\documentclass[11pt]{amsart}
\usepackage{amsmath}
\usepackage{amssymb}
\usepackage{amsthm}
\usepackage{graphicx}
\usepackage{mathrsfs}
\usepackage{dsfont}
\usepackage{pifont}
\usepackage{ytableau}
\usepackage{youngtab}
\usepackage{tikz}
\usepackage{bookmark}
\usepackage{upgreek}
\hypersetup{hidelinks}

\hypersetup{colorlinks,linkcolor=blue,urlcolor=cyan,citecolor=blue}

\newtheorem{thm}{Theorem}[section]
\theoremstyle{plain}

\newtheorem{lem}[thm]{Lemma}
\newtheorem{prop}[thm]{Proposition}
\newtheorem{cor}[thm]{Corollary}

\theoremstyle{definition}
\newtheorem{defn}[thm]{Definition}
\newtheorem{example}[thm]{Example}

\theoremstyle{remark}
\newtheorem{rem}[thm]{Remark}

\definecolor{A}{rgb}{.75,1,.75}

\newcommand{\la}{\lambda}
\newcommand{\Ga}{\Ga}

\newcommand{\op}{\operatorname}

\renewcommand{\b}{\mathtt{b}}

\newcommand{\un}{\underline}
\newcommand{\aHn}{\mathcal{H}^{\mathrm{aff}}_n(q)}
\newcommand{\Hn}{\mathcal{H}_n(q)}
\newcommand{\Hnn}{\mathcal{H}_{n+1}(q)}
\newcommand{\Hr}{\mathcal{H}_r(q)}
\newcommand{\Hrr}{\mathcal{H}_{r+1}(q)}
\newcommand{\I}{\mathbb{I}}
\newcommand{\RP}{\mathcal{RP}}
\newcommand{\DRP}{\mathcal{DRP}}
\newcommand{\CSP}{\mathcal{CSP}}

\numberwithin{equation}{section}

\begin{document}

\title[Hecke-Clifford algebras]{Representations of Hecke-Clifford superalgebras at roots of unity}

\author{Minjia Chen}\address{School of Mathematics and Statistics\\
  Beijing Institute of Technology\\
  Beijing, 100081, P.R. China}
  \email{scarleturanus@163.com}

\author{Jinkui Wan}
\address{Jinkui Wan, School of Mathematical Sciences, Shenzhen University, Shenzhen, 518060, P.R. China}
\email{wjk302@hotmail.com}
  
\keywords{Hecke-Clifford superalgebra, completely splittable, irreducible representations, semisimplicity}

\subjclass[2020]{Primary: 20C08, 05E10, 17B10}

\begin{abstract}
{\color{black} In this article, we give a classification of irreducible completely splittable representations of affine Hecke-Clifford superalgebras $\mathcal{H}_n^{\mathrm{aff}}(q)$ when $q^2$ is a primitive $h$-th root of unity. As an application, we derive a necessary and sufficient condition for the finite Hecke-Clifford superalgebra $\mathcal{H}_n(q)$ to be semisimple. Specially we show that $\mathcal{H}_n(q)$ is semisimple if and only $h >n$ in the case $h$ is odd and    $h >2n$ in the case $h$ is even.}
\end{abstract}
\maketitle
\setcounter{tocdepth}{1}
 \tableofcontents


\section{Introduction}

\subsection{} An explicit construction, including a dimension formula, for the irreducible representations of symmetric groups $\mathfrak{S}_n$ in module case over an algebraically closed field $\mathbb{F}$ of characteristic $p>0$ is still an open important problem.  
A landmark result in this direction was
obtained by Mathieu~\cite{M}, who computed the dimensions of the
irreducible modules associated to partitions
$\lambda = (\lambda_1, \ldots, \lambda_l)$ of $n$ with $l=\ell(\la)$ satisfying
$\lambda_1 - \lambda_l+l \leq p$, by means of the classical Schur-Weyl
duality.  Kleshchev~\cite{K1} later characterized these modules representation theoretically by showing  they
coincide precisely with those whose restriction to every subgroup
$\mathfrak{S}_k$ for all $1\leq k < n$ is semisimple, or equivalently, on which the Jucys-Murphy elements of $\mathbb{F}\mathfrak{S}_n$ act
semisimply.  Following~\cite{K1}, such modules are called \emph{completely
splittable} (also called calibrated or homogeneous in literature).  Using the modular branching rules for $\mathfrak{S}_n$ (cf. \cite{K2}), one then obtains a dimension formula in terms of paths in Young modular graphs, which recovers Mathieu's result.

The theory of completely splittable representations has been extended to various algebras beyond $\mathfrak{S}_n$. 
Generalizing previous work~\cite{K1, M}, Ruff~\cite{Ru} classified the irreducible completely splittable modules for degenerate affine Hecke algebras. 
Earlier constructions and classifications over the complex field were given by Cherednik~\cite{C1,C2} and Ram~\cite{Ra} for related algebras. 
Analogous theories have since been developed for degenerate affine Hecke-Clifford superalgebras~\cite{Wa,HKS}, for Khovanov-Lauda-Rouquier algebras~\cite{KR}, and for quiver Hecke superalgebras~\cite{KL}.

\subsection{}  In order to study $q$-Young symmetrizers arising in the projective representation theory of
$\mathfrak{S}_n$ initialed by \cite{Sch},   Jones and Nazarov~\cite{JN} introduced the notion of non-degenerate affine Hecke-Clifford superalgebras $\aHn$. The study of $\aHn$ and its associated cyclotomic quotient algebras has made substantial progress recently (cf. 
\cite{BK1, KKT, KL, KMS, LS, Mo, N1, N2, Ol, Ts, SW, Sh}). 
The present paper is aimed to solve two interrelated problems concerning representations of $\aHn$ over $\mathbb{F}$.  The first is to
classify and explicitly construct all irreducible completely splittable
$\aHn$-modules when the parameter $q$ is a root of unity.  The second is to derive
necessary and sufficient conditions for the finite Hecke-Clifford
superalgebra $\Hn$ to be semisimple. Our construction of irreducible completely splittable
$\aHn$-modules is inspired by Young's seminormal construction for symmetric
groups and affine Hecke algebras of type $A$, and the underlying philosophy is close to the approach of Okounkov and
Vershik~\cite{OV} for symmetric groups over $\mathbb{C}$.

\subsection{}

Denote by $X_1^{\pm 1}, \ldots, X_n^{\pm 1}$ the invertible Laurent
polynomial generators of $\aHn$; see subsection~\ref{subsec:aHn} for the
precise definition and relations.  Following \cite{BK1}
(and also~\cite[Part~II]{K2}), the classification of finite-dimensional
$\aHn$-modules reduces to that of \emph{integral} modules, that is, those
on which each operator $X_j + X_j^{-1}$ ($1 \leq j \leq n$) has eigenvalues
lying in the set $\{\mathtt{q}(i) \mid i \in \I\}$ (see~\eqref{substitution0} and \eqref{defn:I} for notations).  For such modules, a standard argument yields
the weight space decomposition
\[
  M \;=\; \bigoplus_{\,\underline{i}\,\in\,\I^n} M_{\underline{i}},
\]
where $M_{\underline{i}}$ is the simultaneous generalized eigenspace for the
commuting operators $X_1 + X_1^{-1}, \ldots, X_n + X_n^{-1}$ corresponding
to eigenvalues $\mathtt{q}(i_1), \ldots, \mathtt{q}(i_n)$, respectively.
A tuple $\underline{i} \in \I^n$ is called a \emph{weight} of $M$ if
$M_{\underline{i}} \neq 0$.  A finite-dimensional $\aHn$-module is defined to be 
completely splittable if the operators $X_1, X_2, \ldots,
X_n$ act semisimply, so that the weight space decomposition
is a direct sum of eigenspaces (as opposed to generalized eigenspaces).

We first establish a set of equivalent characterizations for irreducible completely splittable $\aHn$-modules, stated precisely in
Proposition~\ref{prop:equiv.cond.}. One equivalence is that an irreducible $\aHn$-module $M$ is completely splittable if and only if its weight space $M_{\un i}$ is isomorphic to the irreducible module $L(\un i)$ over the subalgebra $\mathcal{A}_n$ of $\aHn$ which is generated by $X_1^{\pm 1},X_2^{\pm 2},\ldots,X_n^{\pm 1}, C_1,C_2,\ldots,C_n$.  Another key equivalence is that an irreducible $\aHn$-module is completely splittable if and only if its
restriction to the subalgebra $\mathcal{H}^{\mathrm{aff}}_{(r,1^{n-r})}(q)$ associated to the Young subgroup $\mathfrak{S}_{(r,1^{n-r})}$
is semisimple for every $1 \leq r \leq n$. In particular, any  irreducible completely splittable $\aHn$-module is semisimple when restricted to 
the subalgebra generated by $T_k,\, C_k,\, C_{k+1},\, X_k^{\pm 1},\,
X_{k+1}^{\pm 1}$ for each fixed $1 \leq k \leq n - 1$, which is isomorphic
to $\mathcal{H}^{\mathrm{aff}}_2(q)$, see Corollary \ref{Cor-1}. 
A thorough study of the irreducible $\mathcal{H}^{\mathrm{aff}}_2(q)$-modules
then yields an explicit formula for the action of the element
$T_1,T_2,\ldots, T_{n-1}$ on irreducible completely splittable $\aHn$-modules and a complete
determination of all their possible weights which results in the explicit actions of $C_1,\ldots, C_n, X_1^{\pm 1},\ldots,X_n^{\pm 1}$.  From this, we construct an
explicit family of irreducible completely splittable $\aHn$-modules and prove
that this exhausts non-isomorphic irreducible completely splittable $\aHn$-modules. 
\subsection{}
Let $\Hn$ be the finite Hecke-Clifford superalgebra.  A $\Hn$-module is
called completely splittable if the Jucys-Murphy elements $L_1, \ldots, L_n$
(see~(\ref{eq:JM})) act semisimply on it.  According to~\cite{N2}, there is a
surjective algebra homomorphism $\aHn \twoheadrightarrow \Hn$ sending $X_k$
to the Jucys-Murphy element $L_k$ for each $1 \leq k \leq n$.  Via  this homomorphism,
we classify all irreducible completely splittable $\Hn$-modules and derive
an explicit dimension formula for them. Recall that the irreducible representations of $\Hn$ over $\mathbb{F}$ are
parameterized by the set $\mathcal{RP}_h(n)$ of $h$-restricted $h$-strict partitions
of $n$ when $h$ is odd~\cite{BK1}, and by the set $\DRP_h(n)$ of doubly
restricted $\tfrac{h}{2}$-strict partitions of $n$ when $h$ is
even~\cite{Ts, Mo}.  One of our main results identifies the subset
$\CSP_h(n) \subseteq \RP_h(n)$ (resp. $\CSP_h(n) \subseteq
\DRP_h(n)$)  in the case $h$ is odd (resp.  $h$ is even) consisting of exactly those partitions that label irreducible
completely splittable $\Hn$-modules.


As an application of the classification, we show that when $n = h$ with $h$
odd, or when $n = 2h$ with $h$ even, every irreducible representation of
$\Hn$ is completely splittable.  For these critical values of $n$, this
provides an explicit construction of all irreducible $\Hn$-modules together
with closed dimension formulas.  A comparison of total dimensions then shows
that $\Hn$ is \emph{not} semisimple at these values.  To extend
non-semisimplicity to larger $n$, we establish that $\mathcal{H}_{n+1}(q)$
fails to be semisimple whenever $\Hn$ does; the proof
exploits the identification of the branching graph for $\Hn$ with the crystal
graph of type $A^{(2)}_{h-1}$ (when $h$ is odd) and $D^{(2)}_{h/2}$ (when
$h$ is even).  Taken together, these arguments establish that the semisimplicity
of $\Hn$ is equivalent to the known sufficient condition given in \cite{SW}, thereby recovering
and strengthening theorem of~\cite{Sh} for $\Hn$  by removing an additional
hypothesis that was assumed there.

\subsection{}

The paper is organized as follows.  Section~\ref{sec:aHn} includes
basics on superalgebras and the affine
Hecke-Clifford superalgebra $\aHn$.  In Section~\ref{sec:weights}, we examine the weight structure of
completely splittable $\aHn$-modules, developing the technical details needed
for the classification.  The classification of irreducible completely
splittable $\aHn$-modules is then carried out in Section~\ref{sec:classifications}.
The passage to the finite Hecke-Clifford 
superalgebra $\Hn$ and the combinatorial classification of its irreducible completely splittable
representations are treated in Section~\ref{sec:cs-Hn}.  In 
Section~\ref{sec:semsimplicity}, we proved a semisimplicity criteria for the finite Hecke-Clifford superalgebras $\Hn$.


Throughout this paper, $\mathbb{F}$ is an algebraically closed field with $\op{char}\mathbb{F}\neq 2$, and $q \in \mathbb{F}^{*}$ satisfies
$q \neq \pm 1$.  We assume that $q$ is a primitive $k$-th root of unity and $q^2$ is a primitive $h$-th root of unity.
As observed in~\cite[Introduction]{Ts}, the case $k = 2(2\ell + 1)$ for
$\ell \geq 1$ reduces to the case $k$ being odd; accordingly, it suffices to
treat the two cases $k = 2\ell + 1$ (odd) and $k =2(2\ell)$ (even) separately.
Thus, $h =2\ell+1$ when $k=2\ell+1$ in which case $h$ is also odd;  and $h =\frac{k}{2}=2\ell$ when $k=2(2\ell)$ in which chase $h$ is even.  If no such integer exists, we put $h = \infty$.  We further set
$\varepsilon:= q - q^{-1}$.


\section{Basics on the non-degenerate affine Hecke-Clifford superalgebra $\aHn$}\label{sec:aHn}
\subsection{Some basics about superalgebras} 
 We shall recall some basic notions of superalgebras, referring the
reader to~\cite[\S 2-b]{BK1}. {\color{black} By a superspace over $\mathbb{F}$, we mean a $\mathbb{Z}_2$-graded vector space.} Let us denote by
$\bar{v}\in\mathbb{Z}_2$ the parity of a homogeneous vector $v$ of a
vector superspace. By a superalgebra, we mean a
$\mathbb{Z}_2$-graded associative algebra. Let $\mathcal{A}$ be a
superalgebra. {\color{black}By an $\mathcal{A}$-module, we mean a $\mathbb{Z}_2$-graded
left $\mathcal{A}$-module.} A homomorphism $f:V\rightarrow W$ of
$\mathcal{A}$-modules $V$ and $W$ means a linear map such that $
f(av)=(-1)^{\bar{f}\bar{a}}af(v).$  It is common in superalgebra to write expressions that are initially defined only for homogeneous elements. Their intended meaning for arbitrary elements is then obtained by extending the definition linearly from the homogeneous case.  Let $V$ be a finite dimensional
$\mathcal{A}$-module. Let $\Pi
 V$ be the same underlying vector space but with the opposite
 $\mathbb{Z}_2$-grading. The new action of $a\in\mathcal{A}$ on $v\in\Pi
 V$ is defined in terms of the old action by $a\cdot
 v:=(-1)^{\bar{a}}av$. Note that the identity map on $V$ defines
 an isomorphism from $V$ to $\Pi V$. By forgeting the grading we may consider any superalgebra $\mathcal{A}$ as the usual algebra which will be denoted by $|\mathcal{A}|$. Similarly, any $\mathcal{A}$-supermodule $V$ can be considered as a usual $|\mathcal{A}|$-module denoted by $|V|$. A superalgebra analog of Schur's Lemma (cf. \cite{K2}) states that the endomorphism
 algebra $\text{End}_{\mathcal{A}}(V)$ of a finite dimensional irreducible $\mathcal{A}$-module $V$  is either one dimensional or two dimensional. In the
 former case, we call the module $V$ of {\em type }\texttt{M} while in
 the latter case the module $V$ is called of {\em type }\texttt{Q}.  A superalgebra $\mathcal{A}$ is said to be semisimple if the usual algebra $|\mathcal{A}|$ is semisimple. 


\begin{lem}{\color{blue}\cite[Lemma 12.2.1, Corollary 12.2.10]{K2}}\label{lem:type MQ}
	{\color{black}Suppose $\mathcal{A}$ is a finite dimensional superalgebra and $V$ is an irreducible $\mathcal{A}$-module}. If $V$ is of type $\texttt{M}$, then by forgetting the grading, $|V|$ is an irreducible $|\mathcal{A}|$-module. If $V$ is of type $\texttt{Q}$, then by forgetting the grading, $|V|$  is isomorphic to a direct sum of two non-isomorphic irreducible $|\mathcal{A}|$-modules. That is, there exist two non-isomorphic irreducible $|\mathcal{A}|$-modules $V^+,V^-$ such that $|V|\cong V^+\oplus V^-$ as $|\mathcal{A}|$-modules. Moreover if $V_1,\cdots,V_m$ (resp. $V_{m+1},\cdots,V_n$) are pairwise non-isomorphic irreducible $\mathcal{A}$-modules of type \texttt{M} (resp. \texttt{Q}), then $$\{|V_1|, \cdots,|V_m|, V_{m+1}^\pm, \cdots, V_{n}^\pm\}$$ is a complete set of pairwise non-isomorphic $|\mathcal{A}|$-modules.  If in addition $\mathcal{A}$ is semisimple, then 
\begin{equation}\label{eq:dimequal}
\op{dim} \mathcal{A}=\sum_{i=1}^m(\op{dim} V_i)^2+\sum_{j=m+1}^n\frac{(\op{dim} V_j)^2}{2}. 
\end{equation}

\end{lem}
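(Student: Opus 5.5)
The approach is to study the parity involution and the action of endomorphisms on $V$, and then reduce to classical Wedderburn theory. Write $V=V_{\bar0}\oplus V_{\bar1}$. The grading operator $\sigma$ (acting by $(-1)^{\bar v}$) satisfies $\sigma a\sigma^{-1}=(-1)^{\bar a}a$ for homogeneous $a$. Hence a $|\mathcal A|$-submodule $W\subseteq |V|$ is a supermodule exactly when $\sigma W=W$. Moreover, for every $|\mathcal A|$-submodule $W$, both $W\cap\sigma W$ and $W+\sigma W$ are graded submodules.

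For type \texttt{M}, suppose $0\neq W\subsetneq|V|$ is an $|\mathcal A|$-submodule. Irreducibility of $V$ as a supermodule forces $W\cap\sigma W=0$ and $W+\sigma W=V$, so $|V|=W\oplus\sigma W$. Since $\sigma W\cong W^{\sigma}$ (twisted by the parity automorphism), we can construct an odd endomorphism. Let $\pi$ be the projection onto $W$ along $\sigma W$ and set $J=(2\pi-1)\sigma$ (or a similar combination). One checks that $J$ supercommutes with $\mathcal A$, is odd, and is nonzero. This contradicts $\dim\operatorname{End}_{\mathcal A}(V)=1$. Checking this supercommutation carefully is the main technical step.

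For type \texttt{Q}, Schur's lemma gives an odd automorphism $J$ with $J^2$ a nonzero scalar (we use that $\mathbb F$ is algebraically closed). Normalize so that $J^2=-1$ (the standard convention in \cite{K2}) and put $\tilde J=\sqrt{-1}\,J\sigma$-type combinations. Then $\tilde J$ is an even-degree-free operator commuting with $|\mathcal A|$ and squaring to $1$. Its eigenspaces $V^\pm$ are $|\mathcal A|$-submodules with $\sigma V^+=V^-$. Each is irreducible, since any submodule $U\subseteq V^+$ yields the graded submodule $U\oplus\sigma U$. They are non-isomorphic: an isomorphism would give $\operatorname{End}_{|\mathcal A|}(|V|)$ of dimension $4$, whereas the ungraded endomorphism algebra is spanned by $1$ and $J\sigma$ (or its analogue).

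For the complete list, take any irreducible $|\mathcal A|$-module $U$. Then $U\oplus U^{\sigma}$ carries a natural superstructure (the induced module from the even part, or via $\mathcal A\rtimes\langle\sigma\rangle$). Some irreducible supersubquotient $V_i$ of it has $|V_i|$ containing $U$, so $U$ occurs among the $|V_i|$ and $V_i^\pm$. For distinctness across different $i$: a common ungraded irreducible constituent of $|V_i|$ and $|V_j|$ would, after applying $\sigma$, generate isomorphic graded modules. The relation $V_i^-=\sigma V_i^+$ shows how $|V_i|$ is recovered, up to $\Pi$, from any of its constituents.

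Finally, if $\mathcal A$ is semisimple, apply Wedderburn to $|\mathcal A|$: $\dim\mathcal A=\sum(\dim\text{irreducibles})^2$. We have $\dim|V_i|=\dim V_i$ for type \texttt{M}, and $\dim V_j^\pm=\dim V_j/2$ for type \texttt{Q}, contributing $2(\dim V_j/2)^2=(\dim V_j)^2/2$. This gives \eqref{eq:dimequal}. I expect the main obstacle to be the completeness and non-isomorphism bookkeeping, in particular showing that each ungraded irreducible arises from exactly one $V_i$ up to $\Pi$. This is where I would rely on the even subalgebra argument from \cite[\S12.2]{K2}.
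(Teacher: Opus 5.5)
Your route is correct in most places and is more hands-on than the source. The paper gives no proof and only cites \cite{K2}, where this belongs to the super Schur/Wedderburn theory: simple superalgebras are $M_{m|n}$ or $Q(n)$, with $|M_{m|n}|$ simple and $|Q(n)|\cong M_n\oplus M_n$. The following parts are fine:
\begin{itemize}
\item Your type \texttt{M} step is complete as written. Since $\sigma\pi\sigma=1-\pi$, the map $J=(2\pi-1)\sigma$ is odd, satisfies $Ja=(-1)^{\bar a}aJ$, and $J^2=-1$.
\item The eigenspace argument in type \texttt{Q} works, apart from a constant slip. Since $(J\sigma)^2=-J^2$, with $J^2=-1$ you should take $\tilde J=J\sigma$; the element $\sqrt{-1}\,J\sigma$ squares to $-1$.
\item Completeness via a graded composition series is fine.
\item The final Wedderburn count for $|\mathcal A|$ is legitimate, because the paper defines semisimplicity of $\mathcal A$ through $|\mathcal A|$.
\end{itemize}

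The real gap is in both non-isomorphism statements. You assert without proof that $\operatorname{End}_{|\mathcal A|}(|V|)$ is spanned by $1$ and $J\sigma$. For $i\neq j$ you only offer a heuristic: extend an isomorphism $\phi\colon X\to X'$ of constituents by $x+\sigma y\mapsto\phi(x)+\sigma\phi(y)$.
\begin{itemize}
\item That map is well defined only when $X\cap\sigma X=0$.
\item When $X=|V_i|$ (type \texttt{M}) it fails in general, because $\phi$ need not intertwine the two grading operators. For example, take $\phi=\sigma_V\colon|V|\to|\Pi V|$.
\item The mixed case, where a type \texttt{M} and a type \texttt{Q} module share a constituent, is not addressed.
\end{itemize}

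The missing idea is short. For $|\mathcal A|$-linear $f\colon|V|\to|W|$, set $f_{\bar0}=\tfrac12(f+\sigma_Wf\sigma_V)$ and $f_{\bar1}=\tfrac12(f-\sigma_Wf\sigma_V)$. Both are again $|\mathcal A|$-linear. Moreover $f_{\bar0}$ is an even $\mathcal A$-homomorphism and $f_{\bar1}\sigma_V$ is an odd one. Hence $f\mapsto f_{\bar0}+f_{\bar1}\sigma_V$ is a linear isomorphism $\operatorname{Hom}_{|\mathcal A|}(|V|,|W|)\cong\operatorname{Hom}_{\mathcal A}(V,W)$. This closes both statements:
\begin{itemize}
\item Taking $V=W$ of type \texttt{Q} gives $\dim\operatorname{End}_{|\mathcal A|}(|V|)=2$, so $V^+\not\cong V^-$.
\item Suppose $|V_i|$ and $|V_j|$ share a constituent. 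Since $|V_i|$ is semisimple, there is a nonzero $|\mathcal A|$-map $|V_i|\to|V_j|$. The splitting then gives a nonzero homogeneous $\mathcal A$-homomorphism $V_i\to V_j$. Its kernel and image are graded submodules, so it is bijective and $V_i\cong V_j$. The isomorphism may be odd, but in the paper's convention that is still an isomorphism, since $V\cong\Pi V$.
\end{itemize}
This yields every pairwise non-isomorphism in the list, and no even-subalgebra argument is needed.
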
 

Given two superalgebras $\mathcal{A}$ and $\mathcal{B}$, we view
the tensor product of superspaces $\mathcal{A}\otimes\mathcal{B}$
as a superalgebra with multiplication defined by
\begin{equation}\label{eq:tensor-AB}
(a\otimes b)(a'\otimes b')=(-1)^{\bar{b}\bar{a'}}(aa')\otimes (bb')
\qquad (a,a'\in\mathcal{A}, b,b'\in\mathcal{B}).
\end{equation}
Suppose $V$ is an $\mathcal{A}$-module and $W$ is a
$\mathcal{B}$-module. Then $V\otimes W$ affords $\mathcal{A}\otimes \mathcal{B}$-module
denoted by $V\boxtimes W$ via
\begin{equation}\label{eq:tensor-AB-1}
(a\otimes b)(v\otimes w)=(-1)^{\bar{b}\bar{v}}av\otimes bw,~a\in \mathcal{A},
b\in \mathcal{B}, v\in V, w\in W.
\end{equation}
If $V$ is an irreducible $\mathcal{A}$-module and $W$ is an
irreducible $\mathcal{B}$-module, $V\boxtimes W$ may not be
irreducible. Indeed, we have the following standard lemma {\color{blue} (cf.
\cite[Lemma 12.2.13]{K2}).}
\begin{lem}\label{tensorsmod}
Let $V$ be an irreducible $\mathcal{A}$-module and $W$ be an
irreducible $\mathcal{B}$-module.
\begin{enumerate}
\item If both $V$ and $W$ are of type $\texttt{M}$, then
$V\boxtimes W$ is an irreducible
$\mathcal{A}\otimes\mathcal{B}$-module of type $\texttt{M}$.

\item If one of $V$ or $W$ is of type $\texttt{M}$ and the other one
is of type $\texttt{Q}$, then $V\boxtimes W$ is an irreducible
$\mathcal{A}\otimes\mathcal{B}$-module of type $\texttt{Q}$.

\item If both $V$ and $W$ are of type $\texttt{Q}$, then
$V\boxtimes W\cong X\oplus \Pi X$ for a type $\texttt{M}$
irreducible $\mathcal{A}\otimes\mathcal{B}$-module $X$.
\end{enumerate}
Moreover, all irreducible $\mathcal{A}\otimes\mathcal{B}$-modules
arise as constituents of $V\boxtimes W$ for some choice of
irreducible modules $V,W$.
\end{lem}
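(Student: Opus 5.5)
We prove the three cases by computing the endomorphism superalgebra of $V\boxtimes W$ and using the density theorem. Throughout, $\mathbb{F}$ is algebraically closed with $\op{char}\mathbb{F}\neq 2$.

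\textbf{Step 1: tensor product of endomorphism algebras.} For finite dimensional irreducible supermodules, the super Jacobson density theorem says that the image of $\mathcal{A}$ in $\op{End}_{\mathbb{F}}(V)$ is the super-centralizer of $D_V:=\op{End}_{\mathcal{A}}(V)$. By the superalgebra Schur lemma, either $D_V=\mathbb{F}$ (type \texttt{M}), or $D_V=\mathbb{F}\oplus\mathbb{F}J_V$ (type \texttt{Q}), where $J_V$ is odd and $J_V^2=1$ after rescaling. The same holds for $W$. The image of $\mathcal{A}\otimes\mathcal{B}$ in $\op{End}(V\otimes W)$ is the super tensor product of the two images. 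Hence
\[
\op{End}_{\mathcal{A}\otimes\mathcal{B}}(V\boxtimes W)\cong D_V\otimes D_W
\]
as superalgebras; this step uses the double centralizer property for the images.

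\textbf{Step 2: the three cases.}
\begin{enumerate}
\item If $D_V\otimes D_W=\mathbb{F}$, the image is all of $\op{End}(V\otimes W)$. So $V\boxtimes W$ is irreducible of type \texttt{M}.
\item If one factor is $\mathbb{F}$ and the other is $\mathbb{F}[J]$, the image is a full super-centralizer of a Clifford algebra on one odd generator. That image acts irreducibly in the super sense, so $V\boxtimes W$ is irreducible of type \texttt{Q}.
\item If both are type \texttt{Q}, then $D=\mathbb{F}[J_V]\otimes\mathbb{F}[J_W]$ is the Clifford superalgebra on two anticommuting odd generators, which is isomorphic to $M_{1|1}(\mathbb{F})$. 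Its even part contains $J_V\otimes J_W$, and $(J_V\otimes J_W)^2=-1$ by the sign rule~\eqref{eq:tensor-AB}. Since $\mathbb{F}$ is algebraically closed, $\sqrt{-1}\in\mathbb{F}$, so we get even idempotents $e_\pm=\tfrac12\bigl(1\pm \sqrt{-1}\,J_V\otimes J_W\bigr)$. These are orthogonal and sum to $1$, and the odd element $J_V\otimes 1$ interchanges them. Therefore $V\boxtimes W=e_+(V\otimes W)\oplus e_-(V\otimes W)$ with $e_-(V\otimes W)\cong\Pi\, e_+(V\otimes W)$ via $J_V\otimes 1$. The endomorphism algebra of $X:=e_+(V\otimes W)$ is $e_+De_+=\mathbb{F}$, so $X$ is irreducible of type \texttt{M}.
\end{enumerate}

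\textbf{Step 3: exhaustion.} Let $U$ be an irreducible $\mathcal{A}\otimes\mathcal{B}$-module. Restrict it to $\mathcal{A}\otimes 1$ and pick an irreducible $\mathcal{A}$-submodule $V\subseteq U$. The space $\op{Hom}_{\mathcal{A}}(V,U)$ is a $\mathcal{B}$-module, and we pick an irreducible $\mathcal{B}$-submodule $W$ of it (or of its twisted version) using the sign conventions. The evaluation map $V\boxtimes W\to U$ is a nonzero $\mathcal{A}\otimes\mathcal{B}$-homomorphism. Since $U$ is irreducible, the map is onto, so $U$ is a quotient, and hence by Step 2 a constituent, of $V\boxtimes W$.

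\textbf{Expected obstacle.} The main difficulty is bookkeeping the signs. One must check that $J_V\otimes 1$ and $1\otimes J_W$ really define super-endomorphisms of $V\boxtimes W$ and that they anticommute; this is the source of $(J_V\otimes J_W)^2=-1$. One must also verify the double centralizer identity for super tensor products in Step 1. I would first reduce to $\mathcal{A}$ and $\mathcal{B}$ replaced by their images, which are simple superalgebras of the form $M_{m|n}(\mathbb{F})$ or $Q(n)$. Then I would check the four combinations by direct dimension count, using the standard isomorphisms
\[
Q(m)\otimes Q(n)\cong M_{mn|mn}(\mathbb{F}),\qquad M\otimes Q\cong Q.
\]
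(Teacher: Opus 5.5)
The paper gives no proof of this lemma; it is quoted from \cite[Lemma 12.2.13]{K2}. Judged on its own, your argument is correct.

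It is also a genuinely different route from the classification argument you mention at the end. That argument replaces $\mathcal{A},\mathcal{B}$ by their images, which are $M_{m|n}(\mathbb{F})$ or $Q(n)$, and reads the three cases off $M\otimes M\cong M$, $M\otimes Q\cong Q$ and $Q(m)\otimes Q(n)\cong M_{mn|mn}(\mathbb{F})$. Your main line avoids the super Wedderburn theorem. Instead it works directly with the endomorphism superalgebra $D_V\otimes D_W$ and the double centralizer.

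Your signs are right. With $(1\otimes J_W)(v\otimes w)=(-1)^{\bar v}v\otimes J_Ww$, both $J_V\otimes 1$ and $1\otimes J_W$ are odd supermodule endomorphisms of $V\boxtimes W$, and they anticommute. Hence their product squares to $-1$, the $e_\pm$ are even orthogonal idempotents, and $J_V\otimes 1$ swaps them. One also checks $e_+De_+=\mathbb{F}e_+$. What your route buys is an explicit model of $V\circledast W$ as an eigenspace of $J_V\otimes J_W$, and it shows that $\sqrt{-1}\in\mathbb{F}$ is needed only in case (3). The classification route is shorter if Wedderburn is taken as known.

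Two points should be made explicit.

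\begin{enumerate}
\item In Step 2 you conclude irreducibility from ``$\operatorname{End}=\mathbb{F}$ (resp.\ $\mathbb{F}[J]$)'' and from ``$e_+De_+=\mathbb{F}$''. Both steps need $V\boxtimes W$ to be semisimple. This holds once Step 1 also records the reverse inclusion: the image of $\mathcal{A}\otimes\mathcal{B}$ in $\operatorname{End}(V\otimes W)$ equals the full super-centralizer of $D_V\otimes D_W$, hence is a simple superalgebra. The proof is the same sign computation that gives $\operatorname{End}_{\mathcal{A}\otimes\mathcal{B}}(V\boxtimes W)\cong D_V\otimes D_W$, combined with density and with the roles of the images and of $D_V,D_W$ exchanged. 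You implicitly use this in case (2) but not in case (3).
\item In Step 3 no twist is needed. The map $v\otimes f\mapsto(-1)^{\bar f\bar v}f(v)$ is already an even $\mathcal{A}\otimes\mathcal{B}$-homomorphism $V\boxtimes\operatorname{Hom}_{\mathcal{A}}(V,U)\to U$. You are also using that $U$ is finite dimensional. This is implicit in the lemma and is the setting in which the paper applies it to $\mathcal{A}_n$.
\end{enumerate}
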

If $V$ is an irreducible $\mathcal{A}$-module and $W$ is an
irreducible $\mathcal{B}$-module, denote by $V\circledast W$ an
irreducible component of $V\boxtimes W$. Thus,
$$
V\boxtimes W=\left\{
\begin{array}{ll}
V\circledast W\oplus \Pi (V\circledast W), & \text{ if both } V \text{ and } W
 \text{ are of type }\texttt{Q}, \\
V\circledast W, &\text{ otherwise }.
\end{array}
\right.
$$
\subsection{The non-degenerate affine Hecke-Clifford superalgebra $\aHn$}\label{subsec:aHn} Recall $\varepsilon=q-q^{-1}$ with $q\neq \pm 1\in\mathbb{F}^*$. 
 Let $\aHn$ be the (non-degenerate) affine Hecke-Clifford superalgebra over $\mathbb{F}$ generated by even elements $T_1,\cdots,T_{n-1}$, $X^{\pm 1}_1,\cdots,X_n^{\pm 1}$ and odd elements $C_1,\cdots, C_n$ subject to the relations: 
\begin{align}
T_i^2&=\varepsilon T_i+1\quad T_iT_j=T_jT_i,\quad T_iT_{i+1}T_i=T_{i+1}T_iT_{i+1},\quad |i-j|>1,\label{TT}\\
X_iX_j&=X_jX_i,\quad X_iX_i^{-1}=X_i^{-1}X_i=1,\quad 1\leq i,j\leq n\label{eq:Poly}\\
C_i^2&=1,\quad C_iC_j=-C_jC_i,\quad 1\leq i\neq j\leq n,\label{eq:Clifford}\\
T_iX_i&=X_{i+1}T_i-\varepsilon(X_{i+1}+C_iC_{i+1}X_i),\label{TX1}\\
T_iX_j&=X_jT_i,\quad j\neq i,i+1,\label{TX2}\\
T_iC_i&=C_{i+1}T_i,\quad T_iC_{i+1}=C_iT_i-\varepsilon(C_i-C_{i+1}),\quad T_iC_j=C_jT_i,\quad j\neq i,i+1,\label{TC}\\
X_iC_i&=C_iX_i^{-1},\quad X_iC_j=C_jX_i,\quad 1\leq i\neq j\leq n.\label{XC}
\end{align}
Obviously (\ref{TX1}) is equivalent to one of the following equivalent equations:
\begin{align}
T_iX_i^{-1}&=X_{i+1}^{-1}T_i+\varepsilon(X_{i}^{-1}+X_{i+1}^{-1}C_iC_{i+1}),\notag\\
T_iX_{i+1}&=X_{i}T_i+\varepsilon(1-C_iC_{i+1})X_{i+1},\label{TX3}\\
T_iX_{i+1}^{-1}&=X_{i}^{-1}T_i-\varepsilon X_{i}^{-1}(1-C_iC_{i+1})\notag
\end{align}
For each permutation $w\in\mathfrak{S}_n$ with an reduced expression $w=s_{i_1}s_{i_2}\cdots s_{i_r}$ for some $1\leq i_1,\ldots,i_{r}\leq n-1$ with $r\geq 0$, there exists an element $T_w:=T_{i_1}\cdots T_{i_r}$ and it is independent of the choice of the reduced expression of $w$ due to the Braid relation \eqref{TT}. For $\alpha=(\alpha_1,\ldots,\alpha_n)\in\mathbb{Z}^n$ and
$\beta=(\beta_1,\ldots,\beta_n)\in\mathbb{Z}_2^n$, set
$X^{\alpha}=X_1^{\alpha_1}\cdots X_n^{\alpha_n}$ and
$C^{\beta}=C_1^{\beta_1}\cdots C_n^{\beta_n}$. Then we have the
following.
\begin{lem}\cite[Theorem 2.3]{BK1}\label{lem:PBWNon-dege}
The set $\{X^{\alpha}C^{\beta}T_w~|~ \alpha\in\mathbb{Z}^n,
\beta\in\mathbb{Z}_2^n, w\in {\mathfrak{S}_n}\}$ forms a basis of $\aHn$.
\end{lem}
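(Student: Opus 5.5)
The plan has two halves: spanning, by a straightening argument, and linear independence, via a faithful representation on a polynomial-Clifford superspace.

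\emph{Spanning.} Let $\mathcal{P}_n$ be the Laurent polynomial algebra in $X_1^{\pm1},\dots,X_n^{\pm1}$, $\mathcal{C}_n$ the Clifford superalgebra on $C_1,\dots,C_n$, and $\mathcal{H}_n^{\mathrm{fin}}$ the span of the $T_w$. By \eqref{XC}, $\mathcal{P}_n\mathcal{C}_n=\mathcal{C}_n\mathcal{P}_n$ is a subalgebra spanned by the $X^\alpha C^\beta$. Let $S$ be the span of all $X^\alpha C^\beta T_w$; we show $S$ is a left ideal containing $1$, hence $S=\aHn$. Left multiplication by $X_j^{\pm1}$ or $C_j$ preserves $S$ after reordering via \eqref{XC} and \eqref{eq:Clifford}. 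For $T_i$, we must rewrite $T_iX^\alpha C^\beta$ as a combination of terms (polynomial-Clifford)$\cdot T_i$ plus (polynomial-Clifford) terms. This follows from \eqref{TX1}, \eqref{TX3}, \eqref{TC} by induction on $\sum|\alpha_j|$. A cleaner formulation: for $f\in\mathcal{P}_n\mathcal{C}_n$,
\[
T_if={}^{s_i}\!f\,T_i+\text{(element of }\mathcal{P}_n\mathcal{C}_n),
\]
where $s_i$ acts by permuting indices; this is verified on generators and extended using multiplicativity. Finally, $T_iT_w$ is either $T_{s_iw}$ or $\varepsilon T_w+T_{s_iw}$ by the quadratic and braid relations \eqref{TT}.

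\emph{Linear independence.} Construct a representation $\aHn\to\operatorname{End}(\mathcal{P}_n\otimes\mathcal{C}_n)$, namely the induced module $\operatorname{Ind}_{\mathcal{H}^{\mathrm{fin}}}^{\aHn}$ of the trivial module where $T_i$ acts by $q$. Set $V=\mathcal{P}_n\mathcal{C}_n$. Let $X_j$ act by left multiplication, twisted appropriately, and $C_j$ by left multiplication. Let $T_i$ act by a Demazure–Lusztig type operator
\[
T_i\cdot f=q\,{}^{s_i}\!f+\varepsilon\,\frac{(\dots)}{X_i-X_{i+1}}\big(f-{}^{s_i}\!f\big)\text{-type correction involving }C_iC_{i+1},
\]
chosen so that \eqref{TX1} and \eqref{TC} hold and $T_i\cdot1=q$. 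One then checks the defining relations: the quadratic relation, the braid relations, and \eqref{TX1}--\eqref{XC}. This only shows $V$ is a module, not independence. To get independence, use instead the regular-type module $V\otimes\mathcal{H}^{\mathrm{fin}}_n$, where $\mathcal{H}^{\mathrm{fin}}_n$ is the finite Hecke–Clifford algebra whose basis $C^\beta T_w$ is known. Alternatively, and more simply, use the degeneration/filtration argument. Filter $\aHn$ by the length of $w$ (total $T$-degree); the straightening above shows $\operatorname{gr}\aHn$ is a quotient of $(\mathcal{P}_n\rtimes\mathcal{C}_n)\otimes\mathbb{F}\mathfrak{S}_n$ twisted. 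It then suffices to show that the images of the $X^\alpha C^\beta T_w$ act linearly independently on the faithful module. Compute the leading term of $X^\alpha C^\beta T_w$ acting on a generic element, i.e. on a localization at $\prod(X_i-X_j)$: the leading term is $X^\alpha C^\beta\,{}^{w}(\cdot)$ up to invertible scalars and lower terms in the Bruhat order. Independence of distinct field automorphisms $w$ of $\mathrm{Frac}(\mathcal{P}_n)$ (Dedekind/Artin) together with freeness of $\mathcal{C}_n$ over $\mathbb{F}$ then finishes the argument.

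\emph{Main obstacle.} The hardest step is writing down explicit operators for $T_i$ on $\mathcal{P}_n\otimes\mathcal{C}_n$ and verifying the braid relation. I would first try to take these formulas from Jones–Nazarov's intertwiner description. I would use the intertwiners $\Phi_i=T_i(X_i-X_{i+1})+\dots$, as in \cite{BK1}, to reduce the braid check to the statement that the $\Phi_i$ satisfy braid relations and act as $s_i$ times rational functions. In this form the braid relation becomes a check in $\mathfrak{S}_n\ltimes\mathrm{Frac}(\mathcal{P}_n)\otimes\mathcal{C}_n$, where it is automatic.
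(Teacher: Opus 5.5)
Your spanning half is fine. The rule $T_if={}^{s_i}\!f\,T_i+(\text{element of }\mathcal{P}_n\mathcal{C}_n)$ is checked on generators with \eqref{TX1}, \eqref{TX3}, \eqref{TC}, and it propagates because $f\mapsto{}^{s_i}\!f$ is an algebra automorphism of $\mathcal{P}_n\mathcal{C}_n$. The paper gives no argument of its own; it only cites \cite[Theorem 2.3]{BK1}. So everything rests on your independence half, and there the gap is genuine: you never produce a module on which the monomials can be tested.

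The $T_i$-operator is left as ``$(\dots)$'', and none of \eqref{TT}, \eqref{TX1}, \eqref{TC} is verified for it. The appeals to ``the induced module'', ``the regular-type module $V\otimes\mathcal{H}^{\mathrm{fin}}_n$'' and ``the faithful module'' are circular. That the induced module has underlying space $\mathcal{P}_n\otimes\mathcal{C}_n$, or that a given module is faithful, is exactly what has to be shown. Your remark that $V$ cannot detect independence is also mistaken. $V$ is the right module, because the operators are not $\mathcal{P}_n$-linear.

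The shortcut in your last paragraph cannot work at all. There is no homomorphism from $\aHn$ to $\mathfrak{S}_n\ltimes(\mathrm{Frac}(\mathcal{P}_n)\otimes\mathcal{C}_n)$, with $\mathfrak{S}_n$ permuting both the $X_j$ and the $C_j$, that is the identity on $\mathcal{A}_n$. Indeed, \eqref{Xinter} and \eqref{Cinter} force the image of $\widetilde\Phi_i$ to be $a\,s_i$ with $a\in\mathbb{F}(x_1,\dots,x_n)$, where $x_j=X_j+X_j^{-1}$. Then \eqref{eq:Phi-square} gives $a\cdot{}^{s_i}a=(x_i-x_{i+1})^2N_i$ with $N_i=x_i^2-(q^2+q^{-2})x_ix_{i+1}+x_{i+1}^2+4\varepsilon^2$. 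Since $N_i$ is irreducible (as $q^4\neq 1$) and $s_i$-invariant, its multiplicity is the even number $2v_{N_i}(a)$ on the left and $1$ on the right, a contradiction. So the braid check is not ``automatic'': that target ring is too small.

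What your sketch misses is the effect of the $C_iC_{i+1}$-terms in \eqref{TX1}. On $V=\mathcal{P}_n\otimes\mathcal{C}_n$ with $T_i\cdot 1=q$, the action on $f\in\mathcal{P}_n$ is forced to be
\[
T_i\cdot f=q\,{}^{s_i}\!f+\varepsilon\frac{X_{i+1}}{X_{i+1}-X_i}\big(f-{}^{s_i}\!f\big)-\varepsilon\,C_iC_{i+1}\frac{X_iX_{i+1}}{X_iX_{i+1}-1}\big(f-{}^{\sigma_i\sigma_{i+1}s_i}\!f\big),
\]
where $\sigma_j$ inverts $X_j$. In addition, left multiplication by $C^\beta$ acts on the polynomial factor through $\sigma^\beta$. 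Hence the operators live in $\big(\mathrm{Frac}(\mathcal{P}_n)\rtimes((\mathbb{Z}_2)^n\rtimes\mathfrak{S}_n)\big)\otimes\operatorname{End}(\mathcal{C}_n)$.

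Consequently the leading term of $X^\alpha C^\beta T_w$ is not an invertible scalar times ${}^{w}(\cdot)$. Since $C_iC_{i+1}$ itself acts through $\sigma_i\sigma_{i+1}$, the sign-changing part of $T_i$ has trivial $\mathfrak{S}_n$-image. So the top term sits at the group element $\sigma^\beta w$. Its coefficient is $X^\alpha$, times a twisted product of the $2^n\times2^n$ matrices $A_i$ (the $s_i$-coefficients of $T_i$), times an invertible factor coming from $C^\beta$.

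To close the gap you need three things:
\begin{enumerate}
\item Define $V$ explicitly, for instance $T_i\cdot(fc)=q\,{}^{s_i}\!f\,{}^{s_i}c+{}^{s_i}\!f\,r_i'(c)+(T_i\cdot f-q\,{}^{s_i}\!f)\,c$, with $r_i'$ read off from \eqref{TC}.
\item Verify \eqref{TT} on $V$ by direct computation.
\item Prove $\det A_i\neq 0$. Letting $X_{i+1}\to\infty$ turns $A_i$ into $T_i-\varepsilon$ acting on $\mathcal{C}_n$, which is invertible because $T_i$ has eigenvalues $q$ and $-q^{-1}$.
\end{enumerate}
Only then does Dedekind--Artin independence, applied to the faithful action of $(\mathbb{Z}_2)^n\rtimes\mathfrak{S}_n$ on $\mathrm{Frac}(\mathcal{P}_n)$, finish the proof. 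Dedekind for $\mathfrak{S}_n$ alone plus freeness of $\mathcal{C}_n$ does not suffice.
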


For any $a\in \mathbb{F}$, we fix a solution of the equation $x^2=a$ and denote it by $\sqrt{a}$. Following \cite{BK1}, for $i\in \mathds{Z}$ we define
\begin{equation}\label{substitution0}
\mathtt{q}(i)=2\frac{q^{2i+1}+q^{-2i-1}}{q+q^{-1}},\quad\mathtt{b}_\pm(i)=\frac{\mathtt{q}(i)}{2}\pm\sqrt{\frac{\mathtt{q}(i)^2}{4}-1}.
\end{equation}
In addition, following \cite{BK1} and \cite{Ts}, we take the subset $\mathbb{I}\subset \mathbb{Z}$ via 
\begin{equation}\label{defn:I} 
\mathbb{I}=\begin{cases}\mathbb{Z}_{\geq 0}, &\text{ if }h=\infty,\\
\{0,1,\cdots,\frac{h-1}{2}\}, &\text{ if }h\text{ is odd,}\\
\{0,1,\cdots,\frac{h}{2}-1\}, &\text{ if }h\text{ is even}.
\end{cases}
\end{equation}
It is easy to verify $\{\mathtt{q}(i)|i\in\mathbb{Z}\}=\{\mathtt{q}(i)\mid i\in\mathbb{I}\}$ and moreover $\mathtt{q}(i)\neq \mathtt{q}(j)$ if $i\neq j\in\mathbb{I}$. 
In addition we have 
\begin{equation}\label{eq:qi=2}
\begin{aligned}
&\mathtt{q}(i)=\pm 2 \text{ if and only if }i=0 \text{ in the case }h\text{ is odd},\\
&\mathtt{q}(i)=\pm 2 \text{ if and only if }i=0\text{ or }\frac{h}{2}-1 \text{ in the case }h\text{ is even}. 
\end{aligned}
\end{equation}

Let $\mathcal{A}_n$ be the subalgebra of $\aHn$  generated by even generators $X^{\pm}_1,\ldots,X^{\pm 1}_n$ and odd generators $C_1,\ldots,C_n$. By Lemma~\ref{lem:PBWNon-dege}, $\mathcal{A}_n$ actually can be identified with the superalgebra generated by even generators $X^{\pm 1}_1,\ldots,X^{\pm 1}_n$ and odd generators $C_1,\ldots,C_n$ subject to relations \eqref{eq:Poly}, \eqref{eq:Clifford}, \eqref{XC}.  For a
composition $\mu=(\mu_1,\mu_2,\ldots,\mu_r)$ of $n$, we define
$\mathcal{H}^{\mathrm{aff}}_{\mu}(q)$ to be the subalgebra of $\aHn$
generated by $\mathcal{A}_n$ and $T_j$ such that $s_j\in S_{\mu}=S_{\mu_1}\times\cdots
\times S_{\mu_r}$. Note that
$\mathcal{A}_n=\mathcal{H}^{\mathrm{aff}}_{(1^n)}(q)$.
 A $\mathcal{A}_n$-module $M$ is called {\em integral} if it is finite
dimensional and moreover all eigenvalues of $X_1 + X_1^{-1},X_2+X_2^{-1},\ldots, X_n + X_n^{-1}$ on $M$ are
of the form $\mathtt{q}(i)$ with  $i\in \mathbb{I}$. Call an $\aHn$-module, or more generally an
$\mathcal{H}_\mu^{\mathrm{aff}}(q)$-module for $\mu$ a composition of $n$, integral if it is integral on restriction to $\mathcal{A}_n$.
In what follows we will restrict our attention to these modules, and write $\op{Rep}_{\mathbb{I}} \aHn$ the category of integral $\aHn$-modules. 
For each $i\in \mathbb{I}$, let $L(i)$ be the $2$-dimensional
$\mathcal{A}_1$-module $L(i)=\mathbb{F}v_0\oplus \mathbb{F}v_1$  with 
\begin{equation}\label{eq:Li}
X^{\pm 1}_1 v_0=\mathtt{b}_{\pm}(i)v_0,\quad X^{\pm 1}_1 v_1=\mathtt{b}_{\mp}(i)v_1, \quad
C_1v_0=v_1,\quad C_1v_1=v_0.
\end{equation}
Clearly $L(i)\cong L(j)$ if and only if $i=j\in\mathbb{I}$. 
\begin{lem} 
The $\mathcal{A}_1$-module $L(i)$  is irreducible of type $\texttt{M}$ if $\mathtt{q}(i)\neq \pm 2 $,
and irreducible of type $\texttt{Q}$ if $\mathtt{q}(i)= \pm 2 $. Moreover, $\{L(i)\mid i\in\mathbb{I}\}$ is a complete set of pairwise non-isomorphic integral finite dimensional irreducible $\mathcal{A}_1$-module. 

\end{lem}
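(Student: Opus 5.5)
The plan is to study $\mathcal{A}_1$ directly. It is the superalgebra generated by even $X_1^{\pm1}$ and odd $C_1$, subject to $C_1^2=1$ and $X_1C_1=C_1X_1^{-1}$. First I check that \eqref{eq:Li} defines an $\mathcal{A}_1$-module. Since $\mathtt{b}_+(i)\mathtt{b}_-(i)=1$, the scalars $\mathtt{b}_\pm(i)$ are mutually inverse, and they are the roots of $x^2-\mathtt{q}(i)x+1=0$. The relation $X_1C_1v_0=X_1v_1=\mathtt{b}_-(i)v_1$ agrees with $C_1X_1^{-1}v_0=\mathtt{b}_-(i)C_1v_0=\mathtt{b}_-(i)v_1$, and the check on $v_1$ is symmetric. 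The grading puts $v_0$ even and $v_1$ odd, so $C_1$ is odd.

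For irreducibility: a nonzero graded submodule contains a nonzero homogeneous vector, which is a multiple of $v_0$ or $v_1$. Applying $C_1$ then produces the other vector, so the submodule is all of $L(i)$.

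For the type, I compute $\mathrm{End}_{\mathcal{A}_1}(L(i))$. An even endomorphism commutes with $X_1$ and $C_1$ and preserves the grading. It is therefore diagonal in the basis $v_0,v_1$, and commuting with $C_1$ forces it to be a scalar. An odd endomorphism $f$ swaps the two lines, say $f(v_0)=av_1$ and $f(v_1)=bv_0$. The super sign rule requires $f(C_1v)=-C_1f(v)$ and $f(X_1v)=X_1f(v)$.
\begin{itemize}
\item The $C_1$ condition gives $f(v_1)=f(C_1v_0)=-C_1(av_1)=-av_0$, so $b=-a$.
\item The $X_1$ condition gives $\mathtt{b}_+(i)av_1=f(X_1v_0)=X_1(av_1)=\mathtt{b}_-(i)av_1$.
\end{itemize}
Hence a nonzero odd endomorphism exists if and only if $\mathtt{b}_+(i)=\mathtt{b}_-(i)$. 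This happens exactly when $\mathtt{q}(i)^2/4-1=0$, that is $\mathtt{q}(i)=\pm2$. In that case $f(v_0)=v_1$, $f(v_1)=-v_0$ is a genuine odd endomorphism, giving type $\texttt{Q}$; otherwise the type is $\texttt{M}$.

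For completeness, let $V$ be a finite-dimensional integral irreducible $\mathcal{A}_1$-module. The element $X_1+X_1^{-1}$ is central in $\mathcal{A}_1$, since it commutes with $C_1$. Its generalized eigenspaces are therefore graded submodules, so by irreducibility $V$ equals one of them, for some eigenvalue $\mathtt{q}(i)$ with $i\in\mathbb{I}$.

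Next choose a homogeneous eigenvector $v$ of the even operator $X_1$. One exists because $X_1$ preserves each graded piece and $\mathbb{F}$ is algebraically closed. Its eigenvalue $\lambda$ satisfies $(\lambda+\lambda^{-1}-\mathtt{q}(i))v=0$ on that eigenvector, since $X_1+X_1^{-1}$ acts on $v$ by $\lambda+\lambda^{-1}$. Hence $\lambda\in\{\mathtt{b}_\pm(i)\}$.

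After swapping the labels $\mathtt{b}_\pm$ if needed (the isomorphism class of $L(i)$ does not depend on the choice), we may take $\lambda=\mathtt{b}_+(i)$. Then $C_1v$ is an $X_1$-eigenvector with eigenvalue $\mathtt{b}_-(i)$. The span of $v$ and $C_1v$ is a graded submodule, so it equals $V$. The map $v_0\mapsto v$, $v_1\mapsto C_1v$ gives an isomorphism $L(i)\to V$, up to applying $\Pi$ if $v$ is odd, and $\Pi L(i)\cong L(i)$ via $v_0\leftrightarrow v_1$ composed with suitable signs.

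Finally, the $L(i)$ are pairwise non-isomorphic because $X_1+X_1^{-1}$ acts on $L(i)$ by $\mathtt{q}(i)$, and these scalars are distinct for distinct $i\in\mathbb{I}$. The only mildly delicate step is the sign convention for odd homomorphisms in the type computation.
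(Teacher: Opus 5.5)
Your proposal is correct and follows essentially the paper's route: the type is decided by whether $\mathtt{b}_+(i)=\mathtt{b}_-(i)$, and completeness comes from an $X_1$-eigenvector $v$ whose eigenvalue is forced by integrality, with $v, C_1v$ spanning the module. You correctly get type $\texttt{Q}$ exactly when $\mathtt{q}(i)=\pm 2$; the paper's one-line proof has $\texttt{M}$ and $\texttt{Q}$ interchanged.

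One small slip: the swap $v_0\leftrightarrow v_1$ does not commute with $X_1$ when $\mathtt{b}_+(i)\neq\mathtt{b}_-(i)$, so it does not witness $\Pi L(i)\cong L(i)$, and indeed no even isomorphism $\Pi L(i)\to L(i)$ exists in type $\texttt{M}$. Use the identity map instead, which is an odd isomorphism in the paper's conventions. Alternatively, replace $v$ by $C_1v$ when $\lambda=\mathtt{b}_-(i)$ and map $v_0\mapsto v$, $v_1\mapsto \pm C_1v$, with sign $-$ when $v$ is odd.
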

\begin{proof}
One can easily check that $L(i)$ is of type $\texttt{M}$ if and only if $\mathtt{b}_+(i)=\mathtt{b}_-(i)$ or equivalently $\mathtt{q}(i)=\pm 2$ by \eqref{substitution0}. Hence the first statement holds. Suppose $U$ is an integral finite dimensional irreducible $\mathcal{A}_1$-module. Then there exists an eigenvector $u_0\in U_{\bar{0}}$ of $X_1$ with respect to an eigenvalue $z$ for some $z\in\mathbb{F}^*$. Then $(X_1+X_1^{-1})u_0=(z+z^{-1})u_0$. Since $U$ is integral, we obtain $z+z^{-1}=\mathtt{q}(i)$ for some $i\in\mathbb{I}$. Then by \eqref{substitution0} one can show  $U\cong L(i)$.  Putting together, we obtain that $\{{L}(i)|i\in\mathbb{I}\}$ is a complete set of pairwise non-isomorphic finite dimensional irreducible $\mathcal{A}_1$-module. 
\end{proof}

Clearly we have $$\mathcal{A}_n\cong \mathcal{A}_1\otimes\cdots\otimes \mathcal{A}_1.$$
{\color{black} For each $\underline{i}=(i_1,i_2,\ldots,i_n)\in\I^n$, set 
\begin{equation}\label{L-under-a-nondege}
L(\underline{i})=L(i_1)\circledast L(i_2)\circledast\cdots\circledast L(i_n), 
\end{equation}
 then $L(\underline{i})\cong L(\underline{j})$ if and only if $i_k= j_k$ for $1\leq k\leq n$. } 
 
 \begin{cor}\cite[Corollary 3.4]{SW} \label{cor:irrepAn}
The $\mathcal{A}_n$-modules
$$
\{L(\underline{i})=L(i_1)\circledast
L(i_2)\circledast\cdots\circledast
L(i_n)|~\underline{i}=(i_1,\ldots,i_n)\in\mathbb{I}^n\}
$$
forms a complete set of pairwise non-isomorphic finite dimensional irreducible modules in the category $\op{Rep}_{\I}\mathcal{A}_n$. 
Moreover, setting $\gamma_0=\{1\leq k\leq n\mid \mathtt{q}(i_k)=\pm 2\}$,  then $L(\underline{i})$ is of type $\texttt{M}$ if
$\gamma_0$ is even and type $\texttt{Q}$ if $\gamma_0$ is odd.
Furthermore,
$\text{dim}~L(\underline{i})=2^{n-\lfloor\frac{\gamma_0}{2}\rfloor}$,
where $\lfloor\frac{\gamma_0}{2}\rfloor$ denotes the greatest
integer less than or equal to $\frac{\gamma_0}{2}$ .
\end{cor}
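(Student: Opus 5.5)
The plan is to argue by induction on $n$, using the tensor decomposition $\mathcal{A}_n\cong\mathcal{A}_{n-1}\otimes\mathcal{A}_1$ together with Lemma~\ref{tensorsmod} and the classification of integral irreducible $\mathcal{A}_1$-modules in the preceding lemma. For $n=1$ everything is contained in that lemma: $L(i)$ has dimension $2$, and it is of type $\texttt{Q}$ exactly when $\mathtt{q}(i)=\pm2$, i.e.\ when $\gamma_0=1$. For the inductive step, write $L(\underline{i})=L(i_1,\ldots,i_{n-1})\circledast L(i_n)$. By the "moreover" part of Lemma~\ref{tensorsmod}, every irreducible $\mathcal{A}_n$-module is a constituent of $V\boxtimes W$ with $V$ irreducible over $\mathcal{A}_{n-1}$ and $W$ irreducible over $\mathcal{A}_1$. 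If the $\mathcal{A}_n$-module is integral, its restrictions to $\mathcal{A}_{n-1}$ and to the last $\mathcal{A}_1$ factor are integral, so $V$ and $W$ are integral. By induction $V\cong L(i_1,\ldots,i_{n-1})$ and $W\cong L(i_n)$, which gives completeness.

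For the type and dimension, let $\gamma_0'$ be the count for $(i_1,\ldots,i_{n-1})$. Lemma~\ref{tensorsmod} says that the type of $V\circledast W$ is the sum mod $2$ of the types, with $\texttt{Q}=1$ and $\texttt{M}=0$. Hence the parity of $\gamma_0$ determines the type, as claimed. The dimension is $\dim V\cdot\dim W$, halved exactly when both $V$ and $W$ are of type $\texttt{Q}$. If $i_n$ has $\mathtt{q}(i_n)\neq\pm2$, then $\dim L(\underline{i})=2\cdot2^{n-1-\lfloor\gamma_0'/2\rfloor}$ and $\gamma_0=\gamma_0'$. If $\mathtt{q}(i_n)=\pm2$ and $\gamma_0'$ is even, there is no halving, and $\lfloor(\gamma_0'+1)/2\rfloor=\lfloor\gamma_0'/2\rfloor$. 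If $\gamma_0'$ is odd, we halve, and $\lfloor(\gamma_0'+1)/2\rfloor=\lfloor\gamma_0'/2\rfloor+1$. In all three cases $\dim L(\underline{i})=2^{n-\lfloor\gamma_0/2\rfloor}$.

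The main obstacle is showing that these modules are pairwise non-isomorphic, which also shows that $L(\underline{i})$ is well defined up to isomorphism. My first approach is to read off $\underline{i}$ intrinsically: on $L(\underline{i})$ the commuting even operators $X_k+X_k^{-1}$ act by the scalar $\mathtt{q}(i_k)$, since $X_k+X_k^{-1}$ is central in $\mathcal{A}_n$ (it commutes with $C_k$ by \eqref{XC}) and so acts as a scalar on each $L(i_k)$ factor. Since $\mathtt{q}$ is injective on $\I$, the tuple $\underline{i}$ is recovered from the module. The one subtlety is that the choice of the component $V\circledast W$ in case (3) of Lemma~\ref{tensorsmod} is only up to $\Pi$. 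This is harmless because $X\cong\Pi X$ via the identity map, as noted in the basics.
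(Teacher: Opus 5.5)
Your proof is correct and follows the route the paper has in mind: the paper does not reprove the statement but cites \cite[Corollary 3.4]{SW}, after setting up your ingredients, namely $\mathcal{A}_n\cong\mathcal{A}_1\otimes\cdots\otimes\mathcal{A}_1$, $L(\underline{i})$ defined by iterated $\circledast$, and Lemma~\ref{tensorsmod} for irreducibility, type and dimension, while your use of the central elements $X_k+X_k^{-1}$ (acting by $\mathtt{q}(i_k)$) is the natural justification of the paper's unproved remark that $L(\underline{i})\cong L(\underline{j})$ only if $\underline{i}=\underline{j}$. The only thing left implicit is that the last clause of Lemma~\ref{tensorsmod} is applied to finite-dimensional modules over the infinite-dimensional algebra $\mathcal{A}_{n-1}\otimes\mathcal{A}_1$; this is harmless, since you can replace $\mathcal{A}_{n-1}$ and $\mathcal{A}_1$ by their finite-dimensional, mutually supercommuting images in $\operatorname{End}(M)$.
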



\begin{rem}\label{rem:Ltau}
Following \cite[Remark 2.5]{Wa}, we observe that each permutation $\tau\in {\mathfrak{S}_n}$ defines a superalgebra
isomorphism $\tau:\mathcal{A}_n \rightarrow \mathcal{A}_n$ by mapping $X^{\pm 1} _k$ to
$X^{\pm 1}_{\tau(k)}$ and $C_k$ to  $C_{\tau(k)}$, for $1\leq k\leq n$. For
$\underline{i}\in\mathbb{I}^n$, the twist of the action of
$\mathcal{A}_n$ on $L(\underline{i})$ with
$\tau^{-1}$ leads to a new $\mathcal{A}_n$-module denoted by
$L(\underline{i})^{\tau}$ with
$$
L(\underline{i})^{\tau}=\{z^{\tau}~|~z\in L(\underline{i})\} ,\quad
fz^{\tau}=(\tau^{-1}(f)z)^{\tau}, \text{ for any }f\in
\mathcal{A}_n, z\in L(\underline{i}).
$$
So in particular we have 
\begin{equation}\label{X-z-tau}
(X^{\pm 1} _kz)^{\tau}=X^{\pm 1}_{\tau(k)}z^{\tau}, 
(C_kz)^{\tau}=C_{\tau(k)}z^{\tau}
\end{equation}
 for each $1\leq k\leq n$ and $z\in L(\un i), \tau\in\mathfrak{S}_n$. It is easy to see that $L(\underline{i})^{\tau}\cong L(\tau\cdot \underline{i})$, where
$\tau\cdot \underline{i}=(i_{\tau^{-1}(1)},\ldots,i_{\tau^{-1}(n)})$
for $\underline{i}=(i_1,\ldots,i_n)\in\mathbb{I}^n$ and $\tau\in {\mathfrak{S}_n}$. Moreover it is straightforward to show that the following holds 
\begin{equation}\label{L-tau-sigma}
((L(\underline{i}))^\tau)^\sigma\cong L(\underline{i})^{\sigma\tau}. 
\end{equation}

\end{rem}
\subsection{Intertwining elements for $\aHn$}
Given $1\leq k<n$, we define the intertwining element $\widetilde{\Phi}_k$ in $\aHn$ as follows: 
\begin{equation} \label{eq:zi}
\mathsf{z}_k:= (X_k+X^{-1}_k)-(X_{k+1}+X^{-1}_{k+1})= X^{-1}_k(X_kX_{k+1}-1)(X_kX^{-1}_{k+1}-1),
\end{equation}
\begin{equation}\label{intertwinNon-dege}
\widetilde{\Phi}_k:=\mathsf{z}^2_k T_k+\varepsilon\frac{\mathsf{z}^2_k}{X_k X^{-1}_{k+1}-1}-\varepsilon\frac{\mathsf{z}^2_k}{X_k X_{k+1}-1}C_k C_{k+1}.
\end{equation} These elements satisfy the following properties (cf. \cite[(3.7),Proposition 3.1]{JN} and \cite[(4.11)-(4.15)]{BK1}) 
\begin{align}
\widetilde{\Phi}^2_k&=\mathsf{z}^2_k\left(\mathsf{z}^2_k-\varepsilon^2 \left(X^{-1}_k X^{-1}_{k+1}(X_k X_{k+1}-1)^2-X^{-1}_kX_{k+1}(X_k X^{-1}_{k+1}-1)^2\right)\right)\label{Sqinter},\\
\widetilde{\Phi}_k X^{\pm 1}_k&=X^{\pm 1}_{k+1}\widetilde{\Phi}_k, \widetilde{\Phi}_kX^{\pm 1}_{k+1}=X^{\pm 1}_k\widetilde{\Phi}_k,
\widetilde{\Phi}_k X^{\pm 1}_l=X^{\pm 1}_l\widetilde{\Phi}_k \label{Xinter},\\
\widetilde{\Phi}_k C_k&=C_{k+1}\widetilde{\Phi}_k, \widetilde{\Phi}_k C_{k+1}=C_k \widetilde{\Phi}_k,
\widetilde{\Phi}_kC_l=C_l\widetilde{\Phi}_k \label{Cinter},\\
\widetilde{\Phi}_j \widetilde{\Phi}_k&=\widetilde{\Phi}_k \widetilde{\Phi}_j,
\widetilde{\Phi}_k\widetilde{\Phi}_{k+1}\widetilde{\Phi}_k=\widetilde{\Phi}_{k+1}\widetilde{\Phi}_k  \widetilde{\Phi}_{k+1}\label{Braidinter}
\end{align}
for all admissible $j,k,l$ with $l\neq k, k+1$ and $|j-k|>1$. Observe that we can rewrite $\widetilde{\Phi}^2_k$ as 
\begin{equation}\label{eq:Phi-square}
\widetilde{\Phi}^2_k=\mathsf{z}^4_k\varepsilon^2\left(\frac{1}{\varepsilon^2}-\frac{X_kX_{k+1}}{(X_kX_{k+1}-1)^2}-\frac{X_kX_{k+1}^{-1}}{(X_kX_{k+1}^{-1}-1)^2}\right). 
\end{equation}
Inspired by the above formula, 
{\color{black}for any pair of $(x,y)\in (\mathbb{F}^*)^2$ with $x\neq y^{\pm 1}$, we consider the following condition 
\begin{align}\label{invertible}
	\frac{xy}{(xy-1)^2}+\frac{xy^{-1}}{(xy^{-1}-1)^2}=\frac{1}{\varepsilon^2}.
\end{align}
According to \cite{JN}, via the substitution 
$
x+x^{-1}=2\frac{qu+q^{-1}u^{-1}}{q+q^{-1}}, y+y^{-1}=2\frac{qv+q^{-1}v^{-1}}{q+q^{-1}}
$
with $u,v\in\mathbb{F}$,  
the condition \eqref{invertible}  is  equivalent to the condition which states that $u,v$ satisfy one of the following four equations 
\begin{align}\label{invertible2}
	v=q^2u,\quad v=q^{-2}u,\quad v=u^{-1},\quad v=q^{-4}u^{-1}, 
\end{align}
In particular,  by \eqref{invertible2} one can verify that 
\begin{equation}\label{eq:special-square}
\begin{aligned}
&\frac{xy^{-1}}{(xy^{-1}-1)^2}+\frac{xy}{(xy-1)^2}=\frac{1}{\varepsilon^2}  \text{ with } x=\mathtt{b}_{\pm}(i), y=\mathtt{b}_{\pm}(j) \text{ for some }i,j\in\I\\
&\Longleftrightarrow 
|i-j|=1. 
\end{aligned}
\end{equation}
}

\section{Weights of irreducible completely splittable $\aHn$-modules}\label{sec:weights}
In this section, we shall introduce the notion of completely splittable $\aHn$-modules and then  give a description of the weights of irreducible completely splittable $\aHn$-modules which is parallel to \cite[section 3]{Wa} with more complicated reasoning and calculation.

\subsection{Equivalent conditions for irreducible completely splittable $\aHn$-modules}
For $\underline{i}=(i_1,\cdots,i_n)\in\I^n$ and $M\in\op{Rep}_{\mathbb{I}}\aHn$,  set
\begin{equation}\label{eq:wt-decomp-0}
M_{\underline{i}}=\{z\in M|(X_k+X_k^{-1}-\mathtt{q}(i_k))^Nz=0\text{ for }N\gg 0,1\leq k\leq n\}
\end{equation}
If $M_{\underline{i}}\neq 0,$ then $\underline{i}$ is called a \emph{weight} of $M$ and $M_{\underline{i}}$ is called a weight space. Since the polynomial generators $X_1^{\pm 1},\cdots,X_n^{\pm 1}$ commute, we have
\begin{equation}\label{eq:wt-decomp}
M=\oplus_{\underline{i}\in\mathbb{I}^n}M_{\underline{i}}.
\end{equation}
For $i\in\mathbb{I}$ and $1\leq m\leq n,$ set
\[\Theta_{i^m}M=\{z\in M|(X_j+X_j^{-1}-\mathtt{q}(i))^Nz=0\text{ for }N\gg 0,n-m+1\leq j\leq n\}.\]
It's easy to see that
\begin{equation}
\begin{aligned}\label{1}
&(X_k+X_k^{-1})T_k\\
&=T_k(X_{k+1}+X_{k+1}^{-1})+\varepsilon\left[X_k^{-1}(1-C_kC_{k+1})-(1-C_kC_{k+1})X_{k+1}\right]\\
&=T_k(X_{k+1}+X_{k+1}^{-1})+\varepsilon\left[X_k^{-1}(1-X_kX_{k+1})-X_k^{-1}(1-X_kX_{k+1}^{-1})C_kC_{k+1}\right]
\end{aligned}
\end{equation}
Hence $\Theta_{i^m}$ defines an exact functor
\[\Theta_{i^m}:\op{Rep}_{\mathbb{I}}\aHn\rightarrow\op{Rep}_{\mathbb{I}}\mathcal{H}^{\mathsf{aff}}_{n-m,m}(q).\]
Moreover as $\mathcal{H}^{\mathsf{aff}}_{n-1,1}(q)$-modules, we have
\begin{equation}\label{decomp.2}
\op{res}_{\mathcal{H}^{\mathsf{aff}}_{n-1,1}(q)}^{\aHn}M=\oplus_{i\in\mathbb{I}}\Theta_{i}M.
\end{equation}
For $i\in\mathbb{I}$ and $M\in\op{Rep}_{\mathbb{I}}\aHn,$ define
\[\epsilon_i(M)=\op{max}\{m\geq 0|\Theta_{i^m}M\neq 0\}.\]

\begin{lem}\cite[Theorem 4.16, Lemma 5.4]{BK1}\label{lem:K1-1}
For each $i\in\I$, the induced module $L(\un i^m):=\op{ind}^{\mathcal{H}^{\mathsf{aff}}_m(q)}_{\mathcal{A}_m}L(i)\circledast L(i)\circledast\cdots\circledast L(i)$ is irreducible. In addition, suppose  $M\in\op{Rep}_\mathbb{I}\aHn$ is irreducible. Let $i\in \mathbb{I}$ and $m = \epsilon_i(M).$ Then there exists an irreducible $N \in\op{Rep}_\mathbb{I} \mathcal{H}^{\mathsf{aff}}_{n-m}(q)$ with $\epsilon_i(N) = 0$ such that $\Theta_{i^m}M\cong N \circledast L(\un i^m)$. 
\end{lem}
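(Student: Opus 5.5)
The plan is to prove the two assertions in order, using the PBW basis of Lemma~\ref{lem:PBWNon-dege}, Frobenius reciprocity, and a Mackey-type filtration of induced modules. Set $D:=L(i)\circledast\cdots\circledast L(i)$ (an $\mathcal{A}_m$-module), so that $L(\un i^m)=\mathcal{H}^{\mathrm{aff}}_m(q)\otimes_{\mathcal{A}_m}D$.

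\emph{Irreducibility of $L(\un i^m)$.} By Lemma~\ref{lem:PBWNon-dege} we have $L(\un i^m)=\bigoplus_{w\in\mathfrak{S}_m}T_w\otimes D$ as vector spaces. Filter this space by the Bruhat order on $w$. From \eqref{TX1}, \eqref{TX3} and \eqref{TC} one gets, modulo terms $T_u\otimes D$ with $u<w$,
\[
X_k^{\pm1}(T_w\otimes z)\equiv T_w\otimes X_{w^{-1}(k)}^{\pm1}z .
\]
Hence each $X_k+X_k^{-1}$ acts triangularly, with diagonal blocks isomorphic to the twists $D^{w}$. In particular every vector has weight $(i,\dots,i)$.

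The key claim is that the common kernel
\[
K=\bigcap_{k=1}^m \ker\bigl(X_k+X_k^{-1}-\mathtt{q}(i)\bigr)
\]
equals $1\otimes D$. The inclusion $1\otimes D\subseteq K$ is clear. For the converse, take $v\in K$ and let $w$ be a maximal element in the support of $v$. If $w\ne 1$, choose $k$ with $w^{-1}(k)\neq k$ and compute the $T_w$-component of $(X_k+X_k^{-1}-\mathtt{q}(i))v$, including the correction terms coming from the $\varepsilon$-terms in \eqref{TX1}. The aim is to show this component is nonzero; this can be organized inductively on $\ell(w)$, applying $\widetilde\Phi_k$-type arguments to length-one pieces.

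Granting the claim, let $S\subseteq L(\un i^m)$ be a nonzero submodule. Then $S\cap K\neq0$, since the commuting operators $X_k+X_k^{-1}-\mathtt{q}(i)$ are nilpotent on $S$. As $D$ is irreducible over $\mathcal{A}_m$, we get $S\supseteq 1\otimes D$, and $1\otimes D$ generates the whole induced module, so $S=L(\un i^m)$. I expect this triangularity computation to be the main obstacle.

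\emph{The structure of $\Theta_{i^m}M$.} Put $m=\epsilon_i(M)$, so $\Theta_{i^m}M\neq0$ is an integral $\mathcal{H}^{\mathrm{aff}}_{n-m}(q)\otimes\mathcal{H}^{\mathrm{aff}}_m(q)$-module. By Lemma~\ref{tensorsmod} it has an irreducible submodule of the form $N\circledast L'$. Every weight of $L'$ is $(i,\dots,i)$, so $L'$ contains $D$ as $\mathcal{A}_m$-submodule. By Frobenius reciprocity $L'$ is a quotient of $L(\un i^m)$, hence $L'\cong L(\un i^m)$ by the first part. If $\epsilon_i(N)>0$, then some weight of $N\circledast L(\un i^m)$ ends in $m+1$ copies of $i$, giving $\Theta_{i^{m+1}}M\ne0$ and contradicting maximality; so $\epsilon_i(N)=0$.

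It remains to show $\Theta_{i^m}M\cong N\circledast L(\un i^m)$. Frobenius reciprocity makes $M$ a quotient of $\op{ind}^{\aHn}_{\mathcal{H}^{\mathrm{aff}}_{n-m,m}(q)}(N\circledast L(\un i^m))$. A Mackey filtration of this induced module, indexed by minimal-length coset representatives, shows that only the identity coset contributes weights ending in $i^m$, precisely because $\epsilon_i(N)=0$. Hence $\Theta_{i^m}$ of the induced module is $N\circledast L(\un i^m)$. By exactness of $\Theta_{i^m}$, the module $\Theta_{i^m}M$ is a nonzero quotient of the irreducible module $N\circledast L(\un i^m)$, so it is isomorphic to it.
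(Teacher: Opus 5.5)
There is a genuine gap in the first half of your proposal. The paper gives no proof of this lemma; it simply cites [BK1]. Your second half (Frobenius reciprocity, the Mackey filtration in which only the identity double coset carries weights ending in $i^m$ because $\epsilon_i(N)=0$, then exactness of $\Theta_{i^m}$) is the standard argument behind [BK1, Lemma 5.4] and is fine. It depends, however, on $L'\cong L(\un i^m)$, which needs the irreducibility of $L(\un i^m)$. That irreducibility is the Kato-type theorem and the real content of the lemma, and your argument for it does not work.

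\emph{The key claim $K=1\otimes D$ is false whenever $\mathtt{q}(i)=\pm2$.} This happens for $i=0$, and for $i=\frac h2-1$ when $h$ is even. Take $i=0$ and $m=2$, so $X_1=X_2=1$ on $D$.
\begin{itemize}
\item By \eqref{TX3}, $(X_1-1)(1\otimes z_1+T_1\otimes z_2)=-\varepsilon\,1\otimes(1-C_1C_2)z_2$.
\item Hence $(X_1-1)^2=0$, so $X_1+X_1^{-1}-2=X_1^{-1}(X_1-1)^2$ kills all of $L(\un 0^2)$. The same holds for $X_2$.
\end{itemize}
So $K$ is the whole module, not $1\otimes D$. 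The correct object is the $\mathcal{A}_m$-socle, i.e.\ the simultaneous eigenvectors of the $X_k$; for $\mathtt{q}(i)=\pm2$ this is the joint kernel of the $X_k\mp1$. In this example that socle is indeed $1\otimes D$, because $(C_1C_2)^2=-1$ makes $1-C_1C_2$ invertible.

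\emph{Even with $K$ corrected, you give no proof that it equals $1\otimes D$, and the mechanism you describe cannot work.} By your own triangularity, for $w$ maximal in the support of $v$ the $T_w$-component of $(X_k+X_k^{-1}-\mathtt q(i))v$ is $T_w\otimes(X_{w^{-1}(k)}+X_{w^{-1}(k)}^{-1}-\mathtt q(i))v_w$. This is identically zero, since $X_j+X_j^{-1}$ acts on $D$ by the scalar $\mathtt q(i)=\mathtt b_+(i)+\mathtt b_-(i)$.

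Everything happens in the subleading components, of length $\ell(w)-1$. These collect contributions from all top-length elements of the support together with the $\varepsilon$-correction terms. For $m=2$ the computation reduces to two facts: injectivity on $D$ of $X_1^{-1}(1-C_1C_2)-(1-C_1C_2)X_2$ (via \eqref{1} and Lemma~\ref{lem-0}) when $\mathtt q(i)\neq\pm2$, and invertibility of $1-C_1C_2$ when $\mathtt q(i)=\pm2$. For general $m$ you need a genuine argument that organizes these terms, and none is supplied.

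The suggested $\widetilde\Phi_k$-type arguments do not fill the gap. All weights of $L(\un i^m)$ equal $i^m$, so $\mathsf z_k$ is nilpotent there and $\widetilde\Phi_k$ degenerates. This is exactly why the paper's proof of Proposition~\ref{prop:restr-rank-2}(2) uses $\widetilde\Phi_1$ only for $i\neq j$ and falls back on this very lemma for $i=j$.
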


\begin{defn}
A $\aHn$-module is called completely splittable if $X_1^{\pm 1},\cdots,X_n^{\pm 1}$ act semisimply.
\end{defn}

\begin{rem} \label{rem:cs}
Observe that if $M\in \op{Rep}_\mathbb{I}\aHn$ is completely splittable, then for $\underline{i}\in\mathbb{I}^n,$
\begin{equation}\label{eq:Mi}
\begin{aligned}
M_{\underline{i}}&=\{z\in M|(X_k+X_k^{-1})z=\mathtt{q}(i_k)z,1\leq k\leq n\}\\
&=\text{span-}\{z\in M|X_kz=\mathtt{b}_\pm(i_k)z,1\leq k\leq n\}. 
\end{aligned}
\end{equation}
\end{rem}

\begin{lem}\label{lem-0}
For any $i\in \mathbb{I}$ such that $i\neq 0,\frac{h}{2}-1$ and any $0\neq z\in L(i)\boxtimes \cdots\boxtimes L(i)$($n$ copies), we have $X_j^{-1}(1-C_jC_{j+1})z\neq (1-C_jC_{j+1})X_{j}z$ for any $1\leq j\leq n-1$.
\end{lem}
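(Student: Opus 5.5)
The plan is to reduce the claimed inequality to the injectivity of the single operator $X_j-X_j^{-1}$ on $L(i)\boxtimes\cdots\boxtimes L(i)$, using only the relation \eqref{XC} and the explicit description \eqref{eq:Li} of $L(i)$.

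\emph{Rewriting the right-hand side.} Relation \eqref{XC} gives $X_jC_j=C_jX_j^{-1}$, or equivalently $C_jX_j=X_j^{-1}C_j$, together with $X_jC_{j+1}=C_{j+1}X_j$. Moving $X_j$ to the left through $C_{j+1}$ and then through $C_j$ gives
\[
C_jC_{j+1}X_j=C_jX_jC_{j+1}=X_j^{-1}C_jC_{j+1}.
\]
Hence
\[
(1-C_jC_{j+1})X_jz=X_jz-X_j^{-1}C_jC_{j+1}z,
\qquad
X_j^{-1}(1-C_jC_{j+1})z=X_j^{-1}z-X_j^{-1}C_jC_{j+1}z .
\]
Subtracting, the two sides of the claimed inequality differ exactly by $(X_j^{-1}-X_j)z$. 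So it suffices to show that $X_j-X_j^{-1}$ has trivial kernel on $L(i)^{\boxtimes n}$.

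\emph{Injectivity of $X_j-X_j^{-1}$.} By \eqref{eq:Li} and the definition \eqref{eq:tensor-AB-1} of the outer tensor product, $X_j$ acts only on the $j$-th factor. It acts diagonally in the basis $v_{\epsilon_1}\otimes\cdots\otimes v_{\epsilon_n}$, with eigenvalue $\mathtt{b}_+(i)$ or $\mathtt{b}_-(i)$ according to $\epsilon_j$. Since $\mathtt{b}_+(i)\mathtt{b}_-(i)=1$, the operator $X_j-X_j^{-1}$ is diagonal in the same basis with eigenvalues $\pm(\mathtt{b}_+(i)-\mathtt{b}_-(i))$. By \eqref{substitution0}, $\mathtt{b}_+(i)-\mathtt{b}_-(i)=2\sqrt{\mathtt{q}(i)^2/4-1}$, which vanishes if and only if $\mathtt{q}(i)=\pm2$.

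\emph{Using the hypothesis on $i$.} By \eqref{eq:qi=2}, $\mathtt{q}(i)=\pm2$ happens only for $i=0$, or, when $h$ is even, also for $i=\frac h2-1$. These are precisely the excluded values (for $h$ odd, or $h=\infty$, the value $\frac h2-1$ is not in $\I$, so only $i=0$ is excluded). Therefore every eigenvalue of $X_j-X_j^{-1}$ is nonzero, the operator is invertible, and $(X_j^{-1}-X_j)z\neq0$ for every $z\neq0$. This proves the inequality.

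No step is a genuine obstacle. The only points needing care are the sign bookkeeping when commuting $X_j$ past the odd elements $C_j$ and $C_{j+1}$ (there is no super-sign, since $X_j$ is even), and the case $h=\infty$, where \eqref{eq:qi=2} must be read as saying that $\mathtt{q}(i)=\pm2$ only for $i=0$.
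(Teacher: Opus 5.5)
As a proof of the inequality exactly as printed, your argument is correct: with $X_j$ on the right, $C_jC_{j+1}X_j=X_j^{-1}C_jC_{j+1}$ cancels the Clifford terms, and injectivity of $X_j-X_j^{-1}$ follows from $\mathtt{b}_+(i)\neq\mathtt{b}_-(i)$. Your handling of $h=\infty$ and odd $h$ is also fine.

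\textbf{The gap.} The printed statement contains a typo, and you proved a different claim from the one the paper proves and uses. The intended right-hand side is $(1-C_jC_{j+1})X_{j+1}z$, not $(1-C_jC_{j+1})X_jz$. Two places show this.
\begin{itemize}
\item The paper's proof speaks of $(1-C_jC_{j+1})X_{j+1}$ and computes $(1-C_1C_2)X_2z$.
\item Lemma~\ref{lem:separate weig.} uses the lemma to rule out $\varepsilon\bigl[X_k^{-1}(1-C_kC_{k+1})-(1-C_kC_{k+1})X_{k+1}\bigr]z=0$, which is the error term in \eqref{1}.
\end{itemize}
For the intended version your key cancellation does not occur. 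Since $C_jC_{j+1}X_{j+1}=X_{j+1}^{-1}C_jC_{j+1}$, the difference of the two sides is
\[
D=(X_j^{-1}-X_{j+1})-(X_j^{-1}-X_{j+1}^{-1})C_jC_{j+1}.
\]
This operator still contains $C_jC_{j+1}$, so it is not diagonal in your basis, and injectivity of $X_j-X_j^{-1}$ says nothing about it. The literal version is not the operator that arises from \eqref{1}, so it cannot feed into Lemma~\ref{lem:separate weig.}.

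\textbf{Two ways to close it.}

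\emph{The paper's route.} Reduce to $n=2$, $j=1$, since both operators are even and involve only the $j$-th and $(j+1)$-th factors. Write $z=a\,v\otimes v+b\,v\otimes w+c\,w\otimes v+d\,w\otimes w$ and compare coefficients. Each comparison reads $(\mathtt{b}_+(i)-\mathtt{b}_-(i))$ times one of $a,b,c,d$ equals $0$.

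\emph{Repairing your eigenvalue approach.} Let $u$ be a simultaneous eigenvector of $(X_j,X_{j+1})$ with eigenvalues $(x,y)$, where $x,y\in\{\mathtt{b}_\pm(i)\}$, and set $u':=C_jC_{j+1}u$. Then:
\begin{itemize}
\item $u'$ has eigenvalues $(x^{-1},y^{-1})$;
\item $C_jC_{j+1}u'=-u$, because $(C_jC_{j+1})^2=-1$;
\item $u$ and $u'$ are independent, because $x\neq x^{-1}$.
\end{itemize}
Hence $L(i)^{\boxtimes n}$ splits into $D$-stable planes $\mathbb{F}u\oplus\mathbb{F}u'$. On each plane $D$ has matrix
\[
\begin{pmatrix} x^{-1}-y & x^{-1}-y^{-1}\\ y-x & x-y^{-1}\end{pmatrix},
\qquad
\det = 4-(x+x^{-1})(y+y^{-1}) = 4-\mathtt{q}(i)^2 .
\]
This determinant is nonzero exactly when $\mathtt{q}(i)\neq\pm2$, which is the hypothesis on $i$ by \eqref{eq:qi=2}. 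This block argument is basis-free and avoids the sign bookkeeping in the tensor product, but it is a genuinely additional step beyond what you wrote.
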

\begin{proof}
Assume $1\leq j\leq n-1$. Observe that the elements $X_j^{-1}(1-C_jC_{j+1})$ and $(1-C_jC_{j+1})X_{j+1}$ are even and act on the $j$-th, $(j+1)$-th factor of the tensor product $L(i)\boxtimes \cdots\boxtimes L(i)$. So it suffices to consider the case $n = 2$ and $j = 1$. Let $\{v,w\}$ be a basis of $L(i)$ such that $v\in L(\un i)_{\bar{0}}, w\in L(\un i)_{\bar{1}}$ and $X_1^\pm v=\mathtt{b}_\pm(i)v,X_1^\pm w=\mathtt{b}_\mp(i)w, C_1v=w, C_1w=v$. Then for any
\[z=av\otimes v+bv\otimes w+cw\otimes v+dw\otimes w\in L(i)\boxtimes L(i),\]
with $a,b,c,d\in \mathbb{F}$, by \eqref{eq:tensor-AB-1} we have
\[\begin{aligned}
X_1^{-1}(1-C_1C_{2})z=&(\b_-(i)a+\b_-(i)d)v\otimes v+(\b_-(i)b+\b_-(i)c)v\otimes w\\
&+(\b_+(i)c-\b_+(i)b)w\otimes v+(\b_+(i)d-\b_+(i)a)w\otimes w,\\
(1-C_1C_{2})X_{2}z=&(\b_+(i)a+\b_-(i)d)v\otimes v+(\b_-(i)b+\b_+(i)c)v\otimes w\\
&+(\b_+(i)c-\b_-(i)b)w\otimes v+(\b_-(i)d-\b_+(i)a)w\otimes w.
\end{aligned}\]
Since $\b_+(i)\neq\b_-(i)$ for $i\neq 0,\frac{h}{2}-1$, it is easy to verify that $X_1^{-1}(1-C_1C_2)z=(1-C_1C_2)X_2z$ iff $a=b=c=d=0$ or equivalently $z=0$. This proves the lemma. 
\end{proof}
\begin{lem}\label{lem:separate weig.}
Suppose that $M\in\op{Rep}_\mathbb{I}\aHn$ is completely splittable and that $M_{\underline{i}}\neq 0$ for some $\underline{i}\in\mathbb{I}^n.$ Then $i_k\neq i_{k+1}$ for all $1\leq k\leq n-1.$
\end{lem}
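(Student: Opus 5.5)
Suppose, to get a contradiction, that $M_{\underline{i}}\neq 0$ and $i_k=i_{k+1}=:i$ for some $k$. Everything happens inside the copy of $\mathcal{H}^{\mathrm{aff}}_2(q)$ generated by $T_k, C_k, C_{k+1}, X_k^{\pm1}, X_{k+1}^{\pm1}$, so I will work there. Since $M$ is completely splittable, Remark~\ref{rem:cs} says that $X_k+X_k^{-1}$ and $X_{k+1}+X_{k+1}^{-1}$ both act on $M_{\underline{i}}$ as the scalar $\mathtt{q}(i)$. I then pick $0\neq z\in M_{\underline{i}}$ and apply the first identity in \eqref{1} to $z$:
\[
(X_k+X_k^{-1})T_kz-\mathtt{q}(i)T_kz=\varepsilon\left[X_k^{-1}(1-C_kC_{k+1})-(1-C_kC_{k+1})X_{k+1}\right]z .
\]
The plan is to show that $T_kz$ lies in $M_{s_k\cdot\underline{i}}=M_{\underline{i}}$. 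Granting this, the left-hand side vanishes, so $X_k^{-1}(1-C_kC_{k+1})z=(1-C_kC_{k+1})X_{k+1}z$ for every $z\in M_{\underline{i}}$. The same computation with $X_{k+1}$, using \eqref{TX3}, gives the analogous relation.

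To justify that $T_kz\in M_{\underline{i}}$, I decompose $T_kz$ into weight components. By \eqref{TX2}, $T_k$ commutes with $X_l$ for $l\neq k,k+1$, so every component has the same entries outside positions $k,k+1$. By \eqref{1}, $(X_k+X_k^{-1}-\mathtt{q}(i))T_kz$ lies in the span of $X_k^{\pm1}C^{\beta}z$, and these vectors stay inside $M_{\underline{i}}$. Hence the component $w$ of $T_kz$ in any other weight $\underline{j}$ satisfies $(\mathtt{q}(j_k)-\mathtt{q}(i))w=0$, because $X_k+X_k^{-1}$ acts semisimply. So $j_k=i$, and similarly $j_{k+1}=i$. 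This gives $T_kz\in M_{\underline{i}}$.

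Next I split according to whether $i\in\{0,\frac h2-1\}$.

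\textbf{Case $i\neq 0,\frac h2-1$.} Here $\mathtt{b}_+(i)\neq\mathtt{b}_-(i)$. The space $M_{\underline{i}}$ is a module over the Clifford–Laurent algebra on positions $k,k+1$, and $X_k,X_{k+1}$ act semisimply with eigenvalues $\mathtt{b}_\pm(i)$. So restricted to those two factors it is a direct sum of copies of $L(i)\boxtimes L(i)$; more precisely it contains a nonzero vector of a tensor product $L(i)\boxtimes L(i)\boxtimes(\text{rest})$, with the relevant operators acting only on the first two factors. Lemma~\ref{lem-0} (with $n=2$, $j=1$, after shifting the indices to $k,k+1$) then shows that the displayed relation forces $z=0$. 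This is a contradiction.

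\textbf{Case $i\in\{0,\frac h2-1\}$.} Here $\mathtt{q}(i)=\pm2$, so $\mathtt{b}_+(i)=\mathtt{b}_-(i)=\pm1$, and $X_k,X_{k+1}$ act on $M_{\underline{i}}$ as the same scalar $b=\pm1$. The relation becomes $b^{-1}(1-C_kC_{k+1})z=b(1-C_kC_{k+1})z$, which holds automatically, so it gives nothing. Instead I would use \eqref{TX1} directly: $T_kX_k-X_{k+1}T_k=-\varepsilon(X_{k+1}+C_kC_{k+1}X_k)$. Applying both sides to $z$, and using $T_kz\in M_{\underline{i}}$ so that $X_{k+1}T_kz=bT_kz$, the left side is $bT_kz-bT_kz=0$. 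The right side is $-\varepsilon b(1+C_kC_{k+1})z$. Hence $(1+C_kC_{k+1})z=0$ for all $z\in M_{\underline{i}}$. The analogous identity \eqref{TX3} gives $\varepsilon(1-C_kC_{k+1})X_{k+1}z=0$, that is, $(1-C_kC_{k+1})z=0$. Adding the two gives $2z=0$, and since $\operatorname{char}\mathbb{F}\neq2$ and $\varepsilon\neq0$, this forces $z=0$, a contradiction.

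The main obstacle I expect is the step showing $T_kz\in M_{\underline{i}}$, with correct bookkeeping of the correction terms, together with placing $M_{\underline{i}}$ as a module so that Lemma~\ref{lem-0} applies. If the weight-decomposition argument is messy, the fallback is to use the intertwiner $\widetilde\Phi_k$ or to compute in the $\mathcal{H}^{\mathrm{aff}}_2(q)$-submodule generated by $z$.
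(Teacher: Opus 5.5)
Your proof is correct and follows the same route as the paper: show $T_kz\in M_{\underline{i}}$, read off $X_k^{-1}(1-C_kC_{k+1})z=(1-C_kC_{k+1})X_{k+1}z$ from \eqref{1}, rule out $\mathtt{q}(i)\neq\pm2$ by Lemma~\ref{lem-0}, and in the case $\mathtt{q}(i)=\pm2$ use the commutation relations to force $2z=0$. The differences are cosmetic. You get $T_kz\in M_{\underline{i}}$ by projecting onto genuine weight components, whereas the paper checks $(X_k+X_k^{-1}-\mathtt{q}(i))^2T_kz=0$. You also spell out why Lemma~\ref{lem-0} applies to $M_{\underline{i}}$, which the paper leaves implicit. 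Finally, you use both \eqref{TX1} and \eqref{TX3}, while the paper uses \eqref{TX3} alone and multiplies by $1+C_kC_{k+1}$.
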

\begin{proof}
Suppose $i_k=i_{k+1}$ for some $1\leq k\leq n-1.$ Let $0\neq z \in M_{\underline{i}}.$ Since $M$ is completely splittable, $(X_k+X_k^{-1}-\mathtt{q}(i_k))z=0=(X_{k+1}+X_{k+1}^{-1}-\mathtt{q}(i_{k+1}))z$ by Remark  \ref{rem:cs}.  Then by (\ref{1}),
\[(X_k+X_k^{-1}-\mathtt{q}(i_k))T_kz=\varepsilon\left[X_k^{-1}(1-C_kC_{k+1})-(1-C_kC_{k+1})X_{k+1}\right]z.\]
and
\[(X_k+X_k^{-1}-\mathtt{q}(i_k))^2T_kz=\varepsilon\left[X_k^{-1}(1-C_kC_{k+1})-(1-C_kC_{k+1})X_{k+1}\right](X_k+X_k^{-1}-\mathtt{q}(i_k))z=0.\]
Similarly, we see that
$
(X_{k+1}+X_{k+1}^{-1}-\mathtt{q}(i_{k+1})^2)T_kz=0.
$
Hence 
\begin{equation}\label{eq:Tkz}
T_kz\in M_{\underline{i}}.
\end{equation} Therefore by \eqref{eq:Mi} we have 
\[\varepsilon\left[X_k^{-1}(1-C_kC_{k+1})-(1-C_kC_{k+1})X_{k+1}\right]z=0.\]
This together with Lemma \ref{lem-0}  implies  $i_k=i_{k+1}=0$ in the case $h$ is odd and $i_k=i_{k+1}=0 \text{ or }i_k=i_{k+1}=\frac{h}{2}-1$ in the case $h$ is even. Then by \eqref{eq:Mi} and \eqref{substitution0} 
as well as \eqref{eq:qi=2} we obtain that $X_k=X_{k+1}=1 \text { or }-1 $ acting on $M_{\underline{i}}$, and  then by (\ref{TX3}) and \eqref{eq:Tkz} one can get 
\[\varepsilon(1-C_kC_{k+1})X_{k+1}z=T_kX_{k+1}z-X_kT_kz=0.\]
 This means $2\varepsilon z=\varepsilon(1+C_kC_{k+1})(1-C_kC_{k+1})z=0.$ Hence $z=0$ since $q\neq \pm1.$ This contradicts to the assumption $z\neq 0.$ Thus the lemma follows.
\end{proof}
\begin{cor}\label{cor:variM}
If $M\in\op{Rep}_{\mathbb{I}}\aHn$ is completely splittable, then $\epsilon_i(M)\leq 1$ for any $i\in\mathbb{I}.$
\end{cor}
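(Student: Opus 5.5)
We argue by contradiction, using Lemma~\ref{lem:K1-1} to produce a weight with two equal adjacent entries and then Lemma~\ref{lem:separate weig.} to rule it out. Suppose $\epsilon_i(M)=m\geq 2$ for some $i\in\I$, so that $\Theta_{i^m}M\neq 0$.

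First we reduce to the irreducible case. The functor $\Theta_{i^m}$ is exact, so $\Theta_{i^m}M\neq 0$ implies $\Theta_{i^m}M'\neq 0$ for some composition factor $M'$ of $M$. Here $M'$ is a subquotient of $M$, and the $X_k$ act semisimply on subquotients of modules on which they act semisimply. Hence $M'$ is an irreducible completely splittable module in $\op{Rep}_\I\aHn$ with $\epsilon_i(M')\geq 2$. Alternatively, we can avoid the reduction altogether. Any nonzero vector of $\Theta_{i^m}M$ decomposes into weight vectors, and each of these lies in some $M_{\un j}$ with $j_{n-1}=j_n=i$. This already contradicts Lemma~\ref{lem:separate weig.}.

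It is cleaner to argue directly as in that alternative. Since $\Theta_{i^m}M$ is stable under the commuting operators $X_k+X_k^{-1}$, it decomposes as $\Theta_{i^m}M=\bigoplus_{\un j}(\Theta_{i^m}M\cap M_{\un j})$. By the definition of $\Theta_{i^m}$, the only summands that can be nonzero are those with $j_{n-m+1}=\cdots=j_n=i$. Because $\Theta_{i^m}M\neq 0$, some $M_{\un j}$ with $j_{n-1}=j_n=i$ is nonzero; this uses $m\geq 2$. This contradicts Lemma~\ref{lem:separate weig.} applied with $k=n-1$. Therefore $\epsilon_i(M)\leq 1$.

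The only point needing any care is the claim that $\Theta_{i^m}M$ is a sum of its intersections with the weight spaces. This holds because $\Theta_{i^m}M$ is a sum of generalized eigenspaces for the commuting family $X_k+X_k^{-1}$, $n-m+1\leq k\leq n$. By \eqref{eq:wt-decomp}, it is therefore the direct sum of those $M_{\un j}$ whose last $m$ entries all equal $i$. With this observed, no appeal to Lemma~\ref{lem:K1-1} is needed, and the argument is essentially immediate.
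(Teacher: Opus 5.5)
Your direct argument is correct and matches the paper's intent: the corollary is stated there without a separate proof, as an immediate consequence of Lemma~\ref{lem:separate weig.}. The point is that $\Theta_{i^2}M$ is the sum of the weight spaces $M_{\un j}$ with $j_{n-1}=j_n=i$, so $\epsilon_i(M)\geq 2$ would force such a weight to exist. The opening reduction to composition factors and the appeal to Lemma~\ref{lem:K1-1} are superfluous, as you note yourself, since Lemma~\ref{lem:separate weig.} does not require $M$ to be irreducible.
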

\begin{prop}\label{prop:equiv.cond.}
Let $M\in \op{Rep}_{\mathbb{I}}\aHn$ be irreducible. The following are equivalent.
\begin{enumerate}
\item $M$ is completely splittable.
\item For any $\underline{i}\in\mathbb{I}^n$ with $M_{\underline{i}}\neq 0,$ we have $i_k\neq i_{k+1}$ for any $1\leq k\leq n-1.$
\item The restriction $\op{res}_{\mathcal{H}^{\mathsf{aff}}_{(r,1^{n-r})}(q)}^{\aHn}M$ is semisimple for any $1\leq r\leq n.$
\item For any $\underline{i}\in\mathbb{I}^n$ with $M_{\underline{i}}\neq 0,$ we have $M_{\underline{i}}\cong L(\un i)$ as $\mathcal{A}_n$-modules.
\end{enumerate}
\end{prop}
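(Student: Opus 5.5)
We prove the cycle of implications $(1)\Rightarrow(2)\Rightarrow(4)\Rightarrow(1)$ and, separately, $(1)\Leftrightarrow(3)$, following the pattern of \cite[Section 3]{Wa} for the degenerate case.

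\emph{$(1)\Rightarrow(2)$.} This is exactly Lemma~\ref{lem:separate weig.}.

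\emph{$(2)\Rightarrow(4)$.} Fix a weight $\underline{i}$ of $M$. By induction on $n$, using the restriction decomposition \eqref{decomp.2} and the exactness of $\Theta_{i}$, we reduce to controlling the last coordinate. Condition $(2)$ forces $\epsilon_{i}(\Theta_{i_n}\text{-pieces})\le 1$. The key claim is that every weight $\underline{j}$ of $M$ satisfies $\epsilon_{j_n}(M)=1$. If instead $\Theta_{i^2}M\neq0$, Lemma~\ref{lem:K1-1} gives a factor $L(\underline{i}^2)=\op{ind}^{\mathcal{H}^{\mathrm{aff}}_2}_{\mathcal{A}_2}L(i)\circledast L(i)$, and this module has weight $(i,i)$; such a weight would violate $(2)$ after composing with a weight of $N$. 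Hence $\Theta_{i_n}M\cong N\circledast L(i_n)$ with $N$ an irreducible $\mathcal{H}^{\mathrm{aff}}_{n-1}(q)$-module whose weights are exactly the truncations of weights of $M$, so $N$ again satisfies $(2)$. Induction then gives $N_{(i_1,\dots,i_{n-1})}\cong L(i_1,\dots,i_{n-1})$, and therefore
\[
M_{\underline{i}}=(\Theta_{i_n}M)_{\underline{i}}\cong L(i_1,\dots,i_{n-1})\circledast L(i_n)\cong L(\underline{i}).
\]
We must also check the multiplicity: $\boxtimes$ may produce $X\oplus\Pi X$ when both factors have type $\texttt{Q}$, but $\circledast$ is exactly the irreducible summand that occurs in Lemma~\ref{lem:K1-1}.

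\emph{$(4)\Rightarrow(1)$.} This is immediate, since each $X_k$ acts semisimply on $L(\underline{i})$ by construction \eqref{eq:Li}: the $v_0,v_1$ are eigenvectors, and $L(\underline{i})$ is a summand of a tensor product of such modules.

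\emph{$(1)\Rightarrow(3)$.} Let $1\le r\le n$. The restriction of $M$ to $\mathcal{H}^{\mathrm{aff}}_{(r,1^{n-r})}(q)$ decomposes as $\bigoplus \Theta_{i_{r+1}}\cdots\Theta_{i_n}$-pieces, each of the form $N'\circledast L(i_{r+1})\circledast\cdots\circledast L(i_n)$ by iterating the argument above. Each such piece is irreducible, because the $\mathcal{A}$-weights of the $(1^{n-r})$ part are fixed and, by $(4)$, the multiplicity is one. Hence the restriction is semisimple.

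\emph{$(3)\Rightarrow(1)$.} Take $r=1$. Then the restriction to $\mathcal{A}_n=\mathcal{H}^{\mathrm{aff}}_{(1^n)}(q)$ is semisimple, so $M$ is a direct sum of copies of the modules $L(\underline{i})$, on which the $X_k$ act semisimply. This gives $(1)$.

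The main obstacle is $(2)\Rightarrow(4)$, specifically the step showing that the weight space is exactly $L(\underline{i})$ and not a nonsplit extension or a multiple of it. The first approach to try is the $\Theta$-induction via Lemma~\ref{lem:K1-1} described above. If that fails, the fallback is to use the intertwiners $\widetilde{\Phi}_k$: when $|i_k-i_{k+1}|\neq1$, the element $\widetilde{\Phi}_k^2$ acts invertibly by \eqref{eq:special-square}, so $\widetilde{\Phi}_k$ identifies $M_{\underline{i}}$ with $M_{s_k\underline{i}}^{s_k}$. This argument needs care with type $\texttt{Q}$ factors, where $i=0$ or $\tfrac h2-1$.
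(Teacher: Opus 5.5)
Your proposal is correct and is essentially the paper's proof. It runs the same induction on $n$ through $\Theta_{i_n}M\cong N\circledast L(i_n)$ (Lemma~\ref{lem:K1-1} together with $\epsilon_i(M)\le 1$), only routed as $(2)\Rightarrow(4)$ and $(1)\Rightarrow(3)$ instead of the paper's $(1)\Rightarrow(4)$ and $(2)\Rightarrow(3)$; this is harmless because $(2)$ alone forces $\epsilon_i(M)\le 1$, since otherwise $\Theta_{i^2}M$ would contain a weight ending in $(i,i)$.

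One caveat: your remark about $\circledast$ does not by itself give $M_{\underline{i}}\cong N_{\underline{i}'}\circledast L(i_n)$ when $L(i_n)$ and $L(i_1,\ldots,i_{n-1})$ are both of type $\texttt{Q}$. There you also need $N$ to be of type $\texttt{Q}$, since otherwise the weight space is $L(\underline{i})\oplus\Pi L(\underline{i})$. The paper's proof passes over this point in exactly the same way.
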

\begin{proof}
The proof here is similar to that of  \cite[Proposition 3.6]{Wa}. By Lemma~\ref{lem:separate weig.}, $(1)$
implies $(2)$. Suppose $(2)$ holds, then by Lemma~\ref{lem:K1-1} and
Corollary~\ref{cor:variM} we have $\Theta_iM$ is either zero or
irreducible for each $i\in\I$ and hence by~(\ref{decomp.2}) ${\rm
res}^{\aHn}_{\mathcal{H}^{\mathrm{aff}}_{(n-1,1)}(q)}M$ is
semisimple. Observe that if $\Theta_iM\cong N\circledast L(i)$ for
some irreducible
$N\in\operatorname{Rep}_{\I}\mathcal{H}^{\mathrm{aff}}_{n-1}(q)$, then
$(2)$ also holds for $N$. This implies ${\rm
res}^{\mathcal{H}^{\mathrm{aff}}_{n-1}(q)}_{\mathcal{H}^{\mathrm{aff}}_{(n-2,1)}(q)}N$
is semisimple. Therefore ${\rm
res}^{\aHn}_{\mathcal{H}^{\mathrm{aff}}_{(n-2,1,1)}(q)}M$
is semisimple by~(\ref{decomp.2}). Continuing in this way we see
that the restriction ${\rm
res}^{\aHn}_{\mathcal{H}^{\mathrm{aff}}_{(r,1^{n-r})}(q)}M$ is
semisimple for any $1\leq r\leq n$, whence $(3)$.

 Now assume $(3)$ holds.  In particular
${\rm res}^{\aHn}_{\mathcal{H}^{\mathrm{aff}}_{(1^{n})}(q)}M$ is
semisimple, that is, $M$ is  isomorphic to a direct sum of
$L(\underline{i})$ as $\mathcal{A}_n$-modules. It is clear that
$X_1^\pm,\ldots,X_n^\pm$ act semisimply on $L(\underline{i})$ for each
$\underline{i}\in\I^n$, whence $(1)$.

Clearly $(1)$ holds if $(4)$ is true. Now suppose $(1)$ holds and we
shall prove $(4)$ by induction on $n$. Suppose
$M_{\underline{i}}\neq 0$ for some $\underline{i}\in\I^n$. Observe
that $M_{\underline{i}}\subseteq\Theta_{i_n}M\neq 0$. By
Lemma~\ref{lem:K1-1} and Corollary~\ref{cor:variM},
$\Theta_{i_n}M\cong N\circledast L(i_n)$ for some irreducible
$N\in\operatorname{Rep}_{\I}\mathcal{H}^{\mathrm{aff}}_{n-1}(q)$. This
means $M_{\underline{i}}\cong N_{\underline{i}^{\prime}}\circledast
L(i_n)$, where $\underline{i}^{\prime}=(i_1,\ldots,i_{n-1})$. Note
that $N$ is completely splittable and hence by induction
$N_{\underline{i}^{'}}\cong L(i_1)\circledast\cdots\circledast
L(i_{n-1})$. Therefore $M_{\underline{i}}\cong
L(i_1)\circledast\cdots\circledast L(i_n)=L(\un i)$. Putting together, the proposition is proved. 
\end{proof}

\begin{rem} \label{rem:twist}
Note that $\aHn$ possesses an automorphism $\sigma_n$
which sends $T_k$ to $-T_{n-k}+\varepsilon$, $X_j$ to $X_{n+1-j} $ and $C_j$ to
$C_{n+1-j}$ for $1\leq k\leq n-1$ and $1\leq j\leq n$. Moreover
$\sigma_n$ induces an algebra isomorphism for each composition
$\mu=(\mu_1,\ldots,\mu_m)$ of $n$
$$\sigma_{\mu}:
\mathcal{H}^{\mathrm{aff}}_{\mu}(q)\longrightarrow
\mathcal{H}^{\mathrm{aff}}_{\mu^t}(q),
$$
where $\mu^t=(\mu_m,\ldots,\mu_1)$. Given
$M\in\mathcal{H}^{\mathrm{aff}}_{\mu^t}(q)$, we can twist with
$\sigma_{\mu}$ to get a $\mathcal{H}^{\mathrm{aff}}_{\mu}(q)$-module
$M^{\sigma_{\mu}}$. Observe that for $\aHn$-module $M$, we have
$$
\big({\rm
res}^{\aHn}_{\mathcal{H}^{\mathrm{aff}}_{(r,1^{n-r})}(q)}M^{\sigma_n}\big)^{\sigma_{(1^{n-r},r)}}
\cong{\rm res}^{\aHn}_{\mathcal{H}^{\mathrm{aff}}_{(1^{n-r},r)}(q)}M.
$$
Hence $M\in\op{Rep}_\mathbb{I}\aHn$ is irreducible completely splittable  if and
only if ${\rm
res}^{\aHn}_{\mathcal{H}^{\mathrm{aff}}_{(1^{n-r},r)}(q)}M$ is
semisimple for any $1\leq r\leq n$ by
Proposition~\ref{prop:equiv.cond.}.
\end{rem}

\begin{cor}\label{Cor-1}
Let $M\in\op{Rep}_{\mathbb{I}}\aHn$ be irreducible completely splittable. Then the restriction $\op{res}_{\mathcal{H}^{\mathsf{aff}}_{(1^{k-1},2,1^{n-k-1})}}^{\aHn}M$ is semisimple for any $1\leq k\leq n-1.$ Hence $M$ is semisimple on restriction to the subalgebra generated by $T_k,X_k^\pm,X_{k+1}^\pm,C_k,C_{k+1}$ which is isomorphic to $\mathcal{H}_2^{\mathrm{aff}}(q)$ for fixed $1\leq k\leq n-1.$
\end{cor}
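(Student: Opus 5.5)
The plan is to imitate the proof of \cite[Corollary 3.7]{Wa}: use the semisimplicity of restrictions to the subalgebras $\mathcal{H}^{\mathrm{aff}}_{(r,1^{n-r})}(q)$ from Proposition~\ref{prop:equiv.cond.}(3), then peel off the trailing and leading Clifford--Laurent factors. The twist from Remark~\ref{rem:twist} handles the leading factors. Fix $1\leq k\leq n-1$.

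First, by Proposition~\ref{prop:equiv.cond.}(3) with $r=k+1$, the restriction of $M$ to $\mathcal{H}^{\mathrm{aff}}_{(k+1,1^{n-k-1})}(q)$ is semisimple. As superalgebras,
\[
\mathcal{H}^{\mathrm{aff}}_{(k+1,1^{n-k-1})}(q)\cong\mathcal{H}^{\mathrm{aff}}_{k+1}(q)\otimes\mathcal{A}_{n-k-1}.
\]
Using Lemma~\ref{tensorsmod}, each irreducible summand has the form $N\circledast L(\underline{j})$ with $N$ an irreducible $\mathcal{H}^{\mathrm{aff}}_{k+1}(q)$-module. Because $M$ is completely splittable, $N$ is too: its weights are truncations of weights of $M$, so criterion (2) of Proposition~\ref{prop:equiv.cond.} passes to $N$. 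Here I use that a weight space of $N\circledast L(\underline j)$ is $N_{\underline i'}\circledast L(\underline j)$, which is the same bookkeeping used in the proof of (1)$\Rightarrow$(4).

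Next I apply Remark~\ref{rem:twist} to $N$ with $n$ replaced by $k+1$ and $r=2$. This shows that $N$ restricted to $\mathcal{H}^{\mathrm{aff}}_{(1^{k-1},2)}(q)\cong\mathcal{A}_{k-1}\otimes\mathcal{H}^{\mathrm{aff}}_2(q)$ is semisimple. Tensoring back with $L(\underline j)$, I will check that an induced decomposition of a semisimple module $V$ over $\mathcal{B}$ gives a semisimple module $V\circledast L(\underline j)$ over $\mathcal{B}\otimes\mathcal{A}_{n-k-1}$; this follows from Lemma~\ref{tensorsmod}, since $V_s\boxtimes W$ is a direct sum of irreducibles when $V_s$ and $W$ are irreducible. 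Combining these gives semisimplicity of $M$ over $\mathcal{A}_{k-1}\otimes\mathcal{H}^{\mathrm{aff}}_2(q)\otimes\mathcal{A}_{n-k-1}=\mathcal{H}^{\mathrm{aff}}_{(1^{k-1},2,1^{n-k-1})}(q)$.

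For the final sentence, write $\mathcal{H}^{\mathrm{aff}}_{(1^{k-1},2,1^{n-k-1})}(q)=\mathcal{A}_{k-1}\otimes\mathcal{B}_k\otimes\mathcal{A}_{n-k-1}$, where $\mathcal{B}_k$ is generated by $T_k,X_k^{\pm1},X_{k+1}^{\pm1},C_k,C_{k+1}$. The relations and Lemma~\ref{lem:PBWNon-dege} give $\mathcal{B}_k\cong\mathcal{H}^{\mathrm{aff}}_2(q)$. The restriction of an irreducible $U\circledast V\circledast W$ to the middle factor is a direct sum of copies of $V$ and $\Pi V$, hence semisimple.

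The main obstacle is the superalgebra bookkeeping when type $\texttt{Q}$ factors occur. In that case $\circledast$ is only half of $\boxtimes$, so I must verify that restricting $N\circledast L(\underline j)$ to a tensor factor still yields a direct sum of irreducibles. I would check this using Lemma~\ref{tensorsmod}(3): as a module over the first factor, $X$ is a summand of $V\boxtimes W\cong V^{\oplus\dim W}$ up to parity shifts.
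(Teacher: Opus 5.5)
Your proposal is correct and follows essentially the same route as the paper: semisimplicity over $\mathcal{H}^{\mathrm{aff}}_{(k+1,1^{n-k-1})}(q)$ from Proposition~\ref{prop:equiv.cond.}(3), then Remark~\ref{rem:twist} applied to the completely splittable $\mathcal{H}^{\mathrm{aff}}_{k+1}(q)$-factors to pass to $\mathcal{H}^{\mathrm{aff}}_{(1^{k-1},2,1^{n-k-1})}(q)\cong\mathcal{A}_{k-1}\otimes\mathcal{H}^{\mathrm{aff}}_2(q)\otimes\mathcal{A}_{n-k-1}$. You make explicit the superalgebra bookkeeping that the paper's short proof leaves implicit, namely the type $\texttt{Q}$ case of $N\circledast L(\underline{j})$ and the fact that a direct summand of a semisimple module is semisimple.
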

\begin{proof}
By Proposition~\ref{prop:equiv.cond.},
${\rm res}^{\aHn}_{\mathcal{H}^{\mathrm{aff}}_{(k+1,1^{n-k-1})}(q)}M$
is semisimple. Hence
$${\rm res}^{\aHn}_{\mathcal{H}^{\mathrm{aff}}_{(1^{k-1},2,1^{n-k-1})}(q)}M={\rm res}^{\mathcal{H}^{\mathrm{aff}}_{(k+1,1^{n-k-1})}(q)}_{
\mathcal{H}^{\mathrm{aff}}_{(1^{k-1},2,1^{n-k-1})}(q)} \big({\rm
res}^{\aHn}_{\mathcal{H}^{\mathrm{aff}}_{(k+1,1^{n-k-1})}(q)}M\big)$$
is semisimple by Remark~\ref{rem:twist}. Meanwhile $\mathcal{H}^{\mathrm{aff}}_{(1^{k-1},2,1^{n-k-1})}\cong\mathcal{A}_{k-1}\otimes \mathcal{H}^{\mathrm{aff}}_2(q)\otimes\mathcal{A}_{n-k-1}$. Then the corollary follows. 
\end{proof}

\subsection{The weight constraints}
Suppose that $M \in\op{Rep}_{\mathbb{I}} \aHn$ is completely splittable and that $M_{\underline{i}}\neq 0$ for some $\underline{i}\in\mathbb{I}^n.$ Then $i_k\neq i_{k+1}$ for $1\leq k\leq n-1.$ It follows that both $X_kX_{k+1}^{-1}-1$ and $X_kX_{k+1}-1$ act as nonzero scalar on $M_{\underline{i}}$ since $X_k=\mathtt{b}_{\pm }(i_k), X_{k+1}=\mathtt{b}_{\pm }(i_{k+1})$ on  $M_{\underline{i}}$ for each $1 \leq k \leq n-1$ by Remark \ref{rem:cs}.  So we define linear operators $\Xi_k$ and $\Omega_k$ on $M_{\underline{i}}$ such that for any $z\in M_{\underline{i}},$
\begin{align}
\Xi_kz:=&-\varepsilon(\frac{1}{X_kX_{k+1}^{-1}-1}-\frac{1}{X_kX_{k+1}-1}C_kC_{k+1})z,\label{Xi}\\
\Omega_kz:=&\left(\sqrt{1-\varepsilon^2\left(\frac{X_kX_{k+1}^{-1}}{(X_kX_{k+1}^{-1}-1)^2}+\frac{X_kX_{k+1}}{(X_kX_{k+1}-1)^2}\right)}\right)z\notag\\
=&\left(\sqrt{1-\varepsilon^2\left(\frac{(X_k+X_k^{-1})(X_{k+1}+X_{k+1}^{-1})-4}{((X_k+X_{k}^{-1})-(X_{k+1}+X_{k+1}^{-1}))^2}\right)}\right)z\label{Omega-0}\\
=&\left(\sqrt{1-\varepsilon^2\left(\frac{\mathtt{q}(i_k)\mathtt{q}(i_{k+1})-4}{(\mathtt{q}(i_k)-\mathtt{q}(i_{k+1}))^2}\right)}\right)z.\label{Omega}
\end{align}
Both $\Xi_k$ and $\Omega_k$ are well-defined linear operators on $L(\underline{i})$ for $\underline{i} \in\mathbb{I}^n$ whenever $i_k\neq i_{k+1}$ for $1\leq k\leq n$.


\begin{prop}\label{prop:restr-rank-2}
The following holds for $i,j\in\mathbb{I}.$
\begin{enumerate}
\item If $i=j\pm 1,$ then the irreducible $\mathcal{A}_2$-module $L(i)\circledast L(j)$ affords an irreducible $\mathcal{H}^{\mathsf{aff}}_2(q)$-module denoted by $V(i,j)$ with the action $T_1z=\Xi_1z$ for any $z\in L(i)\circledast L(j).$ The $\mathcal{H}_2^{\mathrm{aff}}(q)$-module $V(i,j)$ has the same type as the $\mathcal{A}_2$-module $L(i)\circledast L(j).$ Moreover, it is always completely splittable.
\item If $i\neq j\pm1,$ the $\mathcal{H}_2^{\mathrm{aff}}(q)$-module $V(i,j):=\op{ind}_{\mathcal{A}_2}^{\mathcal{H}^{\mathsf{aff}}_2(q)}L(i)\circledast L(j)$ is irreducible and has the same type as the $\mathcal{A}_2$-module $L(i)\circledast L(j)$. It is completely splittable if and only if $i\neq j$ (and recall $i\neq j\pm1).$
\item Every irreducible module in the category $\op{Rep}_{\mathbb{I}}\mathcal{H}^{\mathsf{aff}}_2(q)$ is isomorphic to some $V(i,j).$
\end{enumerate}
\end{prop}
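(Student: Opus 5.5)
Part (1) I would handle by checking the defining relations of $\mathcal{H}^{\mathrm{aff}}_2(q)$ directly on $L(i)\circledast L(j)$. Since $i\neq j$, the operators $X_1X_2^{\pm1}-1$ act invertibly, so $\Xi_1$ is well defined. Relations \eqref{eq:Poly}, \eqref{eq:Clifford} and \eqref{XC} already hold in the $\mathcal{A}_2$-module. For \eqref{TX1}, \eqref{TX3} and \eqref{TC}, I would substitute $T_1=\Xi_1$ and commute $X$'s past $C_1C_2$ using $C_1C_2X_1=X_1^{-1}C_1C_2$ and $C_1C_2X_2=X_2^{-1}C_1C_2$; these are identities of rational expressions in the commuting operators $X_1,X_2$ together with $C_1C_2$, and should reduce to formal verification. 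The quadratic relation $T_1^2=\varepsilon T_1+1$ is the real content. Expanding $\Xi_1^2$, with $(C_1C_2)^2=-1$ and $C_1C_2f(X_1,X_2)=f(X_1^{-1},X_2^{-1})C_1C_2$, should give $\varepsilon\,\Xi_1+1$ exactly when \eqref{invertible} holds for $(x,y)=(\mathtt{b}_\pm(i),\mathtt{b}_\pm(j))$. By \eqref{eq:special-square}, this is equivalent to $|i-j|=1$. This computation is the main obstacle, since one must track the $X\mapsto X^{-1}$ twists carefully. Equivalently, the relation says $\widetilde{\Phi}_1$ acts by zero, and \eqref{eq:Phi-square} shows this is consistent. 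Irreducibility and type follow because any $\mathcal{H}_2^{\mathrm{aff}}$-submodule is an $\mathcal{A}_2$-submodule of the irreducible $L(i)\circledast L(j)$, and the endomorphism algebras coincide because $T_1$ is expressed through $\mathcal{A}_2$. Complete splittability is immediate, because the $X$'s act semisimply on $L(i)\circledast L(j)$.

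For part (2), I would use the basis of Lemma~\ref{lem:PBWNon-dege} to write $V(i,j)=L(i)\circledast L(j)\oplus T_1\big(L(i)\circledast L(j)\big)$. I would then use the intertwiner: $\widetilde{\Phi}_1 z$ for $z\in L(i)\circledast L(j)$ lies in the $(j,i)$ weight space by \eqref{Xinter} and \eqref{Cinter}. If $i\neq j$, then $\mathsf{z}_1$ is invertible on $L(i)\circledast L(j)$, and $\widetilde{\Phi}_1^2$ acts by a nonzero scalar by \eqref{eq:Phi-square} and \eqref{eq:special-square}, since $|i-j|\neq1$. Hence $\widetilde{\Phi}_1$ maps $L(i)\circledast L(j)$ isomorphically onto a copy of $L(j)\circledast L(i)$. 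The weight spaces of $V(i,j)$ are therefore the two irreducible $\mathcal{A}_2$-modules $L(i)\circledast L(j)$ and $L(j)\circledast L(i)$, which $\widetilde{\Phi}_1$ interchanges. Any nonzero submodule contains a weight vector, hence a whole weight space, hence both, hence everything; this gives irreducibility. It also gives complete splittability, because the $X$'s act semisimply on each piece. Any endomorphism preserves the weight space $L(i)\circledast L(j)$, which generates $V(i,j)$, so $\mathrm{End}$ injects into $\mathrm{End}_{\mathcal{A}_2}(L(i)\circledast L(j))$ and conversely extends by Frobenius reciprocity; hence the types agree.

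If $i=j$, I would argue through the induced module's structure that it is irreducible. The paper's Lemma~\ref{lem:K1-1} states irreducibility of the induced module in this case when $L(i)\circledast L(i)$ is built from $L(i)$, which covers $L(\underline{i}^2)$ directly. Its only weight is $(i,i)$, so by Lemma~\ref{lem:separate weig.} it is not completely splittable. Type equality again follows from Frobenius reciprocity together with the fact that the $(i,i)$-eigenvectors generate.

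For part (3), given an irreducible $M\in\op{Rep}_{\mathbb{I}}\mathcal{H}^{\mathrm{aff}}_2(q)$, I would pick an irreducible $\mathcal{A}_2$-submodule, which is some $L(i)\circledast L(j)$ by Corollary~\ref{cor:irrepAn}. Frobenius reciprocity then gives a nonzero map $\op{ind}\,L(i)\circledast L(j)\to M$. If $|i-j|\neq1$, the induced module is irreducible by (2), so the map is an isomorphism. If $|i-j|=1$, the induced module has composition factors $V(i,j)$ and $V(j,i)$, since $\widetilde{\Phi}_1^2=0$ there and the submodule structure is visible from the weights. Hence $M\cong V(i,j)$ or $V(j,i)$.
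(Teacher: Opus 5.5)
Your proposal is correct. For parts (1) and (3), and for the case $i=j$ of (2), it is essentially the paper's argument. In (1) you verify the relations directly and reduce the quadratic relation via \eqref{eq:special-square}. For $i=j$ in (2) you use Lemma~\ref{lem:K1-1} together with Lemma~\ref{lem:separate weig.}. For $|i-j|=1$ in (3), your submodule $\widetilde{\Phi}_1\bigl(1\otimes L(i)\circledast L(j)\bigr)$ is, up to the nonzero scalar $(\mathtt{q}(i)-\mathtt{q}(j))^2$, exactly the paper's $\mathrm{span}\{T_1\otimes v-1\otimes\Xi_1 v\}$.

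Your route genuinely differs only in (2) with $i\neq j$.
\begin{itemize}
\item The paper takes an arbitrary vector $1\otimes v_1+T_1\otimes v_2$ of a submodule and applies $(\mathtt{q}(i)-\mathtt{q}(j))(X_1+X_1^{-1}-\mathtt{q}(i))$ to extract $-\widetilde{\Phi}_1(1\otimes v_2)$. It then applies $\widetilde{\Phi}_1$ once more. For complete splittability and type, it invokes Proposition~\ref{prop:equiv.cond.}, which rests on Lemma~\ref{lem:K1-1}, to learn that the restriction to $\mathcal{A}_2$ is $L(i)\circledast L(j)\oplus L(j)\circledast L(i)$.
\item You instead identify the two weight spaces at the outset, as $1\otimes L(i)\circledast L(j)$ and its image under $\widetilde{\Phi}_1$, by a dimension count. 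You then read off irreducibility, complete splittability and the type from this decomposition.
\end{itemize}
The mechanism is the same in both: $\widetilde{\Phi}_1^2$ acts by a nonzero scalar. Your version is more self-contained, since it does not need Proposition~\ref{prop:equiv.cond.}.

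One step should be made precise. For $i=j$, the fact that ``the $(i,i)$-eigenvectors generate'' is not what Frobenius reciprocity needs. You need the simultaneous eigenspace of $X_1,X_2$, i.e.\ the $\mathcal{A}_2$-socle of the restriction, to be a single copy $1\otimes L(i)\circledast L(i)$. This holds because $V(i,i)$ is not completely splittable and both $\mathcal{A}_2$-composition factors are $L(i)\circledast L(i)$; this is the paper's socle argument. Here ``eigenvectors'' must mean eigenvectors of $X_1,X_2$, not of $X_k+X_k^{-1}$, because the two differ when $\mathtt{q}(i)=\pm 2$.
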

\begin{proof}
(1) It is routine to verify $T_1X_1z=\left(X_2T_1-\varepsilon(X_{2}+C_1C_2X_1)\right)z$ and $T_1C_2z=\left(C_1T_1-\varepsilon(C_1-C_2)\right)z$ with $T_1z=\Xi_1z$ for any $z\in V(i,j):=L(i)\circledast L(j)$. Hence it remains to prove $T_1^2=\varepsilon T_1+1$ on $V(i,j).$ Indeed, for $z\in V(i,j)$ we have by \eqref{Xi}
\begin{equation}
\begin{aligned}\label{SqXi}
\Xi_1^2z&=\varepsilon^2\left[\frac{1}{(X_1X_2^{-1}-1)^2}+\frac{X_1X_2}{(X_1X_2-1)^2}-\frac{1-X_1X_2^{-1}}{(X_1X_2^{-1}-1)(X_1X_2-1)}C_1C_2\right]z\\
&=\varepsilon\Xi_1z+\varepsilon^2\left(\frac{X_1X_2^{-1}}{(X_1X_2^{-1}-1)^2}+\frac{X_1X_2}{(X_1X_2-1)^2}\right)z. 
\end{aligned}
\end{equation}
Meanwhile since $i = j \pm 1$,we obtain that 
$$
\left(\frac{X_1X_2^{-1}}{(X_1X_2^{-1}-1)^2}+\frac{X_1X_2}{(X_1X_2-1)^2}\right)z=\left(\frac{xy^{-1}}{(xy^{-1}-1)^2}+\frac{xy}{(xy-1)^2}\right)z=\frac{1}{\varepsilon^2} z
$$
with $x=\mathtt{b}_{\pm}(i)$ and $y=\mathtt{b}_{\pm}(j)$ by \eqref{eq:special-square}. This together with \eqref{SqXi} gives rise to $T_1^2z=(\varepsilon T_1+1)z$ for any $z \in V(i,j).$ 
This proves that $V(i,j)$ admits an $\mathcal{H}^{\mathrm{aff}}_2(q)$-module.  In addition,  ${\rm
End}_{\mathcal{A}_2}(L(i)\circledast L(j))\cong{\rm
End}_{\mathcal{H}_2^{\mathrm{aff}}(q)}(V(i,j))$. Hence $V(i,j)$ has
the same type as the $\mathcal{A}_2$-module
$L(i)\circledast L(j)$. Since $X_1^{\pm 1},X_2^{\pm 1}$ act semisimply on
$L(i)\circledast L(j)$, $V(i,j)$ is completely splittable.

(2)  We first show that $V(i,j) =\op{ind}^{\mathcal{H}_2^{\mathrm{aff}}(q)}_{\mathcal{A}_2}L(i)\circledast L(j)$ is irreducible under the assumption $i\neq j \pm1$.  By Lemma \ref{lem:K1-1}, the module $V(i,j)$ is irreducible if $i=j$. Thus it remains to show when $|i-j|>1$ the module $V(i,j)$ is irreducible. Now suppose that $M$ is a nonzero submodule of $V (i,j)$.  Observe that $V (i,j) = 1\otimes (L(i)\circledast L(j))\oplus T_1 \otimes (L(i)\circledast L(j))$ as vector spaces. Then $M$ contains a nonzero vector $0\neq v:=1\otimes v_1 +T_1\otimes v_2$ for some $0\neq v_1, v_2\in L(i)\circledast L(j)$. By \eqref{eq:Li} and Corollary \ref{cor:irrepAn} we have 
\begin{equation}\label{eq:X1X2-act}
\begin{aligned}
(X_1+X_1^{-1})(1\otimes v_k)=\mathtt{q}(i)(1\otimes v_k),  (X_2+X_2^{-1})(1\otimes v_k)=\mathtt{q}(j)(1\otimes v_k) 
\end{aligned}
\end{equation}
for $k=1,2$. 
This together with (\ref{1}) and \eqref{eq:zi},
\[\begin{aligned}
&(\mathtt{q}(i)-\mathtt{q}(j))(X_1+X_1^{-1}-\mathtt{q}(i)).v\\
&=-\left(\mathtt{q}(i)-\mathtt{q}(j)\right)^2T_1\otimes v_2+(\mathtt{q}(i)-\mathtt{q}(j))\varepsilon\left[X_1^{-1}(1-X_1X_2)-X_1^{-1}(1-X_1X_2^{-1})C_1C_{2}\right](1\otimes v_2)\\
&=-T_1\mathsf{z}^2_1(1\otimes v_2)+\varepsilon\left(\frac{-\mathsf{z}_1^2}{X_1X_2^{-1}-1}+\frac{\mathsf{z}_1^2}{X_1X_2-1}C_1C_2\right)(1\otimes v_2)
=-\widetilde{\Phi}_1(1\otimes v_2)\in M
\end{aligned}\]
This implies $\widetilde{\Phi}^2_1(1\otimes v_2)\in M$. Then by \eqref{eq:X1X2-act} and \eqref{eq:Phi-square} we obtain 
\begin{equation}\label{eq:Phi-square-act}
\left(\frac{xy}{(xy-1)^2}+\frac{xy^{-1}}{(xy^{-1}-1)^2}-\frac{1}{\varepsilon^2}\right)(1\otimes v_2)\in M
\end{equation}
with $x=\mathtt{b}_{\pm}(i), y=\mathtt{b}_{\pm}(j)$ since $i\neq j$ which implies $\mathtt{q}(i)\neq \mathtt{q}(j)$. However since $j\neq i\pm 1$, by \eqref{eq:special-square} we obtain 
$\left(\frac{xy}{(xy-1)^2}+\frac{xy^{-1}}{(xy^{-1}-1)^2}-\frac{1}{\varepsilon^2}\right)\neq 0$ with $x=\mathtt{b}_{\pm}(i), y=\mathtt{b}_{\pm}(j)$. Thus by \eqref{eq:Phi-square-act} we obtain $1\otimes v_2\in M$. This means $L(i)\circledast L(j)\subseteq M$ and hence $M=V(i,j)$. This proves that $V(i,j)$ is irreducible. 

Note that if $i\neq j$, then $V(i,j)$ has two weights, that is,
$(i,j)$ and $(j,i)$. By Proposition~\ref{prop:equiv.cond.}, we see
that ${\rm
res}_{\mathcal{A}_2}^{\mathcal{H}_2^{\mathrm{aff}}(q)}V(i,j)$
is semisimple and is isomorphic to the direct sum of
$L(i)\circledast L(j)$ and $L(j)\circledast L(i)$. This means
$${\rm Hom}_{\mathcal{A}_2}(L(i)\circledast
L(j),{\rm
res}^{\mathcal{H}_2^{\mathrm{aff}}(q)}_{\mathcal{A}_2}V(i,j))
\cong {\rm End}_{\mathcal{A}_2}(L(i)\circledast
L(j)).$$ By Frobenius reciprocity we obtain
$$
{\rm End}_{\mathcal{H}_2^{\mathrm{aff}}(q)}(V(i,j))\cong{\rm
Hom}_{\mathcal{A}_2}(L(i)\circledast L(j),{\rm
res}^{\mathcal{H}_2^{\mathrm{aff}}(q)}_{\mathcal{A}_2}V(i,j))
\cong {\rm End}_{\mathcal{A}_2}(L(i)\circledast
L(j)).$$ Hence $V(i,j)$ has the same type as the
$\mathcal{A}_2$-module $L(i)\circledast L(j)$.

Now suppose $i=j$. This implies that $(i,i)$ is a weight of $V(i,i)$
and hence $V(i,i)$ is not completely splittable by
Lemma~\ref{lem:separate weig.}. By
Proposition~\ref{prop:equiv.cond.}, ${\rm
res}_{\mathcal{A}_2}^{\mathcal{H}_2^{\mathrm{aff}}(q)}V(i,i)$
is not semisimple. Note that ${\rm
res}_{\mathcal{A}_2}^{\mathcal{H}_2^{\mathrm{aff}}(q)}V(i,i)$
has two composition factors and both of them are isomorphic to
$L(i)\circledast L(i)$. Therefore the socle of ${\rm
res}_{\mathcal{A}_2}^{\mathcal{H}_2^{\mathrm{aff}}(q)}V(i,i)$
is simple and isomorphic to $L(i)\circledast L(i)$. Hence ${\rm
Hom}_{\mathcal{A}_2}(L(i)\circledast L(i),{\rm
res}^{\mathcal{H}_2^{\mathrm{aff}}(q)}_{\mathcal{A}_2}V(i,i))
\cong {\rm End}_{\mathcal{A}_2}(L(i)\circledast
L(i))$. By Frobenius reciprocity we obtain
$$
{\rm
End}_{\mathcal{H}_2^{\mathrm{aff}}(q)}(V(i,i))\cong\text{Hom}_{\mathcal{A}_2}(L(i)\circledast
L(i),{\rm
res}^{\mathcal{H}_2^{\mathrm{aff}}(q)}_{\mathcal{A}_2}V(i,i))
\cong {\rm End}_{\mathcal{A}_2}(L(i)\circledast
L(i)).$$ Hence $V(i,i)$ has the same type as the
$\mathcal{A}_2$-module $L(i)\circledast L(i)$.

(3) Suppose $M\in\op{Rep}_\mathbb{I}\mathcal{H}^{\mathrm{aff}}_2(q)$ is irreducible, then there exist $i,j\in\mathbb{I}$ such that $L(i)\circledast L(j)\subset \op{res}_{\mathcal{A}_2}^{\mathcal{H}_2^{\mathrm{aff}}(q)}M$. By Frobenius reciprocity,  $M$ is an irreducible quotient of the induced module $\op{ind}^{\mathcal{H}_2^{\mathrm{aff}}(q)}_{\mathcal{A}_2}L(i)\circledast L(j).$ If $i\neq j\pm 1,$ then $M\cong \op{ind}_{\mathcal{A}_2}^{\mathcal{H}_2^{\mathrm{aff}}(q)}L(i)\circledast L(j)$  by (2); otherwise we have $\Xi_1^2=\varepsilon\Xi_1+1$ on $L(i)\circledast L(j)$ by the proof of (1). Then one can show that the vector space
\[L:=\text{span}\{T_1\otimes v-1\otimes \Xi_1v|v\in L(i)\circledast L(j)\}\]
is a $\mathcal{H}_2^{\mathrm{aff}}(q)$-submodule of $\op{ind}_{\mathcal{A}_2}^{\mathcal{H}_2^{\mathrm{aff}}(q)}L(i)\circledast L(j)$ and it is isomorphic to $V(j,i).$ It is easy to check the quotient $\op{ind}_{\mathcal{A}_2}^{\mathcal{H}_2^{\mathrm{aff}}(q)}L(i)\circledast L(j)/L$ is isomorphic to $V(i,j).$ Hence $M\cong V(i,j).$
\end{proof}

Observe from the proof above that if $i\neq j,j \pm 1$ then the completely splittable $\mathcal{H}_2^{\mathrm{aff}}(q)$-module $V (i,j)$ has two weights $(i,j)$ and $(j,i)$ and moreover $T_1-\Xi_1$ gives a bijection between the associated weight spaces. This together with Corollary \ref{Cor-1} and Proposition \ref{prop:restr-rank-2} leads to the following.

\begin{cor}\label{Cor-2}
Let $M \in\op{Rep}_{\mathbb{I}} \aHn$ be irreducible completely splittable. Suppose $0\neq z\in M_{\underline{i}}$ for some $\underline{i}=(i_1,\cdots,i_n)\in\mathbb{I}^n.$ The following holds for $1\leq k\leq n - 1.$\\
(1) If $i_k= i_{k+1}\pm 1,$ then $T_k z=\Xi_k z.$\\
(2) If $i_k\neq i_{k+1}\pm 1,$ then $0\neq (T_k-\Xi_k)z\in M_{s_k\cdot\underline{i}}$ and hence $s_k\cdot\underline{i}$ is a weight of $M$.
\end{cor}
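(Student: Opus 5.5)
We reduce to rank two. By Corollary~\ref{Cor-1}, the restriction of $M$ to the subalgebra $\mathcal{H}^{\mathrm{aff}}_{(1^{k-1},2,1^{n-k-1})}(q)\cong\mathcal{A}_{k-1}\otimes\mathcal{H}^{\mathrm{aff}}_2(q)\otimes\mathcal{A}_{n-k-1}$ is semisimple. Every irreducible module of this algebra is a constituent of $L(\un i')\boxtimes V\boxtimes L(\un i'')$, where $V$ is an irreducible $\mathcal{H}^{\mathrm{aff}}_2(q)$-module; by Proposition~\ref{prop:restr-rank-2}(3) we have $V\cong V(a,b)$ for some $a,b\in\I$. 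Since $M$ is completely splittable, so is every such summand. So it suffices to prove the two identities on each summand $W$ that meets $M_{\un i}$ nontrivially.

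We may decompose $z$ along these summands. Each summand is stable under $X_k^{\pm1},X_{k+1}^{\pm1}$ and the other $X_j$, so the components of $z$ still lie in $M_{\un i}$. We can therefore take $z\in W_{\un i}$. The generators $T_k,X_k^{\pm1},X_{k+1}^{\pm1},C_k,C_{k+1}$ act only through the middle factor, with the sign twists from \eqref{eq:tensor-AB-1}. These twists are harmless, because $T_k$ and $\Xi_k$ are even and built from $X_k,X_{k+1}$ and $C_kC_{k+1}$, which is even. Hence the statements reduce to the corresponding ones for the $\mathcal{H}^{\mathrm{aff}}_2(q)$-module $V(a,b)$ and its weight $(i_k,i_{k+1})$.

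For (1), suppose $i_k=i_{k+1}\pm1$. The weights of $V(a,b)$ are $(a,b)$, and also $(b,a)$ when $V(a,b)$ is an induced module. If $|a-b|\neq1$, then $V(a,b)$ is induced and has weights $(a,b),(b,a)$ with $|a-b|\neq 1$, so it cannot have weight $(i_k,i_{k+1})$. Hence $a=b\pm1$. Then $V(a,b)=L(a)\circledast L(b)$ has the single weight $(a,b)=(i_k,i_{k+1})$, and $T_1=\Xi_1$ on it by construction. This gives $T_kz=\Xi_kz$.

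For (2), suppose $i_k\neq i_{k+1}\pm1$. Then $V(a,b)$ is the induced module, and complete splittability forces $a\neq b$ by Proposition~\ref{prop:restr-rank-2}(2). It remains to show that $T_1-\Xi_1$ maps the weight space for $(a,b)$ injectively into the weight space for $(b,a)$, and vice versa. We check this by direct computation with \eqref{1}. Write $x,y$ for the eigenvalues of $X_1,X_2$ and $\mathsf z_1=\mathtt q(a)-\mathtt q(b)\neq0$. The computation in the proof of Proposition~\ref{prop:restr-rank-2}(2) shows that $(T_1-\Xi_1)z=\mathsf z_1^{-2}\widetilde{\Phi}_1 z$ on a weight vector $z$. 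By \eqref{Xinter}, $\widetilde{\Phi}_1$ swaps the eigenvalues of $X_1$ and $X_2$, so $(T_1-\Xi_1)z$ lies in the $s_1\cdot(a,b)$ weight space. For nonvanishing we use \eqref{eq:Phi-square}: $\widetilde{\Phi}_1^2$ acts on $z$ by $\mathsf z_1^4\varepsilon^2(\varepsilon^{-2}-\cdots)$, which is nonzero by \eqref{eq:special-square}, since $|a-b|\neq1$ and $a\neq b$. Thus $\widetilde\Phi_1z\neq0$.

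The main obstacle is the reduction step: we must justify that the weight space decomposes compatibly with the rank-two summands. We also need that the weight $(i_k,i_{k+1})$ pins down which $V(a,b)$ can occur, including the degenerate cases where $\mathtt q(i)=\pm2$ and the module has type $\texttt{Q}$. Passing from $\widetilde\Phi_k$ to $T_k-\Xi_k$ is only a rescaling by the invertible scalar $\mathsf z_k^2$, which acts as a scalar on $M_{\un i}$. This rescaling can also be done directly in $M$ without the rank-two reduction, and this would give (2) immediately from \eqref{Xinter}, \eqref{Cinter} and \eqref{eq:Phi-square}.
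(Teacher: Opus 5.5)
Your proposal is correct and takes essentially the paper's route. You reduce to $\mathcal{H}^{\mathrm{aff}}_2(q)$ via Corollary~\ref{Cor-1} and Proposition~\ref{prop:restr-rank-2}, use the weights of $V(a,b)$ to force $a=b\pm1$ in case (1), and in case (2) check that $T_1-\Xi_1=\mathsf{z}_1^{-2}\widetilde\Phi_1$ maps one weight space injectively into the other via \eqref{Xinter} and \eqref{eq:Phi-square}, which is what the paper compresses into ``observe from the proof above.'' Your closing remark that (2) can be proved directly in $M$ is also valid, and it is essentially the argument the paper later gives in Lemma~\ref{lem:newoperator}.
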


\begin{defn}\label{defn:admissible}
Let $\underline{i}\in\mathbb{I}^n.$ For $1\leq k\leq n-1,$ the simple transposition $s_k$ is called admissible with respect to $\underline{i}$ if $i_k\neq i_{k+1}\pm 1.$
\end{defn}

Let $\mathfrak{P}(\aHn)$ be the set of weights $\underline{i}\in\mathbb{I}^n$ of irreducible completely splittable $\aHn$-modules. By Corollary \ref{Cor-2}, if $\un i \in \mathfrak{P}(\aHn)$ and $s_k$ is admissible with respect to $\underline{i},$ then $s_k \cdot \underline{i}\in \mathfrak{P}(\aHn)$; moreover $\underline{i}$ and $s_k\cdot \underline{i}$ must occur as weights in an irreducible completely splittable $\aHn$-module simultaneously.

\begin{prop}\label{lem:restr.1}
Let $\underline{i}\in \mathfrak{P}(\aHn).$ Suppose that $i_k=i_{k+2}$ for some $1\leq k\leq n-2.$\\
(1) If $h=\infty,$ then $i_k=i_{k+2}=0,i_{k+1}=1.$\\
(2) If $h\geq 3$ is odd, then either $i_k=i_{k+2}=0,i_{k+1}=1$ or $i_k=i_{k+2}=\frac{h-3}{2},i_{k+1}=\frac{h-1}{2}.$\\
(3) If $h\geq 4$ is even, then either $i_k=i_{k+2}=0,i_{k+1}=1$ or $i_k=i_{k+2}=\frac{h}{2}-1,i_{k+1}=\frac{h}{2}-2.$
\end{prop}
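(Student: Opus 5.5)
Set $i:=i_k=i_{k+2}$ and $j:=i_{k+1}$. Lemma~\ref{lem:separate weig.} gives $j\neq i$. If $j\neq i\pm1$, then $s_k$ is admissible with respect to $\underline{i}$. By Corollary~\ref{Cor-2}(2), $s_k\cdot\underline{i}$ is then a weight of the same irreducible completely splittable module. Its entries in positions $k+1,k+2$ both equal $i$, which contradicts Lemma~\ref{lem:separate weig.}. Hence $j=i\pm1$, and $T_k$ and $T_{k+1}$ act on $M_{\underline{i}}$ by $\Xi_k$ and $\Xi_{k+1}$ (Corollary~\ref{Cor-2}(1)).

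The main step is to use the braid relation $T_kT_{k+1}T_k=T_{k+1}T_kT_{k+1}$ on a vector $0\neq z\in M_{\underline{i}}$. Since $\Xi_k$ and $\Xi_{k+1}$ preserve $M_{\underline{i}}$, both sides are words in $\Xi_k,\Xi_{k+1}$ applied to $z$. By Proposition~\ref{prop:equiv.cond.}(4) we have $M_{\underline{i}}\cong L(\underline{i})$, so I would compute explicitly on $L(i)\circledast L(j)\circledast L(i)$, which is the relevant factor. Write $x=\mathtt{b}_\pm(i)$ and $y=\mathtt{b}_\pm(j)$. Because $X_k$ and $X_{k+2}$ have the same eigenvalue set, $\Xi_k$ and $\Xi_{k+1}$ have the same form, with scalars $a(X_k,X_{k+1})=-\varepsilon/(X_kX_{k+1}^{-1}-1)$ and $b=\varepsilon/(X_kX_{k+1}-1)$ multiplying $1$ and $C_kC_{k+1}$ respectively. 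I would expand $\Xi_k\Xi_{k+1}\Xi_k-\Xi_{k+1}\Xi_k\Xi_{k+1}$ in the Clifford basis $\{1,C_kC_{k+1},C_{k+1}C_{k+2},C_kC_{k+2}\}$, commuting the $X$'s through the $C$'s using \eqref{XC}; note that $C_k$ inverts $X_k$. The resulting equation should be equivalent to a scalar identity that involves $x$ and $y$ (and $x^{-1}$) together with \eqref{invertible}, which already holds since $|i-j|=1$.

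I expect this braid computation to be the main obstacle. Simplifying it to a clean condition should give something like $\mathtt{q}(i)=\pm2$ type constraints, or equivalently, via the substitution \eqref{invertible2}, conditions such as $u=u^{-1}$ or $q^{4}u^{2}=1$ for $u=q^{2i}$. Concretely, the braid relation will force both adjacent pairs $(i,j)$ and $(j,i)$ to satisfy \eqref{invertible2} with the same choice of branch, and consistency should require $q^{2i}$ to satisfy an extra relation. For $h=\infty$ I expect this to leave exactly $i=0$ with $j=1$: the option $j=i-1$ should be excluded, and $j=i+1$ should force $i=0$. In the root-of-unity cases, $\mathbb{I}$ is folded at the top end. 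For $h$ odd, the relation $q^{2(2i+3)}=1$ type then yields $i=\frac{h-3}{2}$, $j=\frac{h-1}{2}$. For $h$ even, the symmetry $\mathtt{q}(i)=\mathtt{q}(-i-1)$ composed with the shift by $h/2$ yields $i=\frac h2-1$, $j=\frac h2-2$.

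As a sanity check and a possible shortcut, I would first try to avoid the full expansion. The operator $\Xi_k\Xi_{k+1}\Xi_k-\Xi_{k+1}\Xi_k\Xi_{k+1}$ should factor through the difference of the Yang--Baxter-type rational functions. Testing it on a single convenient eigenvector, where $X_k,X_{k+1},X_{k+2}$ act by $x,y,x$, may already produce the scalar condition. This is the analogue of \cite[Lemma 3.9]{Wa} in the degenerate case, and the final case analysis over $h$ then reduces to listing the solutions of that condition in $\mathbb{I}$ using \eqref{defn:I} and \eqref{eq:qi=2}.
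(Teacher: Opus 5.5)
Your reduction to $j=i\pm1$ via Corollary~\ref{Cor-2}(2) and Lemma~\ref{lem:separate weig.} is the paper's argument. So is your plan to test $T_kT_{k+1}T_k=T_{k+1}T_kT_{k+1}$ with $T_k=\Xi_k$, $T_{k+1}=\Xi_{k+1}$ on $M_{\underline{i}}$. Your shortcut of using a single eigenvector on which $X_k,X_{k+1},X_{k+2}$ act by $x,y,x$ is also legitimate: such a vector exists because $C_{k+2}$ preserves $M_{\underline{i}}$ and inverts $X_{k+2}$.

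The gap is that the proposal stops exactly where the proof happens. You never derive the scalar condition that the braid relation produces. As a result, the exclusion of $j=i-1$ for $h$ odd or infinite, and the values $0$, $\frac{h-3}{2}$, $\frac{h}{2}-1$, are only asserted as what you ``expect''. Your guess at the mechanism is also off. The relation \eqref{invertible} for the pairs $(i,j)$ and $(j,i)$ holds automatically when $|i-j|=1$ and carries no new information, and there is no ``same branch'' compatibility to exploit. The new information comes from a genuinely different rational identity.

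Concretely, put $g(t)=\frac{1}{t-1}$ and $f(t)=\frac{t(t+1)}{(t-1)^3}$, and evaluate on such a $z$.
\begin{itemize}
\item The coefficients of $C_kC_{k+1}z$ and $C_{k+1}C_{k+2}z$ in $(\Xi_k\Xi_{k+1}\Xi_k-\Xi_{k+1}\Xi_k\Xi_{k+1})z$ agree on both sides.
\item The coefficient of $C_kC_{k+2}z$ cancels because $g(t)+g(t^{-1})=-1$.
\item What remains is a nonzero multiple of $\big(f(x/y)+f(xy)\big)z$. Since $f(t^{-1})=-f(t)$, the braid relation becomes $f(x^{-1}y)=f(xy)$.
\end{itemize}
The difference $f(t_1)-f(t_2)$ is antisymmetric, hence divisible by $t_1-t_2=y(x-x^{-1})$. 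Working this out gives the condition
\[
(x-x^{-1})\big(\mathtt{q}(j)^2+\mathtt{q}(i)\mathtt{q}(j)-8\big)=0 .
\]
The first factor vanishes if and only if $\mathtt{q}(i)=\pm2$. Substituting $j=i+1$ turns the second factor into a nonzero multiple of $(q^{4i+4}-1)(q^{4i+6}-1)$. Substituting $j=i-1$ turns it into a nonzero multiple of $(q^{4i}-1)(q^{4i-2}-1)$.

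Only at this point does the case analysis close.
\begin{itemize}
\item $\mathtt{q}(i)=\pm2$ gives $i=0$, hence $j=1$, in every case.
\item For $h$ even it also gives $i=\frac{h}{2}-1$, hence $j=\frac{h}{2}-2$, because $\mathtt{q}(\frac{h}{2}-1)=-2$.
\item For $h$ odd, $q^{4i+6}=1$ holds exactly when $2i+3=h$, which gives $i=\frac{h-3}{2}$.
\item All other roots are excluded because $q$ has order $h$, resp.\ $2h$, resp.\ infinite order. For example, $2h\nmid 4i+6$ and $2h\nmid 4i-2$ when $h$ is even, and $\frac{h}{2}\nmid i$, $\frac{h}{2}\nmid i+1$ in the relevant ranges.
\end{itemize}
This is the content of \eqref{eq:braid-final}--\eqref{eq:braid-final-2} in the paper, and it is the step your proposal still has to supply.
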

\begin{proof}
Suppose $\underline{i}$ occurs in an irreducible completely splittable $\aHn$-module $M$ and $i_k = i_{k+2}=u\in\I$ for some $1\leq k\leq n-2.$ If $i_k\neq i_{k+1} \pm 1,$ then $s_k\cdot \underline{i}$ is a weight of $M$ of the form $(\cdots,u,u,\cdots)$ by Corollary $\ref{Cor-2}$. This contradicts with  Lemma \ref{lem:separate weig.}. Hence $i_k = i_{k+1} \pm 1.$ This together with Corollary \ref{Cor-2} shows that $T_k = \Xi_k$ and $T_{k+1} = \Xi_{k+1}$ on $M_{\underline{i}}$. Thus by \eqref{Xi} we have for any $z\in M_{\un i}$ 
\begin{equation}\label{eq:braid-proof-1}
-\frac{1}{\varepsilon^3}T_kT_{k+1}T_kz=\sum_{a=1}^8 \mathscr{T}_{1,a}z
\end{equation}
with 
\[\begin{aligned}
\mathscr{T}_{1,1}z=&\frac{1}{X_kX_{k+1}^{-1}-1}\frac{1}{X_{k+1}X_{k+2}^{-1}-1}\frac{1}{X_kX_{k+1}^{-1}-1}z,\\
\mathscr{T}_{1,2}z=&\frac{1}{X_kX_{k+1}-1}C_kC_{k+1}\frac{1}{X_{k+1}X_{k+2}^{-1}-1}\frac{1}{X_kX_{k+1}-1}C_kC_{k+1}z,\\
\mathscr{T}_{1,3}z=&\frac{1}{X_kX_{k+1}-1}C_kC_{k+1}\frac{1}{X_{k+1}X_{k+2}-1}C_{k+1}C_{k+2}\frac{1}{X_kX_{k+1}^{-1}-1}z,\\
\mathscr{T}_{1,4}z=&\frac{1}{X_kX_{k+1}^{-1}-1}\frac{1}{X_{k+1}X_{k+2}-1}C_{k+1}C_{k+2}\frac{1}{X_kX_{k+1}-1}C_kC_{k+1}z,\\
\mathscr{T}_{1,5}z=&-\frac{1}{X_kX_{k+1}-1}C_kC_{k+1}\frac{1}{X_{k+1}X_{k+2}^{-1}-1}\frac{1}{X_kX_{k+1}^{-1}-1}z,\\
\mathscr{T}_{1,6}z=&-\frac{1}{X_kX_{k+1}^{-1}-1}\frac{1}{X_{k+1}X_{k+2}-1}C_{k+1}C_{k+2}\frac{1}{X_kX_{k+1}^{-1}-1}z,\\
\mathscr{T}_{1,7}z=&-\frac{1}{X_kX_{k+1}^{-1}-1}\frac{1}{X_{k+1}X_{k+2}^{-1}-1}\frac{1}{X_kX_{k+1}-1}C_kC_{k+1}z,\\
\mathscr{T}_{1,8}z=&-\frac{1}{X_kX_{k+1}-1}C_kC_{k+1}\frac{1}{X_{k+1}X_{k+2}-1}C_{k+1}C_{k+2}\frac{1}{X_kX_{k+1}-1}C_kC_{k+1}z.
\end{aligned}\]
Similarly, for any $z\in M_{\un i}$ we have 
\begin{equation}\label{eq:braid-proof-2}
-\frac{1}{\varepsilon^3}T_{k+1}T_{k}T_{k+1}z=\sum_{a=1}^8 \mathscr{T}_{2,a}z
\end{equation}
with
\[\begin{aligned}
\mathscr{T}_{2,1}z=&\frac{1}{X_{k+1} X_{k+2}^{-1}-1}\frac{1}{X_kX_{k+1}^{-1}-1}\frac{1}{X_{k+1} X_{k+2}^{-1}-1}z,\\
\mathscr{T}_{2,2}z=&\frac{1}{X_{k+1} X_{k+2}-1}C_{k+1} C_{k+2}\frac{1}{X_kX_{k+1}^{-1}-1}\frac{1}{X_{k+1} X_{k+2}-1}C_{k+1} C_{k+2}z,\\
\mathscr{T}_{2,3}z=&\frac{1}{X_{k+1} X_{k+2}-1}C_{k+1} C_{k+2}\frac{1}{X_kX_{k+1}-1}C_{k}C_{k+1}\frac{1}{X_{k+1} X_{k+2}^{-1}-1}z,\\
\mathscr{T}_{2,4}z=&\frac{1}{X_{k+1} X_{k+2}^{-1}-1}\frac{1}{X_kX_{k+1}-1}C_kC_{k+1}\frac{1}{X_{k+1} X_{k+2}-1}C_{k+1} C_{k+2}z,\\
\mathscr{T}_{2,5}z=&-\frac{1}{X_{k+1} X_{k+2}-1}C_{k+1} C_{k+2}\frac{1}{X_kX_{k+1}^{-1}-1}\frac{1}{X_{k+1} X_{k+2}^{-1}-1}z,\\
\mathscr{T}_{2,6}z=&-\frac{1}{X_{k+1} X_{k+2}^{-1}-1}\frac{1}{X_kX_{k+1}-1}C_kC_{k+1}\frac{1}{X_{k+1} X_{k+2}^{-1}-1}z,\\
\mathscr{T}_{2,7}z=&-\frac{1}{X_{k+1} X_{k+2}^{-1}-1}\frac{1}{X_kX_{k+1}^{-1}-1}\frac{1}{X_{k+1} X_{k+2}-1}C_{k+1} C_{k+2}z,\\
\mathscr{T}_{2,8}z=&-\frac{1}{X_{k+1} X_{k+2}-1}C_{k+1} C_{k+2}\frac{1}{X_kX_{k+1}-1}C_kC_{k+1}\frac{1}{X_{k+1} X_{k+2}-1}C_{k+1} C_{k+2}z. 
\end{aligned}\]
Notice that
\[\begin{aligned}
&\mathscr{T}_{1,5}z+\mathscr{T}_{1,7}z\\
=&C_kC_{k+1}\frac{X_kX_{k+1}^2X_{k+2}-X_kX_{k+1}+X_k^2X_{k+1}^2-X_k^2X_{k+1}X_{k+2}}{(X_kX_{k+1}^{-1}-1)(X_kX_{k+1}-1)(X_{k+1}X_{k+2}^{-1}-1)(X_{k+1}X_{k+2}-1)}z\\
=&\mathscr{T}_{2,6}z+\mathscr{T}_{2,8}z\\
\end{aligned}\]
Meanwhile the following holds 
\[\begin{aligned}
&\mathscr{T}_{1,6}z+\mathscr{T}_{1,8}z\\
=&C_{k+1}C_{k+2}\frac{X_{k+1}^2-X_{k+1}X_{k+2}+X_kX_{k+1}^2X_{k+2}-X_kX_{k+1}}{(X_kX_{k+1}^{-1}-1)(X_kX_{k+1}-1)(X_{k+1}X_{k+2}^{-1}-1)(X_{k+1}X_{k+2}-1)}z\\
=&\mathscr{T}_{2,5}z+\mathscr{T}_{2,7}z\\
\end{aligned}\]
This together with \eqref{eq:braid-proof-1} and \eqref{eq:braid-proof-2} gives rise to 

\begin{equation}\label{braid-part2}
\begin{aligned}
0=&-\frac{1}{\varepsilon^3}(T_kT_{k+1}T_k-T_{k+1}T_{k}T_{k+1})z=\pounds_1z+\pounds_2z+\pounds_3z+\pounds_4z
\end{aligned}
\end{equation}
with 
\begin{equation}\label{eq:braid-part3-1}
\begin{aligned}
&\pounds_1z:=(\mathscr{T}_{1,1}-\mathscr{T}_{2,1})z\\
&=\left(\frac{X_k^{-2}X_{k+1}^2}{(X_k^{-1}X_{k+1}-1)^2(X_{k+1}X_{k+2}^{-1}-1)}+\frac{X_k^{-1}X_{k+1}}{(X_k^{-1}X_{k+1}-1)(X_{k+1}X_{k+2}^{-1}-1)^2}\right)z
\end{aligned}
\end{equation}
\begin{equation}\label{eq:braid-part3-2}
\begin{aligned}
&\pounds_2z:=(\mathscr{T}_{1,2}-\mathscr{T}_{2,2})z\\
&=-\left(\frac{X_kX_{k+1}^2X_{k+2}}{(X_kX_{k+1}-1)^2(X_{k+1}X_{k+2}-1)}+\frac{X_{k+1}X_{k+2}}{(X_kX_{k+1}-1)(X_{k+1}X_{k+2}-1)^2}\right)z
\end{aligned}
\end{equation}
\begin{equation}\label{eq:braid-part3-3}
\begin{aligned}
&\pounds_3z:=(\mathscr{T}_{1,4}-\mathscr{T}_{2,3})z\\
&=-C_{k}C_{k+2}\left(\frac{X_k^2X_{k+1}^2}{(X_kX_{k+1}-1)^2(X_{k+1}X_{k+2}^{-1}-1)}+\frac{X_kX_{k+1}}{(X_{k}X_{k+1}-1)(X_{k+1}X_{k+2}^{-1}-1)^2}\right)z
\end{aligned}
\end{equation}
\begin{equation}\label{eq:braid-part3-4}
\begin{aligned}
&\pounds_4z:=(\mathscr{T}_{1,3}-\mathscr{T}_{2,4})z\\
&=C_{k}C_{k+2}\left(\frac{X_k^{-1}X_{k+1}^2X_{k+2}}{(X_k^{-1}X_{k+1}-1)^2(X_{k+1}X_{k+2}-1)}+\frac{X_{k+1}X_{k+2}}{(X_{k}^{-1}X_{k+1}-1)(X_{k+1}X_{k+2}-1)^2}\right)z
\end{aligned}
\end{equation}
for any $z\in M_{\underline{i}}$.   

Since $X_k^{\pm 1},X_{k+1}^{\pm 1},X_{k+2}^{\pm 1}$ act semisimply on $M_{\underline{i}},$ $M_{\underline{i}}$ admits a decomposition $M_{\underline{i}}=M^{1,+}_{\un i}\oplus M^{1,-}_{\un i}\oplus M^{2,+}_{\un i}\oplus M^{2,-}_{\un i},$ where $M^{1,\pm }_{\un i}=\{z\in M_{\underline{i}}|X_kz=X_{k+2}z=\mathtt{b}_{\pm}(i_k)\}$ and $M^{2,\pm}_{\un i}=\{z\in M_{\underline{i}}|X_kz=X_{k+2}^{-1}z=\mathtt{b}_\pm(i_k)\}.$  For convenience,  set $a=\mathtt{b}_+(i_k)$ or $a=\mathtt{b}_-(i_k)$ 
and set $b=\mathtt{b}_+(i_{k+1})$ or $b=\mathtt{b}_-(i_{k+1})$. Then we have 
\begin{equation}
\begin{aligned}
X_kz=X_{k+2}z=&az, \quad X_{k+1}z=bz,\quad \text{ for any }z\in M^{1,\pm}_{\un i},\\
X_kz=az, \quad X_{k+2}z=&a^{-1}z, \quad X_{k+1}z=bz,\quad \text{ for any }z\in M^{2,\pm}_{\un i}. 
\end{aligned}
\end{equation}
Thus, for $z\in M^{1,\pm}_{\un i},$  by \eqref{eq:braid-part3-1}-\eqref{eq:braid-part3-4} the following holds 
\[
\pounds_1z==\frac{a^{-1}b(a^{-1}b+1)}{(a^{-1}b-1)^3}z,\qquad \pounds_2z=-\frac{ab(ab+1)}{(ab-1)^3}z, 
\]
\[\begin{aligned}
\pounds_3 z&=-C_kC_{k+1}\left(\frac{a^2b^2}{(ab-1)^2(a^{-1}b-1)}+\frac{ab}{(ab-1)(a^{-1}b-1)^2}\right)z,\\
\pounds_4z &=C_kC_{k+1}\left(\frac{b^2}{(ab-1)(a^{-1}b-1)^2}+\frac{ab}{(ab-1)^2(a^{-1}b-1)}\right)z. 
\end{aligned}\]
Notice that
\[\begin{aligned}
\pounds_3 z+\pounds_4z&=C_kC_{k+1}\left(\frac{-ab(ab-1)}{(ab-1)^2(a^{-1}b-1)}+\frac{ab(a^{-1}b-1)}{(ab-1)(a^{-1}b-1)^2}\right)z\\
&=C_kC_{k+1}\left(\frac{-ab}{(ab-1)(a^{-1}b-1)}+\frac{ab}{(ab-1)(a^{-1}b-1)}\right)z\\
&=0.
\end{aligned}\]
These together with \eqref{braid-part2} give rise to 
\begin{equation}\label{eq:braid-rem-1}
\left(\frac{a^{-1}b(a^{-1}b+1)}{(a^{-1}b-1)^3}-\frac{ab(ab+1)}{(ab-1)^3}\right)z=0
\end{equation}
for any $z\in M^{1,\pm}_{\un i}$. 
Similarly,  for $z\in M^{2,\pm}_{\un i},$  by \eqref{eq:braid-part3-1}-\eqref{eq:braid-part3-4}  the following holds 
\[
\begin{aligned}
\pounds_1z&=\left(\frac{a^{-2}b^2}{(ab-1)(a^{-1}b-1)^2}+\frac{a^{-1}b}{(ab-1)^2(a^{-1}b-1)}\right)\\
\pounds_2z&=-\left(\frac{b^2}{(ab-1)^2(a^{-1}b-1)}+\frac{a^{-1}b}{(ab-1)(a^{-1}b-1)^2}\right)
\end{aligned}
\]
\[
\pounds_3z=-C_kC_{k+1}\frac{ab(ab+1)}{(ab-1)^3}z,\qquad \pounds_4z=C_kC_{k+1}\frac{a^{-1}b(a^{-1}b+1)}{(a^{-1}b-1)^3}z
\]
Notice that
\[\begin{aligned}
\pounds_1z+\pounds_2z&=\left(\frac{a^{-1}b(a^{-1}b-1)}{(ab-1)(a^{-1}b-1)^2}+\frac{-a^{-1}b(ab-1)}{(ab-1)^2(a^{-1}b-1)}\right)z\\
&=\left(\frac{a^{-1}b}{(ab-1)(a^{-1}b-1)}+\frac{-a^{-1}b}{(ab-1)(a^{-1}b-1)}\right)z\\
&=0.
\end{aligned}\]
These together with \eqref{braid-part2} give rise to 
\begin{equation}\label{eq:braid-rem-2}
C_kC_{k+1}\left(\frac{a^{-1}b(a^{-1}b+1)}{(a^{-1}b-1)^3}-\frac{ab(ab+1)}{(ab-1)^3}\right)z=0
\end{equation}
for any $z\in M^{2,\pm}_{\un i}$. 
Combining \eqref{eq:braid-rem-1} and \eqref{eq:braid-rem-2}, we have 
\[
\left(\frac{a^{-1}b(a^{-1}b+1)}{(a^{-1}b-1)^3}-\frac{ab(ab+1)}{(ab-1)^3}\right)z=0,
\]
for any $z\in M_{\un i}$. Hence 
\begin{equation}\label{eq:braid-final}
\frac{b^3}{(a^{-1}b-1)^3(ab-1)^3}(a-a^{-1})\left((b+b^{-1})^2+(a+a^{-1})(b+b^{-1})-8\right)=0.
\end{equation}
Since $a=\mathtt{b}_{\pm}(i_k), b=\mathtt{b}_{\pm}(i_{k+1})$, we have $a+a^{-1}=\mathtt{q}(i_k)$ and $b+b^{-1}=\mathtt{q}(i_{k+1})$. Plugging in to \eqref{eq:braid-final} and  the fact that $i_{k+1} = i_k \pm 1$ verified above, one can easily prove that \eqref{eq:braid-final} leads to 
\begin{equation}\label{eq:braid-final-1}
(\mathtt{q}(i_k)^2-4)(q^{4i_k}-1)(q^{4i_k-2}-1)=0\quad  \text{ if } i_{k+1}=i_k-1
\end{equation}
and
\begin{equation}\label{eq:braid-final-2}
(\mathtt{q}(i_k)^2-4)(q^{4i_k+4}-1)(q^{4i_k+6}-1)=0\quad \text{ if }i_{k+1}=i_k+1. 
\end{equation}
Concerning the solutions of \eqref{eq:braid-final-1} and \eqref{eq:braid-final-2}, we have the following three cases: 
\noindent{\bf Case 1} $h=\infty$: the equation \eqref{eq:braid-final-1} has no solution and the solution of \eqref{eq:braid-final-2} is $i_k=0,i_{k+1}=1.$
{\bf Case 2} $h\geq 3$ is odd:  the equation \eqref{eq:braid-final-1} has no solution and the solution of \eqref{eq:braid-final-2} is $i_k=0,i_{k+1}=1$ or $i_k=\frac{h-3}{2},i_{k+1}=\frac{h-1}{2}.$
{\bf Case 3} $h\geq 4$ is even:  the solution of \eqref{eq:braid-final-1} is $i_k=\frac{h}{2}-1,i_{k+1}=\frac{h}{2}-2$, and the solution of \eqref{eq:braid-final-2} is $i_k=0,i_{k+1}=1$. 
Putting together the proposition is verified. 
\end{proof}

\begin{lem}\label{lem:restr.2}
Let $\underline{i}\in \mathfrak{P}(\aHn).$ Suppose $i_k=i_\ell=u\in\I$ for some $1\leq k<\ell\leq n.$
\begin{enumerate}
\item If $h\geq 3$ is odd or $h=\infty$, then $u+1\in\{i_{k+1},\ldots,i_{\ell-1}\}.$
\item If $h\geq 4$ is even, then  $1\in \{i_{k+1},\ldots,i_{\ell-1}\}$ if $u=0$ and  $u-1\in\{i_{k+1},\ldots,i_{\ell-1}\}$ if $u>0$. 
\end{enumerate}
\end{lem}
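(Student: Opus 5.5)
We argue by induction on $\ell-k$, following the pattern of \cite[Lemma 3.10]{Wa}. Among all pairs $k<\ell$ with $i_k=i_\ell=u$, we may assume $\ell-k$ is minimal, so that $u\notin\{i_{k+1},\ldots,i_{\ell-1}\}$. If some other value repeats strictly between $k$ and $\ell$, we will not need it. The case $\ell=k+1$ is impossible by Lemma~\ref{lem:separate weig.}, since $\un i$ is a weight of a completely splittable module. The case $\ell=k+2$ is exactly Proposition~\ref{lem:restr.1}. For $h$ odd or $h=\infty$, that proposition gives $i_{k+1}=u+1$ in both allowed configurations: $(0,1,0)$ and $(\tfrac{h-3}{2},\tfrac{h-1}{2},\tfrac{h-3}{2})$. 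For $h$ even, it gives $i_{k+1}=1$ when $u=0$ and $i_{k+1}=u-1=\tfrac h2-2$ when $u=\tfrac h2-1$. There is no solution for other $u$, and $\tfrac h2-1>0$ once $h\ge4$. So the base case matches the statement.

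For the inductive step with $\ell-k\geq 3$, suppose for contradiction that the required value ($u+1$ in case (1), and $1$ or $u-1$ in case (2)) does not appear among $i_{k+1},\dots,i_{\ell-1}$. The idea is to use admissible transpositions (Definition~\ref{defn:admissible}) and Corollary~\ref{Cor-2} to move $i_{\ell}$ leftward, or $i_k$ rightward. This produces a new weight in $\mathfrak{P}(\aHn)$ in which the two copies of $u$ are strictly closer while the entries between them form a subset of the old ones. Concretely, look at $i_{\ell-1}$. If $i_{\ell-1}\neq u\pm1$, then $s_{\ell-1}$ is admissible, since $i_{\ell-1}\neq u$ by minimality. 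Then $s_{\ell-1}\cdot\un i$ is a weight with the copies of $u$ at positions $k,\ell-1$, and induction gives a contradiction. The remaining possibility is $i_{\ell-1}=u\pm1$, and since the required value is absent, $i_{\ell-1}$ is the ``other'' neighbour: $u-1$ in case (1) and for $u=0$ in case (2) this cannot occur for $u=0$; in case (2) with $u>0$ it is $u+1$. The same reasoning applies at the left end, $i_{k+1}$.

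So we reduce to $i_{k+1}=i_{\ell-1}=w$, where $w$ is the forbidden neighbour $u-1$ (resp. $u+1$). Now $w$ appears twice between the copies of $u$, at positions $k+1<\ell-1$; these are distinct since $\ell-k\ge3$, and if $\ell-k=3$ they are adjacent equal entries, which contradicts Lemma~\ref{lem:separate weig.}. Apply the induction hypothesis to this inner pair, of length $<\ell-k$. In case (1) it gives $w+1=u$ between positions $k+1$ and $\ell-1$, contradicting minimality. In case (2) with $w=u+1>0$, it gives $w-1=u$ strictly between them, again a contradiction. The boundary subcases in case (2) need checking: $w=0$ would force $1$ to appear, but $w=u+1\ge 2$; and $u=\tfrac h2-1$ would make $w=u+1$ leave $\I$. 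This finishes the induction.

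The main obstacle is making the reduction rigorous. First, the minimal-distance choice must be compatible with applying induction to the inner pair $w,w$, which has smaller distance and so is covered by strong induction on $\ell-k$ over all weights in $\mathfrak{P}(\aHn)$. Second, the edge cases $h=3$ and $h=4$, where $\I$ is tiny and $u\pm1$ may fall outside $\I$, need care. I would organize the induction over all weights simultaneously and handle these small $h$ separately using the explicit list in Proposition~\ref{lem:restr.1}.
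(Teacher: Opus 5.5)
Your proof is correct, but it is organized differently from the paper's. The paper writes out only part (2). It fixes $u\geq 1$ with $u-1$ absent and splits on how often $u+1$ occurs between the two copies. If it occurs zero times, the $u$'s slide together, contradicting Lemma~\ref{lem:separate weig.}. If it occurs once, they slide to $(u,u+1,u)$, contradicting Proposition~\ref{lem:restr.1}. If it occurs at least twice, the paper passes to a nested pair of $(u+1)$'s and repeats, climbing through $u+2,u+3,\ldots$ up to $\frac{h}{2}-1$; there no upper neighbour exists, so the two copies can be slid together.

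You instead use strong induction on the gap $\ell-k$ over all of $\mathfrak{P}(\aHn)$, combined with the minimality reduction that removes copies of $u$ strictly inside. Proposition~\ref{lem:restr.1} is used only for the base case $\ell-k=2$. An endpoint entry not adjacent to $u$ is swapped past by an admissible transposition to shorten the gap. If both $i_{k+1}$ and $i_{\ell-1}$ equal the ``other'' neighbour $w$, the induction hypothesis for the inner pair $(k+1,\ell-1)$ forces $u$ strictly inside, contradicting minimality.

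Your route gives a single argument covering (1) and (2), including $h=\infty$. Termination comes from the shrinking gap rather than from reaching the top of $\I$, and the paper's ``exactly once'' case is absorbed into the base case. The paper's route instead exhibits the nested chain of pairs explicitly, which is the structure reused in Proposition~\ref{prop:restr.2}(6)(b) and Lemma~\ref{lem:weight2}.

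The separate treatment of small $h$ that you anticipate is unnecessary. When $u\pm1\notin\I$ that neighbour simply cannot occur, and your case analysis already handles this. For example, for $h$ odd and $u=\frac{h-1}{2}$ the statement amounts to ``$u$ occurs at most once'', which your induction proves as it stands.
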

\begin{proof}
We shall prove (2) first.  
Now assume $h\geq 4$ is even. If $u = 0$, then $1\in \{i_{k+1},\cdots,i_{\ell-1}\}$; otherwise we can apply admissible transpositions to $\underline i$ to obtain an element in $\mathfrak{P}(\aHn)$ of the form $(\ldots, 0, 0,\ldots),$ which contradicts with Lemma \ref{lem:separate weig.}.

Now assume $u \geq 1$ and $u - 1 \notin \{i_{k+1},\ldots ,i_{\ell-1}\}.$ If $u + 1$ does not appear between $i_{k+1}$ and $i_{\ell-1}$ in $\underline i$, then we can apply admissible transpositions to $\underline i$ to obtain an element in $\mathfrak{P}(\aHn)$ of the form $(\ldots ,u,u, \ldots),$ which contradicts with Lemma \ref{lem:separate weig.}. If $u + 1$ appears only once between $i_{k+1}$ and $i_{\ell-1}$ in $\underline i$, then we can apply admissible transpositions to $\underline i$ to obtain an element in $\mathfrak{P}(\aHn)$ of the form $(\ldots,u,u + 1,u,\ldots),$ which contradicts Proposition \ref{lem:restr.1}. Hence $u + 1$ appears at least twice between $i_{k+1}$ and $i_{\ell-1}$ in $\underline i$. This implies that there exist $k<k_1 < \ell_1 < \ell$ such that
\[i_{k_1} = i_{\ell_1} = u + 1,\{u,u + 1\}\cap \{i_{k_1+1},\ldots,i_{\ell_1-1}\}=\emptyset.
\]
An identical argument shows that there exist $k_1 < k_2 < \ell_2 < \ell_1$ such that
\[i_{k_2} = i_{\ell_2} = u + 2,\{u,u + 1,u+2\}\cap \{i_{k_2+1},\ldots,i_{\ell_2-1}\}=\emptyset.
\]
Continuing in this way, we obtain $k<s<t<\ell$ such that
\[i_{s} = i_{t} = \frac{h}{2}-1,\{u,u + 1,\ldots,\frac{h}{2}-1\}\cap \{i_{s+1},\ldots,i_{t-1}\}=\emptyset.
\]
Then we can apply admissible transpositions to $\underline i$ to obtain an element in $\mathfrak{P}(\aHn)$ of the form $(\ldots,\frac{h}{2}-1,\frac{h}{2}-1,\ldots),$ which contradicts with Proposition  \ref{lem:separate weig.}. This proves (2).  We leave the proof of (1) to the reader as it can verified similarly or using the similar argument in  the proof of \cite[Lemma 3.13]{Wa}. 
\end{proof}
\begin{prop}\label{prop:restr.2}
Let $\underline{i}\in \mathfrak{P}(\aHn).$ Then\\
$(1)$ $i_k\neq i_{k+1}$ for all $1\leq k\leq n-1.$\\
$(2)$ If $h\geq 3$ is odd, then $\frac{h-1}{2}$ appears at most once in $\underline{i}.$\\
$(3)$ If $i_k=i_\ell=0$ for some $1\leq k<\ell\leq n,$ then $1\in\{i_{k+1},\ldots,i_{\ell-1}\}.$\\
$(4)$ If $h=\infty$ and $i_k=i_\ell\geq 1$ for some $1\leq k<\ell\leq n,$ then $\{i_{k}-1,i_k+1\}\subset\{i_{k+1},\ldots,i_{\ell-1}\}.$\\
$(5)$ If $h\geq 3$ is odd and $i_k=i_\ell\geq 1$ for some $1\leq k<\ell\leq n,$ then either of the following holds:
\begin{itemize}
\item[(a)] $\{i_{k}-1,i_k+1\}\subset\{i_{k+1},\ldots,i_{\ell-1}\}.$

\item[(b)] there exists a sequence of integers $k\leq r_0<r_1<\cdots<r_{\frac{h-3}{2}-i_k}<q<t_{\frac{h-3}{2}-i_k}<\cdots<t_1<t_0\leq \ell$ such that $i_q=\frac{h-1}{2},i_{r_j}=i_{t_j}=i_k+j$ and $i_k+j$ does not appear betwwen $i_{r_j}$ and $i_{t_j}$ in $\underline{i}$ for each $0\leq j\leq \frac{h-3}{2}-i_k$.
\end{itemize}
$(6)$ If $h\geq 4$ is even and $i_k=i_\ell\geq 1$ for some $1\leq k<\ell\leq n,$ then:
\begin{itemize}
\item[(a)] If $i_k=i_\ell<\frac{h}{2}-1,$ then $\{i_{k}-1,i_k+1\}\subset\{i_{k+1},\ldots,i_{\ell-1}\}.$

\item[(b)] If $i_k=i_\ell=\frac{h}{2}-1,$ then $i_{k}-1\in\{i_{k+1},\ldots,i_{\ell-1}\}.$ Moreover, denote $m=\#\{t|i_t=\frac{h}{2}-1,k\leq t\leq \ell\}$  then there exists a unique set of integers $1\leq k_{a;b}\leq n$ with $1\leq a \leq \min\{m,\frac{h}{2}\},1\leq b \leq m-(a-1)$,  such that
\begin{itemize}
\item[(i)]
$i_{k_{a;b}}=\frac{h}{2}-a$ for all $a,b.$
\item[(ii)]
$k_{a;b}< k_{a+1;b}<k_{a;b+1}$ for all $a,b.$
\item[(iii)]
$\frac{h}{2}-a$ does not appear between $i_{k_{a;b}}$ and $i_{k_{a;b+1}}$ in $\un i$ for all $a,b.$ Moreover, $k_{a+1;b}$ is the unique index in $\{k_{a;b},k_{a;b}+1,\ldots, k_{a,b+1}\}$ such that $i_{k_{a+1;b}}=\frac{h}{2}-a-1.$
\end{itemize}
\end{itemize}

\end{prop}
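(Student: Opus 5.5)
The plan is to derive every part from three earlier results. Lemma~\ref{lem:separate weig.} forbids adjacent equal entries. Proposition~\ref{lem:restr.1} restricts patterns $(u,v,u)$. Lemma~\ref{lem:restr.2} supplies an intermediate entry. The engine throughout is the same: Corollary~\ref{Cor-2} says that admissible transpositions (swapping neighbours that differ by more than one) keep us inside $\mathfrak{P}(\aHn)$. So if two equal entries $i_k=i_\ell=u$ are separated only by entries not equal to $u\pm1$, we can slide them together. Part (1) is exactly Lemma~\ref{lem:separate weig.}, and part (3) is Lemma~\ref{lem:restr.2} with $u=0$. For (3) in the odd case one notes that $u+1=1$ is the required entry.

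For (2), take two occurrences $i_k=i_\ell=\frac{h-1}{2}$ that are consecutive, i.e.\ with no $\frac{h-1}{2}$ strictly between them. Since $\frac{h+1}{2}\notin\I$, only $\frac{h-3}{2}$ fails to commute past them. If at most one $\frac{h-3}{2}$ lies between, admissible moves produce either $(\ldots,\frac{h-1}{2},\frac{h-1}{2},\ldots)$ or $(\ldots,\frac{h-1}{2},\frac{h-3}{2},\frac{h-1}{2},\ldots)$. The first contradicts Lemma~\ref{lem:separate weig.}. The second contradicts Proposition~\ref{lem:restr.1}(2), whose only allowed middle-heavy pattern is $\frac{h-3}{2},\frac{h-1}{2},\frac{h-3}{2}$. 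If at least two $\frac{h-3}{2}$ lie between, pick two consecutive ones. Between them there must be a $\frac{h-1}{2}$ (contradicting consecutiveness of our pair) or a $\frac{h-5}{2}$. I would then run a descent, as in the proof of Lemma~\ref{lem:restr.2}: nested consecutive pairs with values decreasing by one at each step, ending at the value $0$. There Lemma~\ref{lem:restr.2} forces a $1$ between them, which again is impossible by the nesting. This downward induction on values is the most delicate bookkeeping.

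For (4) and (6)(a), Lemma~\ref{lem:restr.2} already gives one neighbour value: $u+1$, or $u-1$ in the even case. It remains to get the other, so suppose it is absent and argue as in (2). Assume for instance $u-1$ absent (case $h=\infty$). If $u+1$ appears exactly once between $k$ and $\ell$, sliding gives $(u,u+1,u)$ with $u\ge1$, which Proposition~\ref{lem:restr.1} forbids. If $u+1$ appears at least twice, take a consecutive pair of $u+1$'s. Between them there is no $u$ (choose $k,\ell$ consecutive occurrences of $u$ at the outset) and so there must be a $u+2$. This ascends forever, which is impossible for finite $n$. When $h$ is even the ascent terminates at $\frac{h}{2}-1$, where Lemma~\ref{lem:restr.2}(2) demands a $\frac{h}{2}-2$ strictly inside, contradicting the nesting. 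The symmetric case of a missing $u+1$ (when $u+1\le\frac h2-1$) is handled by descent to $0$ using part (3).

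For (5) with $h$ odd the same ascent argument either finds both neighbours, giving (a), or ascends to the top value $\frac{h-1}{2}$, which by (2) can occur only once. Recording the nested pairs $r_j<t_j$ of values $i_k+j$ produced along the way yields (b). For (6)(b) the first claim is Lemma~\ref{lem:restr.2}(2). The array $k_{a;b}$ is built inductively in $a$. Row $a=1$ consists of the $m$ occurrences of $\frac h2-1$ in $[k,\ell]$. Between consecutive entries of row $a$ (values $\frac h2-a$), Lemma~\ref{lem:restr.2} and the ascent-exclusion argument force exactly one entry $\frac h2-a-1$. Here uniqueness holds because two such entries would, by the odd/even analysis above, require a $\frac h2-a$ between them. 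These forced entries form row $a+1$, of length one less. The main obstacle is proving this ``exactly one'' claim cleanly and checking the ordering (ii), which I expect to follow from the interlacing just described.
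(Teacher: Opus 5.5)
Your proposal is correct and follows essentially the paper's route: you slide entries via admissible transpositions and combine Lemma~\ref{lem:separate weig.}, Proposition~\ref{lem:restr.1} and Lemma~\ref{lem:restr.2}, using nested ascents/descents for (4), (5), (6)(a) and the row-by-row construction for (6)(b). The one detour is (2), where the paper simply applies Lemma~\ref{lem:restr.2}(1) with $u=\frac{h-1}{2}$ (it would force $\frac{h+1}{2}\notin\I$ to appear), so your hand-run descent is valid but redundant; conversely, your argument for uniqueness of $k_{a+1;b}$ in (6)(b), via (6)(a) and (3), supplies a step the paper attributes to Lemma~\ref{lem:restr.2} alone, which only gives existence.
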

\begin{proof}
(1) It follows from Lemma~\ref{lem:separate weig.}.

(2) If $\frac{h-1}{2}$ appears more than once in $\underline{i}$,
then it follows from Lemma~\ref{lem:restr.2} that $\frac{h+1}{2}$
appears in $\underline{i}$ which is impossible since
$\frac{h+1}{2}\notin\I$.

(3) It follows from Lemma~\ref{lem:restr.2}.

(4) This is same as \cite[Proposition 3.14(4)]{Wa}. Now suppose $h=\infty$ and $i_k=i_l=u\geq 1$ for some $1\leq k<l\leq
n$. Without loss of generality, we can assume
$u\notin\{i_{k+1},\ldots,i_{l-1}\}$. By Lemma~\ref{lem:restr.2} we
have $u+1\in\{i_{k+1},\ldots,i_{l-1}\}$ and hence it suffices to
show $u-1\in\{i_{k+1},\ldots,i_{l-1}\}$. Now assume
$u-1\notin\{i_{k+1},\ldots,i_{l-1}\}$. Then $u+1$ must appear in the
subsequence $(i_{k+1},\ldots,i_{l-1})$ at least twice, otherwise we
can apply admissible transpositions to $\underline{i}$ to obtain an
element in $\mathfrak{P}(\aHn)$ of the form $(\ldots,u,u+1,u\ldots)$ which
contradicts Proposition~\ref{lem:restr.1}. Hence there exist $k<k_1<l_1<l$
such that
$$
i_{k_1}=i_{l_1}=u+1,\quad  u+1 \text{ does not appear between
}i_{k_1}\text{ and }i_{l_1} \text{ in }\underline{i}.
$$
Since
$u\notin\{i_{k+1},\ldots,i_{l-1}\}\supseteq\{i_{k_1+1},\ldots,i_{l_1-1}\}$,
a similar argument gives $k_2,l_2$ with $k_1<k_2<l_2<l_1$ such
that
$$
i_{k_2}=i_{l_2}=u+2, \quad u+2 \text{ does not appear between
}i_{k_2}\text{ and }i_{l_2} \text{ in }\underline{i}.
$$
Continuing in this way we see that any integer greater than $u$ will
appear in the subsequence $(i_{k+1},\ldots,i_{l-1})$ which is
impossible. Hence $u-1\in\{i_{k+1},\ldots, i_{l-1}\}$.

(5)  We leave the proof of (5) to the reader as it is the same as \cite[Proposition 3.14(5)]{Wa}.

(6) Firstly,  suppose $h\geq 4$ is even and $1\leq i_k=i_\ell=u< \frac{h}{2}-1$ for some $1\leq k <\ell\leq n$. Without loss of generality, we can assume $u\notin\{i_{k+1},\ldots,i_{\ell-1}\}.$ By Proposition \ref{lem:restr.1} we have $u-1\in \{i_{k+1},\ldots,i_{\ell-1}\}.$ Now assume $u+1\notin\{i_{k+1},\ldots,i_{\ell-1}\}$. Then $u-1$ must appear twice between $i_{k+1}$ and $i_{\ell-1}$ in $\underline i$, otherwise we can apply admissible transpositions to $\underline i$ to obtain an element in $\mathfrak{P}(\aHn)$ of the form $(\ldots,u,u - 1,u,\ldots),$ which contradicts Proposition \ref{lem:restr.1}. This implies that there exist $k<k_1 < \ell_1 < \ell$ such that
\[i_{k_1} = i_{\ell_1} = u - 1,\{u,u - 1\}\cap \{i_{k_1+1},\ldots,i_{\ell_1-1}\}=\emptyset.
\]
An identical argument shows that there exist $k_1 < k_2 < \ell_2 < \ell_1$ such that
\[i_{k_2} = i_{\ell_2} = u - 2,\{u,u - 1,u-2\}\cap \{i_{k_2+1},\ldots,i_{\ell_2-1}\}=\emptyset.
\]
Continuing in this way, we obtain $k<s<t<\ell$ such that
\[i_{s} = i_{t} = 0,\{u,u - 1,\ldots,0\}\cap \{i_{s+1},\ldots,i_{t-1}\}=\emptyset.
\]
Then we can apply admissible transpositions to $\underline i$ to obtain an element in $\mathfrak{P}(\aHn)$ of the form $(\ldots,0,0,\ldots),$ which contradicts with Lemma \ref{lem:separate weig.}. This proves part (a) in (6). 

{\color{black} Secondly, assume  $1\leq i_k=i_\ell= \frac{h}{2}-1$ for some $1\leq k <\ell\leq n$. Let $m=\#\{t|i_t=\frac{h}{2}-1,k\leq t\leq \ell\}$, then there exists $k\leq k_{1;1}<k_{1;2}<\cdots<k_{1;m}\leq\ell$ such  $i_{k_{1;1}}=i_{k_{1;2}}=\cdots=i_{k_{1;m}}=\frac{h}{2}-1$. For each $1\leq b\leq m-1$, we have $i_{k_{1;b}}=i_{k_{1;b+1}}=\frac{h}{2}-1$ and $i_s\neq \frac{h}{2}-1$ for all $k_{1;b}<s<k_{1;b+1}$, then by Lemma \ref{lem:restr.2} there exists a unique $k_{2;b}$ satisfying $k_{1;b}<k_{2;b}<k_{1;b+1}$ and moreover $i_{k_{2;b}}=\frac{h}{2}-2, i_s\neq \frac{h}{2}-2$ for $k_{1;b}<s\neq k_{2;b}<k_{1;b+1}$. Now we obtain $i_{k_{2;1}}=i_{k_{2;2}}=\cdots=i_{k_{2;m-1}}=\frac{h}{2}-2$ and moreover   $i_s\neq \frac{h}{2}-2$ for all $k_{2;b}<s<k_{2;b+1}$ and $1\leq b\leq m-2$. Again by Lemma \ref{lem:restr.2} we obtain that  there exists $k_{3;b}$ for each $1\leq b\leq m-2$ satisfying $k_{2;b}<k_{3;b}<k_{2;b+1}$ and $i_{k_{3;b}}=\frac{h}{2}-3$.  Continuing this way, we eventually obtain a set of integers $\{k_{a;b}\mid 1\leq a\leq m, 1\leq b\leq m-a+1\}$ satisfying the properties (i), (ii), (iii) if $1\leq m\leq \frac{h}{2}$. If $m>\frac{h}{2}$, then while continuing the above way one can obtain integers $k_{\frac{h}{2}-1;b}<k_{\frac{h}{2};b}<k_{\frac{h}{2}-1;b+1}$  such that $i_{k_{\frac{h}{2}};b}=0$ for each $1\leq b\leq m-\frac{h}{2}+1$. 
This proves the argument (b) in (6). }
\end{proof}

{
\section{Classification of irreducible completely splittable $\aHn$-modules}\label{sec:classifications}

In this section, we shall give an explicit construction and a classification of irreducible completely splittable $\aHn$-modules.

Recall that for $\un i \in \mathbb I^n$ and $1 \leq k\leq n-1,$ the simple transposition $s_k$ is said to be admissible with respect to $\underline i$ if $i_k\neq i_{k+1}\pm 1.$ Define an equivalence relation $\sim$ on $\mathbb I^n$ by declaring that $\underline i \sim \underline j$ if there exist $s_{k_1},\cdots,s_{k_t}$ for some $t \in \mathbb Z_+$ such that $\underline j = (s_{k_t}\cdots s_{k_1} ) \cdot i$ and $s_{k_\ell}$ is admissible with respect to $(s_{k_{\ell-1}}\cdots s_{k_1} ) \cdot \underline i$ for $1 \leq \ell\leq t.$

Set 
\begin{equation}
\mathfrak{P}'(\aHn)=\{\underline i \in\mathbb I^n\mid \un i \text{ satisfies (1)-(6) in Proposition \ref{prop:restr.2}} \}.
\end{equation}
 Observe that if $\underline i \in \mathfrak{P}'(\aHn)$ and $s_k$ is admissible with respect to $\underline i,$ then the properties in Proposition \ref{prop:restr.2} hold for $s_k\cdot\underline i$ and hence $s_k \cdot \underline i \in \mathfrak{P}^\prime (\aHn).$ This means there is an equivalence relation denoted by $\sim$ on $\mathfrak{P}^\prime (\aHn)$ inherited from the equivalence relation $\sim$ on $\mathbb I^n$. For each $\underline i \in \mathfrak{P}'(\aHn),$ set $\Lambda_{\un i}:=\{\un j\mid \un j\sim \un i\}$ and 
\begin{equation}\label{Pi}
\begin{aligned}
P_{\underline i}=\big\{\tau=s_{k_t}\cdots s_{k_1}\mid & s_{k_\ell}\text{ is admissible with respect to }\\
&s_{k_{\ell-1}}\cdots s_{k_1}\cdot \underline i,1\leq \ell\leq t,t\in\mathbb{Z}_+\big\}.
\end{aligned}
\end{equation}
 The following can be proved using arguments similar to the proof of \cite[Lemma 4.1]{Wa}. 
\begin{lem}\label{lem:Pi}
Let  $\underline i \in\mathfrak{P}'(\aHn).$ Then the map
\[\varphi:P_{\underline i}\rightarrow \Lambda_{\un i},\tau\mapsto\tau\cdot\underline i\]
is bijective.
\end{lem}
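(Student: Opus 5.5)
Surjectivity of $\varphi$ is immediate from the definitions: if $\un j\in\Lambda_{\un i}$, then $\un j=(s_{k_t}\cdots s_{k_1})\cdot\un i$ with each $s_{k_\ell}$ admissible at its stage, so $\tau=s_{k_t}\cdots s_{k_1}\in P_{\un i}$ and $\varphi(\tau)=\un j$. All the work is injectivity: if $\tau,\tau'\in P_{\un i}$ and $\tau\cdot\un i=\tau'\cdot\un i$, we must show $\tau=\tau'$ in $\mathfrak{S}_n$. Equivalently, let $\tau'' =\tau^{-1}\tau'$. Since admissibility is symmetric ($s_k$ is admissible for $\un j$ iff it is admissible for $s_k\cdot\un j$), reversing a chain shows $\tau^{-1}\in P_{\tau\cdot\un i}$, and concatenating chains gives $\tau''\in P_{\un i}$ with $\tau''\cdot\un i=\un i$. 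So it suffices to prove: if $\sigma\in P_{\un i}$ fixes $\un i$, then $\sigma=1$.

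Following \cite[Lemma 4.1]{Wa}, I would track individual entries. Label the positions of $\un i$ by $1,\ldots,n$ and view an admissible chain as moving these labelled entries; an admissible transposition $s_k$ swaps two adjacent entries whose values differ by something other than $\pm1$, and (by Proposition~\ref{prop:restr.2}(1), which holds throughout $\Lambda_{\un i}\subseteq\mathfrak{P}'(\aHn)$) never two equal values. The key claim is: for two labelled entries with values $u,v$ satisfying $|u-v|\le 1$, their relative order is preserved along any admissible chain. Indeed, to change their relative order they must at some step be adjacent and be swapped, which is forbidden if $|u-v|=1$ (not admissible) or $u=v$ (would produce a weight $(\ldots,u,u,\ldots)$, violating (1) of Proposition~\ref{prop:restr.2} for the element of $\mathfrak{P}'(\aHn)$ just before the swap, since equal adjacent entries never occur). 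Hence for each value $u$ the occurrences of $u$ keep their relative order, so if $\sigma\cdot\un i=\un i$ then $\sigma$ must send the $r$-th occurrence of $u$ to the $r$-th occurrence of $u$ for every $u$ and $r$, i.e.\ $\sigma$ fixes every position. Thus $\sigma=1$.

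The only delicate point, which I expect to be the main obstacle, is making the ``relative order of equal values is preserved'' argument rigorous: one must check that in a chain two entries of the same value can only cross by being adjacent at some intermediate stage, and that intermediate stage lies in $\Lambda_{\un i}\subseteq\mathfrak{P}'(\aHn)$ (using the observation preceding the lemma that $\mathfrak{P}'(\aHn)$ is closed under admissible transpositions), so Proposition~\ref{prop:restr.2}(1) forbids them being adjacent at all. Formally I would induct on the chain length, maintaining the invariant that the permutation built so far preserves the order of each value class; each admissible step swaps two adjacent entries of distinct values and so preserves the invariant. Note that the properties (2)--(6) are not needed beyond guaranteeing closure of $\mathfrak{P}'(\aHn)$; only distinctness of adjacent entries is used.
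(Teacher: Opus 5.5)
Your proposal is correct and is essentially the paper's proof. You reduce in the same way to showing that an admissible chain fixing $\underline{i}$ is trivial, and you use the same invariant. The paper packages that invariant as the table $\Gamma(\underline{j})$ whose $a$th column lists, in increasing order, the positions $k$ with $j_k=a$; it satisfies $s_k\cdot\Gamma(\underline{j})=\Gamma(s_k\cdot\underline{j})$ because $j_k\neq j_{k+1}$, which is exactly your statement that the occurrences of each value keep their relative order. Your reduction via $\tau^{-1}\tau'$, using $\tau\cdot\underline{i}=\tau'\cdot\underline{i}$ to concatenate chains, is actually stated more carefully than the paper's blanket claim that $\sigma^{-1}\tau\in P_{\underline{i}}$ for arbitrary $\tau,\sigma\in P_{\underline{i}}$.
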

\begin{proof}
By the definitions of $P_{\underline{i}}$ and the equivalence
relation $\sim$ on $\mathfrak{P}'(\aHn)$, one can check that $\varphi$ is
surjective. Note that if $\tau, \sigma\in P_{\underline{i}}$ then
$\sigma^{-1}\tau\in P_{\underline{i}}$. Therefore, to check the
injectivity of $\varphi$, it suffices to show that for $\tau\in
P_{\underline{i}}$ if $\tau\cdot\underline{i}=\underline{i}$ then
$\tau=1$. Associated to each $\underline{j}\in \mathfrak{P}'(\aHn)$,
there exists a unique table $\Gamma(\underline{j})$ whose $a$th column
consists of all numbers $k$ with $j_k=a$ and is increasing for each
$a\in\I$. Since $\underline{j}\in \mathfrak{P}'(\aHn)$, $j_k\neq j_{k+1}$ and
hence $k$ and $k+1$ are in different columns in
$\Gamma(\underline{j})$ for each $1\leq k\leq n-1$. This means each
simple transposition $s_k$ can naturally act on the table
$\Gamma(\underline{j})$ by switching $k$ and $k+1$ to obtain a new
table denoted by $s_k\cdot\Gamma(\underline{j})$. 
It is clear that
\begin{align}
s_k\cdot\Gamma(\underline{j})=\Gamma(s_k\cdot\underline{j}), \quad
1\leq k\leq n-1.\label{table}
\end{align}

Since $\tau\in P_{\underline{i}}$, we can write
$\tau=s_{k_t}s_{k_{t-1}}\cdots s_{k_1}$ so that $s_{k_l}$ is
admissible with respect to $s_{k_{l-1}}\cdots
s_{k_1}\cdot\underline{i}$ for each $1\leq l\leq t$. Observe that
$s_{k_{l-1}}\cdots s_{k_1}\cdot\underline{i}\in \mathfrak{P'}(\aHn)$ and
hence there exists a table $\Gamma(s_{k_{l-1}}\cdots
s_{k_1}\cdot\underline{i})$ as defined above for $1\leq l\leq t$.
By (\ref{table}) we have
$$
s_{k_l}\cdot\Gamma(s_{k_{l-1}}\cdots s_{k_1}\cdot\underline{i})
=\Gamma(s_{k_l}s_{k_{l-1}}\cdots s_{k_1}\cdot\underline{i})
$$
for $1\leq l\leq t$. This implies
$$
\tau\cdot\Gamma(\underline{i})=s_{k_t}\cdots
s_{k_1}\cdot\Gamma(\underline{i})=\Gamma(s_{k_t}\cdots
s_{k_1}\cdot\underline{i})=\Gamma(\underline{i}).
$$
Therefore $\tau=1$.

\end{proof}

Let $M\in \op{Rep}_{\mathbb I} \aHn$ be irreducible completely splittable and suppose $M_{\un i}\neq 0$ for some $\un i = (i_1,\cdots,i_n)\in\mathbb I^n.$ Recall the linear operators $\Xi_k$ and $\Omega_k$ on $M_{\un i}$ from (\ref{Xi}) and (\ref{Omega}). If $s_k$ is admissible with respect to $\un i,$ then $i_k\neq i_{k+1} \pm 1$ and hence on $M_{\un i}$ the linear operator $\Omega_k$ acts as a nonzero scalar, which is invertible. Therefore we can define the linear map $\widehat\Phi_k$ as follows:
\[\begin{aligned}
\widehat\Phi_k&:M_{\un i}\rightarrow M,\\
&z\mapsto (T_k-\Xi_k)\frac{1}{\Omega_k}z.
\end{aligned}\]
The following is parallel to the degenerate case \cite[Lemma 4.2]{Wa} and the proof is similar. 
\begin{lem}\label{lem:newoperator}
Let $M\in \op{Rep}_{\mathbb I} \aHn$ be irreducible completely splittable and $1\leq k\leq n-1$. Assume that $M_{\un i}\neq 0$ and that $s_k$ is admissible with respect to $\un i$ for some $\un i = (i_1,\cdots ,i_n)\in \mathbb I^n$. Then,
\begin{itemize}
\item[(1)]
The action of $\widehat\Phi_k$ on $M_{\un i}$ satisfies
\begin{equation}\label{XC-interprime}
\begin{aligned}
\widehat{\Phi}_kX_k^{\pm1}&=X_{k+1}^{\pm1}\widehat{\Phi}_k,\widehat{\Phi}_kX_{k+1}^{\pm1}=X_k^{\pm1}\widehat{\Phi}_k,\widehat{\Phi}_kX_{l}^{\pm1}=X_{l}^{\pm1}\widehat{\Phi}_k,\\
\widehat{\Phi}_kC_k&=C_{k+1}\widehat{\Phi}_k,\widehat{\Phi}_kC_{k+1}=C_k\widehat{\Phi}_k,\widehat{\Phi}_kC_l=C_l\widehat{\Phi}_k
\end{aligned}
\end{equation}
for  $1\leq l\leq n$ with $l\neq k,k+1.$ Hence for each $z\in M_{\un i},\widehat\Phi_k(z)\in M_{s_k\cdot \un i}.$
\item[(2)] 
$\widehat\Phi^2_k =1,$ and hence $\widehat\Phi_k: M_{\un i} \rightarrow M_{s_k\cdot \un i}$ is a bijection. 
\item[(3)] The following holds whenever both sides are well-defined: 
\begin{equation}\label{braidprime}
\begin{aligned}
\widehat{\Phi}_k\widehat{\Phi}_l&=\widehat{\Phi}_l\widehat{\Phi}_k,\quad \text{ for } 1\leq k,l\leq n-1 \text{ with }|k-l|>1,\\
\widehat{\Phi}_k\widehat{\Phi}_{k+1}\widehat{\Phi}_k&=\widehat{\Phi}_{k+1}\widehat{\Phi}_k\widehat{\Phi}_{k+1},\quad \text{ for } 1\leq k\leq n-2. 
\end{aligned}
\end{equation}
\end{itemize}
\end{lem}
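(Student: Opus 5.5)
The plan is to compare $\widehat\Phi_k$ with the intertwining element $\widetilde{\Phi}_k$ of \eqref{intertwinNon-dege}, and read off all three properties from \eqref{Sqinter}--\eqref{Braidinter}.

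\emph{Step 1: identify $\widehat\Phi_k$ with a normalized $\widetilde\Phi_k$.} On $M_{\un i}$ the operators $X_k^{\pm1}$, $X_{k+1}^{\pm1}$ act semisimply with $i_k\neq i_{k+1}$, so $\mathsf{z}_k$ acts as the nonzero scalar $\mathtt{q}(i_k)-\mathtt{q}(i_{k+1})$. The terms $\frac{1}{X_kX_{k+1}^{\pm1}-1}$ are invertible there. Comparing \eqref{intertwinNon-dege} with \eqref{Xi} gives, on $M_{\un i}$,
$$\widetilde\Phi_k=\mathsf{z}_k^2\,(T_k-\Xi_k).$$
The only care needed is the order of factors: $\mathsf{z}_k^2$ is symmetric in $X_k,X_{k+1}$ and commutes with $C_kC_{k+1}$, so it is central enough to be pulled out. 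Also, $\Omega_k$ acts on $M_{\un i}$ by a scalar $\omega$ depending only on $\mathtt{q}(i_k),\mathtt{q}(i_{k+1})$ by \eqref{Omega}, and it is symmetric in these two values. Hence
$$\widehat\Phi_k=\bigl(\omega(\mathtt{q}(i_k)-\mathtt{q}(i_{k+1}))^2\bigr)^{-1}\widetilde\Phi_k \quad\text{on } M_{\un i}.$$

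\emph{Step 2: part (1).} Since $\widehat\Phi_k$ is a scalar multiple of $\widetilde\Phi_k$ on $M_{\un i}$, the relations \eqref{XC-interprime} follow from \eqref{Xinter} and \eqref{Cinter}. Here one uses that $X_l^{\pm1}$ and $C_l$ map $M_{\un i}$ into $M_{\un i}$ or into the fixed $X$-eigenspace decomposition; for $C$ one passes through $L(\un i)$-structure, and the scalar is unchanged because $\mathtt{q}$ values are preserved. It follows that $\widehat\Phi_k z$ is a simultaneous eigenvector with the swapped eigenvalues, so $\widehat\Phi_k z\in M_{s_k\cdot\un i}$. Note also that $s_k$ is admissible for $s_k\cdot\un i$.

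\emph{Step 3: part (2).} Apply the Step 1 identity on $M_{\un i}$ and on $M_{s_k\cdot\un i}$. The two normalizing scalars coincide by symmetry, so
$$\widehat\Phi_k^2=\bigl(\omega^2(\mathtt{q}(i_k)-\mathtt{q}(i_{k+1}))^4\bigr)^{-1}\widetilde\Phi_k^2 .$$
By \eqref{eq:Phi-square}, on $M_{\un i}$
$$\widetilde\Phi_k^2=\mathsf{z}_k^4\bigl(1-\varepsilon^2(\tfrac{X_kX_{k+1}}{(X_kX_{k+1}-1)^2}+\tfrac{X_kX_{k+1}^{-1}}{(X_kX_{k+1}^{-1}-1)^2})\bigr)=\mathsf{z}_k^4\,\omega^2$$
by the definition of $\Omega_k$. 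Hence $\widehat\Phi_k^2=1$, and this gives the bijection.

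\emph{Step 4: part (3).} For the commuting relation with $|k-l|>1$, both sides are products of normalized $\widetilde\Phi$'s. The scalars attached to the index pairs $(k,k+1)$ and $(l,l+1)$ are unaffected by the other transposition, so the claim follows from \eqref{Braidinter}.

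For the braid relation, when both sides are defined on $M_{\un i}$, each side equals
$$c\,\widetilde\Phi_k\widetilde\Phi_{k+1}\widetilde\Phi_k \quad\text{resp.}\quad c'\,\widetilde\Phi_{k+1}\widetilde\Phi_k\widetilde\Phi_{k+1}.$$
Here $c,c'$ are inverses of products of factors $\omega(\mathtt{q}(a)-\mathtt{q}(b))^2$ over the pairs swapped along the way. In both words the pairs encountered are $\{i_k,i_{k+1}\}$, $\{i_k,i_{k+2}\}$ and $\{i_{k+1},i_{k+2}\}$, each exactly once, so $c=c'$ and \eqref{Braidinter} finishes the argument.

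The main obstacle is the bookkeeping in Step 1: checking that the identity $\widetilde\Phi_k=\mathsf{z}_k^2(T_k-\Xi_k)$ holds exactly, i.e. that the rational expressions in \eqref{intertwinNon-dege} agree with \eqref{Xi} including the placement of $C_kC_{k+1}$. I would first verify this in the $\mathcal{H}^{\mathrm{aff}}_2(q)$ setting, then use it uniformly in Steps 2--4.
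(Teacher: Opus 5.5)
Your proposal is correct and follows essentially the same route as the paper. The paper writes $\widehat\Phi_k z=\frac{1}{\mathsf{z}_k^2}\widetilde\Phi_k\frac{1}{\Omega_k}z$ on $M_{\un i}$ and reads off (1)--(3) from \eqref{Xinter}--\eqref{Braidinter} and \eqref{eq:Phi-square}, with the same symmetric-scalar bookkeeping for the braid relation that you describe. The only point to tighten is your justification in Step~1 for why $\mathsf{z}_k^2$ acts on $(T_k-\Xi_k)z$ by the scalar $(\mathtt{q}(i_k)-\mathtt{q}(i_{k+1}))^2$. Symmetry in $X_k,X_{k+1}$ is not enough: for instance, $X_kX_{k+1}$ does not commute with $T_k$. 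The right reason is that $\mathsf{z}_k^2$ is a symmetric polynomial in $X_k+X_k^{-1}$ and $X_{k+1}+X_{k+1}^{-1}$, hence commutes with $T_k$. Alternatively, just note that $(T_k-\Xi_k)z\in M_{s_k\cdot\un i}$ by Corollary~\ref{Cor-2}.
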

\begin{proof}
Notice that 
\begin{equation}
\widehat\Phi_k(z)=\frac{1}{\mathsf{z}_k^2}\widetilde\Phi_k\frac{1}{\Omega_k}z\label{op}
\end{equation}
for any $z\in M_{\un i}$. Then it's esay to check \eqref{XC-interprime} via (\ref{Xinter})-(\ref{Cinter}). By (\ref{XC-interprime}), we have for any $z\in M_{\un i},$
\[(X_k+X^{-1}_k-\mathtt{q}(i_{k+1}))\widehat\Phi_k (z)=0,(X_{k+1}+X^{-1}_{k+1}-\mathtt{q}(i_k))\widehat\Phi_k (z)=0,(X_l+X^{-1}_l-\mathtt{q}(i_l))\widehat\Phi_k (z)=0\]
for all $\ell\neq k,k+1.$ This means $\widehat\Phi_kz\in M_{s_k\un i}.$ Hence (1) holds. 

Meanwhile, it's easy to see $\widetilde\Phi_k\Omega_kz=\Omega_k\widetilde\Phi_kz,\widetilde\Phi_k\mathsf{z}_kz=-\mathsf{z}_k\widetilde\Phi_kz$ for any $z\in M_{\un i}$ by \eqref{eq:zi}, \eqref{Xinter} and \eqref{Omega-0}. By (\ref{op}) and (\ref{Sqinter}), for $z\in M_{\un i},$
\[\widehat\Phi_k^2(z)=\frac{1}{\mathsf{z}_k^4}\widetilde \Phi_k^2\frac{1}{\Omega_k^2}z\]
for any $z\in M_{\un i}$. Since $\widetilde \Phi_k^2=\mathsf{z}_k^4\Omega_k^2$ by \eqref{eq:Phi-square} and \eqref{Omega-0}, we have $\widehat \Phi_k^2z=1$ and so $\widehat \Phi_k: M_{\un i}\rightarrow M_{s_k\cdot \un i}$ is bijective.

Finally, assume $|k-l|>1$ and both $\widehat\Phi_k\widehat\Phi_l$ and $\widehat\Phi_l\widehat\Phi_k$ are well-defined on $M_{\un i},$ for some $\un i\in\I^n$.  Then by (\ref{op}) and (\ref{Braidinter}) we see that
\[\widehat\Phi_k\widehat\Phi_l(z)=\widetilde\Phi_k\widetilde\Phi_l\frac{1}{\mathsf{z}_k^2\mathsf{z}_l^2\Omega_k\Omega_l}z,\quad \widehat\Phi_l\widehat\Phi_k(z)=\widetilde\Phi_l\widetilde\Phi_k\frac{1}{\mathsf{z}_k^2\mathsf{z}_l^2\Omega_k\Omega_l}z\]
for any $z\in M_{\un i}$. 
This together with (\ref{Braidinter}) verifies (\ref{braidprime}).
By (\ref{op}) and \eqref{Omega}, on can check that if both $\widehat\Phi_k\widehat\Phi_{k+1}\widehat\Phi_k$ and $\widehat\Phi_{k+1}\widehat\Phi_k\widehat\Phi_{k+1}$ are well-defined on $M_{\un i}$ for some $\un i \in\I^n$ then
\[
\begin{aligned}\widehat\Phi_k\widehat\Phi_{k+1}\widehat\Phi_k(z)&=C\widetilde\Phi_k\widetilde\Phi_{k+1}\widetilde\Phi_k(z)\\
\widehat\Phi_{k+1}\widehat\Phi_k\widehat\Phi_{k+1}(z)&=C\widetilde\Phi_{k+1}\widetilde\Phi_k\widetilde\Phi_{k+1}(z)
\end{aligned}
\]
where $C$ is the scalar
\[\begin{aligned}
C=\frac{1}{(a-b)^2(a-c)^2(b-c)^2}\frac{1}{\sqrt{1-\frac{\varepsilon^2(ab-4)}{(a-b)^2}}\sqrt{1-\frac{\varepsilon^2(ac-4)}{(a-c)^2}}\sqrt{1-\frac{\varepsilon^2(bc-4)}{(b-c)^2}}}\\
\end{aligned}\]
with $a=\mathtt{q}(i_k), b=\mathtt{q}(i_{k+1}),
c=\mathtt{q}(i_{k+2})$. Hence (\ref{braidprime}) holds by (\ref{Braidinter}).
\end{proof}

\begin{rem}\label{rem:admissible}
Suppose that $M\in \op{Rep}_{\mathbb I}\aHn$ is completely splittable. By Lemma \ref{lem:newoperator}, if $M_{\un i}\neq 0$ and $\un j\sim \un i,$ then $M_{\un j}\neq 0.$
\end{rem}
\begin{lem}\label{lem:newbij.}
Let $M \in  \op{Rep}_{\mathbb I} \aHn$ be irreducible completely splittable. Suppose that $M_{\un i}\neq 0$ for some $i \in  \mathbb I^n$ and $\tau  \in  P_{\un i}.$ Write $\tau  = s_{k_t}\cdots s_{k_1}$ so that $s_{k_\ell}$ is admissible with respect to $s_{k_l-1}\cdots s_{k_1}\cdot \un i$ for $1 \leq l\leq t.$ Then
\[\widehat\Phi_\tau:=\widehat\Phi_{k_t}\cdots \widehat\Phi_{k_1} : M_{\un i} \rightarrow M_{\tau\cdot \un i}\]
is a bijection satisfying $X^\pm_k\widehat\Phi_\tau  =\widehat\Phi_\tau  X^\pm_{\tau (k)}$ and $C_k\widehat\Phi_\tau  =\widehat\Phi_\tau  C_{\tau (k)}$ for $1\leq k\leq n.$ Moreover $\widehat\Phi_\tau$ does not depend on the choice of the expression $s_{k_t}\cdots s_{k_1}$ for $\tau.$
\end{lem}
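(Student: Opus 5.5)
The plan has three parts: iterate Lemma~\ref{lem:newoperator} for bijectivity and the intertwining relations, then prove independence of the chosen expression by a Matsumoto-type argument in the groupoid of admissible moves.

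\textbf{Bijectivity.} I argue by induction on $t$. Set $\un i^{(l)}:=s_{k_l}\cdots s_{k_1}\cdot\un i$. By Remark~\ref{rem:admissible} we have $M_{\un i^{(l)}}\neq 0$, and $s_{k_{l+1}}$ is admissible with respect to $\un i^{(l)}$. So Lemma~\ref{lem:newoperator}(2) applies at each step and gives a bijection $\widehat\Phi_{k_{l+1}}:M_{\un i^{(l)}}\to M_{\un i^{(l+1)}}$. Composing these yields the bijection $\widehat\Phi_\tau:M_{\un i}\to M_{\tau\cdot\un i}$.

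\textbf{Intertwining relations.} These also follow by induction on $t$ from \eqref{XC-interprime}, which reads $X_k^{\pm1}\widehat\Phi_{j}=\widehat\Phi_j X^{\pm1}_{s_j(k)}$ and similarly for $C_k$. Suppose $X_k\widehat\Phi_{\tau'}=\widehat\Phi_{\tau'}X_{\tau'(k)}$ holds for $\tau'=s_{k_{t-1}}\cdots s_{k_1}$. Then
\[
X_k\widehat\Phi_{k_t}\widehat\Phi_{\tau'}=\widehat\Phi_{k_t}X_{s_{k_t}(k)}\widehat\Phi_{\tau'}=\widehat\Phi_{k_t}\widehat\Phi_{\tau'}X_{\tau'(s_{k_t}(k))}.
\]
I need to check the index bookkeeping, namely whether this gives $\tau(k)$ or $\tau^{-1}(k)$, against the convention $\tau\cdot\un i=(i_{\tau^{-1}(1)},\ldots)$. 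The eigenvalue check determines it: the vector $\widehat\Phi_\tau z$ lies in $M_{\tau\cdot\un i}$, so $X_k$ acting on it corresponds to $i_{\tau^{-1}(k)}$. I would write the relation consistently with that and adjust the statement's indexing convention if needed.

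\textbf{Independence of the expression.} This is the main obstacle. Lemma~\ref{lem:Pi} together with its proof shows the following: if two admissible sequences $s_{k_t}\cdots s_{k_1}$ and $s_{k'_r}\cdots s_{k'_1}$ give the same element $\tau$, then they are two paths in the graph on $\Lambda_{\un i}$ with the same endpoints. I would show that any two admissible words for $\tau$ are connected by the following moves, each performed inside the admissible world:
\begin{itemize}
\item inserting or deleting $s_ks_k$, which is allowed since $\widehat\Phi_k^2=1$;
\item commuting $s_ks_l$ for $|k-l|>1$;
\item applying braid moves $s_ks_{k+1}s_k=s_{k+1}s_ks_{k+1}$.
\end{itemize}
Each such move preserves $\widehat\Phi$ by Lemma~\ref{lem:newoperator}(2),(3).

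The subtle point is the braid move. I must verify that whenever $s_ks_{k+1}s_k$ is an admissible path from $\un j$, then so is $s_{k+1}s_ks_{k+1}$. The reason is that admissibility at each step depends only on the pairs of values $\{j_k,j_{k+1}\},\{j_k,j_{k+2}\},\{j_{k+1},j_{k+2}\}$, each of which must not differ by $1$, and both words use exactly these three pairs.

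To connect the two words, I would follow \cite[Lemma 4.3]{Wa} and use the tableau $\Gamma(\un j)$ from the proof of Lemma~\ref{lem:Pi}. There, $\tau$ is identified with the permutation of entries of a table, and admissible paths with $\tau$ fixed correspond to words in the Coxeter group. I would reduce each word to a reduced one by deleting $s_ks_k$ pairs, arguing that the intermediate weights stay in $\Lambda_{\un i}$. Then Matsumoto's theorem connects the two reduced words by braid and commutation moves. The key check is that along a reduced word for $\tau$, every intermediate step is automatically admissible. This holds because a non-admissible swap would exchange $k,k+1$ lying in adjacent columns $a,a\pm1$, and such a pair, once inverted relative to $\Gamma(\un i)$, can never be restored. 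This makes the standard reduced-word argument go through.
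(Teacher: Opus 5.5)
Your proposal is correct and follows the paper's route: compose the bijections from Lemma~\ref{lem:newoperator}(2), iterate \eqref{XC-interprime}, and deduce independence of the expression from \eqref{braidprime}. Your Matsumoto/Tits argument supplies the justification that the paper's one-line appeal to \eqref{braidprime} leaves implicit. In particular, you observe that commutation and braid moves preserve admissibility because they exchange the same pairs of values. One small correction: reducing a non-reduced admissible word generally needs braid moves, not only deletions of adjacent $s_ks_k$, but your admissibility observation already covers this.

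Your eigenvalue check also settles the indexing. The relation should read $X_k^{\pm1}\widehat\Phi_\tau=\widehat\Phi_\tau X^{\pm1}_{\tau^{-1}(k)}$, i.e.\ $\widehat\Phi_\tau X_k^{\pm1}=X^{\pm1}_{\tau(k)}\widehat\Phi_\tau$, which is the form actually used in the proof of Theorem~\ref{thm:Classficiation}(3).
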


\begin{proof}
Since $s_{k_l}$ is admissible with respect to $s_{k_{l-1}}\cdots
s_{k_1}\cdot \underline{i}$ for $1\leq l\leq t$, each
$\widehat{\Phi}_{k_l}$ is a well-defined bijection from
$M_{s_{k_{l-1}}\cdot s_{k_1}\cdot\underline{i}}$ to
$M_{s_{k_{l}}\cdot s_{k_1}\cdot\underline{i}}$ by
Lemma~\ref{lem:newoperator} and hence $\widehat{\Phi}_{\tau}$ is
bijective. By~\eqref{braidprime},
$\widehat{\Phi}_{\tau}$ does not depend on the choice of the
expression $s_{k_t}\cdots s_{k_1}$ for $\tau$.
Using~\eqref{XC-interprime}, we obtain
$X_k^\pm\widehat{\Phi}_{\tau}=\widehat{\Phi}_{\tau}X^\pm_{\tau(k)}$ and
$C_k\widehat{\Phi}_{\tau}=\widehat{\Phi}_{\tau}C_{\tau(k)}$ for
$1\leq k\leq n$.
\end{proof}

Suppose $\underline i\in \mathfrak{P}'(\aHn).$ Recall the definition of $L(i)^\tau$ from Remark \ref{rem:Ltau} for $\tau\in P_{\underline i}.$ Denote by $D^{\underline i}$ the $\mathcal{A}_
n$-module defined by
\begin{equation}\label{Di}
D^{\underline i} = \oplus_{\tau\in P_{\underline i}}L(i)^\tau.
\end{equation}
Then we have the following classification of irreducible completely splittable $\aHn$-modules in the category $\op{Rep}_{\mathbb I} \aHn$  which is parallel to the classification in degenerate case which was provided by \cite[Theorem 4.5]{Wa}. In addition, the proof follows a similar argument, which has also been extended to the semisimple setting  of cyclotomic Hecke-Clifford superalgebras in \cite{SW}. We will therefore only outline the proof, omitting detailed calculations. 
\begin{thm}\label{thm:Classficiation}
Suppose $\un i,\un j \in \mathfrak{P}'(\aHn).$ Then
\begin{itemize}
\item[(1)] $D^{\un i}$ affords an irreducible $\aHn$-module via
\begin{equation}\label{eq:actionformula}
T_kz^\tau=\begin{cases}
\Xi_kz^\tau+\Omega_kz^{s_k\tau},&\text{if }s_k\text{ is admissible with respect to }\tau\cdot \un i,\\
\Xi_kz^\tau,&\text{otherwise,}\end{cases}
\end{equation}
for $1\leq k \leq n-1,z \in L(\un i)$ and $\tau\in P_{\un i}.$ In addition, $D^{\un i}$ has the same type as the irreducible $\mathcal{A}_n$-module $L(\un i)$.
\item[(2)]
$D^{\un i} \cong D^{\un j}$ if and only if $\un i\sim \un j.$
\item[(3)]
Every irreducible completely splittable $\aHn$-module in $\op{Rep}_{\mathbb I} \aHn$ is isomorphic to $D^{\un i}$ for some $\un i\in \mathfrak{P}'(\aHn).$ 
\end{itemize}
\end{thm}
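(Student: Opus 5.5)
For part (1) I would first put the $\mathcal{A}_n$-module structure on $D^{\un i}=\oplus_{\tau\in P_{\un i}}L(\un i)^\tau$ from Remark~\ref{rem:Ltau}, so $X_k^{\pm1}z^\tau=(X^{\pm1}_{\tau^{-1}(k)}z)^\tau$ and $C_kz^\tau=(C_{\tau^{-1}(k)}z)^\tau$. Then $L(\un i)^\tau\cong L(\tau\cdot\un i)$ is the weight space of weight $\tau\cdot\un i$. Lemma~\ref{lem:Pi} says these weights are pairwise distinct. Properties (1)--(6) of Proposition~\ref{prop:restr.2} hold on $\Lambda_{\un i}$, so consecutive entries differ and the operators $\Xi_k$, $\Omega_k$ are well defined on each summand; $\Omega_k\neq0$ exactly when $s_k$ is admissible.

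I would then check the defining relations of $\aHn$ for the formula \eqref{eq:actionformula}.
\begin{itemize}
\item The relations \eqref{eq:Poly}, \eqref{eq:Clifford}, \eqref{XC} hold automatically.
\item The mixed relations \eqref{TX1}, \eqref{TX2}, \eqref{TC} are checked on each summand. The $\Xi_k$-part satisfies them by the computation in Proposition~\ref{prop:restr-rank-2}(1). The $\Omega_k z^{s_k\tau}$-part intertwines $X_k\leftrightarrow X_{k+1}$ and $C_k\leftrightarrow C_{k+1}$ by the twist convention, in the same way as $\widehat\Phi_k$ in Lemma~\ref{lem:newoperator}(1).
\item For the quadratic relation: in the non-admissible case, $i_k=i_{k+1}\pm1$, so $\Xi_k^2=\varepsilon\Xi_k+1$ by \eqref{SqXi} and \eqref{eq:special-square}. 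In the admissible case, the summands indexed by $\tau$ and $s_k\tau$ span a $T_k$-stable subspace, on which $T_k$ is a $2\times2$ block matrix. One uses \eqref{SqXi} together with $\Omega_k^2=1-\varepsilon^2(\cdots)$ and the identity $\Xi_k^{(s_k\tau)}=\varepsilon-\Xi_k^{(\tau)}$ transported through the twist (to be verified from \eqref{Xi}, since swapping $X_k,X_{k+1}$ inverts $X_kX_{k+1}^{-1}$).
\item The commuting braid relation $T_kT_l=T_lT_k$ for $|k-l|>1$ is immediate.
\end{itemize}

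The main obstacle is the braid relation $T_kT_{k+1}T_k=T_{k+1}T_kT_{k+1}$. Here I would work on the subspace spanned by $z^{\sigma\tau}$ for $\sigma\in\langle s_k,s_{k+1}\rangle$ with $\sigma\tau\in P_{\un i}$, and split into cases according to which of $s_k,s_{k+1}$ are admissible along the orbit. When all six permutations occur, I would use Lemma~\ref{lem:newoperator}(3) to reduce to the known braid relation of the $\widetilde\Phi_k$ in \eqref{Braidinter}: realize the formula via $T_k=\Xi_k+\Omega_k\widehat\Phi_k$ (equivalently \eqref{op}). In the degenerate cases, where some transposition is not admissible, the weight $(i_k,i_{k+1},i_{k+2})$ has neighbouring entries differing by $1$, including the possibility $i_k=i_{k+2}$ restricted by Proposition~\ref{lem:restr.1}. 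These cases require the explicit computation done in the proof of Proposition~\ref{lem:restr.1} run in reverse: the admissible weights are exactly those for which \eqref{eq:braid-final} vanishes, so the braid identity holds.

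Irreducibility and type go as follows. Any nonzero submodule is a sum of weight spaces. Since $L(\un i)^\tau$ is irreducible over $\mathcal{A}_n$, the submodule contains a whole summand. The operators $T_k-\Xi_k$ move between summands bijectively when $\Omega_k\neq0$, so it contains everything. An endomorphism preserves the weight space $L(\un i)^{1}$, and its restriction to that summand determines it, giving $\op{End}(D^{\un i})\cong\op{End}_{\mathcal{A}_n}(L(\un i))$; hence $D^{\un i}$ has the same type as $L(\un i)$.

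For (2): if $\un j\sim\un i$ then $\un j=\sigma\cdot\un i$ with $\sigma\in P_{\un i}$. The map $z^\tau\mapsto (z')^{\tau\sigma^{-1}}$, using $L(\un i)^\sigma\cong L(\un j)$ and \eqref{L-tau-sigma}, gives the isomorphism. Conversely, the set of weights of $D^{\un i}$ is exactly $\Lambda_{\un i}$, so $D^{\un i}\cong D^{\un j}$ forces $\un j\in\Lambda_{\un i}$.

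For (3): let $M$ be irreducible completely splittable and pick $\un i$ with $M_{\un i}\neq0$. Proposition~\ref{prop:restr.2} gives $\un i\in\mathfrak{P}'(\aHn)$. Proposition~\ref{prop:equiv.cond.}(4) gives $M_{\un i}\cong L(\un i)$; fix such an isomorphism $f$. Define $D^{\un i}\to M$ by $z^\tau\mapsto\widehat\Phi_\tau f(z)$, which is well defined by Lemma~\ref{lem:newbij.}. It is $\mathcal{A}_n$-linear by Lemma~\ref{lem:newbij.}. It intertwines $T_k$ by Corollary~\ref{Cor-2} together with $T_k=\Xi_k+\Omega_k\widehat\Phi_k$ on admissible weight spaces. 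It is nonzero, so it is an isomorphism by irreducibility of both modules.
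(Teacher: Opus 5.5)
\textbf{The gap: the braid relation.} Your handling of the quadratic and mixed relations, irreducibility, (2) and (3) is fine and follows the paper's lines. What is missing is a proof of $T_kT_{k+1}T_k=T_{k+1}T_kT_{k+1}$ on $D^{\un i}$.

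In the case where all six rearrangements occur, you invoke Lemma~\ref{lem:newoperator}(3) and \eqref{Braidinter}. But Lemma~\ref{lem:newoperator} is a statement about a module $M\in\op{Rep}_{\I}\aHn$. Likewise \eqref{Braidinter} is an identity in $\aHn$, and it holds on a space only once all defining relations are known there, including the one you are trying to prove. Applying either to $D^{\un i}$ is therefore circular. The braid relation for the bare swaps $z^\tau\mapsto z^{s_k\tau}$ is trivial. The real content is matching all lower-order terms of $(\Xi_k+\Omega_k\widehat\Phi_k)(\Xi_{k+1}+\Omega_{k+1}\widehat\Phi_{k+1})(\Xi_k+\Omega_k\widehat\Phi_k)$ with those of its counterpart, and you do not supply this.

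Your degenerate cases are also incomplete. The computation in Proposition~\ref{lem:restr.1}, and \eqref{eq:braid-final} in particular, is derived under $j_k=j_{k+2}$. There $T_k=\Xi_k$ and $T_{k+1}=\Xi_{k+1}$ on a single summand, and $X_{k+2}=X_k^{\pm1}$. It says nothing about the following configurations:
\begin{itemize}
\item $(j_k,j_{k+1},j_{k+2})=(u,u+1,u+2)$, a single summand with $j_k\neq j_{k+2}$;
\item orbits such as $\{(u,u+1,u-1),(u,u-1,u+1)\}$ or $\{(v,u,u+1),(u,v,u+1),(u,u+1,v)\}$, where one of $T_k,T_{k+1}$ moves between summands.
\end{itemize}

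\textbf{How the paper closes it.} The paper handles all cases uniformly.
\begin{enumerate}
\item Put $\widehat T_k=T_k-\Xi_k$. Directly from \eqref{eq:actionformula}, $\widehat T_k\widehat T_{k+1}\widehat T_kz^\tau$ and $\widehat T_{k+1}\widehat T_k\widehat T_{k+1}z^\tau$ are both equal to the same product of three $\Omega$-scalars times $z^{s_ks_{k+1}s_k\tau}$. The scalars are those attached to the pairs $\{j_k,j_{k+1}\}$, $\{j_k,j_{k+2}\}$, $\{j_{k+1},j_{k+2}\}$. Alternatively, both sides vanish.
\item Since $\widetilde\Phi_kz^\tau=\widehat T_k\mathsf z_k^2z^\tau$, the $\widetilde\Phi$'s satisfy the braid relation on $D^{\un i}$ by direct check, with no appeal to \eqref{Braidinter}.
\item A calculation using only the relations already verified gives
$(T_kT_{k+1}T_k-T_{k+1}T_kT_{k+1})\mathsf z^2_{k,k+1}\mathsf z^2_{k,k+2}\mathsf z^2_{k+1,k+2}=\widetilde\Phi_k\widetilde\Phi_{k+1}\widetilde\Phi_k-\widetilde\Phi_{k+1}\widetilde\Phi_k\widetilde\Phi_{k+1}$.
\item The $\mathsf z$-factor is a nonzero scalar on $L(\un i)^\tau$ unless $j_k=j_{k+2}$. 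Only that last case needs the computation of Proposition~\ref{lem:restr.1}.
\end{enumerate}

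\textbf{A smaller point on type.} You only show that the restriction map $\op{End}(D^{\un i})\to\op{End}_{\mathcal A_n}(L(\un i))$ is injective. When $L(\un i)$ is of type \texttt{Q}, you must also extend an odd $\psi$ via $z^\tau\mapsto(\psi z)^\tau$ and check that it commutes with $T_k$. This is easy, since $\Xi_k$ and $\Omega_k$ are even and act summandwise.
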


\begin{proof}

(1) To show the formula~\eqref{eq:actionformula} defines a
$\aHn$-module structure on $D^{\underline{i}}$, we need to check
the defining relations~\eqref{TT},~\eqref{TX1},~\eqref{TX2}
and~\eqref{TC} on $L(\underline{i})^{\tau}$ for each $\tau\in
P_{\underline{i}}$. One can show using~\eqref{XC} that
\begin{align}
(\Xi_kX_k-X_{k+1}\Xi_k)z=-\varepsilon(X_{i+1}+C_iC_{i+1}X_i)z.\label{Deltax}
\end{align}
for any $z\in L(\un i)$ and $\tau\in P_{\un i}$. 
For $1\leq k\leq n-1$,
$(X^{\pm 1}_{\tau^{-1}(k)}z)^{s_k\tau}=X^{\pm 1}_{k+1}z^{s_k\tau}$ by
 Remark~\ref{rem:Ltau} and hence if
$s_k$ is admissible with respect to $ \tau\cdot\underline{i}$,
then by \eqref{Deltax} we have 
\begin{align}
T_kX_kz^{\tau}=T_k(X_{\tau^{-1}(k)}z)^{\tau}&=
\Xi_k(X_{\tau^{-1}(k)}z)^{\tau}+\Omega_k(X_{\tau^{-1}(k)}z)^{s_k\tau}\notag\\
&=\Xi_kX_kz^{\tau}+X_{k+1}\Omega_kz^{s_k\tau}\notag\\
&=(\Xi_kX_k-X_{k+1}\Xi_k)z^{\tau}+X_{k+1}(\Xi_kz^{\tau}+\Omega_kz^{s_k\tau})\notag\\
&=-\varepsilon(X_{k+1}+C_kC_{k+1}X_i)z^{\tau}+X_{k+1}T_kz^{\tau}\notag.
\end{align}
Otherwise,  we have
\begin{align}
T_kX_kz^{\tau}=T_k(X_{\tau^{-1}(k)}z)^{\tau}
&=\Xi_k(X_kz^{\tau})\notag\\
&=(\Xi_kX_k-X_{k+1}\Xi_k)z^{\tau}+X_{k+1}\Xi_kz^{\tau}\notag\\
&=-\varepsilon(X_{k+1}+C_kC_{k+1}X_i)z^{\tau}+X_{k+1}T_kz^{\tau}. \notag
\end{align}
Therefore~\eqref{TX1} holds acting on $L(\un i)^\tau$. It is routine to check \eqref{TX2}
and~\eqref{TC}.

It remains to prove~\eqref{TT} acting on $L(\un i)^\tau$. It is clear by~\eqref{TX2} that
$T_kT_l=T_lT_k$ if $|l-k|>1$, so it suffices to prove $T_k^2=\varepsilon T+1$
and $T_kT_{k+1}T_k=T_{k+1}T_kT_{k+1}$. For the remaining of the
proof, let us fix $\tau\in P_{\underline{i}}$ and set
$\underline{j}=\tau\cdot\underline{i}$. 
If $s_k$ is admissible with respect to
$\underline{j}=\tau\cdot\underline{i}$, then one can check
using~\eqref{XC} and ~\eqref{eq:actionformula} that
\[\begin{aligned}
T_k^2z^{\tau}=&\left(\Xi_k^2
 +\Omega_k^2\right)z^{\tau}+\Omega_k\left(\left(\Xi_k z^{\tau}\right)^{s_k}+\Xi_kz^{s_k\tau}\right)\\
 =&\varepsilon \Xi_kz^\tau+z^\tau+\varepsilon \Omega_kz^{s_k\tau}\\
=&(\varepsilon T_k+1)z^{\tau}.
\end{aligned}\]
Otherwise we have $j_{k}=j_{k+1}\pm1$. Then as in \eqref{SqXi} we have
\[
T_k^2z^{\tau}=\Xi_k^2z^{\tau}=(\varepsilon\Xi_k+1)=(\varepsilon T_k+1)z^{\tau}.
\]
Therefore $T_k^2=(\varepsilon T_k+1)$ on $D^{\underline{i}}$ for $1\leq k\leq n-1$ .
Next we shall prove $T_kT_{k+1}T_k=T_{k+1}T_kT_{k+1}$ for $1\leq
k\leq n-2$. Set $\widehat{T}_k=T_k-\Xi_{k}$ for $1\leq k\leq n-1$.
It is clear by~\eqref{eq:actionformula} that
\begin{eqnarray*}
\widehat{T}_kz^{\tau}= \left \{
 \begin{array}{ll}
 \Omega_kz^{s_k\tau},
 & \text{ if } s_k \text{ is admissible with respect to } \underline{j}=\tau\cdot\underline{i}, \\
 0, & \text{ otherwise }.
 \end{array}
 \right.
\end{eqnarray*}
If there exist $a\neq b\in\{k,k+1,k+2\}$ such $j_a-j_b=\pm 1$, then
$\widehat{T}_k\widehat{T}_{k+1}\widehat{T}_k
=0=\widehat{T}_{k+1}\widehat{T}_k\widehat{T}_{k+1}$ on
$L(\underline{i})^{\tau}$; otherwise, one can show
using \eqref{Omega} that
\[\begin{aligned}
\widehat{T}_k\widehat{T}_{k+1}\widehat{T}_kz^{\tau}
=&\sqrt{1-\frac{\varepsilon^2(ab-4)}{(a-b)^2}}\sqrt{1-\frac{\varepsilon^2(ac-4)}{(a-c)^2}}\sqrt{1-\frac{\varepsilon^2(bc-4)}{(b-c)^2}}z^{s_ks_{k+1}s_k\tau}\\
=&\widehat{T}_{k+1}\widehat{T}_k\widehat{T}_{k+1}z^{\tau}, 
\end{aligned}\]
for
any  $z\in L(\underline{i})$ and $\tau\in P_{\un i}$, where $a=\mathtt{q}(j_k), b=\mathtt{q}(j_{k+1}),
c=\mathtt{q}(j_{k+2})$. Hence putting together we get
\begin{align}
\widehat{T}_k\widehat{T}_{k+1}\widehat{T}_kz^{\tau}
=\widehat{T}_{k+1}\widehat{T}_k\widehat{T}_{k+1}z^{\tau}, \text{
for any }z\in L(\underline{i}), 1\leq k\leq n-2.\label{braid'}
\end{align}
Recalling the intertwining elements
$\widetilde\Phi_k$ from~(\ref{intertwinNon-dege}), we see that
\begin{align}
\widetilde\Phi_kz^\tau=\widehat{T}_k \mathsf{z}^2_k z^\tau.\notag
\end{align}
and hence one can obtain 
\begin{equation}
\begin{aligned}
\widetilde\Phi_k\widetilde\Phi_{k+1}\widetilde\Phi_kz^\tau=(a-b)^2(a-c)^2(b-c)^2\widehat{T}_k\widehat{T}_{k+1}\widehat{T}_kz^{\tau}, \\
\widetilde\Phi_{k+1}\widetilde\Phi_{k}\widetilde\Phi_{k+1}z^\tau=(a-b)^2(a-c)^2(b-c)^2\widehat{T}_{k+1}\widehat{T}_{k}\widehat{T}_{k+1}z^{\tau}
\end{aligned}
\end{equation}
Hence by~(\ref{braid'}) we see that for any $z\in
L(\underline{i})$ and $\tau\in P_{\un i}$, 
$$
(\widetilde\Phi_k\widetilde\Phi_{k+1}\widetilde\Phi_k-\widetilde\Phi_{k+1}\widetilde\Phi_k\widetilde\Phi_{k+1})z^\tau=0. 
$$
Meanwhile a tedious calculation shows that
$$
(T_kT_{k+1}T_k-T_{k+1}T_kT_{k+1})(\mathsf{z}^2_{k,k+1}\mathsf{z}^2_{k,k+2}\mathsf{z}^2_{k+1,k+2})z^\tau=(\widetilde\Phi_k\widetilde\Phi_{k+1}\widetilde\Phi_k-\widetilde\Phi_{k+1}\widetilde\Phi_k\widetilde\Phi_{k+1})z^\tau, 
$$
where we set 
$\mathsf{z}_{k,l}=(X_k+X_k^{-1})-(X_l+X_l^{-1})$ for $1\leq k,l\leq n$. Thus we have 
\begin{equation}\label{eq:braid-verify-1}
(T_kT_{k+1}T_k-T_{k+1}T_kT_{k+1})(\mathsf{z}^2_{k,k+1}\mathsf{z}^2_{k,k+2}\mathsf{z}^2_{k+1,k+2})z^\tau=0
\end{equation}
for any $z\in L(\un i),\tau\in P_{\un i}$. Recall that we set $\un j=\tau\cdot\un i, a=\mathtt{q}(j_k), b=\mathtt{q}(j_{k+1}), c=\mathtt{q}(j_{k+2})$. Since $\tau\in P_{\un i}$, the tuple 
$\un j$ belongs to $ \mathfrak{P}'(\aHn)$ and hence $a\neq b, b\neq c$.  If in addition $a\neq c$, then 
$$
\mathsf{z}^{2}_{k,k+1}\mathsf{z}^{2}_{k,k+2}\mathsf{z}^{2}_{k+1,k+2}z^\tau=(a-b)^2(a-c)^2(b-c)^2z^\tau. 
$$ 
This together with \eqref{eq:braid-verify-1} leads to 
$$
T_kT_{k+1}T_kz^{\tau}=T_{k+1}T_kT_{k+1}z^{\tau},\quad \text{ for
any }z\in L(\underline{i}), \tau\in P_{\un i}.
$$

Now assume $a=c$ or equivalently $j_k=j_{k+2}$, then by Lemma~\ref{lem:restr.1} we have
either $j_{k}=j_{k+2}=0,j_{k+1}=1$ or $j_{k}=j_{k+2}=\frac{h-3}{2}\text{(resp. }\frac{h}{2}-1),
j_{k+1}=\frac{h-1}{2}\text{(resp. }\frac{h}{2}-2)$. Hence $T_{k}=\Xi_k$ and $T_{k+1}=\Xi_{k+1}$
on $L(\underline{i})^{\tau}$. We see from the proof of
Lemma~\ref{lem:restr.1} that $T_kT_{k+1}T_kz^\tau=T_{k+1}T_kT_{k+1}z^\tau$.
Therefore $D^{\underline{i}}$ affords a $\aHn$-module by the
formula~\eqref{eq:actionformula}.

Suppose $N$ is a nonzero irreducible submodule of
$D^{\underline{i}}$, then $N_{\underline{j}}\neq 0$ for some
$\underline{j}\in\I^n$. This implies
$(D^{\underline{i}})_{\underline{j}}\neq 0$ and hence
$\underline{j}\sim\underline{i}$. Since
$\tau\cdot\underline{i}\sim\underline{i}\sim\underline{j}$, by
Remark~\ref{rem:admissible} we see that
$N_{\tau\cdot\underline{i}}\neq 0$ for all $\tau\in
P_{\underline{i}}$. Observe that
$(D^{\underline{i}})_{\tau\cdot\underline{i}}\cong
L(\tau\cdot\underline{i})$ is irreducible as a $\mathcal{A}_n$-module for
$\tau\in P_{\underline{i}}$. Therefore
$N_{\tau\cdot\underline{i}}=(D^{\underline{i}})_{\tau\cdot\underline{i}}$
for $\tau\in P_{\underline{i}}$ and hence $N=D^{\underline{i}}$.
This means $D^{\underline{i}}$ is irreducible. 
The proof of the remaining argument in (1) and (2) is similar to that of \cite[Theorem 4.5]{Wa} (cf. \cite[Proposition 4.8(1)(2)]{SW}) and thus we omit the details.

Next we shall prove (3).  Suppose $M\in\operatorname{Rep}_{\I}\aHn$ is irreducible
completely splittable with $M_{\underline{i}}\neq 0$ for some
$\underline{i}\in\I^n$. By
Proposition~\ref{prop:restr.2} we have $\mathfrak{P}(\aHn)\subseteq
\mathfrak{P}'(\aHn)$ and hence $\underline{i}\in \mathfrak{P}'(\aHn)$.  By Proposition~\ref{prop:equiv.cond.},
there exists a $\mathcal{A}_n$-isomorphism $\psi:
L(\underline{i}) \rightarrow M_{\underline{i}}$. By
Lemma~\ref{lem:newbij.}, for each $\tau\in P_{\underline{i}}$, there
exists a bijection $\widehat{\Phi}_{\tau}:
M_{\underline{i}}\rightarrow M_{\tau\cdot\underline{i}}$.  Now for
$\tau\in P_{\underline{i}}$, define
$$
\psi^{\tau}: L(\underline{i})^{\tau}\longrightarrow
M_{\tau\cdot\underline{i}},\quad z^{\tau}\mapsto
\widehat{\Phi}_{\tau}(\psi(z)).
$$
By Lemma~\ref{lem:newbij.}, the bijection $\widehat{\Phi}_{\tau}$
satisfies $
\widehat{\Phi}_{\tau}X^\pm_k=X^\pm_{\tau(k)}\widehat{\Phi}_{\tau},
\widehat{\Phi}_{\tau}C_k=C_{\tau(k)}\widehat{\Phi}_{\tau}$ for
$1\leq k\leq n$. Hence for $z\in L(\underline{i}), \tau\in
P_{\underline{i}}$ and $1\leq k\leq n$,
\begin{align}\psi^{\tau}(X_kz^{\tau})&=\psi^{\tau}((X_{\tau^{-1}(k)}z)^{\tau})
=\widehat{\Phi}_{\tau}(\psi(X_{\tau^{-1}(k)}z))\notag\\
&=\widehat{\Phi}_{\tau}(X_{\tau^{-1}(k)})\psi(z)=X_k\widehat{\Phi}_{\tau}(\psi(z))
=X_k\psi^{\tau}(z^{\tau})\notag.
\end{align}
Similarly one can show that
$\psi^{\tau}(C_kz^{\tau})=C_k\psi^{\tau}(z^{\tau})$. Therefore
$\psi^{\tau}$ is a $\mathcal{A}_n$-homomorphism. By the
fact that $\psi^{\tau}$ is a $\mathcal{A}_n$-module homomorphism for each
$\tau\in P_{\underline{i}}$, one can easily check that
$$
\oplus_{\tau\in P_{\underline{i}}}\psi^{\tau}:
D^{\underline{i}}\longrightarrow M
$$
is a $\aHn$-module isomorphism.
\end{proof}

By Proposition~\ref{prop:restr.2} we have $\mathfrak{P}(\aHn)\subset \mathfrak{P}'(\aHn).$ By Theorem \ref{thm:Classficiation} we obtain the following
\begin{cor}\label{cor:coincide}
We have $\mathfrak{P}(\aHn)=\mathfrak{P}'(\aHn).$ That is, the set of weights of irreducible completely splittable $\aHn$-modules is exactly the set of $\un i \in\I^n$ satisfying the properties listed in Proposition~\ref{prop:restr.2}. 
\end{cor}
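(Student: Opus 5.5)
The corollary asserts two inclusions, and I will prove each separately.

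For $\mathfrak{P}(\aHn)\subseteq\mathfrak{P}'(\aHn)$, take $\underline{i}\in\mathfrak{P}(\aHn)$. Then $\underline{i}$ is a weight of some irreducible completely splittable $M\in\op{Rep}_{\I}\aHn$. Proposition~\ref{prop:restr.2} states that such a weight satisfies conditions (1)--(6), so $\underline{i}\in\mathfrak{P}'(\aHn)$ by definition.

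For $\mathfrak{P}'(\aHn)\subseteq\mathfrak{P}(\aHn)$, take $\underline{i}\in\mathfrak{P}'(\aHn)$ and use the module $D^{\underline{i}}$ of Theorem~\ref{thm:Classficiation}(1).
\begin{enumerate}
\item By Theorem~\ref{thm:Classficiation}(1), $D^{\underline{i}}$ is an irreducible $\aHn$-module. It is integral, since as an $\mathcal{A}_n$-module it is the direct sum $\oplus_{\tau\in P_{\underline{i}}}L(\underline{i})^{\tau}$ of integral modules.
\item $D^{\underline{i}}$ is completely splittable. By Remark~\ref{rem:Ltau} we have $L(\underline{i})^{\tau}\cong L(\tau\cdot\underline{i})$, and $X_1^{\pm1},\ldots,X_n^{\pm1}$ act semisimply on every $L(\underline{j})$ by \eqref{eq:Li} and Corollary~\ref{cor:irrepAn}.
\item $\underline{i}$ is a weight of $D^{\underline{i}}$. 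The summand indexed by $\tau=1\in P_{\underline{i}}$ is $L(\underline{i})$ itself, which is nonzero and lies in $(D^{\underline{i}})_{\underline{i}}$.
\end{enumerate}
Together these give $\underline{i}\in\mathfrak{P}(\aHn)$.

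The only nonformal input is the well-definedness and irreducibility of $D^{\underline{i}}$ in Theorem~\ref{thm:Classficiation}(1), in particular the braid relation check in the case $j_k=j_{k+2}$. That check relies on Proposition~\ref{lem:restr.1}, which applies here because every $\tau\cdot\underline{i}$ lies in $\mathfrak{P}'(\aHn)$. Since the theorem is already established, nothing further needs to be verified for the corollary.
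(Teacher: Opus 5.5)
Your proof is correct and follows the paper's argument: one inclusion is Proposition~\ref{prop:restr.2}, and the other comes from exhibiting $\un i$ as a weight of the irreducible completely splittable module $D^{\un i}$ of Theorem~\ref{thm:Classficiation}(1). One small caveat on your closing remark: Proposition~\ref{lem:restr.1} is stated for $\mathfrak{P}(\aHn)$, not $\mathfrak{P}'(\aHn)$, so citing it for $\tau\cdot\un i\in\mathfrak{P}'(\aHn)$ would be circular; instead, read off its conclusion (the possible values of $j_k=j_{k+2}$ and $j_{k+1}$) directly from conditions (3)--(6) of Proposition~\ref{prop:restr.2}, which do imply it.
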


\begin{rem}
Any representation of a cyclotomic Hecke-Clifford superalgebra can be viewed as a representation of the affine Hecke-Clifford superalgebra \(\aHn\) via inflation, since the former are quotients of the latter. Indeed, under a certain parameter condition on \(q\), the second author and Shi constructed a family of irreducible representations for these cyclotomic algebras in \cite{SW}, indexed by the combinatorial notion of \emph{multipartitions}. One can easily verify that these irreducible representations are completely splittable when inflated to \(\aHn\). It is therefore natural to ask: is there a combinatorial interpretation of the weight set \(\mathfrak{P}(\aHn)\) and the equivalence classes \(\mathfrak{P}(\aHn)/{\sim}\) that parametrize the isomorphism classes of irreducible completely splittable \(\aHn\)-modules?

When \(h = \infty\) or \(h\) is odd, the diagrammatic interpretation in terms of \emph{placed skew shifted Young diagram} in \cite[Section 5]{Wa} also works in our situation, as the properties listed in Proposition~\ref{prop:restr.2} coincide with those in \cite[Proposition 3.14]{Wa}. However, it becomes subtle in the case where \(h\) is even for the affine Hecke-Clifford superalgebra \(\aHn\), while we are indeed able to provide a combinatorial classification for the finite Hecke-Clifford superalgebra \(\Hn\) in the next section.
\end{rem}
}

\section{Irreducible completely splittable module over the finite Hecke-Clifford superalgebra $\Hn$}\label{sec:cs-Hn}
In this section, we shall apply the classification of irreducible completely splittable $\aHn$-modules in previous section to the case of finite Hecke-Clifford superalgebra $\Hn$ and we accordingly provide a combinatorial interpretation in terms of partitions. 

\subsection{A surjective homomorphism}

{\color{black} Let $\Hn$ be the subalgebra of $\aHn$ generated by $T_1,\cdots,T_{n-1},C_1,\cdots,C_n$.  Then $\Hn$ is known as the finite Hecke-Clifford superalgebra and there are the notion of  the Jucy-Murphy elements defined as follows in $\Hn$: 
\begin{equation}\label{eq:JM}
L_1=1, \quad L_{k+1}=(T_k+\varepsilon C_kC_{k+1})L_kT_k,\quad \text{ for }1\leq k\leq n-1. 
\end{equation}
It is known that the elements $L_1,L_2,\ldots, L_n$ are invertible and commute. In addition there exists a surjective homomorphism 
\begin{equation}\label{eq:surj}
\aHn\longrightarrow \Hn, \quad T_k\mapsto T_k, C_j\mapsto C_j, X^{\pm 1}_j\mapsto L^{\pm 1}_j
\end{equation}
for $1\leq k\leq n-1, 1\leq j\leq n$. 

\begin{defn}
A finite dimensional $\Hn$-module $M$ is said to be completely splittable if $L_1,L_2,\ldots,L_n$ act on $M$ semisimply. 
\end{defn}

By~\cite[Lemma 4.4]{BK1} (cf.
\cite[Lemma 15.1.2]{K2}), a $\aHn$-module $M$ belongs to the
category $\text{Rep}_{\I}\aHn$ if all of eigenvalues of $X_j+X_j^{-1}$ on
$M$ are of the form $\mathtt{q}(i)$ with $i\in\mathbb{I}$ for some $1\leq j\leq n$. Hence the
category of finite dimensional completely splittable
$\Hn$-module can be identified with the subcategory of
$\text{Rep}_{\I}\aHn$ consisting of completely splittable
$\aHn$-modules on which $X_1=1$. By \eqref{eq:wt-decomp}, we can
decompose any finite dimensional $\Hn$-module $M$ as
$$
M=\oplus_{\underline{i}\in\I^n}M_{\underline{i}},
$$
where $M_{\underline{i}}=\{z\in M~|~((L_k+L_k^{-1})-\mathtt{q}(i_k))^Nz=0,\text{ for
}N\gg0, 1\leq k\leq n \}$. If $M_{\underline{i}}\neq 0$, then
$\underline{i}$ is called a {\em weight} of $M$.

\begin{defn} Define $\mathfrak{P}(\Hn)$ to be the set of weights
$\underline{i}=(i_1,\ldots,i_n)\in \mathfrak{P}(\aHn)$ satisfying the
following additional conditions:
\begin{equation}\label{wt}
i_1=0,\quad \{i_k-1,i_k+1\}\cap\{i_1,\cdots,i_{k-1}\}\neq \emptyset\text{ for }2\leq k\leq n
\end{equation}
\end{defn}
\begin{prop}\label{prop:Wfin}
$\mathfrak{P}(\Hn)$ is the set of weights occurring in irreducible completely splittable $\Hn$-modules. 
\end{prop}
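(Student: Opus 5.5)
We prove the two inclusions separately. Via the surjection \eqref{eq:surj}, an irreducible completely splittable $\Hn$-module is the same thing as an irreducible completely splittable $\aHn$-module in $\op{Rep}_{\I}\aHn$ on which $X_1$ acts as $1$. Every weight of such a module therefore lies in $\mathfrak{P}(\aHn)$, and by Theorem~\ref{thm:Classficiation} the module is isomorphic to $D^{\un i}$ for some $\un i$.

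\textbf{Inclusion $\subseteq$.} Let $M$ be an irreducible completely splittable $\Hn$-module with weight $\un i$.

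First, $X_1=L_1=1$ gives $\mathtt{q}(i_1)=2$, so $i_1=0$ by \eqref{eq:qi=2}. When $h$ is even, $\frac{h}{2}-1$ also has $\mathtt{q}=\pm2$; I expect the normalisation of $\mathtt{q}$ to give $\mathtt{q}(\frac h2-1)=-2$, so only $i_1=0$ is compatible with $L_1=1$.

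Second, fix $k\ge2$ and suppose $\{i_k\pm1\}\cap\{i_1,\dots,i_{k-1}\}=\emptyset$. Then $i_k$ differs by exactly $\pm1$ from none of $i_{k-1},i_{k-2},\dots,i_1$. It may equal some earlier entry, but by Proposition~\ref{prop:restr.2} two equal entries must be separated by a neighbouring value, which is excluded here. Hence $i_k$ is distinct from and non-adjacent to all earlier entries, so $s_{k-1},s_{k-2},\dots,s_1$ are successively admissible. By Corollary~\ref{Cor-2}, $M$ then has a weight whose first entry is $i_k\neq 0$ (note $i_k\ne0$ since $i_1=0$ and $i_k\ne i_1$). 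This contradicts the first step.

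\textbf{Inclusion $\supseteq$.} Take $\un i\in\mathfrak{P}(\Hn)$. We must show that $X_1$ acts as $1$ on $D^{\un i}$, so that $D^{\un i}$ factors through $\Hn$. The plan has three steps.

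1. Show that every $\un j\sim\un i$ still satisfies \eqref{wt}. An admissible swap $s_k$ exchanges entries that are not adjacent in value, so the "adjacent to something earlier" condition is preserved. At $k=1$, $s_1$ is never admissible when $i_1=0$, since $i_2=1$ is forced by \eqref{wt}.

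2. Deduce that $j_1=0$ for every $\un j\sim\un i$, so $X_1+X_1^{-1}=2$ on $D^{\un i}$.

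3. Upgrade this to $X_1=1$. On $L(0)$ we have $\mathtt{b}_\pm(0)=1$ when $\mathtt{q}(0)=2$, so $X_1$ acts semisimply with eigenvalue $1$ on each $L(\un j)$, hence $X_1=1$.

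It remains to check that $D^{\un i}$ is a module over the image $\Hn$, i.e. that the kernel of \eqref{eq:surj} acts trivially. I would invoke the known fact from \cite{BK1} that the kernel is generated by $X_1-1$, i.e. that $\Hn\cong\aHn/\langle X_1-1\rangle$.

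\textbf{Main obstacle.} The delicate part is the first inclusion, specifically the combinatorial claim that an equal earlier entry forces a neighbour value in between. Here I rely on Proposition~\ref{prop:restr.2}(3)–(6). The even-$h$ case $u=\frac h2-1$ needs care because only $u-1$ is guaranteed to lie between equal entries, which is still enough for the argument.
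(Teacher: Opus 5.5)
Your proof is correct and follows the paper's argument: $L_1=1$ forces $i_1=0$. A value $i_k$ that is non-adjacent to all earlier entries is moved to the front by Corollary~\ref{Cor-2} to reach a contradiction. Conversely, \eqref{wt} is preserved under admissible swaps, so $X_1=1$ on $D^{\un i}$.

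Your hedge about $\mathtt{q}(\frac h2-1)$ is unnecessary, since $\mathtt{q}$ is injective on $\I$ and $\mathtt{q}(0)=2$. Your uniform use of Proposition~\ref{prop:restr.2} for repeated entries covers both the $i_k=0$ case that the paper treats separately and the repeated case with $i_k\geq 1$ that the paper leaves implicit.
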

\begin{proof} 
Suppose $\underline{i}\in \I^n$ occurs in some irreducible
completely splittable representation $M$ of $\Hn$.  Clearly by \eqref{eq:surj} $M$ can be viewed as an irreducible completely splittable $\aHn$-module and hence $\underline{i}\in \mathfrak{P}(\aHn)$. In addition,  since 
$X_1+X_1^{-1}=2$ acting on $M$ and we have $\mathtt{q}(i_1)=2$ which implies $i_1=0$ by \eqref{substitution0}. For $2\leq k\leq n$, if $i_k=0$, then by Proposition \ref{prop:restr.2}(iii) we have $1\in\{i_1,\ldots,i_{k-1}\}$ and hence $\{i_k-1,i_k+1\}\cap\{i_1,\ldots,i_{k-1}\}\neq\emptyset$. Now assume $i_k\geq 1$ and suppose
$\{i_k-1,i_k+1\}\cap\{i_1,\ldots,i_{k-1}\}=\emptyset$. Then $s_l$ is admissible with respect to $s_{l+1}\cdots s_{k-1}\cdot\underline{i}$ for $1\leq l\leq k-1$ and hence $M_{s_1\cdots
s_{k-1}\cdot\underline{i}}\neq 0$ by Corollary \ref{Cor-2}. Set $\underline{j}=s_1\cdots
s_{k-1}\cdot\underline{i}$. Note $j_1=i_k\neq 0$ and this
contradicts to the fact that $X_1=1$ on $M$. Thus $\{i_k-1,i_k+1\}\cap\{i_1,\ldots,i_{k-1}\}\neq\emptyset$. So we obtain $\underline{i}\in\mathfrak{P}(\Hn)$. 

Conversely, let $\underline{i}\in \mathfrak{P}(\Hn)$. Recall
$P_{\underline{i}}$ and $D^{\underline{i}}$ from \eqref{Pi}
and \eqref{Di}, respectively. It can be easily checked that
$\tau\cdot\underline{i}\in \mathfrak{P}(\Hn)$ for each $\tau\in
P_{\underline{i}}$ and hence $X_1=1$ on $D^{\underline{i}}$. This
implies that $D^{\underline{i}}$ affords an irreducible
completely splittable $\Hn$-module. The Proposition
follows from the fact that $\underline{i}$ is a weight of
$D^{\underline{i}}$.
\end{proof}

\subsection{Basics on partitions}

For a partition $\la=(\la_1,\la_2,\cdots )$, we always assume $\la_1\geq\la_2\geq\dots\geq 0$. Denote by $\ell(\la)$ the number of nonzero parts in $\la$. It is known that the partition $\la$ can be drawn as Young diagrams. Denote by $\mathcal{P}(n)$ the set of partitions of $n$. For $k\geq 0$, a partition $\la$ is said to be $k$-strict if $k$ divides $\la_r$ whenever $\la_r=\la_{r+1}$ for $r \geq 1$. Denote by $\mathcal{SP}_k(n)$ the set of $k$-strict partition of $n$. Then the set $\mathcal{SP}_0(n)$ is exactly the set of  usual strict partitions of $n$, that is, 
$$
\mathcal{SP}_0(n)= \{\la\in \mathcal{P}(n)\mid \la_i>\la_{i+1}, 1\leq i\leq \ell(\la)-1\}.
$$  Similar to  the case of partitions, a  strict partition $\la\in \mathcal{SP}_0(n)$ can be identified with the shifted Young diagram which is obtained from the ordinary Young diagram by shifting the $k$-th row to the right by $k-1$ squares for all $k>1$, that is, 
\begin{equation}
\la^{\mathsf{s}}=\{(i,j)\mid 1\leq i\leq\ell(\la), i\leq j\leq i+\la_i-1 \}. 
\end{equation} 
The $(i,j)$-hook of a shifted Young diagram contains all nodes that are either in the same row as $(i,j)$ and to the right of $(i,j)$, or in the same column as $(i,j)$ and below $(i,j)$ including $(i,j)$.  Additionally if $(j,j)$ is included in the hook then nodes in the $(j+1)$-row are also included. Denote by $h^{\mathsf{s}}_\la(i,j)$ the number of nodes in the $(i,j)$-hook.
\begin{example}
Suppose $\la=(7,5,3,2)$, then the $(1,2)$-hook and $(1,4)$-hook of shifted Young diagram $\la^{\mathsf{s}}$ are as below
\[\ydiagram{7,1+5,2+3,3+2}*[\bullet]{1+6,1+1,2+3}, 
\qquad
\ydiagram{7,1+5,2+3,3+2}*[\bullet]{3+4,3+1,3+1,3+1},\]
and accordingly $h^{\mathsf{s}}_\la(1,2)=10$ and $h^{\mathsf{s}}_\la(1,4)=7$. 
\end{example}

Denote by $\mathcal{T}^{\mathsf{s}}(\la)$ the set of shifted tableaux of shape $\la^{\mathsf{s}}$; that is, a shifted tableau is a labelling of the nodes in the shifted Young diagram $\la^{\mathsf{s}}$ with the entries $1,2,\dots, n$. Let $T_{(i,j)}$ denote the entry in the node $(i,j)$ and let $T(k)$ be the node which is occupied by the number $k$ for each $1\leq k\leq n$. So if $T_{(i,j)}=k$ then $T(k)=(i,j)$. 
A shifted tableau $T$ is called \emph{standard} if its entries strictly increase from left to right along each row and down each column. We denote by $\op{Std}^{\mathsf{s}}(\la)$ the subset of $\mathcal{T}^{\mathsf{s}}(\la)$ consisting of standard tableaux of shape $\la^{\mathsf{s}}$. We then have the following remarkable hook length formula (cf. \cite[Chapter III, Section 8, Example 12]{Mac})
\begin{equation}\label{eq:hook2}
\sharp \op{Std}^{\mathsf{s}}(\la)=\frac{n!}{\prod_{(i,j)\in\la^{\mathsf{s}}} h^{\mathsf{s}}_\la(i,j)}
\end{equation}
where the product in the denominator is over all nodes in the shifted Young diagram $\la^{\mathsf{s}}$.
}

 {\color{black}\subsection{Irreducible completely splittable $\Hn$-modules in case $h \geq 3$ is odd}
By Proposition \ref{prop:Wfin}, $\mathfrak{P}(\Hn)$ is the set of weights occurring in irreducible completely splittable finite Hecke-Clifford algebra $\Hn$-modules. If $h\geq 3$ is prime, we observe that the conditions that $\mathfrak{P}(\Hn)$ satisfied, including Proposition \ref{prop:restr.2}(1)(2)(3)(5) and \eqref{wt}, are the same as the conditions that weights of degenerate affine Hecke-Clifford algebras over a filed of characteristic $p=h$ satisfied, which has been discussed by the second author in \cite{Wa}. Since the combinatorial construction in \cite{Wa} does not depend on whether $p$ is prime or not, we can use the same combinatorial objects to index the irreducible completely splittable representations of $\Hn$. Here we give a quick review on the details (cf. \cite{CWZ}).
Denote by 
\begin{equation}\label{eq:cpn}
\begin{aligned}
\mathcal{CSP}_h(n)=\Big\{\xi\in\mathcal{SP}_0(n)\mid &\xi_1=h-u,\xi_2\leq u\text{ for some }1\leq u\leq \frac{h-3}{2}\\
&\text{or } 1\leq \xi_1\leq \frac{h+1}{2}\Big\}
\end{aligned}
\end{equation}
For each $\xi\in\mathcal{CSP}_h(n)$, denote by
\begin{equation}
\op{Std}^{\mathsf{s}}_h(\xi)=\begin{cases}
\big\{T\in\op{Std}^{\mathsf{s}}(\xi)|T_{(2,\xi_2+1)}>T_{(1,\xi_1)}\big\}, &\text{if }\xi_1=h-u,\xi_2=u\\
&\text{ for some }1\leq u\leq \frac{h-3}{2},\\
\op{Std}^{\mathsf{s}}(\xi),&\text{otherwise. }
\end{cases}
\end{equation}
 Set
\[\Delta_{h}(n):=\{(\xi,T)|\xi\in\mathcal{CSP}_h(n),T\in\op{Std}^{\mathsf{s}}_h(\xi)\}\]

We label the {\em residue} of nodes in the shifted Young diagram of $\xi\in\mathcal{CSP}_h(n)$ 
using the set $\mathbb{I}=\{0,1,\ldots,\frac{h-1}{2}\}$ in \eqref{defn:I} via  the way that the first node in each row has residue $0$ and then follow the repeating pattern
\begin{equation}\label{eq:residue-odd}
0, 1,\ldots,\frac{h-3}{2},\frac{h-1}{2},\frac{h-3}{2},\ldots,1,0. 
\end{equation}
The residue  in \eqref{eq:residue-odd} is actually to compute
$\mathtt{q}$-values of the usual residue $j-i$ of the nodes $(i,j)$ and the reason for this pattern is due to the observation $\mathtt{q}(a)=\mathtt{q}(b)$ if $a=b\mod h$ or $a+b+1=0\mod h$ for any $a,b\in\mathbb{Z}$. Let $\xi\in\mathcal{CSP}_h(n)$ and suppose $T\in \op{Std}^{\mathsf{s}}_h(\xi)$. Let 
$$
\underline{i}_{(\xi, T)}=(\op{res}(T(1)),\op{res}(T(2)),\cdots,\op{res}(T(n)))\in\mathbb{I}^n
$$ be the residue sequence corresponding to the pair $(\xi,T)$. 

\begin{lem}\cite[Lemma 6.6 and Lemma 6.7]{Wa}\label{lem:odd}
For each $(\xi,T)\in \Delta_h(n)$, the residue sequence $\underline{i}_{(\xi,T)}$ belongs to $\mathfrak{P}(\Hn)$ and this gives rise to a bijection from $\Delta_{h}(n)$ to $\mathfrak{P}(\Hn)$. Moreover, assume $\un j\in \mathfrak{P}(\Hn)$, then $ \un j\sim \underline{i}_{(\xi,T)}$ for some  $(\xi,T)\in\Delta_h(n)$ if and only if $\un j=\un i_{(\xi,S)}$ for some $S\in \op{Std}^{\mathsf{s}}_h(\xi)$.
\end{lem}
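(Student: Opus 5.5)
The plan is to reduce the lemma to the corresponding combinatorial statements of \cite[Lemma 6.6, Lemma 6.7]{Wa}. We first observe that when $h\geq 3$ is odd, $\mathfrak{P}(\Hn)$ is cut out purely combinatorially by the following conditions: Proposition~\ref{prop:restr.2}(1), (2), (3), (5); the condition \eqref{wt}; and the admissibility rule $i_k\neq i_{k+1}\pm1$ for the equivalence $\sim$. This description uses Corollary~\ref{cor:coincide}, and condition (4) is vacuous. We then check that these are verbatim the conditions defining the weight set of completely splittable modules for the degenerate affine Hecke--Clifford algebra in characteristic $p$ in \cite{Wa}, with $p$ replaced by $h$ and with the same index set $\{0,\dots,\frac{h-1}{2}\}$. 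The arguments of \cite[\S6]{Wa} only manipulate these sequences, so they never use primality of $p$. The statement then transfers, provided the residue pattern \eqref{eq:residue-odd} agrees with the one in \cite{Wa}; it does, being the $\mathtt{q}$-reduction of $j-i$ modulo the symmetries $a\equiv b$ and $a+b+1\equiv 0 \pmod h$.

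To keep the proposal self-contained, we would also outline the direct argument.

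\emph{Step 1: residue sequences are weights.} Take $(\xi,T)\in\Delta_h(n)$. We have $i_1=0$, since $T(1)=(1,1)$. Each new node $T(k)$ is adjacent, to its left or above, to an earlier node whose residue differs by one, unless it is a first node of a row (residue $0$, with a residue-$1$ node above it). This gives \eqref{wt}. Condition (1) holds because $k$ and $k+1$ lie in a common row or column with adjacent residues, or are non-adjacent with distinct residues by standardness. Conditions (2), (3) and (5) come from the shape constraint in \eqref{eq:cpn}. The residue $\frac{h-1}{2}$ can occur only at column $\frac{h+1}{2}$ of row $1$, or in the case $\xi_1=h-u$. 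Repeated residues in row $1$ beyond the peak of the pattern produce the chain $r_0<\dots<q<\dots<t_0$ of (5)(b). The extra condition $T_{(2,\xi_2+1)}>T_{(1,\xi_1)}$ is precisely what forbids a sequence of the form $(\dots,u,u,\dots)$ after admissible swaps.

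\emph{Step 2: bijectivity.} For injectivity, we recover $T$ from $\un i$ by placing each $k$ successively in the unique addable node of the current shifted shape having residue $i_k$. This uses that, inside $\mathcal{CSP}_h$, two addable nodes never share a residue. For surjectivity, we run the same procedure on an arbitrary $\un j\in\mathfrak{P}(\Hn)$. Condition \eqref{wt} guarantees that an addable node of residue $j_k$ exists, and conditions (2), (3) and (5) guarantee that the resulting shape stays in $\mathcal{CSP}_h(n)$ and that the tableau lies in $\op{Std}^{\mathsf{s}}_h$.

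\emph{Step 3: equivalence classes.} An admissible $s_k$ swaps $k$ and $k+1$ when their residues differ by more than one. These are exactly the cases where $k$ and $k+1$ are not adjacent in the diagram, so swapping them gives another standard tableau of the same shape. Conversely, any two standard tableaux of the same shape are connected by such swaps. This is the usual connectivity of $\op{Std}^{\mathsf{s}}(\xi)$ under non-adjacent transpositions, restricted to the subset $\op{Std}^{\mathsf{s}}_h(\xi)$, which is also connected. Since $\sim$ preserves the multiset of residues, and the shape is determined by that multiset within $\mathcal{CSP}_h$, the ``only if'' direction follows.

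The main obstacle is the surjectivity in Step 2 together with the connectivity of $\op{Std}^{\mathsf{s}}_h(\xi)$ in the exceptional shapes $\xi_1=h-u$, $\xi_2=u$. Here condition (5)(b) must be matched precisely with the constraint $T_{(2,\xi_2+1)}>T_{(1,\xi_1)}$. For this step I would rely on the case analysis of \cite[Lemma 6.7]{Wa} rather than redo it.
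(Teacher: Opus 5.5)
Your main route is the paper's own. The paper gives no separate proof of this lemma. It notes that for odd $h$ the set $\mathfrak{P}(\Hn)$ (via Corollary~\ref{cor:coincide}) is cut out by Proposition~\ref{prop:restr.2}(1)(2)(3)(5), \eqref{wt} and the rule $i_k\neq i_{k+1}\pm1$. These are verbatim the conditions of \cite{Wa} with $p$ replaced by $h$, and primality is never used there, so \cite[Lemmas 6.6, 6.7]{Wa} is imported directly. As a reduction this is fine.

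The self-contained sketch you add, however, would fail as written. Two of its claims are false for the exceptional shapes $\xi_1=h-u$.

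\emph{Smallest counterexample.} Take $h=n=5$, so $\mathcal{CSP}_5(5)=\{(4,1),(3,2)\}$.
\begin{itemize}
\item Both shapes have residue multiset $\{0,0,1,1,2\}$. So the shape is \emph{not} determined by the residue multiset, and the last sentence of your Step 3 is wrong.
\item The intermediate shape $(3,1)$ has two addable nodes of residue $1$, namely $(1,4)$ and $(2,3)$, and each leads into $\mathcal{CSP}_5$. So the weight $(0,1,2,0,1)$ cannot be read off greedily without a tie-break, and the injectivity argument in Step 2 fails.
\end{itemize}

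\emph{General case.} For $2\le u\le\frac{h-1}{2}$, the shape $(h-u,u-1,\dots)$ has two addable nodes of residue $u-1$. The condition $T_{(2,\xi_2+1)}>T_{(1,\xi_1)}$ in $\op{Std}^{\mathsf{s}}_h$ is exactly the needed tie-break. It excludes the row-$1$ choice, since that choice yields shape $(h-u+1,u-1,\dots)$ with row $2$ completed first. Similarly, ``admissible iff non-adjacent'' fails for the pair $(1,\xi_1),(2,\xi_2+1)$. These nodes are non-adjacent but have residues $u,u-1$; for instance, the unique element of $\op{Std}^{\mathsf{s}}_5((4,1))$ has entries $4,5$ there.

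\emph{Repair of Step 3.} No multiset argument is needed.
\begin{itemize}
\item For ``only if'': if $s_k$ is admissible for $\un i_{(\xi,T)}$, then $T(k),T(k+1)$ are non-adjacent and are not the special pair. Hence $s_kT\in\op{Std}^{\mathsf{s}}_h(\xi)$ and $\un i_{(\xi,s_kT)}=s_k\cdot\un i_{(\xi,T)}$, and induction finishes.
\item For ``if'': regard $\op{Std}^{\mathsf{s}}_h(\xi)$ as the set of linear extensions of the shifted-diagram poset with the extra relation $(1,\xi_1)\prec(2,\xi_2+1)$. Linear extensions of a finite poset are connected by swaps of consecutive incomparable entries. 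Such entries are never the special pair, and one checks their residues never differ by $0$ or $\pm1$. So every such swap is admissible.
\end{itemize}
The surjectivity half of Step 2 still needs the case analysis you defer to \cite{Wa}.
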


Assume $\xi\in\mathcal{CSP}_h(n)$ and $T\in \op{Std}^{\mathsf{s}}_h(\xi)$. Recall the module $D^{\un i}$ for each $\un i\in \mathfrak{P}(\aHn)$ define in \eqref{Di} and set
\[D(\xi):=D^{\un i_{(\xi,T)}}\]
for some $T\in\op{Std}^{\mathsf{s}}_h(\xi)$. Then $D(\xi)$ is independent of the choice of $T \in\op{Std}^{\mathsf{s}}_h(\xi)$  by Lemma  \ref{lem:odd}. 

\begin{thm}\label{dim2}
The set $\{D(\xi)\mid \xi\in\mathcal{CSP}_h(n)\}$ is a complete set of pairwise non-isomorphic irreducible completely splittable $\Hn$-modules. Moreover, $D(\xi)$ is type
 $\texttt{M}$ if $\ell(\xi)$ is even and is type
$\texttt{Q}$ if $\ell(\xi)$ is odd. In addition,
\[
\op{dim}D(\xi)=2^{n-\big\lfloor\frac{\ell(\xi)}{2}\big\rfloor}\sharp\op{Std}^{\mathsf{s}}_h(\xi).
\]
\end{thm}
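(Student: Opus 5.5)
The proof assembles four earlier results: Proposition~\ref{prop:Wfin}, Theorem~\ref{thm:Classficiation}, Lemma~\ref{lem:odd} and Corollary~\ref{cor:irrepAn}, with one counting step for the Clifford dimension.

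\textbf{Completeness and non-isomorphism.} Let $M$ be an irreducible completely splittable $\Hn$-module. Inflating along \eqref{eq:surj}, $M$ becomes an irreducible completely splittable $\aHn$-module, so Theorem~\ref{thm:Classficiation}(3) gives $M\cong D^{\un i}$ for some weight $\un i$ of $M$. By Proposition~\ref{prop:Wfin}, $\un i\in\mathfrak{P}(\Hn)$. By Lemma~\ref{lem:odd}, $\un i=\un i_{(\xi,T)}$ for a unique $(\xi,T)\in\Delta_h(n)$, hence $M\cong D(\xi)$. Conversely, the proof of Proposition~\ref{prop:Wfin} shows that each $D(\xi)$ is an irreducible completely splittable $\Hn$-module.

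For non-isomorphism, suppose $D(\xi)\cong D(\eta)$. Theorem~\ref{thm:Classficiation}(2) gives $\un i_{(\xi,T)}\sim\un i_{(\eta,S)}$. The last part of Lemma~\ref{lem:odd} then says $\un i_{(\eta,S)}=\un i_{(\xi,S')}$ for some $S'\in\op{Std}^{\mathsf{s}}_h(\xi)$. The bijectivity in Lemma~\ref{lem:odd} forces $(\eta,S)=(\xi,S')$, so $\eta=\xi$.

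\textbf{Type.} By Theorem~\ref{thm:Classficiation}(1), $D(\xi)$ has the same type as $L(\un i)$ with $\un i=\un i_{(\xi,T)}$. By Corollary~\ref{cor:irrepAn} this type is governed by the parity of $\gamma_0=\#\{k\mid \mathtt{q}(i_k)=\pm2\}$, which equals $\#\{k\mid i_k=0\}$ by \eqref{eq:qi=2}, since $h$ is odd. So the key combinatorial claim is that the number of residue-$0$ nodes of $\xi^{\mathsf s}$ equals $\ell(\xi)$.

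Each row starts with a residue-$0$ node. By the pattern \eqref{eq:residue-odd}, a second $0$ occurs in a row only at positions $h$ and $h+1$, that is, only if the row has length at least $h$. Every $\xi\in\CSP_h(n)$ has $\xi_1\le h-1$, since either $\xi_1=h-u$ with $u\ge1$ or $\xi_1\le\frac{h+1}{2}<h$. Hence each row contains exactly one $0$, so $\gamma_0=\ell(\xi)$. The type is therefore \texttt{M} if $\ell(\xi)$ is even and \texttt{Q} if $\ell(\xi)$ is odd.

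\textbf{Dimension.} By \eqref{Di}, $\dim D^{\un i}=|P_{\un i}|\cdot\dim L(\un i)$. Lemma~\ref{lem:Pi} gives $|P_{\un i}|=|\Lambda_{\un i}|$. By Lemma~\ref{lem:odd}, every element of $\Lambda_{\un i}$ lies in $\mathfrak{P}(\Hn)$ (using the proof of Proposition~\ref{prop:Wfin}), and $\Lambda_{\un i}$ is in bijection with $\op{Std}^{\mathsf s}_h(\xi)$ via $S\mapsto \un i_{(\xi,S)}$. Corollary~\ref{cor:irrepAn} together with $\gamma_0=\ell(\xi)$ gives $\dim L(\un i)=2^{n-\lfloor \ell(\xi)/2\rfloor}$. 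Multiplying the two factors yields the stated formula
\[
\dim D(\xi)=2^{n-\lfloor\ell(\xi)/2\rfloor}\,\sharp\op{Std}^{\mathsf{s}}_h(\xi).
\]

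The only step not purely formal is the count of residue-$0$ nodes, which needs the bound $\xi_1\le h-1$. It also uses the fact, implicit in Lemma~\ref{lem:odd}, that the equivalence class $\Lambda_{\un i}$ consists exactly of residue sequences of tableaux of the same shape.
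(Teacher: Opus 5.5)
Your proposal is correct and follows essentially the same route as the paper: inflate along \eqref{eq:surj}, apply Theorem~\ref{thm:Classficiation} and Proposition~\ref{prop:Wfin}, and then read off type and dimension from Lemma~\ref{lem:odd}, \eqref{eq:qi=2} and Corollary~\ref{cor:irrepAn}. The difference is that you write out steps the paper leaves implicit, namely the non-isomorphism argument, the count $\gamma_0=\ell(\xi)$ (which uses $\xi_1\le h-1$), and the identity $|P_{\un i}|=|\Lambda_{\un i}|=\sharp\op{Std}^{\mathsf{s}}_h(\xi)$ obtained from Lemma~\ref{lem:Pi} and Lemma~\ref{lem:odd}.
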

\begin{proof}
 Assume $\xi\in\mathcal{CSP}_h(n)$. By Lemma  \ref{lem:odd} and Theorem \ref{thm:Classficiation}, every weight $\un i$ of  $D(\xi)$ satisfies $i_1=0$ which implies the $X_1+X_1^{-1}=2$ on $D(\xi)$. Then $X_1=1$ on $D^{\un i}$ as it is completely splittable. Therefore  $D(\xi)$ admits a $\Hn$-module by \eqref{eq:surj}.  Conversely, suppose $M$ is an irreducible completely splittable $\Hn$-module and $\un i$ is a weight of $M$. 
 Observe that  $M$ actually admits an irreducible completely splittable $\aHn$-module via \eqref{eq:surj} and hence $M\cong D^{\un i}$. Meanwhile by Lemma \ref{lem:odd} there exists $(\xi,T)\in\Delta_h(n)$ such that $\un i=\un i_{(\xi,T)}$. This means $M\cong D^{\un i}\cong D(\xi)$. The remaining statement in  the theorem follows from Theorem \ref{thm:Classficiation} and Lemma \ref{lem:odd} as well as \eqref{eq:qi=2} and Corollary \ref{cor:irrepAn}. 
\end{proof}
}
\subsection{Irreducible completely splittable $\Hn$-modules in case $h \geq 4$ is even}

{\color{black} Now, we consider the two kinds of weights described in Proposition \ref{prop:restr.2} (6a-b). Set
\[\begin{aligned}
\mathfrak{P}^1(\aHn)&=\left\{\un i\in \mathfrak{P}(\aHn)\mid \sharp\{1\leq k\leq n\mid i_k=\frac{h}{2}-1\}\leq 1\right\},\\
\mathfrak{P}^2(\aHn)&=\left\{\un i\in \mathfrak{P}(\aHn)\mid \sharp\{1\leq k\leq n\mid i_k=\frac{h}{2}-1\}\geq 2\right\}. 
\end{aligned}\]
Clearly we have 
$$
\mathfrak{P}(\aHn)=\mathfrak{P}^1(\aHn)\cup \mathfrak{P}^2(\aHn)
$$
and moreover if $\un i \in \mathfrak{P}^k(\aHn)$ and $\un j\sim\un i,$ then $\un j\in \mathfrak{P}^k(\aHn)$ for $k = 1, 2.$ Denote by $\mathfrak{P}^k(\Hn)$ the set of $\un i=(i_1,\cdots,i_n)\in \mathfrak{P}^k(\aHn)$ satisfying \eqref{wt} for $k=1,2$. Then, $\mathfrak{P}(\Hn)=\mathfrak{P}^1(\Hn)\sqcup \mathfrak{P}^2(\Hn)$ is the set of weights occurring in irreducible completely splittable $\Hn$-modules by Proposition \ref{prop:Wfin}.
Denote by $$\mathcal{CSP}_h^1(n)=\{\la\in \mathcal{SP}_0(n)\mid \la_1\leq \frac{h}{2}\}$$
and set 
\[\Delta^1_h(n)=\{(\xi,T)|\xi\in \mathcal{CSP}^1_h(n),T\in\op{Std}^{\mathsf{s}}(\xi)\}.\]
We label the residue of nodes in the shifted Young diagram of $\xi\in\mathcal{CSP}^1_h(n)$ 
using the set $\mathbb{I}=\{0,1,\ldots,\frac{h}{2}-1\}$ in \eqref{defn:I} via  the way that the first node in each row has residue $0$ and then follow the repeating pattern
\begin{equation}\label{eq:residue-even}
0, 1,\ldots,\frac{h}{2}-2,\frac{h}{2}-1,\frac{h}{2}-1,\frac{h}{2}-2\ldots,1,0. 
\end{equation}
Again, similar to the case $h$ is odd, the residue  in \eqref{eq:residue-even} is actually to compute
$\mathtt{q}$-values of the usual residue $j-i$ of the nodes $(i,j)$ and the reason for this pattern is due to the observation $\mathtt{q}(a)=\mathtt{q}(b)$ if $a=b\mod h$ or $a+b+1=0\mod h$ for any $a,b\in\mathbb{Z}$. Let $\xi\in\mathcal{CSP}^1_h(n)$ and suppose $T\in \op{Std}^{\mathsf{s}}(\xi)$. Let 
$$
\underline{i}_{(\xi, T)}=(\op{res}(T(1)),\op{res}(T(2)),\cdots,\op{res}(T(n)))\in\mathbb{I}^n
$$ be the residue sequence corresponding to the pair $(\xi,T)$.


\begin{lem}\label{Weven1}
The following map 
\begin{equation}
\mathcal{F}: \Delta^1_h(n)\longrightarrow \mathfrak{P}^1(\Hn), \quad (\xi, T)\mapsto \un i_{(\xi,T)}
\end{equation} 
is a bijection. 
 Moreover,  assume $\un j\in \mathfrak{P}^1(\Hn)$, then $ \un j\sim \un i_{(\xi,T)}$ for some $(\xi, T)\in\Delta_h^1(n)$ if and only if $\un j=\un i_{(\xi,S)}$ for some $S\in \op{Std}^{\mathsf{s}}(\xi)$. Thus there exists a surjective map $\Phi_1: \mathfrak{P}^1(\Hn)\longrightarrow \mathcal{CSP}_h^1(n)$ such that $\un i\sim \un j$ if and only if $\Phi_1(\un i)=\Phi_1(\un j)$. 
\end{lem}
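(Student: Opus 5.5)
The plan is to follow the strategy of \cite[Lemmas 6.6, 6.7]{Wa} used for Lemma~\ref{lem:odd}, adapted to the residue pattern \eqref{eq:residue-even}. The key observation is that for $\xi\in\mathcal{CSP}^1_h(n)$ we have $\xi_1\leq \frac h2$. So each row of $\xi^{\mathsf{s}}$ has residues $0,1,\ldots,\xi_r-1$, which stay in the strictly increasing part of the pattern, and the residue $\frac h2-1$ can occur only in a node $(1,\frac h2)$. Hence at most one entry equal to $\frac h2-1$ occurs, as required for $\mathfrak{P}^1$. The residue of the node $(r,c)$ is then simply $c-r$. The combinatorics is therefore exactly that of shifted tableaux with content $c-r$, truncated at $\frac h2-1$. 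This is the same as the $h=\infty$ situation restricted to weights with all entries $\leq \frac h2-1$.

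First I show that $\mathcal F$ is well defined. Given $(\xi,T)$, set $\un i=\un i_{(\xi,T)}$. Then $i_1=0$, since $T(1)=(1,1)$. For $k\geq 2$, the node $T(k)=(r,c)$ has either a left neighbour $(r,c-1)$ or an upper neighbour $(r-1,c)$ with smaller entry, of residue $i_k-1$ or $i_k+1$ respectively; this gives \eqref{wt}. Next I check conditions (1)--(6) of Proposition~\ref{prop:restr.2}, which by Corollary~\ref{cor:coincide} characterize $\mathfrak{P}(\aHn)$.
\begin{itemize}
\item[(1)] Consecutive entries $k,k+1$ cannot lie on the same diagonal $c-r$ of a standard shifted tableau.
\item[(3)] Between two occurrences of $0$ (diagonal nodes $(r,r)$ and $(r+1,r+1)$), the node $(r,r+1)$ of residue $1$ must be filled.
\item[(6a)] For $1\leq u<\frac h2-1$, two consecutive occurrences of $u$ on the same diagonal are separated by the nodes $(r,c+1)$ and $(r+1,c)$, which carry $u+1$ and $u-1$.
\item[(6b)] This condition is vacuous, since $\frac h2-1$ occurs at most once.
\end{itemize}
Condition (2) is irrelevant for even $h$.

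Next I construct the inverse. Given $\un j\in\mathfrak{P}^1(\Hn)$, I build $(\xi,T)$ inductively, placing $k$ in the unique addable node of the current shifted diagram whose residue is $j_k$. Uniqueness holds because addable nodes of a shifted diagram lie on distinct diagonals, and residue equals diagonal here since the values stay below $\frac h2$. Existence is the main obstacle. Using \eqref{wt}, Proposition~\ref{prop:restr.2}(3),(6a) and Lemma~\ref{lem:restr.2}(2), I must show the following: if $j_k=u$, the previous occurrence of $u$ (if any) has been followed by both a $u-1$ and a $u+1$, or just by a $1$ when $u=0$, and a node of residue $u-1$ or $u+1$ adjacent to the new position exists. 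This is exactly the statement that the node on diagonal $u$ is addable. I would argue it by the shape of the diagram seen through the counts $\#\{t<k\mid j_t=v\}$. The conditions say these counts are weakly decreasing in $v$, with at most one step between neighbours. Moreover, a new $u$ is allowed exactly when the counts at $u-1$ and $u$ coincide (resp.\ at $u=0$, when the count at $1$ equals the count at $0$). The cap $\xi_1\leq\frac h2$ comes from the fact that $\frac h2-1$ occurs at most once and residue $\frac h2$ does not exist. These two maps are mutually inverse, so $\mathcal F$ is a bijection.

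For the second statement, I use that $s_k$ is admissible for $\un i_{(\xi,T)}$ iff $|i_k-i_{k+1}|\neq1$. This holds iff $T(k)$ and $T(k+1)$ are not adjacent, which is iff $s_kT$ is again standard, and then $s_k\cdot\un i_{(\xi,T)}=\un i_{(\xi,s_kT)}$. So $\sim$ preserves the shape. Conversely, any two standard shifted tableaux of the same shape are connected by such transpositions, for instance via the row-reading tableau. Hence $\un j\sim\un i_{(\xi,T)}$ iff $\un j=\un i_{(\xi,S)}$ for some $S$. Setting $\Phi_1(\un i_{(\xi,T)})=\xi$ gives the required surjection whose fibres are the $\sim$-classes.
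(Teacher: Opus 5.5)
Your proposal is correct and takes essentially the paper's route: you characterize $\mathfrak{P}^1(\Hn)$ through \eqref{wt}, Proposition~\ref{prop:restr.2} and Corollary~\ref{cor:coincide}, and then run Wan's shifted-tableau combinatorics, which the paper only cites and you actually carry out (your remark that this is the $h=\infty$ case truncated at $\frac{h}{2}-1$ is exactly why it transfers). One slip in the existence step needs correcting: with $c_v=\#\{t<k\mid j_t=v\}$, the node on diagonal $u$ is addable iff $c_u=c_{u+1}$ and, for $u\geq 1$, $c_{u-1}=c_u+1$ (the equality $c_{u-1}=c_u$ you state holds only after placing $k$), and both conditions follow from the ingredients you list: if $u$ occurred before, its last occurrence is followed by a $u+1$ (and by a $u-1$ when $u\geq 1$), while if $u$ is new then $c_{u+1}\leq c_u=0$ together with \eqref{wt} forces $c_{u-1}=1$.
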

\begin{proof}
By Corollary \ref{cor:coincide}, Proposition \ref{prop:restr.2} (6a-b) and \eqref{wt}, we observe that $\mathfrak{P}^1(\Hn)$ consists of the vectors $\un i\in\mathbb{I}^n$ satisfying: (1) $i_1=0$ and $  \{i_k-1,i_k+1\}\cap\{i_1,\cdots,i_{k-1}\}\neq \emptyset\text{ for }2\leq k\leq n$; (2) if $i_a=i_b=0$ for $1\leq a<b\leq n$ then $1\in \{i_{a+1},i_{a+2},\ldots,i_{b-1}\}$;  (3) if $i_a=i_b=u\neq 0$ for $1\leq a<b\leq n$ then $1\leq u\leq\frac{h}{2}-2$ and moreover $\{u-1,u+1\}\subseteq\{i_{a+1},i_{a+2},\ldots,i_{b-1}\}$; (4) $\sharp\{a\mid i_a=\frac{h}{2}-1, 1\leq a\leq n\}\leq 1$. Then one can show that $\mathcal{F}$ is a well-defined bijective map and moreover the second statement of proposition holds by applying argument same as the proof of \cite[Lemma 5.3, Proposition 5.5, Lemma 5.6]{Wa}. We omit the details as it is straightforward. This proves the proposition. 
\end{proof}

In the following, we shall explore the set of weights $\mathfrak{P}^2(\aHn)$ which is remarkably different from the case of  $\mathfrak{P}^1(\aHn)$. 
Set 
\begin{align}
\underline{\theta}^{(b,m)}_h=&\left\{
\begin{array}{cc}
(\frac{h}{2}-1,\frac{h}{2}-2,\ldots,1,0),\text{ if } b\leq m-\frac{h}{2}+1, \\
(\frac{h}{2}-1,\frac{h}{2}-2,\ldots,\frac{h}{2}-1-m+b),\text{ if } b>m-\frac{h}{2}+1. 
\end{array}
\right.
\end{align}
for $1\leq b\leq m$ and $m\geq 2$. For each $\un i\in \mathfrak{P}^2(\aHn)$, denote by $m_{\un i}=\#\{a\mid i_a=\frac{h}{2}-1, 1\leq a\leq n\}$, i.e. the number of parts of $\un i$ which are equal to $\frac{h}{2}-1.$ Obviously $m_{\un i}\geq 2$ for each $\un i\in \mathfrak{P}^2(\aHn)$. 
\begin{lem}\label{lem:weight2}
Let $\un i \in \mathfrak{P}^2(\aHn)$ and write $m=m_{\un i}$. Then 
$$
\un i\sim (\un i^{(\mathsf{L})}, \underline{\theta}^{(1,m)}_h, \underline{\theta}^{(2,m)}_h,\ldots, \underline{\theta}^{(m,m)}_h,\un i^{(\mathsf{R})})
$$
for some $\un i^{(\mathsf{L})}\in\mathbb{I}^l,\un i^{(\mathsf{R})}\in\mathbb{I}^r$ with $l,r\geq 0$. Moreover $0\leq i^{(\mathsf{L})}_a,  i^{(\mathsf{R})}_b\leq\frac{h}{2}-2$ for $1\leq a\leq l$ and $1\leq b\leq r$. In addition, if $r\geq 1$, then $ i^{(\mathsf{R})}_1=\max\{0, \frac{h}{2}-m-1\}$. 
\end{lem}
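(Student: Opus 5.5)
The plan is to apply Proposition~\ref{prop:restr.2}(6b) to the first and last occurrences of $\frac{h}{2}-1$ in $\un i$, and then move letters by admissible transpositions. Admissible transpositions only swap adjacent entries differing by something other than $\pm1$, and $\sim$ is built from these, so we always stay inside $\mathfrak{P}(\aHn)$ (in fact inside $\mathfrak{P}^2(\aHn)$, with $m$ unchanged).

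\textbf{The skeleton.} Let $k$ and $\ell$ be the positions of the first and last occurrences of $\frac{h}{2}-1$, so that $m$ counts the occurrences in $[k,\ell]$. Proposition~\ref{prop:restr.2}(6b) gives indices $k_{a;b}$ with $i_{k_{a;b}}=\frac{h}{2}-a$, for $1\le a\le\min\{m,\frac h2\}$ and $1\le b\le m-a+1$. These satisfy the interlacing $k_{a;b}<k_{a+1;b}<k_{a;b+1}$, and $k_{a+1;b}$ is the unique occurrence of $\frac h2-a-1$ strictly between $k_{a;b}$ and $k_{a;b+1}$. Reading the positions in increasing order, the skeleton entries form the word
\[
\tfrac h2-1,\ \tfrac h2-2,\ \dots
\]
This word is exactly the concatenation $\underline\theta^{(1,m)}_h\cdots\underline\theta^{(m,m)}_h$ once the index triangle is regrouped by anti-diagonals. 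For the grouping, I would index the blocks by $b'$ and let block $b'$ consist of the entries $k_{a;\,b'-a+1}$. The bound $a\le\min\{m,\frac h2\}$ together with $b'-a+1\ge1$ gives the two cases in the definition of $\underline\theta^{(b',m)}_h$. I will check this indexing carefully against the interlacing inequalities, since it is the combinatorial heart of the statement.

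\textbf{Clearing the non-skeleton entries.} Every other entry must be pushed outside $[k_{1;1},k_{1;m}]$. Its value $v$ satisfies $v\le\frac h2-2$, because all copies of $\frac h2-1$ lie in the skeleton. I will show such an entry commutes admissibly past skeleton letters in one direction.

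Take a non-skeleton entry $v=\frac h2-c$ with $c\ge2$, lying between consecutive skeleton letters. By the uniqueness clause (iii), applied at every level $a\le c-1$, this entry cannot sit inside a window $(k_{c-1;b},k_{c-1;b+1})$, since that window's unique $v$ is $k_{c;b}$. So it lies outside all level-$(c-1)$ windows, which means it is to the left of $k_{c-1;1}$ or to the right of $k_{c-1;m-c+2}$, or $c-1$ exceeds the triangle.

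I would then move it leftward (respectively rightward), past letters whose values are not $v\pm1$. The obstruction is a skeleton letter of value $v\pm1$. I will argue inductively, moving the outermost non-skeleton entries first, that such a letter is never encountered before the entry exits $[k_{1;1},k_{1;m}]$. If it were encountered, the result would be a weight containing a pattern that contradicts Lemma~\ref{lem:restr.2} or Proposition~\ref{lem:restr.1}.

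\textbf{Main obstacle.} This clearing step is the hard part. It needs a careful induction on the distance to the boundary of the skeleton, using Lemma~\ref{lem:restr.2}(2) to rule out trapped entries. Entries already outside $[k,\ell]$ stay in $\un i^{(\mathsf L)}$ or $\un i^{(\mathsf R)}$. Their values are at most $\frac h2-2$ because $m$ counts every occurrence of $\frac h2-1$, which gives the bound $0\le i^{(\mathsf L)}_a,i^{(\mathsf R)}_b\le\frac h2-2$.

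\textbf{The entry $i^{(\mathsf R)}_1$.} The last skeleton letter is $\max\{0,\frac h2-m\}$, which is the final entry of $\underline\theta^{(m,m)}_h$. If $r\ge1$, I first commute right-hand entries as far right as admissibility allows. Any entry $w$ whose value is not adjacent to $\max\{0,\frac h2-m\}$ can be moved further right, so it cannot be forced to occupy the first position of $\un i^{(\mathsf R)}$. This leaves $w=\max\{0,\frac h2-m\}\pm1$.

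The value $\frac h2-m+1$ is excluded. If $m\le\frac h2$, every occurrence of $\frac h2-m+1$ lies in the triangle, because it is bracketed by skeleton copies; an occurrence immediately after the last skeleton letter would create the pattern $(u,u-1,u)$ with $u=\frac h2-m+1$, which Proposition~\ref{lem:restr.1}(3) forbids. This forces $w=\frac h2-m-1$ when $m<\frac h2$, and $w=0$ otherwise (the case $\max=0$ gives $w=1$; I expect this is where the statement's $\max$ really encodes the case split, and I will double-check this boundary case).
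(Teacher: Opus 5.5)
There is a genuine gap, and it starts with how you identify the skeleton with the blocks.

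\textbf{The skeleton is not already in $\theta$-order.} By definition $\underline{\theta}^{(m,m)}_h=(\frac{h}{2}-1)$, and $\underline{\theta}^{(b,m)}_h$ is the decreasing run $\frac{h}{2}-1,\frac{h}{2}-2,\dots$ of length $\min\{m-b+1,\frac{h}{2}\}$. So it corresponds to the family $\{k_{a;b}\}_a$ with $b$ fixed. Your anti-diagonal family $\{k_{a;b'-a+1}\}_a$ has length $\min\{b',\frac{h}{2}\}$. Since (ii) gives $k_{a+1;b'-a}<k_{a;b'-a+1}$, it is read in \emph{increasing} order of values, so it matches no block.

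More importantly, (ii) does not force the skeleton letters to sit in $\theta$-order inside $\underline{i}$. For $m=3$, take the word $(\frac{h}{2}-1,\frac{h}{2}-2,\frac{h}{2}-1,\frac{h}{2}-3,\frac{h}{2}-2,\frac{h}{2}-1)$, where $k_{3;1}>k_{1;2}$. It satisfies Proposition~\ref{prop:restr.2} and has no non-skeleton letters at all. Yet it differs from $\underline{\theta}^{(1,3)}_h\underline{\theta}^{(2,3)}_h\underline{\theta}^{(3,3)}_h$; you need the admissible swap of $\frac{h}{2}-1$ past $\frac{h}{2}-3$. So ``clear the non-skeleton entries'' is not sufficient. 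The clearing step itself is also only announced: non-skeleton letters can trap one another, and you give no induction that releases them.

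\textbf{How the paper avoids this.} It does not clear globally. It takes the \emph{last} two copies of $\frac{h}{2}-1$ and the unique $\frac{h}{2}-2$ between them. It slides these two copies together across the intervening strings; this is admissible because, by (iii), those strings contain no $\frac{h}{2}-2$. Then it brings in the previous copy of $\frac{h}{2}-1$, together with its unique chain $\frac{h}{2}-2,\frac{h}{2}-3$, in the same way. The blocks are assembled one at a time from the right, and skeleton letters are reordered along the way.

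\textbf{The claim about $i^{(\mathsf{R})}_1$.} Your argument rests on a misreading. The concatenation ends with $\frac{h}{2}-1$, the single letter of $\underline{\theta}^{(m,m)}_h$; $\max\{0,\frac{h}{2}-m\}$ is the last letter of $\underline{\theta}^{(1,m)}_h$. Pushing tail entries further right is irrelevant, and your boundary value $w=1$ is wrong (it should be $0$).

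The missing idea is to move tail entries \emph{leftward across the whole skeleton}. As long as the first tail entry $w$ satisfies $w<\frac{h}{2}-m-1$, it differs by at least $2$ from every skeleton value $\max\{0,\frac{h}{2}-m\},\dots,\frac{h}{2}-1$. So admissible swaps carry it into $\underline{i}^{(\mathsf{L})}$.

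Once this stops, $w\neq\frac{h}{2}-1$, and any value $\max\{1,\frac{h}{2}-m\}\le w\le\frac{h}{2}-2$ is excluded by Proposition~\ref{prop:restr.2}(6a). Indeed, put $v=w$ and $b_0=v+m-\frac{h}{2}+1$. The last skeleton copy of $v$ is the final letter of $\underline{\theta}^{(b_0,m)}_h$, and after it only values $>v$ occur. Hence no $v-1$ lies between the two copies of $v$. This forces $w=\max\{0,\frac{h}{2}-m-1\}$.
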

\begin{proof}
By Proposition \ref{prop:restr.2}(6b)(iii), we have that $\un i$ will be of the form 
$$
\un i=(\un i^{(0)}, \frac{h}{2}-1, \un i^{(1)}, \frac{h}{2}-2, \un i^{(2)}, \frac{h}{2}-1, \un i^{(3)})
$$
for some $\un i^{(0)}, \un i^{(1)},\un i^{(2)},\un i^{(3)}$ and moreover no $\frac{h}{2}-1, \frac{h}{2}-2$ in $\un i^{(1)},\un i^{(2)}$ and no $\frac{h}{2}-1$ in $\un i^{(3)}$. 
 Then by Corollary \ref{Cor-2}, Definition \ref{defn:admissible} and Corollary \ref{cor:coincide} we can use admissible transpositions to consecutively swap the $\frac{h}{2}-1$ between $\un i^{(0)}$ and $\un i^{(1)}$ with all entries in $\un i^{(1)}$ and also consecutively swap the $\frac{h}{2}-1$ between $\un i^{(2)}$ and $\un i^{(3)}$ with all entries  in $\un i^{(2)}$  to get
$$
\un i\sim (\un i^{(0)}, \un i^{(1)}, \frac{h}{2}-1,\frac{h}{2}-2,\frac{h}{2}-1,\un i^{(2)},\un i^{(3)})=:\un i'. 
$$
As $\un i'\in \mathfrak{P}^2(\aHn)$, apply the proof of Proposition \ref{prop:restr.2}(6b)(iii) to $\un i'$ one can obtain that $\un i'$ must be of the form 
$$
\un i'=(\un u^{(0)}, \frac{h}{2}-1,\un u^{(1)}, \frac{h}{2}-2, \un u^{(2)}, \frac{h}{2}-3,\un u^{(3)}, \frac{h}{2}-1,\frac{h}{2}-2,\frac{h}{2}-1,\un i^{(2)},\un i^{(3)})
$$
for some $\un u^{(0)},\un u^{(1)},\un u^{(2)},\un u^{(3)}$ and moreover no $\frac{h}{2}-1, \frac{h}{2}-2$ in $\un u^{(1)}$ and no $\frac{h}{2}-1, \frac{h}{2}-2,\frac{h}{2}-3$ in $\un u^{(2)}, \un u^{(3)}$.  
Then again by Corollary \ref{Cor-2}, Definition \ref{defn:admissible} and Corollary \ref{cor:coincide} we can use admissible transpositions to consecutively swap the entries in $\un u^{(1)}$ with $\frac{h}{2}-1$ on the left hand side and then swap the entries in $\un u^{(2)}$ with $\frac{h}{2}-1, \frac{h}{2}-2$ on the left hand side as well as swapping the entries in $\un u^{(3)}$ with $ \frac{h}{2}-1,\frac{h}{2}-2,\frac{h}{2}-1$ on the right hand side to get 
$$
\un i'\sim (\un u^{(0)}, \un u^{(1)}, \un u^{(2)}, \frac{h}{2}-1,\frac{h}{2}-2,\frac{h}{2}-3,\frac{h}{2}-1,\frac{h}{2}-2,\frac{h}{2}-1,\un u^{(3)}, \un i^{(2)},\un i^{(3)}). 
$$
Continue in this way, we eventually obtain that 
\begin{equation}\label{eq:i-sim-1}
\un i\sim (\un r^{(0)}, \underline{\theta}^{(1,m)}_h, \underline{\theta}^{(2,m)}_h,\ldots, \underline{\theta}^{(m,m)}_h,\un r^{(1)}). 
\end{equation}
for some $\un r^{(0)}, \un r^{(1)}$ such that no $\frac{h}{2}-1$ appearing in $\un r^{(0)}, \un r^{(1)}$. Observe that if $m\leq\frac{h}{2}-1$, then we can consecutively swap $ r^{(1)}_1$ with entries in $\underline{\theta}^{(1,m)}_h, \underline{\theta}^{(2,m)}_h,\ldots, \underline{\theta}^{(m,m)}_h$ whenever $ r^{(1)}_1<\frac{h}{2}-m-1$ and continue doing this to eventually get 
$$
\un i\sim (\un i^{\mathsf{(L)}}, \underline{\theta}^{(1,m)}_h, \underline{\theta}^{(2,m)}_h,\ldots, \underline{\theta}^{(m,m)}_h,\un i^{\mathsf{(R)}})
$$
for some $\un i^{\mathsf{(L)}}, \un i^{\mathsf{(R)}}$ with either $\un i^{\mathsf{(R)}}=\emptyset$ or $ i^{\mathsf{(R)}}_1\geq \frac{h}{2}-m-1$. In addition, by Proposition \ref{prop:restr.2}(6a), it is easy to see that $ i^{\mathsf{(R)}}_1= \frac{h}{2}-m-1$. Otherwise  $m>\frac{h}{2}-1$, then by \eqref{eq:i-sim-1} and Proposition \ref{prop:restr.2}(6a) we obtain that $ i^{\mathsf{(R)}}_1=0$ if $\un i^{\mathsf{(R)}}\neq\emptyset$.  Putting together, the lemma is verified. 
\end{proof}

Write $\un{\rho}_h:=(0,1,\ldots,\frac{h}{2}-2,0,1,\ldots,\frac{h}{2}-3,\ldots,0,1,2,0,1,0)$ and 
for each $2\leq m\leq \frac{h}{2}-1$, we set 
\[
\un\delta_{h,m}=
(0,1,\ldots,\frac{h}{2}-2,0,1,\ldots,\frac{h}{2}-3,\ldots,0,1,\ldots,\frac{h}{2}-m-1). 
\]

\begin{lem}\label{lem:Wfin2}
Let $\un i \in\mathfrak{P}^2(\Hn)$ and write $m=m_{\un i}$. Keep the notation in Lemma \ref{lem:weight2}. Then 
$$
\un i\sim
\left\{\begin{array}{cc}
 (\un\delta_{h,m}, \un \theta_h^{(1,m)},\un \theta_h^{(2,m)},\ldots,\un\theta_h^{(m,m)}, \un{\widehat{i}}),&\text{ if }m\leq\frac{h}{2}-1,\\
 (\un \rho_h, \un \theta_h^{(1,m)}, \un\theta_h^{(2,m)},\ldots,\un\theta_h^{(m,m)}, \un{\widehat{i}}),&\text{ if }m> \frac{h}{2}-1
 \end{array}
 \right.
$$
for some $ \un{\widehat{i}}$ with all entries being less than $\frac{h}{2}-1$. 
\end{lem}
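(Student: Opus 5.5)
We would start from Lemma~\ref{lem:weight2}: $\un i\sim(\un i^{(\mathsf{L})},\un\theta_h^{(1,m)},\ldots,\un\theta_h^{(m,m)},\un i^{(\mathsf{R})})$, with every entry of $\un i^{(\mathsf{L})}$ and $\un i^{(\mathsf{R})}$ at most $\frac h2-2$. The relation $\sim$ preserves $\mathfrak{P}(\Hn)$: Proposition~\ref{prop:Wfin} and Remark~\ref{rem:admissible} show that every $\un j\sim\un i$ is a weight of the same irreducible completely splittable $\Hn$-module. Hence this representative also satisfies \eqref{wt} and Proposition~\ref{prop:restr.2}. We set $\un{\widehat i}=\un i^{(\mathsf{R})}$, whose entries are already below $\frac h2-1$. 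The work is then to show that $\un i^{(\mathsf{L})}$ can be moved, by admissible transpositions inside the left block (possibly passing letters across the $\theta$-block into $\un{\widehat i}$), to exactly $\un\delta_{h,m}$ or $\un\rho_h$.

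The first step is to find what must precede the $\theta$-block. By \eqref{wt}, the first letter $\frac h2-1$ of $\un\theta^{(1,m)}_h$ needs some $\frac h2-2$ earlier, and every letter of $\un i^{(\mathsf{L})}$ must be ``supported'' by a neighbour occurring earlier. Moving leftward through the staircase $\un\theta^{(1,m)}_h,\un\theta^{(2,m)}_h,\ldots$, each block $\un\theta^{(b,m)}_h$ starts with $\frac h2-1$ and ends at $\frac h2-1-m+b$ (or $0$). Property (6a) of Proposition~\ref{prop:restr.2} states that two equal letters $u<\frac h2-1$ must have both $u\pm1$ between them. Applying it to the occurrences of $u$ inside the $\theta$-block and a potential occurrence of $u$ in $\un i^{(\mathsf{L})}$ forces $\un i^{(\mathsf{L})}$ to contain certain letters, and in a definite relative order. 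Concretely, we plan to prove by induction on $c$ that, after admissible swaps, $\un i^{(\mathsf{L})}$ begins with the rows $(0,1,\ldots,\frac h2-2),(0,1,\ldots,\frac h2-3),\ldots$ of the staircase. Row $c$ is built using \eqref{wt}, which forces an increasing chain from $0$, together with (6a) and (3), which force a fresh $0$, then $1$, and so on, between repeated letters. Each extra letter not in these rows is either forbidden or commutes (admissibly, because it differs by at least $2$ from its neighbours) to the right past the $\theta$-block, where it joins $\un{\widehat i}$.

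The counting is the crux. When $m\le\frac h2-1$, the $\theta$-block contains the letters $\frac h2-1-m+b$ down to their minima, and the last row of $\un i^{(\mathsf{L})}$ must stop at $\frac h2-m-1$. This matches $i^{(\mathsf{R})}_1=\frac h2-m-1$ in Lemma~\ref{lem:weight2} and gives $\un\delta_{h,m}$. When $m>\frac h2-1$, the $\theta$-blocks reach $0$, and the full staircase $\un\rho_h$ is forced. The main obstacle will be this combinatorial bookkeeping. We must show there is no surplus: any extra copy of a letter $u$ in $\un i^{(\mathsf{L})}$ would, via (6a) applied across the $\theta$-block, either create a forbidden pattern $(\ldots,u,u,\ldots)$ or $(\ldots,u,u\pm1,u,\ldots)$ (Proposition~\ref{lem:restr.1}), or be movable to the right. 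Our first attempt would be induction on the length of $\un i^{(\mathsf{L})}$: we repeatedly take its last letter and either identify it as the required final letter of the staircase or commute it rightward. The required commutation uses Definition~\ref{defn:admissible} and the tableau picture of Lemma~\ref{lem:Pi}.
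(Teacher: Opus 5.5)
\textbf{There is a genuine gap.} Your outline has the same architecture as the paper's proof: assemble the rows $(0,1,\ldots,\frac h2-1-c)$ one at a time at the front of the left block, then push the leftover letters across the $\theta$-block into $\un{\widehat i}$. However, the step you postpone as ``the crux'' is the whole content of the lemma, and neither of your suggested ways of handling it works.

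\textbf{What is missing.} You need a uniqueness statement that makes every rearranging swap admissible. Suppose rows $1,\ldots,c-1$ are already at the front, and let $\un r$ be the segment between them and the $\theta$-block. Inductively, $\un r$ has no letter $\ge\frac h2-c$.
\begin{itemize}
\item By Proposition~\ref{prop:restr.2}(3),(6a), the letter $\frac h2-1-c$ occurs at most once in $\un r$, since two copies would need a $\frac h2-c$ between them.
\item It does occur. For $c=1$ this is \eqref{wt} applied to the first letter of $\un\theta^{(1,m)}_h$. For $c\ge 2$, the last letter $\frac h2-c$ of row $c-1$ and the copy of $\frac h2-c$ in $\un\theta^{(1,m)}_h$ need a $\frac h2-1-c$ between them, and it can only lie in $\un r$.
\item That copy in $\un\theta^{(1,m)}_h$ exists exactly for $c\le m$ (for every $c$ if $m>\frac h2-1$). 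This is what fixes the number of rows.
\item Going down, each $u$ occurs exactly once in $\un r$ to the left of the chosen $u+1$. Existence comes from \eqref{wt} if $c=1$, and otherwise from the pair formed by the chosen $u+1$ and the $u+1$ of row $c-1$. Uniqueness holds because a second copy would require a second $u+1$.
\item Hence nothing precedes the chosen $0$ in $\un r$, and every letter strictly between the chosen $u$ and $u+1$ is $\le u-1$. So the chain collapses to $(0,1,\ldots,\frac h2-1-c)$ at the front of $\un r$ by admissible transpositions, and the new remainder has no letter $\ge\frac h2-1-c$.
\end{itemize}
Only after the last row are the leftover letters $\le\frac h2-m-2$ (or there are none). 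Only then are they at distance $\ge 2$ from the $\theta$-letters, which are all $\ge\frac h2-m$, so they can pass the $\theta$-block. This also settles your ``no surplus'' issue directly, with no appeal to Proposition~\ref{lem:restr.1}.

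\textbf{Direction of forcing.} The new letters of a row are forced from the top of the row downwards, starting from the pair of $\frac h2-c$'s across the $\theta$-block. It is not ``a fresh $0$, then $1$''.

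\textbf{Your first attempt fails.} Take $h=8$ and $\un i=(0,1,0,2,1,3,2,3)\in\mathfrak P^2(\mathcal H_8(q))$. Then $m=2$, $\un i^{(\mathsf L)}=(0,1,0,2,1)$, and the target is $\un\delta_{8,2}=(0,1,2,0,1)$. After peeling off the final $1$, the last letter is $2$. It is neither the required $0$ nor movable to the right, since it sits next to $1$. One has to move the interior $0$ rightwards inside $\un i^{(\mathsf L)}$, which is exactly what the chain analysis above justifies.

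\textbf{Minor point.} $\un{\widehat i}$ ends up being the leftover letters followed by $\un i^{(\mathsf R)}$, not $\un i^{(\mathsf R)}$ itself as you first set it.
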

\begin{proof}
By Lemma \ref{lem:weight2}, we have  $\un i\sim (\un i^{\mathsf{(L)}}, \un \theta_h^{(1,m)}, \un\theta_h^{(2,m)},\ldots,\un\theta_h^{(m,m)},\un i^{\mathsf{(R)}})$ such that no $\frac{h}{2}-1$ in $\un i^{\mathsf{(L)}}, \un i^{\mathsf{(R)}}$. Since no $\frac{h}{2}-1$ in $\un i^{\mathsf{(L)}}$, by Proposition \ref{prop:restr.2} we obtain 
\begin{equation}\label{eq:uni-number}
\sharp\left\{1\leq k \leq \ell(\un i^{\mathsf{(L)}})\mid  i^{\mathsf{(L)}}_k=b\right\}\leq \frac{h}{2}-1-b
\end{equation}
for $0\leq b\leq\frac{h}{2}-1$. 
As the first entry in $\un\theta_h^{(1,m)}$ is $\frac{h}{2}-1$, we observe that $\frac{h}{2}-2$ appears exactly once in $\un i^{\mathsf{(L)}}$ by \eqref{wt} and Proposition \ref{prop:restr.2} since $\un i \in\mathfrak{P}^2(\Hn)$. That is, there exists a unique $a_1$ such that $ i^{\mathsf{(L)}}_{a_1}=\frac{h}{2}-2$ and for any $k\neq a_1$ we have $ i^{\mathsf{(L)}}_k\neq \frac{h}{2}-2$. Then again by \eqref{wt} and Proposition \ref{prop:restr.2},  the element  $\frac{h}{2}-3$ appears only once on the left hand side of $ i^{\mathsf{(L)}}_{a_1}=\frac{h}{2}-2$ in $\un i^{\mathsf{(L)}}$ which means there exists a unique $1\leq a_2<a_1$ such that $ i^{\mathsf{(L)}}_{a_2}=\frac{h}{2}-3$ and for any $1\leq k\neq a_2<a_1$ we have $ i^{\mathsf{L}}_k\neq \frac{h}{2}-3, \frac{h}{2}-2,\frac{h}{2}-1$. Hence  by Corollary \ref{Cor-2}, Definition \ref{defn:admissible} and Corollary \ref{cor:coincide} we can use admissible transpositions to consecutively swap  $\frac{h}{2}-2$ with $ i^{\mathsf{(L)}}_k$ for $a_2<k<a_1$ to obtain
$$
\un i\sim (\un i^{\mathsf L,0},  \frac{h}{2}-3,\frac{h}{2}-2, \un i^{\mathsf L,1}, \un\theta_h^{(1,m)}, \un\theta_h^{(2,m)},\ldots, \un\theta_h^{(m,m)}, \un i^{(\mathsf{R})})
$$
for some $\un i^{\mathsf L,0},\un i^{\mathsf L,1}$ and moreover no $\frac{h}{2}-3,\frac{h}{2}-2,\frac{h}{2}-1$ in $\un i^{\mathsf L,0}$ and no $\frac{h}{2}-2,\frac{h}{2}-1$ in $\un i^{\mathsf L,1}$.  
Applying  the same argument to $ \un i^{\mathsf L,0}$ one can eventually to get 
\begin{equation}\label{eq:uni-mid}
\un i\sim (0,1,\ldots, \frac{h}{2}-3,\frac{h}{2}-2, \un i^{\mathsf{L,2}}, \un\theta_h^{(1,m)},  \un\theta_h^{(2,m)},\ldots, \un\theta_h^{(m,m)}, \un i^{(\mathsf{R})})=:\un j
\end{equation}
for some $\un i^{\mathsf{L,2}}$ such that $ i^{\mathsf{L,2}}_k<\frac{h}{2}-2$ for all admissible $k$.  This together with the fact  $ \un\theta_h^{(1,m)}=(\frac{h}{2}-1, \frac{h}{2}-2,\ldots)$ obtain that 
 $\frac{h}{2}-3$ appears exactly once in $\un i^{\mathsf{L,2}}$, that is there exists $b$ such that $ i^{\mathsf{L,2}}_b=\frac{h}{2}-3$ and for all admissible $k\neq b$ we have $ i^{\mathsf{L,2}}_k< \frac{h}{2}-3$. 
 Observe that in sequence $\un j$ on the right hand side of \eqref{eq:uni-mid}, we have $j_{\frac{h}{2}-2}=\frac{h}{2}-3=j_{b+\frac{h}{2}-1}$ and hence by Proposition \ref{prop:restr.2} we obtain the $\frac{h}{2}-4$ must appears exactly once on the left hand side of $ i^{\mathsf{L,2}}_b=\frac{h}{2}-3$ in $\un i^{\mathsf{L,2}}$. Then one can apply the argument similar to the swapping process from $\un i$ to $\un j$ to eventually obtain 
 \begin{equation}\label{eq:uni-mid}
\un i\sim \un j\sim (0,1,\ldots, \frac{h}{2}-3,\frac{h}{2}-2, 0,1,\ldots, \frac{h}{2}-3, \un i^{\mathsf{L,3}}, \un\theta_h^{(1,m)},  \un\theta_h^{(2,m)},\ldots, \un\theta_h^{(m,m)}, \un i^{(\mathsf{R})})=:\un j'
\end{equation}
for some $\un i^{\mathsf{L,3}}$ such that $ i^{\mathsf{L,3}}_k<\frac{h}{2}-3$ for $1\leq k\leq\ell(\un i^{\mathsf{L,3}})$. 
Continue in this way,  in the case  $m< \frac{h}{2}-1$,  we eventually obtain 
$$
\un i\sim  ( \un\delta_{h,m},\un i^{\mathsf{L, m+1}},  \un\theta_h^{(1,m)},\un\theta_h^{(2,m)}\ldots,\un\theta_h^{(m,m)}, \un i^{\mathsf{(R)}})
$$
for some $\un i^{\mathsf{L, m+1}}$ such that $ i^{\mathsf{L,m+1}}_k<\frac{h}{2}-m-1$ for $1\leq k\leq\ell(\un i^{\mathsf{L,m+1}})$. Then by Corollary \ref{Cor-2}, Definition \ref{defn:admissible} and Corollary \ref{cor:coincide} we get 
$$
 ( \un\delta_{h,m},\un i^{\mathsf{L, m+1}}, \un\theta_h^{(1,m)},\un\theta_h^{(2,m)}\ldots,\un\theta_h^{(m,m)}, \un i^{(\mathsf{R})})\sim  ( \un\delta_{h,m},  \un\theta_h^{(1,m)},\un\theta_h^{(2,m)}\ldots,\un\theta_h^{(m,m)},\un i^{\mathsf{L, m+1}}, \un i^{(\mathsf{R})}). 
$$
Similarly, in the case $m\geq \frac{h}{2}-1$,  we eventually obtain 
$$
\un i\sim  ( \un\rho_{h},\un i^{\mathsf{L'}},\un\theta_h^{(1,m)},\un\theta_h^{(2,m)}\ldots,\un\theta_h^{(m,m)}, \un i^{(\mathsf{R})})
$$
for some $\un i^{\mathsf{L'}}$. Then by \eqref{eq:uni-number} we have $\un i^{\mathsf{L'}}=\emptyset$. Putting together, this proves the lemma. 
\end{proof}
\begin{cor}\label{cor:n-bound}
If $\mathfrak{P}^2(\Hn)\neq\emptyset$, then $n\geq\frac{mh}{2}$. 
\end{cor}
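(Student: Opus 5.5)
The bound will be read with $m=m_{\un i}$ for an arbitrary $\un i\in\mathfrak{P}^2(\Hn)$. The plan is to count the entries of the normal form given by Lemma~\ref{lem:Wfin2}. The relation $\sim$ only permutes entries, so it preserves the length $n$. Lemma~\ref{lem:Wfin2} gives $\un i\sim(\un\delta_{h,m},\un\theta_h^{(1,m)},\ldots,\un\theta_h^{(m,m)},\un{\widehat i})$ when $m\le\frac h2-1$, and $\un i\sim(\un\rho_h,\un\theta_h^{(1,m)},\ldots,\un\theta_h^{(m,m)},\un{\widehat i})$ when $m>\frac h2-1$. Hence $n$ is at least the total length of the prefix block plus the $\theta$-blocks. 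It therefore suffices to show that this total equals $\frac{mh}{2}$ in both cases, since $\un{\widehat i}$ can only add to the length.

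Case $m\le \frac h2-1$. Here $m-\frac h2+1\le 0<b$ for every $1\le b\le m$, so each block $\un\theta_h^{(b,m)}=(\frac h2-1,\ldots,\frac h2-1-m+b)$ has length $m-b+1$. Summing over $b$ gives $\frac{m(m+1)}2$. The prefix $\un\delta_{h,m}$ consists of $m$ runs $0,1,\ldots,\frac h2-1-t$ for $t=1,\ldots,m$, of lengths $\frac h2-1,\frac h2-2,\ldots,\frac h2-m$. Their total is $\frac{mh}{2}-\frac{m(m+1)}2$. Adding the two gives exactly $\frac{mh}2$.

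Case $m>\frac h2-1$. The prefix $\un\rho_h$ has runs of lengths $\frac h2-1,\frac h2-2,\ldots,1$, so its length is $\frac12(\frac h2-1)\frac h2$. For the $\theta$-blocks, the $m-\frac h2+1$ indices $b\le m-\frac h2+1$ give full blocks $(\frac h2-1,\ldots,0)$ of length $\frac h2$. The remaining indices $b=m-\frac h2+2,\ldots,m$ give blocks of lengths $m-b+1$, which run through $\frac h2-1,\ldots,1$ and sum to $\frac12(\frac h2-1)\frac h2$. The total is
\[
\Big(\frac h2-1\Big)\frac h2+\Big(m-\frac h2+1\Big)\frac h2=\frac{mh}2 .
\]

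In both cases $n\ge\frac{mh}2$. The only delicate point is applying the two-case definition of $\un\theta_h^{(b,m)}$ correctly in each range of $m$; the rest is routine arithmetic. Since $m\ge2$, this in particular gives $n\ge h$ whenever $\mathfrak{P}^2(\Hn)\neq\emptyset$.
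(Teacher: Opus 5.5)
Your proposal is correct and matches the paper's route: the corollary is stated without proof immediately after Lemma~\ref{lem:Wfin2}, and it follows by counting the length of that normal form. You carry out this count correctly in both ranges of $m$: the prefix together with the $\theta$-blocks has length exactly $\frac{mh}{2}$, which is consistent with $a=n-\frac{mh}{2}$ in Proposition~\ref{prop:uni-simple}.
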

Set
\[\un \tau_h=(0,1,\cdots,\frac{h}{2}-1). \]
\begin{prop}\label{prop:uni-simple}
Let $\un i \in \mathfrak{P}^2(\Hn)$ and write $m=m_{\un i}$.
Then 
\[\un i\sim (\underbrace{\un \tau_h,\un\tau_h,\ldots,\un \tau_h}_m,\widehat{\un i}),\]
for some $\widehat{\un i}\in \mathbb{I}^a$ with $a=n-\frac{mh}{2}$.  Moreover $\widehat{\un i}$ satisfies that $0\leq \widehat{ i}_k\leq\frac{h}{2}-2$ for any $1\leq k\leq a$ as well as the properties in Proposition \ref{prop:restr.2} and \eqref{wt}. 
\end{prop}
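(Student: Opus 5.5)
We start from Lemma~\ref{lem:Wfin2}, which already gives
\[
\un i\sim(\un\delta_{h,m},\un\theta_h^{(1,m)},\ldots,\un\theta_h^{(m,m)},\un{\widehat{i}})\quad(m\le \tfrac h2-1),\qquad \un i\sim(\un\rho_h,\un\theta_h^{(1,m)},\ldots,\un\theta_h^{(m,m)},\un{\widehat{i}})\quad(m>\tfrac h2-1),
\]
with all entries of $\un{\widehat i}$ at most $\frac h2-2$. The plan is to rearrange the middle block $\un\delta_{h,m}\un\theta^{(1,m)}_h\cdots\un\theta^{(m,m)}_h$ (resp. with $\un\rho_h$) by admissible transpositions into $m$ consecutive copies of $\un\tau_h$, possibly followed by leftover entries. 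Those leftovers are then merged into the tail $\widehat{\un i}$. Every step uses only that $s_k$ is admissible when $|i_k-i_{k+1}|\neq 1$ (Definition~\ref{defn:admissible}), together with Corollary~\ref{Cor-2} and Corollary~\ref{cor:coincide}, so that the intermediate tuples stay in $\mathfrak{P}(\aHn)$. Condition \eqref{wt} is not preserved under $\sim$ in general, but it can be checked directly on the final tuple.

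The rearrangement goes by induction on $m$, building copies of $\un\tau_h$ from the left. The first prefix $0,1,\ldots,\frac h2-2$ of $\un\delta_{h,m}$ (resp. $\un\rho_h$), followed by the first entry $\frac h2-1$ of $\un\theta^{(1,m)}_h$, is already a copy of $\un\tau_h$. The remainder $\frac h2-2,\ldots$ of $\un\theta^{(1,m)}_h$ must then be interleaved with the second run $0,1,\ldots,\frac h2-3$ of $\un\delta_{h,m}$.

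Concretely, for each value $c$ of the remainder descending chain, we move it leftwards past all entries of the block between it and its target. Entries differing from $c$ by at least $2$ commute with it. The neighbouring values $c\pm1$ are never crossed, because they are exactly the ones that occupy adjacent positions in the target word. After this step the word reads $\un\tau_h$ followed by a shorter configuration of the same shape, namely the analogue of $\un\delta_{h,m-1}$ (resp. $\un\rho_h$ truncated) followed by $\un\theta^{(2,m)}_h,\ldots,\un\theta^{(m,m)}_h$ with one fewer copy of $\frac h2-1$. Iterating this $m$ times yields $(\un\tau_h)^m$ followed by leftovers. I expect a cleaner way to organise the bookkeeping is through the tableau/table picture $\Gamma(\cdot)$ from Lemma~\ref{lem:Pi}: realise both words as reading words of the same arrangement and argue that two reading words related by moves of non-adjacent values are equivalent.

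For the case $m>\frac h2-1$ the same procedure applies, but the chains $\un\theta^{(b,m)}_h$ with $b\le m-\frac h2+1$ reach $0$. We must check that the zeros left over after forming the copies of $\un\tau_h$ can be pushed right. Each such $0$ may commute past everything except a $1$, and the alternation required by Proposition~\ref{prop:restr.2}(3) tells us where it may sit.

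Once the word has the form $(\un\tau_h^m,\widehat{\un i})$, the properties of $\widehat{\un i}$ follow quickly. The length count gives $a=n-\frac{mh}{2}$. All $m$ occurrences of $\frac h2-1$ lie inside the copies of $\un\tau_h$, so every entry of $\widehat{\un i}$ is at most $\frac h2-2$. The new tuple lies in $\mathfrak{P}^2(\Hn)$: it is in $\mathfrak{P}(\aHn)$ by Corollary~\ref{cor:coincide}, and \eqref{wt} holds because $\un\tau_h$ begins with $0$ and increases by $1$, while every entry of $\widehat{\un i}$ already had a neighbour-value to its left in the original word. The conditions of Proposition~\ref{prop:restr.2} and \eqref{wt} for $\widehat{\un i}$ itself are obtained by restricting those for the full word. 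The key point is that $\un\tau_h^m$ contains every value, so \eqref{wt} for the tail must be verified intrinsically via the equivalence just constructed.

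The main obstacle is the combinatorial verification that each leftward migration never has to cross a value differing by exactly $1$, especially at the ends of the chains, near $\frac h2-1$ and near $0$ when $m>\frac h2-1$. That is where I would spend the care, checking small cases such as $h=6$, $m=2,3$ first.
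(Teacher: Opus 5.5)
Your route is the paper's: start from Lemma~\ref{lem:Wfin2}, peel off one copy of $\un\tau_h$ at a time by admissible transpositions, then read off the tail. However, the proposal leaves unproved the two points that carry the content.

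\textbf{First gap: the rearrangement.} You defer the check that no migrating entry crosses a neighbouring value. The reason you give (``$c\pm1$ occupy adjacent positions in the target word'') is not an argument, because what matters is what lies between $c$ and its destination in the \emph{current} word. Also, ``interleaved with the second run'' is not what happens. The correct bookkeeping is short:
\begin{itemize}
\item Write the left block as runs $\un\delta^{(b)}=(0,1,\ldots,\frac h2-1-b)$, $1\le b\le m$ (resp.\ $\un\rho^{(b)}$, $1\le b\le \frac h2-1$, for $\un\rho_h$).
\item Move the entries of $\un\theta^{(1,m)}_h$ in order, sending $\frac h2-b$ to the end of the $b$-th run. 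At that moment it crosses exactly the runs $b+1,b+2,\ldots$, whose entries are all $\le \frac h2-b-2$, so every swap is admissible.
\item Run $b$ then has the shape of the old run $b-1$, and run $1$ becomes $\un\tau_h$.
\item When $m>\frac h2-1$, the final $0$ of $\un\theta^{(1,m)}_h$ does not move at all; it becomes the new singleton run. So one gets $(\un\tau_h,\un\rho_h,\un\theta^{(2,m)}_h,\ldots,\un\theta^{(m,m)}_h,\widehat{\un i})$.
\end{itemize}
Since $\un\theta^{(b,m)}_h=\un\theta^{(b-1,m-1)}_h$ and $\un\rho_h=\un\delta_{h,\frac h2-1}$, the result is the same configuration with $m-1$ in place of $m$. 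So a single induction on $m$ handles both cases, with no zeros to push and no leftover entries (the block has length $\frac{mh}{2}$). This is cleaner than the paper's treatment of $m>\frac h2-1$.

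Your reading-word idea can also be made rigorous, though not via $\Gamma$. By the projection lemma for words modulo commuting letters that differ by at least $2$, it suffices that the block and $(\un\tau_h)^m$ have the same subword on each pair $\{c,c+1\}$. Both give $(c,c+1)^m$.

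\textbf{Second gap: \eqref{wt} for $\widehat{\un i}$.} You rightly note that it does not follow by restriction, but you do not prove it. Incidentally, your remark that \eqref{wt} is not preserved under $\sim$ is wrong: admissible transpositions do preserve it, as the proof of Proposition~\ref{prop:Wfin} uses. Nothing in your argument depends on that remark. The missing argument combines the equivalence with Proposition~\ref{prop:restr.2}(6a):
\begin{itemize}
\item Suppose $\widehat i_1=c\ge 1$. Then $c\le\frac h2-2$. Moving $c$ left past $\frac h2-1,\ldots,c+2$ in the last copy of $\un\tau_h$ gives a tuple in $\mathfrak P(\aHn)$ containing $(c,c+1,c)$. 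This violates (6a), since $c-1$ is missing between the two $c$'s. Hence $\widehat i_1=0$.
\item Suppose a later $\widehat i_k=c\ge 1$ has neither $c-1$ nor $c+1$ before it in $\widehat{\un i}$. By (6a), no earlier $c$ occurs in $\widehat{\un i}$ either. So $c$ commutes to the front of the tail, contradicting the previous step.
\item A later $\widehat i_k=0$ is handled by Proposition~\ref{prop:restr.2}(3).
\end{itemize}
With these two pieces supplied, your argument proves the proposition.
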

\begin{proof}
First, suppose $2\leq m\leq \frac{h}{2}-1$. By Lemma \ref{lem:Wfin2}, we obtain 
$$
\un i\sim  (\un\delta_{h,m}, \un \theta_h^{(1,m)},\un \theta_h^{(2,m)},\ldots,\un\theta_h^{(m,m)}, \un{\widehat{i}}). 
$$
and we can write $\un\delta_{h,m}=(\un \delta^{(1)}_{h,m}, \un \delta^{(2)}_{h,m},\ldots, \un \delta^{(m)}_{h,m})$ with $\un \delta^{(b)}_{h,m}=(0,1,\ldots, \frac{h}{2}-1-b)$ for $1\leq b\leq m$. 
Then clearly by Corollary \ref{Cor-2}, Definition \ref{defn:admissible} and Corollary \ref{cor:coincide} we can apply admissible transpositions to swap the unique $\frac{h}{2}-1$ in $\un \theta_h^{(1,m)}$ with all entries in $\un \delta^{(2)}_{h,m},\ldots, \un \delta^{(m)}_{h,m}$, swap the unique $\frac{h}{2}-2$ in $\un \theta_h^{(1,m)}$ with all entries $\un \delta^{(3)}_{h,m},\ldots, \un \delta^{(m)}_{h,m}$, etc. Eventually we obtain 
$$
\un i\sim  (\un \tau_{h},\un \delta^{(1)}_{h,m},\ldots,\un \delta^{(m-1)}_{h,m}, \un \theta_h^{(2,m)},\ldots,\un\theta_h^{(m,m)}, \un{\widehat{i}}). 
$$
Continuing in this way we eventually obtain 
$$
\un i\sim  (\underbrace{\un \tau_{h},\un \tau_{h},\ldots,\un \tau_{h}}_m, \un{\widehat{i}}). 
$$
This proves the first statement of proposition in the case $2\leq m\leq\frac{h}{2}-1$.  Now assume $m>\frac{h}{2}-1$. Then similar to the above argument, in the first step we will get 
$$
\un i\sim  (\un \tau_{h},\un \rho^{(1)}_{h},\ldots,\un \rho^{(\frac{h}{2}-2)}_{h},0, \un \theta_h^{(2,m)},\ldots,\un\theta_h^{(m,m)}, \un{\widehat{i}}), 
$$
where $\un\rho^{(b)}_{h}=(0,1,\ldots,\frac{h}{2}-1-b)$ for $1\leq b\leq \frac{h}{2}-1$. 
Continuing in this way one can 
$$
\begin{aligned}
\un i\sim & \Big(\underbrace{\un \tau_{h},\ldots,\un\tau_h}_{m-\frac{h}{2}+1},\un\rho^{(1)}_{h} ,\un\rho^{(2)}_{h},\ldots,\un\rho^{(h-m-2)}_{h}, 0,1,0,2,1,0,\ldots, m-\frac{h}{2}, m-\frac{h}{2}-1,\ldots,1,0,\\ 
&\un \theta_h^{(m-\frac{h}{2}+2,m)},\ldots,\un\theta_h^{(m,m)}, \un{\widehat{i}}\Big)\\
\sim& \Big(\underbrace{\un \tau_{h},\ldots,\un\tau_h}_{m-\frac{h}{2}+1}, \un\rho^{(1)}_{h} ,\un\rho^{(2)}_{h},\ldots,\un\rho^{(h-m-2)}_{h}, \\
&\un\rho^{(h-m-1)}_{h}, \un\rho^{(h-m)}_{h},\ldots,\un\rho^{(\frac{h}{2}-2)}_{h},\un\rho^{(\frac{h}{2}-1)}_{h}, \un \theta_h^{(m-\frac{h}{2}+2,m)},\ldots,\un\theta_h^{(m,m)}, \un{\widehat{i}}\Big)
\end{aligned}
$$
in the the case $m-\frac{h}{2}+1\leq \frac{h}{2}-1$. Then observe that the subsequence $\Big(\un\rho^{(1)}_{h} ,\un\rho^{(2)}_{h},\ldots,\un\rho^{(h-m-2)}_{h}, \\
\un\rho^{(h-m-1)}_{h}, \un\rho^{(h-m)}_{h},\ldots,\un\rho^{(\frac{h}{2}-2)}_{h},\un\rho^{(\frac{h}{2}-1)}_{h}, \un \theta_h^{(m-\frac{h}{2}+2,m)},\ldots,\un\theta_h^{(m,m)}, \un{\widehat{i}}\Big)
$ 
is the same pattern as the case $m\leq\frac{h}{2}-1$ and hence we can get 
\begin{align*}
&(\underbrace{\un \tau_{h},\ldots,\un\tau_h}_{m-\frac{h}{2}+1}, \un\rho^{(1)}_{h},\un\rho^{(2)}_{h},\ldots,\un\rho^{(\frac{h}{2}-1)}_{h} , \un \theta_h^{(m-\frac{h}{2}+2,m)},\ldots,\un\theta_h^{(m,m)}, \un{\widehat{i}})\\
&\sim 
(\underbrace{\un \tau_{h},\ldots,\un\tau_h}_{m-\frac{h}{2}+1},\underbrace{\un \tau_{h},\ldots,\un\tau_h}_{\frac{h}{2}-1}, \un{\widehat{i}})=(\underbrace{\un \tau_{h},\ldots,\un\tau_h}_{m}, \un{\widehat{i}}). 
\end{align*}
Meanwhile $m-\frac{h}{2}+1>\frac{h}{2}-1$,  one can obtain 
\begin{align*}
\un i\sim  (\underbrace{\un \tau_{h},\ldots,\un\tau_h}_{\frac{h}{2}-1}, 0,1,0,2,1,0,\ldots, \frac{h}{2}-2, \frac{h}{2}-3,\ldots,1,0, \un \theta_h^{(m-\frac{h}{2}+2,m)},\ldots,\un\theta_h^{(m,m)}, \un{\widehat{i}})\\
\sim (\underbrace{\un \tau_{h},\ldots,\un\tau_h}_{\frac{h}{2}-1}, 0,1,\ldots, \frac{h}{2}-2,0,1,\ldots, \frac{h}{2}-3,\ldots,0,1,0, \un \theta_h^{(m-\frac{h}{2}+2,m)},\ldots,\un\theta_h^{(m,m)}, \un{\widehat{i}})
\end{align*}
in the case.  Then we can apply the same strategy to the subsequence $(0,1,\ldots, \frac{h}{2}-2,0,1,\ldots, \frac{h}{2}-3,\ldots,0,1,0, \un \theta_h^{(m-\frac{h}{2}+2,m)},\ldots,\un\theta_h^{(m,m)})$ and continue in this way we eventually are reduced to the case $m-\frac{h}{2}+1\leq \frac{h}{2}-1$. This proves the first statement of the proposition. Then it is straightforward to check the second statement by Proposition \ref{prop:restr.2}. 
\end{proof}

\begin{lem} \label{lem:uni-unihat}
Suppose $\un i,\un j\in\mathfrak{P}^2(\Hn)$. Then $\un i\sim \un j$ if and only if $\un{\widehat{i}}\sim \un{\widehat{j}}$. 
\end{lem}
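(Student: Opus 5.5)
Throughout, Proposition~\ref{prop:uni-simple} is used to write
\[
\un i\sim(\un\tau_h^{\,m},\widehat{\un i}),\qquad \un j\sim(\un\tau_h^{\,m'},\widehat{\un j}),
\]
where $\un\tau_h^{\,m}$ abbreviates $m$ consecutive copies of $\un\tau_h$, $m=m_{\un i}$ and $m'=m_{\un j}$. Since $\sim$ is an equivalence relation, the lemma reduces to comparing these normal forms. Two facts are used repeatedly: admissible transpositions preserve the number of entries equal to $\frac{h}{2}-1$, and the entries of $\widehat{\un i}$ and $\widehat{\un j}$ all lie in $\{0,\ldots,\frac{h}{2}-2\}$.

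\textbf{The direction ``$\Leftarrow$''.} First note that $\widehat{\un i}$ and $\widehat{\un j}$ have the same length exactly when $m=m'$, because their lengths are $n-\frac{mh}{2}$ and $n-\frac{m'h}{2}$. So if $\widehat{\un i}\sim\widehat{\un j}$, then $m=m'$. Suppose the equivalence is given by a chain of admissible transpositions $s_{k_1},\ldots,s_{k_t}$ acting on positions $1,\ldots,a$. Shift each index by $\frac{mh}{2}$, giving $s_{k_1+mh/2},\ldots,s_{k_t+mh/2}$. Each shifted transposition is admissible for the long sequence, since admissibility only looks at the two entries being swapped. The shifted chain does not touch the prefix $\un\tau_h^{\,m}$, so it carries $(\un\tau_h^{\,m},\widehat{\un i})$ to $(\un\tau_h^{\,m},\widehat{\un j})$, and hence $\un i\sim\un j$.

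\textbf{The direction ``$\Rightarrow$''.} Assume $\un i\sim\un j$. Then $m=m'$, since the number of entries equal to $\frac{h}{2}-1$ is invariant under $\sim$. The aim is to show that the class of $(\un\tau_h^{\,m},\widehat{\un i})$ determines the class of $\widehat{\un i}$. The plan is to use the table bijection $P_{\un i}\to\Lambda_{\un i}$ of Lemma~\ref{lem:Pi}, tracking where the entries equal to $\frac{h}{2}-1$ and the entries of the prefix can move.
\begin{enumerate}
\item Show the first rigidity claim: in every $\un k\sim(\un\tau_h^{\,m},\widehat{\un i})$, the $m$ entries equal to $\frac{h}{2}-1$, together with the entries forced between them by Proposition~\ref{prop:restr.2}(6b), occupy positions whose relative pattern is one of those produced in the proofs of Lemmas~\ref{lem:weight2} and~\ref{lem:Wfin2}.
\item From this, deduce a canonical procedure from Proposition~\ref{prop:uni-simple}: starting from any element of the class, it pushes the entries of the prefix to the front by admissible swaps.
\item Show the second rigidity claim: the entries that remain after removing the prefix, read in order, always form a sequence $\sim$-equivalent to $\widehat{\un i}$.
\end{enumerate}

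The cleanest way I would try to carry this out is an invariant. To each $\un k$ in the class, attach the subsequence obtained by deleting, for each $b=1,\ldots,m$, the $b$-th occurrence of each value $0,1,\ldots,\frac{h}{2}-1$, counted in the order determined by the table $\Gamma(\un k)$. The main step is to check that an admissible swap of $\un k$ either acts as an admissible swap on this subsequence or leaves it unchanged. For this I would use Proposition~\ref{prop:restr.2}, which prevents entries of $\widehat{\un i}$, all of which are at most $\frac{h}{2}-2$, from overtaking the chains $\frac{h}{2}-1,\frac{h}{2}-2,\ldots$ in a way that changes this labeling.

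I expect this compatibility check to be the main obstacle. It requires a case analysis of a swap involving one prefix entry and one $\widehat{\ }$-entry with equal value. In that case the $\Gamma$-column order may exchange which occurrence counts as ``prefix''. I would show that such a swap is never admissible: two equal adjacent values are excluded by Lemma~\ref{lem:separate weig.}. Alternatively, the swap is compensated by relabeling, because the two entries have the same value and so the deleted subsequence is unchanged.

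Once compatibility is established, the subsequence attached to $(\un\tau_h^{\,m},\widehat{\un i})$ is $\widehat{\un i}$, and the subsequence attached to $(\un\tau_h^{\,m},\widehat{\un j})$ is $\widehat{\un j}$. Since these two sequences lie in the same class, compatibility yields $\widehat{\un i}\sim\widehat{\un j}$.
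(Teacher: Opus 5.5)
Your proof is correct. The direction ``$\Leftarrow$'' (shifting the chain of admissible transpositions past the prefix) is exactly the paper's ``clearly''. For ``$\Rightarrow$'' you take a genuinely different route.

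The paper argues by contraposition using content. Admissible transpositions preserve the multiset of entries. By Proposition~\ref{prop:uni-simple}, both $\widehat{\un i}$ and $\widehat{\un j}$ lie in $\mathfrak{P}^1(\mathcal{H}_a(q))$, so by Lemma~\ref{Weven1} their classes are indexed by shifted partitions $\xi$ and $\gamma$. Then $\widehat{\un i}\nsim\widehat{\un j}$ forces $\xi\neq\gamma$, hence different residue contents. Consequently $\un i$ and $\un j$ have different contents and cannot be equivalent.

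You instead build a class-level invariant directly: delete the first $m$ occurrences of each value. This works, and more easily than you expect. An admissible swap of a weight never exchanges two equal values (Lemma~\ref{lem:separate weig.}; and swapping equal values would leave the sequence unchanged anyway). So it preserves each entry's occurrence index within its value, and hence the deleted set. It therefore either leaves the residual subsequence unchanged or swaps two kept entries that are adjacent there. That swap is admissible, since admissibility depends only on the two values. No appeal to Proposition~\ref{prop:restr.2} is needed, and your rigidity steps 1--3 can be dropped entirely.

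What each route buys:
\begin{itemize}
\item Your route is elementary and self-contained. It uses only the definition of $\sim$ and works for arbitrary sequences. It does not depend on Lemma~\ref{Weven1}, whose proof the paper only sketches, nor on $\widehat{\un i}\in\mathfrak{P}^1(\mathcal{H}_a(q))$. As a bonus, it shows the class of $\widehat{\un i}$ is independent of the choice made in Proposition~\ref{prop:uni-simple}.
\item The paper's route is two lines once Lemma~\ref{Weven1} is granted. It also yields the stronger fact that classes in $\mathfrak{P}^2(\Hn)$ are detected by content alone, which matches how $\Phi_2$ is built in Proposition~\ref{prop:Phi2}.
\end{itemize}
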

\begin{proof}
If $\un{\widehat{i}}\sim \un{\widehat{j}}$, then clearly $\un i\sim \un j$. Conversely, suppose $\un{\widehat{i}}\nsim \un{\widehat{j}}$. Then by Proposition \ref{prop:uni-simple} we observe that 
both $\un{\widehat{i}}, \un{\widehat{j}}$ belong to the set $\mathfrak{P}^1(\mathcal{H}_a(q))$ with $a=n-\frac{mh}{2}$ and hence there exist $(\xi,T), (\gamma,S)\in \Delta^1_h(a)$ such that $\un{\widehat{i}}=\un i_{(\xi,T)}$ 
and $\un{\widehat{j}}=\un i_{(\gamma,S)}$ by Lemma \ref{Weven1}. Since  $\un{\widehat{i}}\nsim \un{\widehat{j}}$, we obtain $\xi\neq \gamma$ which means the set (repeating allowed) of entries in $\un{\widehat{i}}$ is different from the set (repeating allowed) of entries in $\un{\widehat{j}}$. Hence the set (repeating allowed) of entries in $\un i$ is different from the set (repeating allowed) of entries in $\un j$. This implies $\un i\nsim \un j$. The lemma is verified. 
\end{proof}

Let 
$$
\mathcal{CSP}^2_h(n):=\{\xi=(\underbrace{\frac{h}{2},\ldots,\frac{h}{2}}_{m}, \gamma)\mid m\geq 2, \gamma\in\mathcal{SP}_0(n-\frac{mh}{2}),\gamma_1\leq\frac{h}{2}-1\}. 
$$

\begin{prop}\label{prop:Phi2}
There exists a surjective map $\Phi_2: \mathfrak{P}^2(\Hn)\longrightarrow \mathcal{CSP}^2_h(n)$ such that $\un i\sim \un j$ if and only if $\Phi_2(\un i)=\Phi_2(\un j)$. 
\end{prop}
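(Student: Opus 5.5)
We define $\Phi_2$ through the normal form of Proposition~\ref{prop:uni-simple}, then read off well-definedness and injectivity on classes from Lemma~\ref{lem:uni-unihat}, and surjectivity from Lemma~\ref{Weven1}.

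\emph{Construction.} Let $\un i\in\mathfrak{P}^2(\Hn)$ with $m=m_{\un i}\geq 2$. By Proposition~\ref{prop:uni-simple},
\[
\un i\sim(\un\tau_h,\ldots,\un\tau_h,\widehat{\un i})
\]
with $m$ copies of $\un\tau_h$. Here $\widehat{\un i}\in\I^a$, $a=n-\frac{mh}{2}$, has all entries at most $\frac h2-2$ and satisfies Proposition~\ref{prop:restr.2} and \eqref{wt}. Hence $\widehat{\un i}\in\mathfrak{P}^1(\mathcal H_a(q))$ (via Corollary~\ref{cor:coincide} and Proposition~\ref{prop:Wfin}). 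Lemma~\ref{Weven1} then gives $\Phi_1(\widehat{\un i})=\gamma\in\mathcal{CSP}^1_h(a)$. Because $\frac h2-1$ does not occur in $\widehat{\un i}$, the residue pattern \eqref{eq:residue-even} forces $\gamma_1\leq\frac h2-1$. We set
\[
\Phi_2(\un i):=\bigl(\tfrac h2,\ldots,\tfrac h2,\gamma\bigr)\quad(m\text{ copies of }\tfrac h2),
\]
which lies in $\mathcal{CSP}^2_h(n)$.

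\emph{Well-definedness.} The tail $\widehat{\un i}$ need not be unique, so we must show the output is. The number $m$ is an invariant of $\sim$, since $\sim$ only permutes entries. If $\widehat{\un i}$ and $\widehat{\un i}'$ are two tails for $\un i$, both normal forms are $\sim$ to $\un i$ and hence to each other. Lemma~\ref{lem:uni-unihat} gives $\widehat{\un i}\sim\widehat{\un i}'$, so $\Phi_1(\widehat{\un i})=\Phi_1(\widehat{\un i}')$. The same argument applied to $\un i\sim\un j$ shows $\Phi_2(\un i)=\Phi_2(\un j)$.

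\emph{Injectivity on classes.} Suppose $\Phi_2(\un i)=\Phi_2(\un j)$. Then $m_{\un i}=m_{\un j}$, since this is the number of parts equal to $\frac h2$ (note $\gamma_1<\frac h2$). Also $\Phi_1(\widehat{\un i})=\Phi_1(\widehat{\un j})$, so $\widehat{\un i}\sim\widehat{\un j}$ by Lemma~\ref{Weven1}. Lemma~\ref{lem:uni-unihat} then gives $\un i\sim\un j$.

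\emph{Surjectivity.} This is the step I expect to be the main obstacle. Fix $\xi=(\frac h2^m,\gamma)$ with $\gamma\in\mathcal{SP}_0(a)$ and $\gamma_1\leq\frac h2-1$. Choose $T\in\op{Std}^{\mathsf s}(\gamma)$ and put $\widehat{\un i}=\un i_{(\gamma,T)}\in\mathfrak{P}^1(\mathcal H_a(q))$; all its entries are at most $\frac h2-2$. The candidate preimage is
\[
\un i=(\un\tau_h^{\,m},\widehat{\un i}),
\]
and it remains to show $\un i\in\mathfrak{P}^2(\Hn)$.

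The conditions in \eqref{wt} are immediate. The first block starts with $0$, every later entry of a $\un\tau_h$ block has a neighbour value already present, and $\widehat{\un i}$ satisfies \eqref{wt} itself.

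The work is in Proposition~\ref{prop:restr.2}(1)--(6), specifically in the mixed cases where a repeated value has one occurrence inside the $\un\tau_h$ blocks and another inside $\widehat{\un i}$.
\begin{itemize}
\item[(a)] For $0<u<\frac h2-1$, two consecutive occurrences of $u$ inside the blocks have $u+1$ and $u-1$ between them, supplied by $\un\tau_h$. So (6a) holds there.
\item[(b)] For $u=0$, every pair of occurrences has a $1$ between them, so (3) holds.
\item[(c)] For $u=\frac h2-1$, the structure in (6b) has to be produced. I would take $k_{a;b}$ to be the position of $\frac h2-a$ in the $b$-th copy of $\un\tau_h$, or in the next copy. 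Conditions (i)--(iii) are then checked by inspecting adjacent blocks.
\item[(d)] For a pair of occurrences of $u$ with the last one in the blocks and the next one in $\widehat{\un i}$, the values $u\pm1$ must appear between them. The last block contributes $u+1$ after its $u$ (this needs $u\leq\frac h2-2$, which holds since $\widehat{\un i}$ has no $\frac h2-1$). The value $u-1$ comes from \eqref{wt} for $\widehat{\un i}$: either $u-1$ or $u+1$ precedes $u$ inside $\widehat{\un i}$, and the standardness of $T$ forces $u-1$ to appear before the first occurrence of $u$.
\end{itemize}

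This gives $\Phi_2(\un i)=\xi$, completing the proof.
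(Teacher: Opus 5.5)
Your proposal is correct and follows the paper's proof. The paper likewise defines $\Phi_2$ through the normal form of Proposition~\ref{prop:uni-simple} and $\Phi_1$ of the tail, gets the class statements from Lemma~\ref{lem:uni-unihat} and Lemma~\ref{Weven1}, and proves surjectivity with the preimage $(\un\tau_h,\ldots,\un\tau_h,\un i_{(\gamma,T)})$, taking $T$ to be the row-reading tableau and calling the membership check straightforward. One small correction in your item (c): $k_{a;b}$ should be the position of $\frac h2-a$ in the $(b+a-1)$-th copy of $\un\tau_h$, counting from the copy that contains $i_k$, rather than only the $b$-th or the next copy; with that choice conditions (i)--(iii) do hold.
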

\begin{proof}
Suppose $\un i\in\mathfrak{P}^2(\Hn)$. By Proposition \ref{prop:uni-simple} and  the proof of Lemma \ref{lem:uni-unihat}, we obtain that $\un{\widehat{i}}$ belong to the set $\mathfrak{P}^1(\mathcal{H}_a(q))$ with $a=n-\frac{mh}{2}$ and hence there exists a unique  $(\gamma, T)\in \Delta^1_h(n-\frac{mh}{2})$ such that $\un{\widehat{i}}=\un i_{(\gamma,T)}$ by Lemma \ref{Weven1}. Then we set $\Phi_2(\un i)=(\underbrace{\frac{h}{2},\ldots,\frac{h}{2}}_{m}, \gamma)$. If $\un i\sim \un j$, then by Lemma \ref{lem:uni-unihat} we have $\un{\widehat{i}}\sim \un{\widehat{j}}$ and then by  Lemma \ref{Weven1} we have $\Phi_2(\un i)=\Phi_2(\un j)$. 
Conversely, given $\xi=(\underbrace{\frac{h}{2},\ldots,\frac{h}{2}}_{m}, \gamma)\in\mathcal{CSP}^2_h(n)$, set 
$$
\un i_\xi=(\underbrace{\un \tau_h,\ldots,\un \tau_h}_m, 0,1,\ldots,\gamma_1-1,0,1,\ldots,\gamma_2-1, \ldots,0,1,\gamma_l-1), 
$$
where $l=\ell(\gamma)$. Then it is straightforward to check that $\un i_\xi\in \mathfrak{P}^2(\Hn)$ and moreover $\Phi_2(\un i_\xi)=\xi$.  This verifies the proposition.
\end{proof}

Let 
$$
\mathcal{CSP}_h(n)=\mathcal{CSP}^1_h(n)\cup\mathcal{CSP}^2_h(n). 
$$
For $\xi \in \mathcal{CSP}_h(n)$, set 
\begin{equation}\label{eq:gamma0-even}
\gamma_0(\xi)=\ell(\xi)+\sharp\{1\leq k\leq\ell(\xi)\mid \xi_k=\frac{h}{2}\}. 
\end{equation}

\begin{thm}\label{thm:HC-classification}
For each $\xi \in \mathcal{CSP}_{h}(n)$ there exists an irreducible $\Hn$-module $D(\xi)$ such that

(1) $D(\xi)$ is type $\texttt{M}$ if $\gamma_0(\xi)$ is even and is type $\texttt{Q}$ if $\gamma_0(\xi)$ is odd. Moreover, $\op{dim}D(\xi)=2^{n-\lfloor\frac{\gamma_0(\xi)}{2}\rfloor}\sharp\op{Std}^{\mathsf{s}}(\xi)$ for $\xi\in \mathcal{CSP}^1_h(n)$ and 
$\op{dim}D(\xi)=2^{n-\lfloor\frac{\gamma_0(\xi)}{2}\rfloor}\sharp \Phi_2^{-1}(\xi)$ for $\xi\in \mathcal{CSP}^2_h(n)$.

(2) $\{D(\xi)\mid \xi \in \mathcal{CSP}_{h}(n)\}$ is a complete set of pairwise non-isomorphic irreducible completely splittable $\Hn$-modules.
\end{thm}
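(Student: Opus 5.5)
The construction mirrors Theorem \ref{dim2}. For $\xi\in\mathcal{CSP}^1_h(n)$, pick any $T\in\op{Std}^{\mathsf{s}}(\xi)$ and set $D(\xi):=D^{\un i_{(\xi,T)}}$. For $\xi\in\mathcal{CSP}^2_h(n)$, pick any $\un i\in\Phi_2^{-1}(\xi)$, for instance the tuple $\un i_\xi$ from the proof of Proposition \ref{prop:Phi2}, and set $D(\xi):=D^{\un i}$. By Theorem \ref{thm:Classficiation}(2) together with Lemma \ref{Weven1} (resp. Proposition \ref{prop:Phi2}), $D(\xi)$ does not depend on the choice.

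Every weight of $D(\xi)$ lies in the $\sim$-class of the chosen tuple, hence in $\mathfrak{P}(\Hn)$, since $\mathfrak{P}^k(\Hn)$ is closed under admissible transpositions (cf. the proof of Proposition \ref{prop:Wfin}). So every weight has first entry $0$, and $X_1+X_1^{-1}=2$. Complete splittability then forces $X_1=1$, so $D(\xi)$ descends to $\Hn$ via \eqref{eq:surj}. It is irreducible and completely splittable because it is already so as an $\aHn$-module.

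For part (2), let $M$ be an irreducible completely splittable $\Hn$-module. Inflate it to $\aHn$ and pick a weight $\un i$. Then $\un i\in\mathfrak{P}(\Hn)=\mathfrak{P}^1(\Hn)\sqcup\mathfrak{P}^2(\Hn)$, and $M\cong D^{\un i}$ by Theorem \ref{thm:Classficiation}(3). Depending on which piece contains $\un i$, we get $\un i\sim\un i_{(\xi,T)}$ for $\xi=\Phi_1(\un i)$, or $\Phi_2(\un i)=\xi$, so $M\cong D(\xi)$. For pairwise non-isomorphism, suppose $D(\xi)\cong D(\xi')$. Then the representative tuples are $\sim$-equivalent by Theorem \ref{thm:Classficiation}(2). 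Since $\sim$ preserves the number $m_{\un i}$ of entries equal to $\frac h2-1$, both tuples lie in the same $\mathfrak{P}^k(\Hn)$. The fibre descriptions of $\Phi_1$ and $\Phi_2$ then give $\xi=\xi'$.

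For part (1), the type and dimension follow from Theorem \ref{thm:Classficiation}(1) and Corollary \ref{cor:irrepAn}. $D^{\un i}$ has the type of $L(\un i)$, and $\dim D^{\un i}=|P_{\un i}|\cdot\dim L(\un i)=|\Lambda_{\un i}|\cdot 2^{n-\lfloor \gamma_0/2\rfloor}$ by Lemma \ref{lem:Pi}. Here $\gamma_0$ is the number of entries of $\un i$ equal to $0$ or $\frac h2-1$, by \eqref{eq:qi=2}. Next, $\Lambda_{\un i}$ is exactly $\Phi_2^{-1}(\xi)$ in the second case, and in the first case it is in bijection with $\op{Std}^{\mathsf{s}}(\xi)$ by Lemma \ref{Weven1}.

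The remaining, and main, step is to check the identity
\[\#\{k: i_k\in\{0,\tfrac h2-1\}\}=\ell(\xi)+\#\{k:\xi_k=\tfrac h2\}=\gamma_0(\xi),\]
using the residue pattern \eqref{eq:residue-even}. In a row of length at most $\frac h2$, residue $0$ occurs only at the first node, and $\frac h2-1$ occurs only at node $\frac h2$. So each row contributes one entry equal to $0$, plus one entry equal to $\frac h2-1$ exactly when it has length $\frac h2$. For $\mathcal{CSP}^1_h(n)$ this is immediate from $\un i_{(\xi,T)}$. For $\mathcal{CSP}^2_h(n)$ it can be checked directly on $\un i_\xi=(\un\tau_h^m,0,\ldots,\gamma_1-1,\ldots)$. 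Each $\un\tau_h$ contributes one $0$ and one $\frac h2-1$, and each row of $\gamma$ contributes one $0$ and no $\frac h2-1$, because $\gamma_1\le\frac h2-1$. Finally, $\gamma_0$ is constant on $\sim$-classes, since admissible transpositions permute entries, so the count is independent of the representative chosen.
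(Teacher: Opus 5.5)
Your proposal is correct and follows the same route as the paper. You define $D(\xi)=D^{\un i}$ for a tuple $\un i$ in the fibre of $\Phi_1$ or $\Phi_2$, use that every weight begins with $0$ to get $X_1=1$ and descend to $\Hn$, and then read off type, dimension and the classification from Theorem~\ref{thm:Classficiation}, Lemma~\ref{Weven1} and Proposition~\ref{prop:Phi2}. You make explicit two steps the paper only covers with ``follow directly'': that $\mathfrak{P}^k(\Hn)$ is stable under $\sim$, so $\Lambda_{\un i}$ is exactly the relevant fibre, and that the residue pattern makes the number of entries in $\{0,\frac h2-1\}$ equal to $\gamma_0(\xi)$.
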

\begin{proof}
For each $\xi \in \mathcal{CSP}^\epsilon_{h}(n)$ with $\epsilon =0,1$, choose a $\un i\in\mathfrak{P}^\epsilon(\Hn)$ such that $\Phi_\epsilon(\un i)=\xi$. Then set
$$
D(\xi)=D^{\un i}. 
$$
Clearly by Proposition \ref{prop:Phi2} and Lemma \ref{Weven1}, we obtain that $X_1=1$ on $D(\xi)$ and hence $D(\xi)$ is an irreducible completely splittable $\Hn$-modules and moreover it is independent of the choice of $\un i$. Then the remaining statements of the theorem follows directly from Proposition \ref{prop:Phi2} and Lemma \ref{Weven1} as well as Theorem \ref{thm:Classficiation}. 
\end{proof}

\begin{rem}
It is interesting to see whether the set $\Phi_2^{-1}(\xi)$ can be identified with the set of certain standard tableaux of shape $\xi$. 
\end{rem}

\begin{example}\label{ex:example-even}
In the case $h=4$, clearly $\xi=(2,1)\in\mathcal{CSP}^1_h(3)$ and moreover by Theorem \ref{thm:HC-classification} we have $D((2,1))$ is of type $\texttt{Q}$ and $\dim D((2,1))=2^2$. 
In the case $h=6$, clearly $\mu=(3,1)\in \mathcal{CSP}^1_h(4)$ and $\xi=(3,2)\in\mathcal{CSP}^1_h(5)$ and moreover by Theorem \ref{thm:HC-classification} we have $D((3,1))$ is of type $\texttt{Q}$ and $\dim D((3,1))=2^4$. 
Meanwhile $D((3,2))$ is of type $\texttt{Q}$ and $\dim D((3,2))=2^5$. 
\end{example}
}

\section{Semisimplicity criteria on finite Hecke-Clifford algebra $\Hn$}\label{sec:semsimplicity}

In this section, we shall provide a semisimplicity criteria for $\Hn$ at roots of unity. 

\subsection{Branching rules and cystal graph}
{
For integers $k\geq 0$, recall that $\mathcal{SP}_k(n)$ denotes the set of $k$-strict partition of $n$ for any $k\geq 0$ and in addition a $k$-strict partition $\la\in\mathcal{SP}_k(n)$ is called $k'$-restricted if
\[
\begin{cases}
\la_r-\la_{r+1}<k' & \text{if }k\mid \la_r,\\
\la_r-\la_{r+1}\leq k' & \text{if }k\nmid \la_r,
\end{cases}
\]
Denote by $\mathcal{RP}_h(n)=\{\la\in\mathcal{SP}_h(\la)\mid \la\text{ is }h\text{-restricted}\}$ which is the subset of $\mathcal{SP}_h(\la)$ consisting of $h$-restricted partitions in the case $h$ is odd and denote  by $\mathcal{DRP}_{h}(n)=\{\la\in\mathcal{SP}_\frac{h}{2}(n)\mid \la \text{ is }h\text{-restricted}\}$ which is the subset of $\mathcal{SP}_{\frac{h}{2}}(\la)$ consisting of  $h$-restricted partition of $n$ in the case $h$ is even. In \cite{Hu}, the partitions in $\mathcal{DRP}_h(n)$ are said to be  the double restricted $\frac{h}{2}$-strict partitions. 


In this section, in the case $h$ is odd (resp. $h$ is even) we identify an arbitrary $h$-strict partition ($\frac{h}{2}$-strict partition) in $\mathcal{SP}_h(n)$ (resp. $\mathcal{SP}_{\frac{h}{2}}(n)$) with its Young diagram. We label the residue of nodes in the Young diagram of $\la$ using the set $\mathbb{I}$ in \eqref{defn:I} via the way that the first node in each row has residue $0$ and then follow the repeating pattern
$$
\left\{
\begin{array}{ll}
0,1,\cdots,\frac{h-3}{2},\frac{h-1}{2},\frac{h-3}{2},\cdots,1,0, \quad & \text{if }h\text{ is odd},\\
\\
0,1,\cdots,\frac{h}{2}-2,\frac{h}{2}-1,\frac{h}{2}-1,\frac{h}{2}-2,\cdots,1,0,\quad  &\text{if }h\text{ is even}.
\end{array}
\right.
$$
The residue of the node $A$ is denoted $\op{res}A$. Let $i\in \mathbb{I}$ be some fixed residue. A node $A=(r,s)\in \la$ with $\op{res}A=i$ is called $i$-removable (for $\la$) if either $\la_A:=\la-\{A\}$ is again a $h$-strict partition  (resp. $\frac{h}{2}$-strict partition) or the node $B=(r,s+1)$ immediately to the right of $A$ belongs to $\la$ and moreover satisfies $\op{res}B=i$  and both $\la_B:=\la-\{B\}$ and $\la_{A,B}=\la-\{A,B\}$ are $h$-strict partitions (resp. $\frac{h}{2}$-strict partition) in the case $h$ is odd (resp. $h$ is even). Similarly, A node $B=(r,s)\notin \la$ with $\op{res}B=i$ is called $i$-addable (for $\la$) if either $\la_B:=\la\cup\{B\}$ is again a $h$-strict partition (resp. $\frac{h}{2}$-strict partition) or the node $A=(r,s-1)$ immediately to the left of $B$ does not belong to $\la$ and satisfies  $\op{res}A=i$ and both $\la^{A}=\la\cup\{A\}$  and $\la^{A,B}=\la\cup\{A,B\}$ is a $h$-strict partition (resp. $\frac{h}{2}$-strict partition) in the case $h$ is odd (resp. $h$ is even), then $B$ is also called $i$-addable.
Furthermore, label all $i$-addable nodes of the diagram  by $+$ and all $i$-removable nodes by $-$. Then, the $i$-signature of $\la$ is the sequence of $+$ and $-$ obtained by going along the rim of the Young diagram from bottom left to top right and reading off all the signs. The reduced $i$-signature of $\la$ is obtained from the $i$-signature by successively erasing all neighboring pairs of the form $+-$. Nodes corresponding to a $-$ in the reduced $i$-signature are called $i$-normal and the rightmost of these is the $i$-good node, nodes corresponding to a $+$ in the reduced $i$-signature are called $i$-conormal and the leftmost of these is the $i$-cogood node. Define
\begin{equation}\label{eq:two-crystal-operator}
\begin{aligned}
\epsilon_i(\la)&=\#\{i\text{-normal nodes in }\la\}\\
&=\#\{-\text{'s in the reduced }i\text{-signature of }\la\},\\
\varphi_i(\la)&=\#\{i\text{-conormal nodes in }\la\}\\
&=\#\{+\text{'s in the reduced }i\text{-signature of }\la\}.
\end{aligned}
\end{equation}

\begin{example}\label{ex:add-remove}
Suppose $h=14$,  then $\I=\{0,1,2,3,4,5,6\}$ and the residue pattern will be repeating of $0,1,2,3,4,5,6,6,5,4,3,2,1,0$ in each row of partitions belonging to $\mathcal{DRP}_h(n)$. Clearly  $\la=(7,6,5)\in\mathcal{DRP}_h(18)$ and its residues can be  displayed as follows:
\[\ytableausetup{boxsize=1.5em}
\ytableaushort{0123456,012345,01234} 
\]
There is no $6$-removable nodes  in $\la$ and the $6$-addable nodes are: 
\[\ytableausetup{boxsize=1.5em}
\ytableaushort{{}{}{}{}{}{}{}{\none[+]},{}{}{}{}{}{}{\none[+]},{}{}{}{}{}}
\]
So by \eqref{eq:two-crystal-operator} we have $\epsilon_6(\la)=0,\varphi_6(\la)=2$. 
\end{example}
\begin{thm}\cite{Ka, Hu}
For each $\la\in\mathcal{RP}_h(n)$ (resp. $\la\in\mathcal{DRP}_h(n)$) in the case $h$ is odd (resp. $h$ is even) and suppose $B$ is a $i$-cogood node of $\la$ for some $i\in\mathbb{I}$, then $\la^B:=\la\cup B\in\mathcal{RP}_h(n+1)$ (resp. $\la^B:=\la\cup B\in\mathcal{DRP}_h(n+1)$) in the case $h$ is odd (resp. $h$ is even).

\end{thm}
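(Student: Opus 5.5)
The plan is a direct combinatorial argument on Young diagrams. I would not appeal to the abstract realisation of $B(\Lambda_0)$; that realisation is essentially what \cite{Ka, Hu} prove, and I only use it as a consistency check. Write $k=h$ if $h$ is odd and $k=\frac{h}{2}$ if $h$ is even. Fix $\la$ in $\RP_h(n)$ (resp. $\DRP_h(n)$) and let $B=(r,s)$ be the $i$-cogood node.

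By definition $B$ is $i$-addable, so either $\la^B$ is again $k$-strict, or $B$ is the right half of a pair $A,B$ of $i$-addable nodes in row $r$. In the second case the definition gives $\la^A$, $\la^{A,B}$ $k$-strict. I would first rule this out when $B$ is cogood: the $+$ of $A$ sits immediately to the left of the $+$ of $B$ in the $i$-signature and cannot be cancelled by a $-$ between them. So $B$ could not be the leftmost conormal node, except in the $\la\cup\{A\}$-type situation, where one checks directly that $\la^B$ is $k$-strict. Hence $\la^B$ is $k$-strict, and only the restrictedness inequalities remain.

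Adding $B$ at the end of row $r$ (so $s=\la_r+1$) changes only two differences. The difference $\la_{r-1}-\la_r$ decreases, which is harmless; the only subtlety is when the divisibility of $\la_r$ by $k$ changes, and I would check that case by hand. The difference $\la_r-\la_{r+1}$ increases by one. So the essential case is when $\la_r-\la_{r+1}$ is already extremal: $\la_r-\la_{r+1}=h$ with $k\nmid\la_r$, or $h-1$ with $k\mid \la_r$ (or the analogous conditions when $k\nmid\la_r$ but $k\mid\la_r+1$). I would then compute residues from the periodic pattern \eqref{eq:residue-odd} (resp. \eqref{eq:residue-even}), which has period $h$ in the odd case and the doubled pattern in the even case. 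The aim is to show that the node $B'=(r+1,\la_{r+1}+1)$, possibly together with its right neighbour in the doubled-residue case, has residue $i$ and is $i$-addable. Its $+$ lies strictly to the left of that of $B$ in the signature read from bottom-left to top-right.

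Since $B$ is the leftmost conormal node, this $+$ must be cancelled by an $i$-removable $-$ occurring after it and before $B$. Along the rim, the only nodes between $B'$ and $B$ are the end nodes of row $r+1$ and row $r$ (rows are strictly or $k$-strictly separated here). So an $i$-removable node would have to be $(r,\la_r)$ or, in the even case, the pair $(r,\la_r-1),(r,\la_r)$. Its residue is read off from the pattern, and it differs from $i$ except at the turning residues $0$, $\frac{h-1}{2}$ (resp. $0$, $\frac{h}{2}-1$). At those residues the removability condition fails because of the $k$-strictness constraints. This contradiction shows $\la^B$ is restricted.

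The main obstacle is this last residue bookkeeping. In the even case the turning residues $0$ and $\frac{h}{2}-1$ occur twice in a row, so addable and removable ``dominoes'' appear, and the extremal difference condition splits according to whether $k$ divides $\la_r$, $\la_r+1$ or $\la_{r+1}$. I would first do the odd case completely, then treat the even case by listing the residues of the last two columns of rows $r$ and $r+1$ in each divisibility sub-case. As a cross-check, I would compare with Example~\ref{ex:add-remove} and with the known statement that cogood additions realise the crystal operators $\tilde f_i$ on $B(\Lambda_0)$ of type $A^{(2)}_{h-1}$ (resp. $D^{(2)}_{h/2}$).
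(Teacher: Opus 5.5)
Your argument is correct in substance, and it takes a genuinely different route. The paper gives no proof: it quotes the statement from \cite{Ka,Hu}, where closure under adding cogood nodes is part of Kang's theorem. That theorem says reduced proper Young walls, translated by Hu into restricted $h$-strict (resp.\ doubly restricted $\frac{h}{2}$-strict) partitions, realise the highest weight crystal $B(\Lambda_0)$ of type $A^{(2)}_{h-1}$ (resp.\ $D^{(2)}_{h/2}$).

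Your two-row check is elementary and self-contained. It also verifies the claim in exactly this paper's conventions (reading order, $+-$ cancellation, the paper's form of $h$-restrictedness), which is not automatic when importing a Young-wall result. The citation, on the other hand, gives far more than closure. It shows these partitions with cogood additions form all of $B(\Lambda_0)$, and that is the form in which the statement is combined with the branching rules of \cite{BK1,Ts}.

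The bookkeeping in your sketch needs tightening, though the strategy survives.

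First, the paired case of addability ($A=(r,s-1)\notin\la$) is excluded outright, with no exceptional sub-case. There $A$ is itself $i$-addable through the first clause, and its $+$ immediately precedes that of $B$. So if $B$ is conormal, so is $A$.

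Second, the inequality at index $r-1$ involves only whether $k\mid\la_{r-1}$, so it is untouched.

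Third, at index $r$ the only violations are $\la_r-\la_{r+1}=h$ with $k\nmid\la_r$, and $\la_r-\la_{r+1}=h-1$ with $k\mid\la_r+1$. Your case ``$h-1$ with $k\mid\la_r$'' is not a violation, and there $B'$ need not have residue $i$. For $h=4$ and $\la=(4,1)$, the node $(1,5)$ is $0$-cogood while $(2,2)$ has residue $1$.

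The uniform fact to use is that columns $s$ and $s+1$ carry equal residues if and only if $k\mid s$. In particular, in the odd case only the residue $0$ is doubled, not $\frac{h-1}{2}$.

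With this, both violations go through:
\begin{itemize}
\item In the first violation, $B'$ sits $h$ columns to the left of $B$.
\item In the second, the column of $B'$ is congruent to $\la_r+2$ modulo $h$, and $k\mid\la_r+1$.
\item Either way $\op{res}B'=i$, and $B'$ is addable through the first clause because $\la_r-\la_{r+1}\geq h-1\geq 2$.
\item In both violations $k\nmid\la_r$, so $(r,\la_r)$ has residue different from $i$. Hence neither clause of removability produces a $-$ in row $r$, and that is the whole contradiction.
\end{itemize}

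Drop the assertion that at the turning residues removability fails by $k$-strictness. It is false in general: in the example above, $(1,4)$ has residue $0$ and is $0$-removable. It is simply never needed.
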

Then we have the following due to \cite{BK1} and \cite{Ts}. 

\begin{thm}\label{thm:typefi}\cite[Theorem 9.4 and 9.6]{BK1}\cite[Corollary 3.16 and Lemma 4.9]{Ts}\label{thm;BK-T}
Suppose $h\geq 3$ and $n\geq 1$. Then 
\begin{enumerate}
\item In case $h$ is odd (resp. $h$ is even), for each $\la\in\mathcal{RP}_h(n)$ (resp. $\la\in\mathcal{DRP}_h(n)$), there exists an irreducible $\Hn$-module $M(\la)$ and moreover $\{M(\la)\mid \la\in\mathcal{RP}_h(n) \}$ (resp. $\{M(\la)\mid \la\in\mathcal{DRP}_h(n) \}$) is a complete set of non-isomorphic irreducible $\Hn$-modules. 
\item Denote by $b_h(\la)=\sharp\{r\geq 1|h\nmid \la_r\}$ (resp. $b_{\frac{h}{2}}(\la)=\sharp\{r\geq 1|\frac{h}{2}\nmid \la_r\}$). Then  $M(\la)$ is type $\texttt{M}$ if $b_h(\la)$ is even (resp. $b_{\frac{h}{2}}(\la)$ is even) and is type $\texttt{Q}$ if $b_h(\la)$ is odd (resp. $b_{\frac{h}{2}}(\la)$ is odd) in case $h$ is odd (resp. $h$ is even). 
\item For each $i\in\mathbb{I}$, there exists a $\Hnn$-supermodule $f_iM(\la)$, unique up to isomorphism, such that $f_iM(\la)\neq 0$ if and only if $\la$ has an $i$-cogood node $B$ in which case $f_iM(\la)$ is indecomposable and the multiplicity of $M(\la^B)$ as a composition factor in $f_iM(\la)$ is $\varphi_i(\la)$.
\end{enumerate}
\end{thm}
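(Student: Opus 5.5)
This theorem is quoted from \cite[Theorems 9.4 and 9.6]{BK1} and \cite[Corollary 3.16, Lemma 4.9]{Ts}, so the plan is to recall how those sources obtain it rather than to prove it from scratch. I would organize the argument around the Grojnowski--Kleshchev crystal approach, which is how the cited works proceed.

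\textbf{Step 1: crystal operators and the crystal graph.} Consider the category of finite dimensional $\Hn$-modules, viewed via \eqref{eq:surj} as integral $\aHn$-modules with $X_1=1$. On it define the crystal operators
\[
\tilde e_i M=\operatorname{soc}\,\operatorname{res}^{\mathcal{H}^{\mathrm{aff}}_{n-1,1}(q)}_{\mathcal{H}^{\mathrm{aff}}_{n-1}(q)}\Theta_i M,
\qquad
\tilde f_i M=\operatorname{cosoc}\,\operatorname{ind}(M\circledast L(i)).
\]
Lemma~\ref{lem:K1-1} and the functors $\Theta_{i^m}$ give the basic properties:
\begin{itemize}
\item $\epsilon_i(\tilde e_i M)=\epsilon_i(M)-1$;
\item $\tilde e_i M$ is irreducible;
\item $\tilde e_i$ and $\tilde f_i$ are mutually inverse on irreducibles.
\end{itemize}
Following \cite{BK1}, one then shows that the Grothendieck group of the cyclotomic (level one) quotients realizes the basic representation of $A^{(2)}_{h-1}$ (for $h$ odd), respectively $D^{(2)}_{h/2}$ (for $h$ even, via \cite{Ts}). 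The resulting irreducible modules form the crystal $B(\Lambda_0)$.

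\textbf{Step 2: labelling by partitions.} Identify $B(\Lambda_0)$ combinatorially with $\mathcal{RP}_h$, respectively $\mathcal{DRP}_h$, using \cite{Ka, Hu}. Under this identification $\tilde f_i$ adds the $i$-cogood node. Define $M(\lambda)$ recursively along a path of cogood nodes from $\emptyset$; it is well defined because the crystal is connected and $\tilde f_i$ is injective on irreducibles. This gives (1).

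\textbf{Step 3: the type.} For (2), track the type through each $\tilde f_i$ step, using Lemma~\ref{tensorsmod}. Adjoining $L(i)$ flips the type exactly when $L(i)$ has type $\texttt{Q}$, which by \eqref{eq:qi=2} happens for $i=0$ (and also $i=\frac h2-1$ when $h$ is even). Counting the nodes of residue $0$ in a row, resp.\ residue $0$ or $\frac h2-1$, one finds their parity equals the parity of the number of rows whose length is not divisible by $h$ (resp.\ $\frac h2$). This yields $b_h(\lambda)$, resp.\ $b_{\frac h2}(\lambda)$.

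\textbf{Step 4: the modules $f_iM(\lambda)$.} For (3), set
\[
f_iM=\operatorname{res}\,\Theta_i\,\operatorname{ind}M
\]
taken at the appropriate level, corrected by a factor $\frac12$ when both $M$ and $L(i)$ are of type $\texttt{Q}$. Its cosocle and socle are both $\tilde f_iM$, so $f_iM$ is indecomposable. The multiplicity $[f_iM(\lambda):M(\lambda^B)]=\varphi_i(\lambda)$ then follows from the Shapovalov-form/crystal comparison in \cite[Theorem 9.6]{BK1}.

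\textbf{Main obstacle.} The hardest part is this last multiplicity statement: it requires the identification of the Grothendieck group with the basic module, not just the crystal. In the paper I would simply cite \cite{BK1, Ts} for it.
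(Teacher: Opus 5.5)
Like the paper, you ultimately rest the statement on \cite{BK1} and \cite{Ts}, and the paper gives nothing beyond these citations, so at the level of logic your proposal agrees with it. Your Step 3 parity count is also correct: in a row of length $L$, the number of nodes of residue $0$ (resp.\ of residue $0$ or $\frac h2-1$) is odd iff $h\nmid L$ (resp.\ $\frac h2\nmid L$). This does use the result from \cite{BK1} that $\tilde f_iM$ has the same type as $M\circledast L(i)$.

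However, several points in your outline of the cited argument are misstated and would fail if read as a proof.

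\emph{The socle definition of $\tilde e_i$.} If $\epsilon_i(M)=1$, then $\Theta_iM\cong N\circledast L(i)$ by Lemma~\ref{lem:K1-1}. Its restriction to $\mathcal{H}^{\mathrm{aff}}_{n-1}(q)$ is $N\oplus\Pi N$ unless both $N$ and $L(i)$ are of type $\texttt{Q}$. So $\operatorname{soc}\operatorname{res}\Theta_iM$ is in general not irreducible; Brundan--Kleshchev work with normalized functors $e_i$ that remove this doubling.

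\emph{Inverse crystal operators.} $\tilde e_i$ and $\tilde f_i$ are not mutually inverse: $\tilde e_i\tilde f_iM\cong M$ holds, but $\tilde f_i\tilde e_iM\cong M$ only when $\epsilon_i(M)>0$. Moreover, for modules over the finite algebra, $\tilde f_iM$ must be replaced by its largest quotient on which $X_1=1$, which may be $0$. It is this truncation that produces $B(\Lambda_0)$ rather than $B(\infty)$, and it is what defines $\varphi_i$.

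\emph{Well-definedness of $M(\la)$.} This does not follow from connectedness of the crystal and injectivity of $\tilde f_i$. It follows from the uniqueness of the crystal isomorphism between the module crystal and the partition crystal, both being $B(\Lambda_0)$.

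\emph{The module in (3).} $\operatorname{res}\Theta_i\operatorname{ind}M$ is a module for $\mathcal{H}_n(q)$ (or $\mathcal{H}^{\mathrm{aff}}_{n,1}(q)$), not for $\mathcal{H}_{n+1}(q)$. The module in (3) is the largest quotient of $\operatorname{ind}^{\mathcal{H}^{\mathrm{aff}}_{n+1}(q)}_{\mathcal{H}^{\mathrm{aff}}_{n,1}(q)}(M\circledast L(i))$ on which $X_1=1$. Its irreducible cosocle gives indecomposability; your further claim that its socle is also $\tilde f_iM$ needs a separate self-duality argument.
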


\begin{rem}\label{rem:Dxi-Mxi}(cf. \cite[Remark 6.9]{Wa})
It is straightforward to check that $\mathcal{CSP}_h(n)\subset\mathcal{RP}_h(n)$ if $h$ is odd and $\mathcal{CSP}_h(n)\subset\mathcal{DRP}_h(n)$ if $h$ is even. Moreover by investigating the weights of $M(\xi)$ via the study in \cite{BK1,Ts}, we actually have $D(\xi)\cong M(\xi)$ for each $\xi\in\mathcal{CSP}_h(n)$. 
\end{rem}

\subsection{Semisimplicity criterion}

\begin{lem}\label{prop:IrrCSodd}
Suppose $h\geq 3$ is odd. Then $\mathcal{CSP}_h(n)=\mathcal{RP}_h(n)$ if and only if $h\geq n$. Hence, every irreducible $\Hn$-modules is completely splittable if and only if $h\geq n$.
\end{lem}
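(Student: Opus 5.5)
The lemma reduces to a comparison of two explicit sets of partitions. By Remark~\ref{rem:Dxi-Mxi} we have $\mathcal{CSP}_h(n)\subseteq\mathcal{RP}_h(n)$ and $D(\xi)\cong M(\xi)$. By Theorem~\ref{dim2} and Theorem~\ref{thm;BK-T}, every irreducible $\Hn$-module is completely splittable if and only if $\mathcal{CSP}_h(n)=\mathcal{RP}_h(n)$. So the second sentence follows from the first, and the work is to show that $\mathcal{RP}_h(n)\subseteq\mathcal{CSP}_h(n)$ holds exactly when $h\geq n$.

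\emph{Sufficiency ($h\geq n$).} Take $\lambda\in\mathcal{RP}_h(n)$ with $n\leq h$.

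First, $\lambda$ is strict. If $\lambda_r=\lambda_{r+1}>0$, then $h$-strictness forces $h\mid\lambda_r$, so $\lambda_r\geq h$. Then $n\geq 2h>h$, which is impossible.

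Second, the parts of $\lambda$ are bounded. If some part equals $h$, then $n=h$ and $\lambda=(h)$. This violates $h$-restrictedness, since $h\mid\lambda_1$ requires $\lambda_1-\lambda_2=h<h$, which fails. Hence every part satisfies $\lambda_r<h$, so no part is divisible by $h$, and restrictedness reads $\lambda_r-\lambda_{r+1}\leq h$. This is automatic here.

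Third, we check membership in \eqref{eq:cpn}. If $\lambda_1\leq\frac{h+1}{2}$, we are done. Otherwise write $\lambda_1=h-u$ with $1\leq u\leq\frac{h-3}{2}$; the constraint $\lambda_1\leq h-1$ gives $u\geq 1$. Then $\lambda_2\leq n-\lambda_1\leq h-(h-u)=u$. So $\lambda\in\mathcal{CSP}_h(n)$.

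\emph{Necessity ($n>h$).} Here we exhibit some $\lambda\in\mathcal{RP}_h(n)\setminus\mathcal{CSP}_h(n)$. Every element of $\mathcal{CSP}_h(n)$ is strict and satisfies either $\xi_1\leq\frac{h+1}{2}$ or ($\xi_1=h-u$ and $\xi_2\leq u$). In particular $\xi_1+\xi_2\leq h$ in the second case, and $\xi_1\leq h-1$ always. The plan is to split by the size of $n$.

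\begin{itemize}
\item For $h<n<2h$, take $\lambda=(h-1,n-h+1)$. This is strict when $n-h+1<h-1$, i.e.\ $n\leq 2h-3$. It is $h$-restricted because neither part is divisible by $h$ and the gaps are at most $h$. It is not in $\mathcal{CSP}_h(n)$ since $\lambda_1+\lambda_2=n>h$ while $\lambda_1=h-1>\frac{h+1}{2}$ once $h\geq 5$.
\item For $n\in\{2h-2,2h-1\}$, take $(h-1,h-2,\dots)$ with a suitable small tail.
\item For large $n$, append to $(h-1,n-h+1)$ a tail of strict parts below $h-1$, or use repeated parts equal to $h$ in the form $(h,h,\dots)$ adjusted to stay restricted, e.g.\ $(h+1,1,\dots)$-type shapes. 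Any $\lambda$ with $\lambda_1\geq h$ lies outside $\mathcal{CSP}_h(n)$, so it suffices to find an $h$-restricted $h$-strict partition with $\lambda_1\geq h$. A natural uniform choice is $\lambda=(h+1,h-1,\dots)$, or the general restricted ``staircase'' shapes.
\item For $h=3$, where $\frac{h+1}{2}=2$ and $\mathcal{CSP}_3(n)$ consists of strict partitions with parts at most $2$, the only possibilities are $n\leq 3$. Then $n=4$ is covered by $\lambda=(3,1)$, which is restricted because $3-1<3$, and similar examples handle larger $n$.
\end{itemize}

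\emph{Main obstacle.} The hardest step is producing a uniform family of witnesses $\lambda\in\mathcal{RP}_h(n)$ for every $n>h$ that is carefully checked against both $h$-strictness and the restriction inequalities. I would try first $\lambda=(h-1,h-2,\dots,h-k,r)$, a strict staircase truncated with remainder $r$, for $h<n\leq\binom{h}{2}$. For larger $n$, I would use $\lambda$ built from a multiset of parts containing $h$, repeated as allowed by $h$-strictness, with restriction $\lambda_r-\lambda_{r+1}<h$ verified explicitly.
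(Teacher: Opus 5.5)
\textbf{Verdict: there is a gap in the necessity direction; sufficiency is fine.} Your reduction of the second sentence to the equality $\mathcal{CSP}_h(n)=\mathcal{RP}_h(n)$, via Theorem~\ref{dim2}, Theorem~\ref{thm;BK-T} and Remark~\ref{rem:Dxi-Mxi}, is correct. So is your argument that $\mathcal{RP}_h(n)\subseteq\mathcal{CSP}_h(n)$ when $h\geq n$. It fills in what the paper calls ``straightforward'': $\mathcal{RP}_h(n)=\mathcal{CSP}_h(n)=\mathcal{SP}_0(n)$ for $h>n$, and $\mathcal{SP}_0(n)\setminus\{(n)\}$ for $h=n$.

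The trouble is the ``only if'' direction. You only give verified witnesses for $h+1\leq n\leq 2h-3$ with $h\geq 5$, and for $n=4$ when $h=3$. Elsewhere the argument is incomplete:
\begin{itemize}
\item For $n\in\{2h-2,2h-1\}$ the ``suitable small tail'' is never specified.
\item For $n\geq 2h$ you list several candidate shapes but never fix one or check it.
\item For $h=3$, $n\geq 5$ you only appeal to ``similar examples''.
\end{itemize}
Since the lemma is an equivalence, these are exactly the cases needed to show $\mathcal{RP}_h(n)\neq\mathcal{CSP}_h(n)$ for every $n>h$. As written, that direction is not proved, and you acknowledge as much under ``main obstacle''. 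Your truncated-staircase plan would also still leave every $n>\binom{h}{2}$ open.

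The missing step is small, and you already isolated the key point. Every $\xi\in\mathcal{CSP}_h(n)$ has $\xi_1\leq h-1$, so it suffices to exhibit one $\lambda\in\mathcal{RP}_h(n)$ with $\lambda_1=h$. Two facts make this easy:
\begin{itemize}
\item $h$-strictness allows repeated parts equal to $h$.
\item At a part equal to $h$, the restriction condition only asks that the next part be nonzero.
\end{itemize}
Write $n=ah+b$ with $a\geq 1$ and $0\leq b\leq h-1$.
\begin{itemize}
\item If $b\neq 0$, take $\lambda=(h^a,b)$. At parts divisible by $h$ the drops are $0$ and $h-b<h$; the final drop is $b\leq h$. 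So $\lambda\in\mathcal{RP}_h(n)$.
\item If $b=0$, take $\mu=(h^{a-1},h-1,1)$. Here $a\geq 2$ because $n>h$, and $h-1>1$ because $h\geq 3$, so $\mu$ is $h$-strict. At parts divisible by $h$ the drops are $0$ and $1$; elsewhere they are $h-2$ and $1$. So $\mu\in\mathcal{RP}_h(n)$.
\end{itemize}
Both have first part $h$, so they lie outside $\mathcal{CSP}_h(n)$. This is the paper's argument. It covers all $n>h$ and all odd $h\geq 3$ at once, with no case split on $n$ and no separate treatment of $h=3$.

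Even in your first bullet, $(h,n-h)$ works for every $h<n<2h$ without the extra hypotheses $h\geq 5$ and $n\leq 2h-3$ that $(h-1,n-h+1)$ forced on you.
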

\begin{proof}
Assume $h\geq 3$ is odd. It is straightforward (cf. \cite{CWZ}) to check that $\mathcal{RP}_h(n)=\mathcal{CSP}_h(n)=\mathcal{SP}_0(n)$ for $h>n$ and $\mathcal{RP}_h(n)=\mathcal{CSP}_h(n)=\mathcal{SP}_0(n)\setminus\{(n)\}$ in the case $h=n$. 
Now suppose $h< n,$ then $n=ah+b$ for some $a\geq 1$ and $0\leq b \leq h-1$. Obviously either of $\la=(h^{a},b)$(the case $b\neq 0$) and $\mu=(h^{a-1},h-1,1)$(the case $b=0$ and then $a> 1$) belongs to $\mathcal{RP}_h(n)$. But neither of them belongs to $\mathcal{CSP}_h(n)$ since $\la_1=\mu_1=h>h-1$. Hence $\mathcal{RP}_h(n)\neq \mathcal{CSP}_h(n)$. Thus the lemma is proved.
\end{proof}
\begin{lem}\label{prop:IrrCSeven}
Suppose $h\geq 4$ is even. Then $\mathcal{CSP}_h(n)=\mathcal{DRP}_h(n)$ if and only if $h\geq 2n$. Hence, every irreducible $\Hn$-modules is completely splittable if and only if $h\geq 2n$.
\end{lem}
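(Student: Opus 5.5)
The plan is to compare the two index sets directly, using the explicit description $\mathcal{CSP}_h(n)=\mathcal{CSP}^1_h(n)\cup\mathcal{CSP}^2_h(n)$ from Theorem~\ref{thm:HC-classification}. The inclusion $\mathcal{CSP}_h(n)\subseteq\mathcal{DRP}_h(n)$ is given by Remark~\ref{rem:Dxi-Mxi}, so only the reverse containment needs work. The second sentence of the lemma then follows formally. By Theorem~\ref{thm;BK-T} the irreducible $\Hn$-modules are the $M(\la)$ with $\la\in\mathcal{DRP}_h(n)$. By Theorem~\ref{thm:HC-classification} and Remark~\ref{rem:Dxi-Mxi}, the completely splittable ones are exactly the $M(\xi)\cong D(\xi)$ with $\xi\in\mathcal{CSP}_h(n)$. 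Hence every irreducible module is completely splittable if and only if the two sets coincide.

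\emph{Step 1: if $h\geq 2n$, the sets coincide.} Then $n\leq h/2$.
\begin{itemize}
\item Every $\xi\in\mathcal{CSP}^2_h(n)$ has $m\geq 2$ parts equal to $h/2$, so it needs $n\geq h$. Hence $\mathcal{CSP}^2_h(n)=\emptyset$.
\item Every strict partition of $n$ has $\xi_1\leq n\leq h/2$, so $\mathcal{CSP}^1_h(n)=\mathcal{SP}_0(n)$.
\item On the other side, let $\la\in\mathcal{DRP}_h(n)$. A repeated part of $\la$ would have to be divisible by $h/2$, hence equal to $h/2$ since parts are at most $n\leq h/2$. Two parts equal to $h/2$ would force $n\geq h$, which is impossible. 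So $\la$ is strict.
\item Conversely, for a strict $\la$ every difference $\la_r-\la_{r+1}$ is at most $n\leq h/2<h$, so the $h$-restrictedness condition holds automatically. Thus $\mathcal{DRP}_h(n)=\mathcal{SP}_0(n)$.
\end{itemize}
Therefore $\mathcal{CSP}_h(n)=\mathcal{DRP}_h(n)$.

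\emph{Step 2: if $h<2n$, exhibit a partition in $\mathcal{DRP}_h(n)\setminus\mathcal{CSP}_h(n)$.} Here $n\geq h/2+1$. Write $n-(h/2+1)=a\cdot\frac{h}{2}+b$ with $a\geq 0$ and $0\leq b<h/2$. Set
$$\la=\Big(\tfrac{h}{2}+1,\underbrace{\tfrac{h}{2},\ldots,\tfrac{h}{2}}_{a},b\Big),$$
omitting the final part if $b=0$.
\begin{itemize}
\item $\la$ is $\frac{h}{2}$-strict: the only repeated part is $h/2$, which is divisible by $h/2$.
\item $\la$ is $h$-restricted: the successive differences are $1$, $0$, $h/2-b$ and $b$, all strictly less than $h$. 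This satisfies both cases of the definition, whether or not $\frac{h}{2}\mid\la_r$. Note that $h/2+1$ is not divisible by $h/2$ because $h\geq 4$.
\item $\la\notin\mathcal{CSP}_h(n)$: every element of $\mathcal{CSP}^1_h(n)$ has first part at most $h/2$, and every element of $\mathcal{CSP}^2_h(n)$ has first part exactly $h/2$. But $\la_1=h/2+1$.
\end{itemize}
Hence $\mathcal{CSP}_h(n)\subsetneq\mathcal{DRP}_h(n)$.

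The only point that requires some care is choosing a witness in Step 2 that works uniformly for all $n>h/2$. The more obvious choice $((h/2)^a,b)$ fails when $a=1$, because $(h/2,b)$ already lies in $\mathcal{CSP}^1_h(n)$. Starting the witness with the part $h/2+1$ avoids this and makes the proof a straightforward verification.
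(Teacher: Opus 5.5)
Your proof is correct and is essentially the paper's. Your uniform witness $(\frac h2+1,(\frac h2)^a,b)$, with $n-(\frac h2+1)=a\frac h2+b$, coincides with the paper's $(\frac h2+1,\frac h2,\ldots,\frac h2,b-1)$ when $\frac h2\nmid n$ and with its $(\frac h2+1,\frac h2,\ldots,\frac h2,\frac h2-1)$ when $\frac h2\mid n$. Your Step 1 is the paper's argument, except that you verify restrictedness directly instead of quoting $\mathcal{CSP}_h(n)\subseteq\mathcal{DRP}_h(n)$.

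Two cosmetic slips. Your list of successive differences omits $\frac h2+1-b$ (when $a=0$) and $\frac h2$ (when $b=0$, $a\geq 1$); this does not matter, since every difference is at most $\frac h2+1<h$. Also, the ``obvious'' choice $((\frac h2)^a,b)$ fails for every $a\geq 1$, not only $a=1$, because for $a\geq 2$ it lies in $\mathcal{CSP}^2_h(n)$.
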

\begin{proof}
Clearly $\mathcal{CSP}_h(n)$ is naturally a subset of $\mathcal{DRP}_h(n)$.  If $h < 2n$, then we have $n=a\left(\frac{h}{2}\right)+b$ for some $a\geq 1,0\leq b<\frac{h}{2}$. Notice that either $\la=(\frac{h}{2}+1,\frac{h}{2},\ldots, \frac{h}{2},\frac{h}{2}-1)$(the case $b=0$ and then $a\geq 2$) or $\mu=(\frac{h}{2}+1,\frac{h}{2},\ldots, \frac{h}{2},b-1)$(the case $b\neq 0$) belongs to $\mathcal{DRP}_h(n)$. But neither of them belongs to $\mathcal{CSP}_h(n)$ since $\la_1=\mu_1=\frac{h}{2}+1>\frac{h}{2}$.

Now suppose $h\geq 2n$. Obviously $ \mathcal{CSP}_h(n)=\mathcal{SP}_{0}(n)$. For any $\lambda \in \mathcal{DRP}_h(n)$, if $\lambda_r = \lambda_{r+1}$ for some $r$, then $\frac{h}{2} \mid \lambda_r$. Since $2\lambda_r \leq n \leq \frac{h}{2}$, it follows that $\lambda_r = 0$, which implies $\lambda \in \mathcal{SP}_0(n)$. Thus, $\mathcal{DSP}_h(n) \subseteq \mathcal{SP}_0(n)$. Therefore, $\mathcal{CSP}_h(n)=\mathcal{SP}_0(n)=\mathcal{DRP}_h(n)$ when $h\geq 2n$.  Now the last statement of the lemma  follows from Theorem \ref{thm:HC-classification}.
\end{proof}

\begin{lem}\cite[Corollary 3.2]{Sa}\label{lem:STD}
For any $n\in\mathbb{Z}_+$,
\[\sum_{\la\in\mathcal{SP}_0(n)}2^{n-\ell(\la)}(\sharp\op{Std}^{\mathsf{s}}(\la))^2=n!\]
\end{lem}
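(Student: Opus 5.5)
This identity is the shifted analogue of the RSK identity, and I would prove it through the semisimple representation theory of the Hecke-Clifford (Sergeev) superalgebra at generic parameter, rather than by a purely combinatorial bijection. Work with $h=\infty$, i.e. $q$ not a root of unity, so that $\mathbb{I}=\mathbb{Z}_{\geq 0}$ and every $\mathtt{q}(i)$, $i\in\mathbb{I}$, is distinct. First I would show that in this regime every irreducible $\Hn$-module is completely splittable and that $\Hn$ is semisimple. Semisimplicity is the known sufficient condition of \cite{SW}, which applies for $h=\infty$; alternatively one may specialise to $q$ transcendental over the prime field.

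Second, I would classify the irreducibles via the $h=\infty$ analogue of Theorem~\ref{dim2}. By Proposition~\ref{prop:restr.2}(1),(3),(4) together with \eqref{wt}, the weights in $\mathfrak{P}(\Hn)$ for $h=\infty$ are exactly the residue sequences $\un i_{(\xi,T)}$ of standard shifted tableaux $T\in\op{Std}^{\mathsf{s}}(\xi)$, $\xi\in\mathcal{SP}_0(n)$. Here the residue of the node $(i,j)$ is $j-i$, so the diagonal carries residue $0$. This is the $h=\infty$ case of Lemma~\ref{lem:odd}, i.e. \cite[Lemma 6.6, 6.7]{Wa} with $p=\infty$. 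Theorem~\ref{thm:Classficiation} then gives irreducibles $D(\xi)$, one for each $\xi\in\mathcal{SP}_0(n)$. The number of diagonal nodes, i.e. the number of entries equal to $0$, is $\ell(\xi)$. By Corollary~\ref{cor:irrepAn} we get $\dim D(\xi)=2^{n-\lfloor \ell(\xi)/2\rfloor}\sharp\op{Std}^{\mathsf{s}}(\xi)$, and the type of $D(\xi)$ is $\texttt{Q}$ exactly when $\ell(\xi)$ is odd. Since all irreducibles are completely splittable, these modules exhaust the irreducible $\Hn$-modules.

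Third, I would apply \eqref{eq:dimequal}, using $\dim\Hn=2^n n!$, which follows from the PBW basis $\{C^\beta T_w\}$. Squaring the dimension formula gives $(\dim D(\xi))^2=2^{2n-2\lfloor\ell/2\rfloor}(\sharp\op{Std}^{\mathsf{s}}(\xi))^2$ with $\ell=\ell(\xi)$. In the type $\texttt{M}$ case ($\ell$ even) this equals $2^{2n-\ell}(\sharp\op{Std}^{\mathsf{s}}(\xi))^2$. In the type $\texttt{Q}$ case ($\ell$ odd) we have $2\lfloor \ell/2\rfloor=\ell-1$, so half of the square is again $2^{2n-\ell}(\sharp\op{Std}^{\mathsf{s}}(\xi))^2$. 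Hence
\[
2^n n!=\sum_{\xi\in\mathcal{SP}_0(n)}2^{2n-\ell(\xi)}(\sharp\op{Std}^{\mathsf{s}}(\xi))^2 ,
\]
and dividing by $2^n$ yields the claimed identity.

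The main obstacle is the first step: justifying semisimplicity of $\Hn$ at generic $q$ without circularity. One clean route is to note that for generic $q$ the algebra is a flat deformation of the Sergeev superalgebra $\mathcal{C}_n\rtimes\mathfrak{S}_n$ over $\mathbb{C}$, which is semisimple, and semisimplicity is an open condition. A second option avoids this: show directly that the $D(\xi)$ exhaust the composition factors of the regular module, using that the weight-space decomposition is semisimple whenever every weight has distinct adjacent entries. If this step proves delicate, one can instead cite \cite{Sa} directly, where the identity is proved combinatorially via a shifted RSK/Sagan–Worley insertion.
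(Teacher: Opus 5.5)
Your route is genuinely different from the paper's: the paper gives no argument and simply cites \cite{Sa}, whose proof is bijective (shifted insertion sends $\pi\in\mathfrak{S}_n$ to a pair of standard shifted tableaux of the same shape, the second with some off-diagonal entries marked, giving the factor $2^{n-\ell(\la)}\sharp\op{Std}^{\mathsf{s}}(\la)$). Yours is Schur's representation-theoretic route, run through the paper's machinery at $h=\infty$. Your bookkeeping is correct. For $h=\infty$ only $\mathtt{q}(0)=\pm 2$, so $\gamma_0=\ell(\xi)$. Both the type~$\texttt{M}$ and the type~$\texttt{Q}$ contributions to \eqref{eq:dimequal} then equal $2^{2n-\ell(\xi)}(\sharp\op{Std}^{\mathsf{s}}(\xi))^2$.

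The gap is the claim that at $h=\infty$ every irreducible $\Hn$-module is completely splittable. You assert it but never prove it, and you treat semisimplicity as the obstacle. Semisimplicity does not give it. Knowing $\Hn$ is semisimple and having the pairwise non-isomorphic irreducibles $D(\xi)$, $\xi\in\mathcal{SP}_0(n)$, Wedderburn only yields $\sum_{\xi}2^{n-\ell(\xi)}(\sharp\op{Std}^{\mathsf{s}}(\xi))^2\le n!$. Equality is exactly the statement that no further irreducibles exist, i.e. that no irreducible has a weight with $i_k=i_{k+1}$. Nothing in Sections 3--4 rules this out, because that classification only concerns modules already assumed completely splittable. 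Your ``second option'' does not help: saying that all weights of the regular module have distinct adjacent entries is the same missing statement.

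To close the gap you need an independent count of irreducibles. One option is the $h=\infty$ case of the Brundan--Kleshchev classification \cite{BK1}, where the irreducible $\Hn$-modules are labelled by strict partitions of $n$; your $|\mathcal{SP}_0(n)|$ modules $D(\xi)$ must then be all of them. This is exactly how the paper obtains complete splittability in Lemmas~\ref{prop:IrrCSodd} and~\ref{prop:IrrCSeven}: it compares $\mathcal{CSP}_h(n)$ with the labelling set of Theorem~\ref{thm;BK-T}. Another option is the Tits deformation theorem applied to your flat family, which matches simples at generic $q$ with those of $\mathcal{C}_n\rtimes\mathfrak{S}_n$. Note, though, that if you import Schur's classical theory of that algebra including dimensions, the identity is immediate and the Hecke--Clifford detour is unnecessary.

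A Gelfand--Tsetlin argument via the central symmetric functions in $L_j+L_j^{-1}$ is harder than it looks. It only makes $L_k+L_k^{-1}$ semisimple, and you would still have to rule out a unipotent part of $L_k$ on the $\mathtt{q}(0)$-eigenspace.

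Finally, drop the citation of \cite{SW} for semisimplicity. The paper notes that Proposition~\ref{prop:CS-Nonsemi}(2) can be proved by a dimension comparison, which uses precisely this lemma, so that citation risks circularity. Your deformation argument is the safe route: over $\mathbb{C}$ the trace-form discriminant is a Laurent polynomial in $q$, nonzero at $q=1$, hence nonzero at transcendental $q$.
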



\begin{prop}\label{prop:CS-Nonsemi} 
The following holds: 
\begin{enumerate}
\item If $h$ is odd, then $\Hn$ is not semisimple in the case $h=n$. If $h$ is even, then $\Hn$ is not semisimple in the case $h=2n$.
\item If $h>n$ in the case $h$ is odd and $h>2n$ in the case $h$ is even, then the Hecke-Clifford superalgebra $\Hn$ is semisimple.
\end{enumerate}
\end{prop}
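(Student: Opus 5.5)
The plan is a dimension count. I compare $\dim\Hn=2^n n!$ with the sum over irreducibles in \eqref{eq:dimequal}. By Lemma~\ref{lem:PBWNon-dege} and the surjection \eqref{eq:surj}, $\Hn$ has basis $\{C^\beta T_w\}$, so $\dim\Hn=2^n n!$.

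In the relevant ranges ($h\ge n$ odd, $h\ge 2n$ even), Lemmas~\ref{prop:IrrCSodd} and \ref{prop:IrrCSeven} say every irreducible is completely splittable. Theorems~\ref{dim2} and \ref{thm:HC-classification} then give all irreducibles, with their types and dimensions. Suppose $\Hn$ were semisimple. By Lemma~\ref{lem:type MQ}, \eqref{eq:dimequal} would hold, and I will rewrite its right side as $\sum_\xi (\dim D(\xi))^2/2^{\delta(\xi)}$, where $\delta=0$ for type $\texttt{M}$ and $\delta=1$ for type $\texttt{Q}$.

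\textbf{Part (2).} Take $h>n$ odd. Then $\mathcal{CSP}_h(n)=\mathcal{SP}_0(n)$ and $\op{Std}^{\mathsf{s}}_h(\xi)=\op{Std}^{\mathsf{s}}(\xi)$, since the exceptional condition needs $\xi_1+\xi_2=h>n$. Thus $\dim D(\xi)=2^{n-\lfloor \ell/2\rfloor}\sharp\op{Std}^{\mathsf{s}}(\xi)$ with $\ell=\ell(\xi)$. The type is $\texttt{Q}$ exactly when $\ell$ is odd, so $2\lfloor\ell/2\rfloor+\delta=\ell$. Hence
\[\sum_\xi \frac{(\dim D(\xi))^2}{2^{\delta(\xi)}}=\sum_{\xi\in\mathcal{SP}_0(n)}2^{2n-\ell(\xi)}(\sharp\op{Std}^{\mathsf{s}}(\xi))^2=2^n n!\]
by Lemma~\ref{lem:STD}. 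For $h>2n$ even, no part equals $h/2$, so $\gamma_0(\xi)=\ell(\xi)$ and the same computation works.

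Equality of this sum with $\dim\Hn$ does not by itself force semisimplicity. I will use the standard argument. Since the $|D(\xi)|$ (split into $V^\pm$ for type $\texttt{Q}$) form a complete set of irreducible $|\Hn|$-modules by Lemma~\ref{lem:type MQ}, and $\mathbb{F}$ is algebraically closed, the map $|\Hn|\to\prod\op{End}_{\mathbb F}(\text{irreducibles})$ is surjective by Jacobson density and Wedderburn for the semisimple quotient. Its kernel is the Jacobson radical. The target has dimension $\sum_{\texttt M}(\dim)^2+\sum_{\texttt Q}2(\dim/2)^2$, which is the sum above, namely $2^n n!$. So the radical is zero and $\Hn$ is semisimple.

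\textbf{Part (1).} Take $h=n$ odd. Here $\mathcal{CSP}_h(n)=\mathcal{SP}_0(n)\setminus\{(n)\}$, and this is also $\mathcal{RP}_h(n)$ by the proof of Lemma~\ref{prop:IrrCSodd}. Compared with the identity in Lemma~\ref{lem:STD}, two things are lost. First, the term for $\xi=(n)$, which is $2^{2n-1}\cdot 1>0$, is missing. Second, for $\xi=(h-u,u)$ the set $\op{Std}^{\mathsf{s}}_h(\xi)$ is a subset of $\op{Std}^{\mathsf{s}}(\xi)$, so these terms can only shrink. Hence $\sum (\dim D)^2/2^\delta<2^n n!=\dim\Hn$. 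So \eqref{eq:dimequal} fails and $\Hn$ is not semisimple.

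For $h=2n$ even, $\mathcal{CSP}^2_h(n)=\emptyset$, since $m\ge2$ would need $n\ge h=2n$. So $\mathcal{CSP}_h(n)=\{\xi\in\mathcal{SP}_0(n)\mid \xi_1\le n\}$, which is all of $\mathcal{SP}_0(n)$. Now the discrepancy comes from $\xi=(n)$, which has the part $h/2=n$. This gives $\gamma_0=2$, so $D((n))$ is of type $\texttt{M}$ with dimension $2^{n-1}$, contributing $2^{2n-2}$ instead of the $2^{2n-1}$ that appears in Lemma~\ref{lem:STD}. All other $\xi$ have $\gamma_0=\ell$ and contribute exactly as in Lemma~\ref{lem:STD}. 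The total is therefore $2^n n!-2^{2n-2}<\dim\Hn$, so $\Hn$ is not semisimple.

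I expect the main obstacle to be the bookkeeping in part (1). This means confirming $\mathcal{RP}_h(n)=\mathcal{CSP}_h(n)$ at $h=n$, and in the even case checking the type and dimension of $D((n))$ via $\gamma_0$. The rest is a direct application of Lemma~\ref{lem:STD}.
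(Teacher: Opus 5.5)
Your proposal is correct and follows the paper's own proof. Both compare dimensions against Lemma~\ref{lem:STD}. At $h=n$ (odd) the shortfall comes from the missing $\xi=(n)$ and the truncated sets $\op{Std}^{\mathsf{s}}_h(\xi)$, and at $h=2n$ (even) from $\gamma_0((n))=2>\ell((n))$. For part (2) the paper only cites \cite{SW} and remarks that a dimension comparison plus Wedderburn suffices; your Jacobson-radical count $\dim|\Hn|/J=\sum_{\texttt{M}}(\dim D(\xi))^2+\sum_{\texttt{Q}}(\dim D(\xi))^2/2=2^n n!$ supplies exactly those omitted details.
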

\begin{proof}
By Lemma \ref{prop:IrrCSodd} and \ref{prop:IrrCSeven}, every irreducible $\Hn$-module is completely splittable in the case $h=n$ being odd and $h=2n$ being even. First, suppose  $h=n$ is odd, by 
the proof of Lemma \ref{prop:IrrCSodd} we have $\mathcal{CSP}_h(n)=\{\xi\in\mathcal{SP}_0(n)\mid \xi\neq (n)\}$. Then by Theorem \ref{dim2}, Lemma \ref{prop:IrrCSodd} and Lemma \ref{lem:STD} we have
\begin{equation}\label{eq:IrrCSodd}
\begin{aligned}
&\sum_{\substack{\xi\in\mathcal{CSP}_h(n)\\ \ell(\xi)\text{ is even}}}(\op{dim}D(\xi))^2+\sum_{\substack{\xi\in\mathcal{CSP}_h(n)\\ \ell(\xi)\text{ is odd}}}\frac{(\op{dim}D(\xi))^2}{2}\\
&=2^n\sum_{\substack{\xi\in\mathcal{SP}_0(n)\\ \xi\neq (n)}}2^{n-\ell(\xi)}(\sharp\op{Std}^{\mathsf{s}}_h(\xi))^2
<2^n\sum_{\xi\in\mathcal{SP}_0(n)}2^{n-\ell(\xi)}(\sharp\op{Std}^{\mathsf{s}}(\xi))^2=2^nn!. 
\end{aligned}
\end{equation}
This together with Lemma \ref{lem:type MQ} implies that $\Hn$ is not semisimple. Second, assume $h=2n$ is even, again by the proof of Lemma \ref{prop:IrrCSeven} we have $\mathcal{CSP}_h(n)=\mathcal{SP}_0(n)$.  Then by Theorem \ref{thm:HC-classification}, Proposition \ref{prop:IrrCSeven} and Lemma \ref{lem:STD}, we have
\begin{equation}\label{eq:IrrCSeven}
\begin{aligned}
&\sum_{\substack{\xi\in\mathcal{CSP}_h(n)\\ \gamma_0(\xi)\text{ is even}}}(\op{dim}D(\xi))^2+\sum_{\substack{\xi\in\mathcal{CSP}_h(n)\\ \gamma_0(\xi)\text{ is odd}}}\frac{(\op{dim}D(\xi))^2}{2}
=2^n\sum_{\xi\in\mathcal{CSP}_h(n)}2^{n-\gamma_0(\xi)}(\sharp\op{Std}^{\mathsf{s}}(\xi))^2\\
&<2^n\sum_{\xi\in\mathcal{SP}_0(n)}2^{n-\ell(\xi)}(\sharp\op{Std}^{\mathsf{s}}(\xi))^2=2^nn!
\end{aligned}
\end{equation}
since $\gamma_0((n))>\ell((n))$ and $\gamma_0(\xi)=\ell(\xi)$ for $\xi\neq (n)$ by \eqref{eq:gamma0-even}. Again by Lemma \ref{lem:type MQ}, $\Hn$ is not semisimple. This proves (1).  
The statement in (2) is due to \cite{SW} which can also be proved via a similar argument to that of (1) by a dimension comparison and Wedderburn theorem Lemma \ref{lem:type MQ} and we omit the details here. 
\end{proof}

\begin{lem}\label{lem:nonsemi35}
The algebra $\mathcal{H}_3(q)$ is not semisimple in the case $h=4$. And the algebra $\mathcal{H}_5(q)$ is not semisimple in both case $h=3$ and $h=6$. 
\end{lem}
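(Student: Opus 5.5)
The plan is to reduce each of the three cases to a smaller algebra that Proposition~\ref{prop:CS-Nonsemi}(1) already shows is not semisimple. The reductions are $\mathcal{H}_2(q)$ for $h=4$ (since $h=2\cdot 2$), and $\mathcal{H}_3(q)$ both for $h=3$ (since $h=n=3$) and for $h=6$ (since $h=2\cdot 3$). The tool is a descent statement: \emph{if $\mathcal{H}_{n}(q)$ is semisimple, then so is $\mathcal{H}_{n-1}(q)$.} Iterating it gives $\mathcal{H}_3(q)$ semisimple $\Rightarrow \mathcal{H}_2(q)$ semisimple when $h=4$. Likewise $\mathcal{H}_5(q)$ semisimple $\Rightarrow \mathcal{H}_4(q)$ semisimple $\Rightarrow\mathcal{H}_3(q)$ semisimple when $h=3$ or $h=6$. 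Each conclusion contradicts Proposition~\ref{prop:CS-Nonsemi}(1), which proves the lemma.

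The descent goes through the intermediate superalgebra $\mathcal{B}:=\mathcal{H}_{n-1}(q)\otimes \mathrm{Cl}_1\subset \mathcal{H}_n(q)$, where $\mathrm{Cl}_1=\langle C_n\rangle$.
\begin{enumerate}
\item By Lemma~\ref{lem:PBWNon-dege} restricted to the finite part, $\mathcal{H}_n(q)$ is free as a right $\mathcal{B}$-module with basis $\{T_d\}$, where $d$ runs over the minimal length coset representatives of $\mathfrak{S}_n/\mathfrak{S}_{n-1}$. In particular $\mathrm{res}$ sends projective $\mathcal{H}_n(q)$-modules to projective $\mathcal{B}$-modules.
\item Let $M$ be a finite dimensional $\mathcal{B}$-module. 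If $M$ is a $\mathcal{B}$-direct summand of $\mathrm{res}\,\mathrm{ind}_{\mathcal{B}}^{\mathcal{H}_n(q)}M=\bigoplus_d T_d\otimes M$, then semisimplicity of $\mathcal{H}_n(q)$ makes $\mathrm{ind}\,M$ projective, hence $\mathrm{res}\,\mathrm{ind}\,M$ projective, hence $M$ projective. So $\mathcal{B}$ is semisimple.
\item Since $\mathrm{Cl}_1$ is semisimple (char $\mathbb{F}\neq 2$), $|\mathcal{B}|$ semisimple forces $|\mathcal{H}_{n-1}(q)|$ semisimple. To see this, restrict $\mathcal{B}$-modules to $\mathcal{H}_{n-1}(q)$ and use that every $\mathcal{H}_{n-1}(q)$-module $N$ is a summand of $\mathrm{res}(\mathcal{B}\otimes_{\mathcal{H}_{n-1}(q)} N)=N\oplus C_nN$.
\end{enumerate}

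The main obstacle is the splitting in (ii). We need the summand $1\otimes M$ (the identity coset $d=1$) to have a $\mathcal{B}$-stable complement $\bigoplus_{d\neq 1}T_d\otimes M$. Equivalently, $\mathcal{B}$ must be a $(\mathcal{B},\mathcal{B})$-bimodule direct summand of $\mathcal{H}_n(q)$. I would check this directly from the relations~\eqref{TT} and~\eqref{TC}. The point is that left multiplication by $T_i$ ($i<n-1$), by $C_j$ ($j\le n$), and by $T_{n-1}$ applied to $T_d$ with $d\neq 1$ never produces a term in the identity coset. The correction terms $-\varepsilon(C_i-C_{i+1})$ in~\eqref{TC} only shorten $T$-words within a coset $\mathfrak{S}_{n-1}d$ with $d\neq1$, and never land in $\mathcal{B}$ itself. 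Concretely, take the spanning set $\{T_w C^\beta\}$ of $\mathcal{H}_n(q)$. The span of those with $w\in\mathfrak{S}_{n-1}$ is $\mathcal{B}$, and I would show that the span of those with $w\notin \mathfrak{S}_{n-1}$ is a $\mathcal{B}$-sub-bimodule, arguing by induction on $\ell(w)$ through the double coset $\mathfrak{S}_{n-1}s_{n-1}\mathfrak{S}_{n-1}$.

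If this verification becomes messy, the fallback is a direct dimension count, as in Proposition~\ref{prop:CS-Nonsemi}. Using Theorem~\ref{thm;BK-T}, I would list the parametrizing sets $\DRP_4(3)$, $\RP_3(5)$ and $\DRP_6(5)$ together with the types given by $b_h(\la)$. The completely splittable ones have known dimensions by Theorems~\ref{dim2} and~\ref{thm:HC-classification}. The remaining few modules I would bound using the branching rule Theorem~\ref{thm;BK-T}(3), and then show the Wedderburn sum~\eqref{eq:dimequal} falls short of $2^n n!$ ($=48$ or $3840$), contradicting Lemma~\ref{lem:type MQ}.
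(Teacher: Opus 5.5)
The gap is in step (ii), exactly the step you flag as the main obstacle. Your proposed complement $\operatorname{span}\{T_wC^\beta : w\notin\mathfrak{S}_{n-1}\}$ is not a $\mathcal{B}$-sub-bimodule. By \eqref{TC}, $C_{n-1}T_{n-1}=T_{n-1}C_n+\varepsilon(C_{n-1}-C_n)$, and $\varepsilon(C_{n-1}-C_n)$ is a nonzero element of $\mathcal{B}$. So left multiplication by $C_{n-1}\in\mathcal{B}$ sends the basis vector $T_{s_{n-1}}$ partly into $\mathcal{B}$. Deleting the only letter of the reduced word $s_{n-1}$ lands in the identity coset, contrary to your claim that the correction terms never do. Equivalently, projection onto the $d=1$ component is not a bimodule map, and in fact $\mathcal{B}T_{n-1}\mathcal{B}\cap\mathcal{B}\neq 0$. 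So (ii), and with it the descent statement, is unproved as written.

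The missing idea is a Clifford correction of $T_{n-1}$. Set $\widetilde T_{n-1}:=T_{n-1}-\tfrac{\varepsilon}{2}(1-C_{n-1}C_n)$. It satisfies $\widetilde T_{n-1}C_j=C_{s_{n-1}(j)}\widetilde T_{n-1}$ for all $j$, commutes with $T_1,\dots,T_{n-3}$, and has $\widetilde T_{n-1}^2=1+\tfrac{\varepsilon^2}{2}$.

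Consequences, in order:
- $\widetilde T_{n-1}$ normalizes $\mathcal{B}':=\langle T_1,\dots,T_{n-3},C_1,\dots,C_n\rangle\cong\mathcal{H}_{n-2}(q)\otimes\mathrm{Cl}_2$.
- Hence $K:=\mathcal{B}\widetilde T_{n-1}\mathcal{B}=\sum_{x,y}T_x\widetilde T_{n-1}\mathcal{B}'T_y$, with $x,y$ over the $n-1$ minimal left, resp.\ right, coset representatives of $\mathfrak{S}_{n-2}$ in $\mathfrak{S}_{n-1}$. So $\dim K\le 2^n(n-1)!\,(n-1)$.
- Expanding $T_{n-1}T_{n-2}T_{n-1}=T_{n-2}T_{n-1}T_{n-2}$ gives $\widetilde T_{n-1}T_{n-2}\widetilde T_{n-1}\in\mathcal{B}+K$. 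Thus $\mathcal{B}+K$ is a left ideal containing $1$.
- A dimension count then yields $\mathcal{H}_n(q)=\mathcal{B}\oplus K$ as $\mathcal{B}$-bimodules.

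With this repair, your steps (i)--(iii) go through. For (i) you need left freeness, $\mathcal{H}_n(q)=\bigoplus_d\mathcal{B}T_d$, so that restriction preserves projectives.

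The repaired route is genuinely different from the paper's. It proves Theorem~\ref{thm:plusnonsemi} for all $r$ without crystals. The paper instead treats the three cases individually, using Lemma~\ref{lem:type MQ} plus an integrality obstruction: $x^2=80$ has no integer solution, and $x^2+2y^2=2^9\cdot 15$ or $2^9\cdot 13$ has none because $5$, resp.\ $13$, is $\equiv 5 \pmod 8$ and occurs to an odd power.

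Your fallback lacks that integrality idea. For $h=3$ one has $\CSP_3(5)=\emptyset$, so no dimension is known at all. The branching rule of Theorem~\ref{thm:typefi}(3) controls composition multiplicities, which give only crude bounds. For $h=4$ it gives only $\dim M((3))\le\dim\operatorname{ind}M((2))=12$, whereas $\dim M((3))^2<80$ would be needed. So ``falls short'' cannot be established as sketched.
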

\begin{proof}
We prove each case by a dimension comparison. In the case $h=4$, by Theorem \ref{thm;BK-T}  we have that $\{M((3)),M((2,1))\}$ is the complete set of pairwise non-isomorphic irreducible $\mathcal{H}_3(q)$-supermodule. Besides, both $M((2,1)),M((3))$ are of type $\texttt{Q}$. Meanwhile Example \ref{ex:example-even} and Remark \ref{rem:Dxi-Mxi} show that $\op{dim}M((2,1))=2^2$. If $\mathcal{H}_3(q)$ is semisimple, then by Lemma \ref{lem:type MQ} we should have 
\[ (\op{dim}M((3)))^2+2^4=3!\cdot 2^4=96.\]
But $(\op{dim}M((3)))^2=80$ has no integral solution, a contradiction. Hence $\mathcal{H}_3(q)$ is not semisimple in case $h=4$.

Now consider $\mathcal{H}_5(q)$. In case $h=3$, $\{M((4,1)),M((3,2))\}$ is the complete set of pairwise non-isomorphic irreducible $\mathcal{H}_5(q)$-supermodule by Theorem \ref{thm:typefi}. Besides, $M((4,1))$ is of type $\texttt{M}$ and $M((3,2))$ is of type $\texttt{Q}$. If $\mathcal{H}_5(q)$ is semisimple, then by Lemma \ref{lem:type MQ} we should have
\[ (\op{dim}M((4,1)))^2+\frac{(\op{dim}M((3,2)))^2}{2}=120\cdot 2^5\]
But $a^2+2b^2=240\cdot 32$ has no integral solution by  number theory (cf. \cite{Co}), a contradiction. Hence $\mathcal{H}_5(q)$ is not semisimple in case $h=3$.

In case $h=6$, $\{M((5)),M((4,1)),M((3,2))\}$ is the complete set of pairwise non-isomorphic irreducible $\mathcal{H}_5(q)$-supermodule by Theorem \ref{thm:typefi}. Besides, $M((5)),M((3,2))$ are both of type $\texttt{Q}$ and $M((4,1))$ is of type $\texttt{M}$. Example \ref{ex:example-even} and Remark \ref{rem:Dxi-Mxi} show that $\op{dim}M((3,2))=2^5$. If $\mathcal{H}_5(q)$ is semisimple, then by Lemma \ref{lem:type MQ} we should have
\[ 16\cdot 2^5+\op{dim}M((4,1))^2+\frac{1}{2}\op{dim}M((4,1))=120\cdot 2^5.\]
But $a^2+2b^2=208\cdot 32$ has no integral solution by  number theory (cf. \cite{Co}), a contradiction. Hence $\mathcal{H}_5(q)$ is not semisimple in case $h=6$.
\end{proof}

Based on Proposition \ref{prop:CS-Nonsemi} and Lemma \ref{lem:nonsemi35}, we can show that:
\begin{thm}\label{thm:plusnonsemi}
Suppose $h\geq 3$. If $\Hr$ is not semisimple, then $\Hrr$ is not semisimple.
\end{thm}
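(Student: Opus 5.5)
The plan is to argue by contradiction, using the branching functors $f_i$ of Theorem~\ref{thm;BK-T} together with the crystal combinatorics. Assume $\Hr$ is not semisimple but $\Hrr$ is semisimple. Semisimplicity of $\Hrr$ transfers down to $\Hr$: $\Hrr$ is a free $\Hr$-module (by the PBW-type basis inherited from Lemma~\ref{lem:PBWNon-dege}), so every $\Hr$-module $N$ is a direct summand of $\op{res}\op{ind} N$. One way to see this is to exhibit an $\Hr$-bimodule splitting of $\Hrr=\Hr\oplus(\text{complement})$, namely the standard decomposition by the double cosets $\Hr T_r \Hr$ and $\Hr C_{r+1}$. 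Then induction from $\Hr$ to $\Hrr$ followed by restriction is a faithful exact functor which sends $N$ to a module containing $N$ as a summand. Since every $\Hrr$-module is semisimple, its restriction to $\Hr$ would be semisimple if restriction preserves semisimplicity. That is not automatic, so this is the step I would replace by a sharper crystal argument.

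The sharper route uses Theorem~\ref{thm;BK-T}(3). If $\Hrr$ is semisimple, then every indecomposable $\Hrr$-module is irreducible. Hence each nonzero $f_iM(\la)$, which is indecomposable, must be irreducible. Its composition factor $M(\la^B)$ occurs with multiplicity $\varphi_i(\la)$, so we would need $\varphi_i(\la)\le 1$ for every $\la$ in $\RP_h(r)$ (resp.\ $\DRP_h(r)$) and every $i\in\I$ with an $i$-cogood node. The key step is therefore purely combinatorial: show that whenever $\Hr$ is non-semisimple, some $\la$ of size $r$ has $\varphi_i(\la)\ge 2$. One caution: $f_i$ here is really $f_i$ applied through $\op{res}^{r+1}$, and for type $\texttt{Q}$ modules it should be checked that indecomposability of $f_iM(\la)$ survives. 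Theorem~\ref{thm;BK-T} gives this as stated, so I would take it as given.

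To carry out the combinatorial step, I would use the known non-semisimple thresholds. By Proposition~\ref{prop:CS-Nonsemi}(2) and part (1), non-semisimplicity of $\Hr$ forces $r\ge h$ ($h$ odd) or $r\ge h/2$ ($h$ even). Lemma~\ref{lem:nonsemi35} handles the small exceptional cases $h=3,4,6$, where the general construction can degenerate. For instance, Example~\ref{ex:add-remove} shows $\la=(7,6,5)$ with $\varphi_6=2$ for $h=14$. In general I would take partitions such as $\la=(h/2,h/2-1,\dots)$, staircase-type with first row $h/2$, or for odd $h$ the analogous shape reaching residue $\frac{h-1}{2}$. I would then exhibit two conormal $i$-addable nodes with $i$ the top residue and no removable $i$-nodes, giving $\varphi_i\ge2$ for all $r$ beyond the threshold. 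This is done by appending rows, or columns of multiples of $h$, that do not create $i$-removable nodes.

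I expect the main obstacle to be that uniform family. It needs, for every $r$ at or above the threshold, a restricted (resp.\ doubly restricted) strict partition with $\varphi_i\ge 2$, and the residue and restrictedness conditions must be handled carefully, especially near $r=h$ or $r=h/2$. This is presumably why Lemma~\ref{lem:nonsemi35} isolates $r=3,5$. Combined with the theorem's induction (non-semisimplicity of $\Hr$ implies that of $\Hrr$), the family only needs to exist at each $r$ where we know $\Hr$ is non-semisimple. The contrapositive then yields the claim.
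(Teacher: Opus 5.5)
\textbf{The crystal route has a gap at its core, and the gap is not a formality.} The route you commit to is the paper's. By Theorem~\ref{thm:typefi}(3), an indecomposable $f_iM(\la)$ in which $M(\la^B)$ occurs $\varphi_i(\la)\ge 2$ times is not simple, so $\Hrr$ is not semisimple. The cases $(r,h)=(2,4),(4,3),(4,6)$ are left to Lemma~\ref{lem:nonsemi35}.

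All of the content of that route is the family of partitions, and you do not produce it. Moreover, the choice you indicate for odd $h$ fails exactly where it is first needed:
\begin{itemize}
\item For odd $h$ the residue $\frac{h-1}{2}$ occurs once per period and never in two adjacent columns. So the second clause of the definition of addable node never applies to it.
\item Hence every $\frac{h-1}{2}$-addable node sits in column $\la_k+1$ of a nonempty row with $\la_k\equiv\frac{h-1}{2}\pmod h$.
\item Such parts are not divisible by $h$, so by $h$-strictness they are distinct. Two of them force $|\la|\ge \frac{h-1}{2}+\frac{3h-1}{2}=2h-1$.
\end{itemize}
Thus $\varphi_{(h-1)/2}(\la)\le 1$ whenever $h\le |\la|\le 2h-2$, in particular at $r=h$.

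The paper instead uses $i=0$ for odd $h$, exploiting the repeated residue $0$ in columns $h,h+1$. For $r=h$ it takes $\la=(h-1,1)$. The nodes $(2,1)$, $(1,h)$, $(1,h+1)$ give the $0$-signature $-\,+\,+$, the last one via the second clause of addability, so $\varphi_0(\la)=2$. The general families are \eqref{eq:sepcail-la-odd} and \eqref{eq:sepcail-la-even}.

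\textbf{Your abandoned first route already proves the theorem.} You stopped at a non-obstacle: you do not need restriction to preserve semisimplicity, only projectivity. Suppose $\Hrr$ is semisimple and let $N$ be any $\Hr$-module.
\begin{itemize}
\item $\op{ind}N=\Hrr\otimes_{\Hr}N$ is a projective $\Hrr$-module, hence a summand of some $\Hrr^{\oplus m}$.
\item $\Hrr$ is free as a left $\Hr$-module, with basis $C_{r+1}^{\epsilon}T_d$, where $\epsilon\in\{0,1\}$ and $d$ runs over the minimal length representatives of $\mathfrak{S}_r\backslash\mathfrak{S}_{r+1}$. So $\op{res}\op{ind}N$ is a projective $\Hr$-module.
\item Your bimodule splitting $\Hrr=\Hr\oplus\Hr C_{r+1}\oplus\Hr T_r\Hr$ is correct. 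Indeed $C_{r+1}$ supercommutes with $\Hr$, and $\Hr T_r\Hr$ is the span of the $C^\beta T_w$ with $w\notin\mathfrak{S}_r$, which is closed under both actions. This makes $N$ an $\Hr$-direct summand of $\op{res}\op{ind}N$, so $N$ is projective.
\end{itemize}
Every $\Hr$-module is then projective, so $\Hr$ is semisimple. This is exactly the contrapositive of the statement, and it is the Hecke--Clifford analogue of ``$kG$ semisimple $\Rightarrow$ $kH$ semisimple''. It holds for every $h$. It needs none of the crystal combinatorics, none of the case-by-case partitions, and none of the number-theoretic dimension arguments of Lemma~\ref{lem:nonsemi35} on which the paper's proof depends.
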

\begin{proof}
If $\Hr$ is not semisimple, then by Proposition \ref{prop:CS-Nonsemi}(2)  we have $h\leq r$ in the case $h$ is odd and $h\leq 2r$ in the case $h$ is even. Write $r=ah+b$ with $0\leq b\leq h-1$ in the case $h$ is odd and $r=a(\frac{h}{2})+b$ with $0\leq b\leq \frac{h}{2}-1$ in the case $h$ is even. Then in the case $h$ is odd we take
\begin{equation}\label{eq:sepcail-la-odd}
\la=\begin{cases}
(h,h,\ldots,h,h-1,b+1),&\text{ if }b\neq h-1,h-2,\\
(h,h,\ldots,h,h-1),&\text{ if }b= h-1,\\
(h,h,\ldots,h,h-1,h-2,1),&\text{ if }b= h-2,h>3,\\
(5,3,\ldots,3,2),&\text{ if }b=1,h=3,a\geq 2,\\
\end{cases}
\end{equation}
and in the case $h$ is even we take
\begin{equation}\label{eq:sepcail-la-even}
\la=\begin{cases}
(\frac{h}{2},\frac{h}{2},\ldots,\frac{h}{2},\frac{h}{2}-1,b+1),&\text{ if }b\neq \frac{h}{2}-1,\frac{h}{2}-2\\
(\frac{h}{2},\frac{h}{2},\ldots,\frac{h}{2},\frac{h}{2}-1),&\text{ if }b=\frac{h}{2}-1\\
(h-1,\frac{h}{2},\ldots,\frac{h}{2},\frac{h}{2}-1),&\text{ if }b= \frac{h}{2}-2,a\geq 2\\
(\frac{h}{2}-1,\frac{h}{2}-2,1),&\text{ if }b= \frac{h}{2}-2,a=1,h>6\\
\end{cases}
\end{equation}
It's straightforward to check that $\varphi_i(\la)\geq 2$ for either $i=0$ or $i=\frac{h}{2}-1$. 
For example, if $h$ is even and $b\neq\frac{h}{2}-1,\frac{h}{2}-2$, then $\la=(\frac{h}{2},\frac{h}{2},\ldots,\frac{h}{2},\frac{h}{2}-1,b+1)\in\mathcal{DRP}_h(n)$ satisfies $\varphi_{\frac{h}{2}-1}(\la)=2>1$, see Example \ref{ex:add-remove} for a concrete instance.  Thus, for these $\la$ listed in \eqref{eq:sepcail-la-odd} in the case $h$ is odd and  in \eqref{eq:sepcail-la-even} in the case $h$ is even, we obtain that $f_0M(\la)$ or $f_{\frac{h}{2}}M(\la)$ is indecomposable but not simple by Theorem \ref{thm:typefi}(3). Thus $\Hrr$ is not semisimple. Now we have three remaining cases to prove, that is, to prove $\Hrr$ is not semisimple under the assumption $\Hr$ is not semisimple and $(r,h)$ satisfies ($r=4,h=3$), ($r=4,h=6$) or ($r=2,h=4$). By Lemma \ref{lem:nonsemi35} we already know $\mathcal{H}_{r+1}(q)$ is not semisimple in each case. Hence the theorem is proved.

\end{proof}

Now we can state the main result of this section.

\begin{thm}\label{thm:semisimple}
The Hecke-Clifford superalgebra $\Hn$ is semisimple if and only if $h>n$ in the case $h$ is odd and $h>2n$ in the case $h$ is even.
\end{thm}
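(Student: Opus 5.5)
The statement follows by combining the two halves of Proposition~\ref{prop:CS-Nonsemi} with the inductive step of Theorem~\ref{thm:plusnonsemi}. The ``if'' direction is exactly Proposition~\ref{prop:CS-Nonsemi}(2). If $h$ is odd and $h>n$, or $h$ is even and $h>2n$, then $\Hn$ is semisimple. This rests on the dimension comparison: by Lemma~\ref{prop:IrrCSodd} and Lemma~\ref{prop:IrrCSeven}, every irreducible is completely splittable with $\mathcal{CSP}_h(n)=\mathcal{SP}_0(n)$. The dimensions from Theorem~\ref{dim2} and Theorem~\ref{thm:HC-classification}, fed into Lemma~\ref{lem:STD}, give total $2^n n!=\dim\Hn$ (using Lemma~\ref{lem:PBWNon-dege} for the basis count). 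Since $\gamma_0(\xi)=\ell(\xi)$ when all parts are $<h/2$, this equality of dimensions is the Wedderburn criterion.

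For the ``only if'' direction, argue by contraposition. Suppose $h\le n$ with $h$ odd, or $2n\ge h$ with $h$ even; we must show $\Hn$ is not semisimple. Start at the critical value $n_0=h$ in the odd case, or $n_0$ with $2n_0=h$ in the even case, which is an integer since $h$ is even. Proposition~\ref{prop:CS-Nonsemi}(1) gives that $\mathcal{H}_{n_0}(q)$ is not semisimple. Then apply Theorem~\ref{thm:plusnonsemi} repeatedly: if $\mathcal{H}_r(q)$ is not semisimple, so is $\mathcal{H}_{r+1}(q)$. Induction on $n\ge n_0$ yields non-semisimplicity for every $n\ge n_0$. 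This is precisely the range $h\le n$ (odd case), respectively $h\le 2n$ (even case). Note that for even $h$ the condition $h\le 2n$ is equivalent to $n\ge h/2=n_0$, so no parity issue arises.

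The argument needs $h\ge 3$ so that the cited results apply. Since $q\ne\pm1$, we have $q^2\ne 1$, hence $h\ge 2$; combined with the reduction in the introduction, $h$ is either odd and $\ge3$ or even and $\ge4$. The case $h=\infty$ needs separate comment. Under the stated convention the inequality $h>n$ holds automatically, and semisimplicity there is the generic case covered by the same argument of \cite{SW}.

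The main obstacle lies in the inductive step, which is already handled by Theorem~\ref{thm:plusnonsemi}. In the present proof the only delicate point is checking that the induction starts correctly: the base case is exactly the boundary $n_0$, and the small exceptional pairs $(r,h)$ are covered within Theorem~\ref{thm:plusnonsemi} by Lemma~\ref{lem:nonsemi35}. Once that is confirmed, the equivalence follows.
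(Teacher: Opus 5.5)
Your argument is correct and is the paper's own proof: the ``if'' direction is Proposition~\ref{prop:CS-Nonsemi}(2), and the ``only if'' direction starts from the non-semisimplicity of $\mathcal{H}_h(q)$ (resp.\ $\mathcal{H}_{h/2}(q)$) given by Proposition~\ref{prop:CS-Nonsemi}(1) and extends it to every larger $n$ by Theorem~\ref{thm:plusnonsemi}. One small correction to a side remark: $q\neq\pm1$ does not by itself force $h\geq 4$ in the even case, since it still allows $q^2=-1$, i.e.\ $h=2$; that case is excluded only implicitly, because then $q+q^{-1}=0$ and $\mathtt{q}(i)$ is undefined.
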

\begin{proof}
In the case $h$ is odd, if $h>n$, then $\Hn$ is semisimple by Proposition \ref{prop:CS-Nonsemi}.  Now suppose $h\leq n$. Then $\mathcal{H}_h(q)$ is not semisimple by Proposition \ref{prop:CS-Nonsemi} and hence $\mathcal{H}_{h+1}(q),\mathcal{H}_{h+2}(q),\ldots$ are not semisimple by Theorem \ref{thm:plusnonsemi}. Therefore $\Hn$ is not semisimple. The same discussion holds in the case $h$ being even.
\end{proof}

\begin{rem}
Theorem \ref{thm:semisimple} verifies the equivalence between (1) and (3) in  \cite[Conjecture 4.13]{SW} for $\Hn$.  Meanwhile by Proposition \ref{prop:CS-Nonsemi} one can see that (2) is neither equivalent to (1) nor (3) in \cite[Conjecture 4.13]{SW} for $\Hn$. 
In \cite{Sh}, a different method by extending the trace form introduced in\cite{WW} to cyclotomic Hecke superalgebras and computing the associated Schur elements is applied to derive the necessary condition for cyclotomic Hecke superalgebras including $\Hn$ as a special case to be semisimple under certain assumptions. Our statement shows that the assumption in the case of $\Hn$ in \cite[Theorem 1.3(2)]{Sh} can be removed. 
\end{rem}


\begin{thebibliography}{ABC}





\bibitem[BK1]{BK1} J. Brundan and A. Kleshchev, {\em Hecke-Clifford superalgebras, crystals of type
$A^{(2)}_{2l}$, and modular branching rules for $\widehat{S}_n$},
Repr. Theory {\bf 5} (2001), 317--403.

\bibitem[BK2]{BK2} J. Brundan and A. Kleshchev, 
{\em Representation theory of symmetric groups and
their double covers}, Groups, Combinatorics and Geometry (Durham, 2001), pp.
31--53, World Scientific, Publishing, River Edge, NJ, 2003. 

\bibitem[CWZ]{CWZ} M. Chen, J. Wan and H. Zhao,
{\em A note on irreducible representations of symmetric groups and Sergeev superalgebras}, 
arXiv:2603.28009, 18 pages. 

\bibitem[C1]{C1} I. Cherednik, {\em Special bases of irreducible
representations of a degenerate affine Hecke algebra}, Funct. Anal.
Appl. {\bf 20} (1986), no.1, 76--78.

\bibitem[C2]{C2} I. Cherednik, {\em A new interpretation of Gel'fand-Tzetlin
bases}, Duke. Math. J. {\bf 54} (1987), 563--577.


\bibitem[Co]{Co} D. Cox, 
Primes of the form  $x^2+ny^2$: Fermat, class field theory, and complex multiplication, 3rd ed.,
AMS Chelsea Publishing, Providence, RI, 2022.




\bibitem[G]{G}
I. Grojnowski,  {\em Aﬃne ${\rm sl}_p$ controls the modular representation theory of the symmetric group
and related Hecke algebras,} preprint, arXiv:math/9907129, 1999.

\bibitem[HKS]{HKS}D. Hill, J. Kujawa and J. Sussan,
{\em Degenerate affine Hecke-Clifford algebras and type $Q$ Lie
superalgebras}, Math. Z {\bf  268} (2011), 1091--1158.

\bibitem[Hu]{Hu} J. Hu, 
{\em Mullineux involution and twisted affine Lie algebras},
J. Algebra {\bf 304} (2006) 557–576.



\bibitem[JN]{JN} A. Jones, M. Nazarov, {\em Affine Sergeev algebra and $q$-analogues of
the Young symmetrizers for projective representations of the
symmetric group}, Proc. London Math. Soc. {\bf 78} (1999), 481--512.

\bibitem[Ka]{Ka} S.-J. Kang, 
{\em Crystal bases for quantum affine algebras and combinatorics of Young walls}, Proc. London Math. Soc. 86
(2003), 29--69.

\bibitem[KKT]{KKT} S.~J. Kang, M.~Kashiwara and S.~Tsuchioka, 
{\em Quiver Hecke Superalgebras}, J. Reine Angew. Math., {\bf 711} (2016), 1--54.

\bibitem[KMS]{KMS} I.~ Kashuba, A.~ Molev, V.~ Serganova
{\em On the Jucys–Murphy method and fusion procedure for the Sergeev superalgebra}, J. London Math. Soc., 112: e70302. https://doi.org/10.1112/jlms.70302. 

\bibitem[K1]{K1} A. Kleshchev, {\em Completely splittable representations
of symmetric groups}, J. Algebra {\bf 181} (1996), 584--592.

\bibitem[K2]{K2} A. Kleshchev, { Linear and Projective
Representations of Symmetric Groups}, Cambridge University Press,
2005.

\bibitem[K3]{K3} A. Kleshchev, {\em  Representation Theory of symmetric groups and related Hecke algebras},  Bulletin (New Series) of the American Mathematical Society {\bf 47} (2010), 419--481. 
Volume 47, Number 3, July 2010, Pages 419–481

\bibitem[KL]{KL} A. Kleshchev and M. Livesey, 
{\em RoCK blocks for double covers of symmetric groups and quiver Hecke superalgebras},
Memoirs of the American Mathematical Society Volume 309,  American Mathematical Society, 2025. 

\bibitem[KR]{KR}  A. Kleshchev and A. Ram, 
{\em Homogeneous representations of Khovanov-Lauda algebras}, J. Eur. Math. Soc.(JEMS) {\bf 12} (2010), no. 5, 1293--1306


\bibitem[LS]{LS}
{\sc S.~Li, L.~Shi}, {\em Seminormal bases of cyclotomic Hecke-Clifford algebras},
Lett. Math. Phys., {\bf 115}, (2025), https://doi.org/10.1007/s11005-025-01998-x.	








 \bibitem[Mac]{Mac}  I.G.~ Macdonald,
 { Symmetric functions and Hall polynomials},
  Second edition, Clarendon Press, Oxford, 1995.


\bibitem[M]{M} O. Mathieu, {\em On the dimension of some modular irreducible representations of
the symmetric group}, Lett. Math. Phys. {\bf 38} (1996), 23--32.


\bibitem[Mo]{Mo} M. Mori, 
{\em A cellular approach to the Hecke–Clifford superalgebra},
Preprint, 2014, arXiv:1401.1722, 80 pages.


\bibitem[N1]{N1} M. Nazarov, {\em Young's orthogonal form of
irreducible projective representations of the symmetric group},
 J. London Math. Soc. (2){\bf 42} (1990), no. 3, 437--451.


\bibitem[N2]{N2} M. Nazarov, {\em Young's symmetrizers for projective representations
of the symmetric group}, Adv. Math. {\bf 127} (1997), 
190--257.

\bibitem[OV]{OV} A. Okounkov and A. Vershik, {\em A new approach to
representation theory of symmetric groups}, Selecta Math. (N.S)
{\bf 2} (1996), 581--605.

\bibitem[Ol]{Ol} G.I.~Olshanski,
{\em Quantized universal enveloping superalgebra of type $Q$ and a
super-extension of the Hecke algebra}, Lett. Math. Phys. {\bf  24}
(1992),  93--102.

\bibitem[Ra]{Ra} A. Ram, {\em Skew shape representations are
irreducible}, (English summary) Combinatorial and geometric
representation theory (Seoul, 2001), 161--189, Contemp. Math., 325,
Amer. Math. Soc., Providence, RI, 2003.




\bibitem[Ru]{Ru} O. Ruff, {\em Completely splittable representations
of symmetric groups and affine Hecke aglebras}, J. Algebra {\bf
305} (2006), 1197--1211.

\bibitem[Sa]{Sa} B. Sagan, {\em Shifted tableaux, {S}chur {$Q$}-functions, and a conjecture of
	{R}. {S}tanley}, J. Combin. Theory Ser.  {\bf
	45} (1987), 62--103.
	
 \bibitem[Sch]{Sch}  I.~Schur,
{\em \"Uber die Darstellung der symmetrischen und der alternierenden
Gruppe durch gebrochene lineare Substitutionen}, J. Reine Angew.
Math. {\bf 139} (1911), 155--250.

\bibitem[Sh]{Sh} L. Shi,
{\em On the semisimplicity and Schur elements of (super)symmetric superalgebras}, arXiv:2605.04745. 

\bibitem[SW]{SW} L. Shi and J. Wan 
{\em On representation theory of cyclotomic Hecke-Clifford algebras}, 
J. Alg (to appear), 35 pages, 2026. https://doi.org/10.1016/j.jalgebra.2026.03.031.


\bibitem[Ts]{Ts} S.~Tsuchioka, 
{\em Hecke-Clifford superalgebras and crystals of type $D_l^{(2)}$}, Publ. Res. Inst. Math. Sci. {\bf 46} (2010), 423--471.



\bibitem[W]{Wa} J. Wan, {\em Completely splittable representations of affine
              {H}ecke-{C}lifford algebras}, J. Algebraic Combin. {\bf
32} (2010), 15--58.


\bibitem[WW]{WW}
J. Wan and W. Wang, {\em Frobenius character formula and spin generic degrees
	for Hecke–Clifford algebra}, Proc. London Math. Soc., {\bf 106} (2013), 287--317.
	
\bibitem[Wa]{W} W. Wang, {\em Double affine Hecke-Clifford algebras for
		the spin symmetric group}, preprint, math.RT/0608074, 2006.
		
	
  
\end{thebibliography}
\end{document}